\theoremstyle{definition}
\newtheorem{defi}{Definition}[section]
\theoremstyle{plain}
\newtheorem{prop}[defi]{Proposition}
\newtheorem*{prop*}{Proposition}
\newtheorem{lem}[defi]{Lemma}
\newtheorem{stel}[defi]{Theorem}
\newtheorem{gev}[defi]{Corollary}
\newtheorem{ques}[defi]{Question}
\newtheorem{fact}[defi]{Fact}
\newtheorem*{stel*}{Theorem}
\theoremstyle{remark}
\newtheorem{opm}[defi]{Remark}
\newtheorem{vb}[defi]{Example}
\numberwithin{equation}{section}
\newcommand{\ff}{\mathbb F}
\newcommand{\qq}{\mathbb Q}
\newcommand{\car}{{\rm char}}
\newcommand{\zz}{\mathbb Z}
\newcommand{\La}{\mathcal L}
\newcommand{\Lar}{\mathcal{L}_{\operatorname{ring}}}
\newcommand{\Erk}{{\rm rk}^{\scriptscriptstyle\exists}}
\newcommand{\Erkn}{{\rm rk}^{\scriptscriptstyle\exists,n}}
\newcommand{\Erkone}{{\rm rk}^{\scriptscriptstyle\exists,1}}
\newcommand{\qr}{\Erk}
\newcommand{\qrp}{{\rm rk}^{\scriptscriptstyle\exists^{\raisebox{-1pt}{\scalebox{0.5}{$+$}}}}}
\newcommand{\erk}{\Erk}
\newcommand{\perk}{\qrp}
\newcommand{\Tfields}{T_{\rm fields}}
\newcommand{\TKleq}{T_K^<}
\newcommand{\TKprec}{T_K^\prec}
\newcommand{\Tec}[1]{T_{#1}^{\scalebox{0.6}{$\prec_{\exists}$}}}
\newcommand{\Tprec}[1]{T_{#1}^\prec}
\newcommand{\TKec}{\Tec{K}}
\newcommand{\TKequiv}{T_K^{\equiv}}
\newcommand{\subTKleq}{{T_K^{\scalebox{0.5}{$<$}}}}
\newcommand{\subTKprec}{{T_K^{\scalebox{0.5}{$\prec$}}}}
\newcommand{\subTec}[1]{{T_{#1}^{\scalebox{0.4}{$\prec_{\exists}$}}}}
\newcommand{\subTKequiv}{{T_K^{\scalebox{0.5}{$\equiv$}}}}
\newcommand{\Ext}{T^<}
\newcommand{\pow}[1]{^{(#1)}}
\newcommand{\powx}[2]{#1^{\times(#2)}}
\newcommand{\pii}[2]{\wp_{#1}^{#2}}
\newcommand{\dotneq}{\not\doteq}
\DeclareMathOperator{\Spec}{Spec}
\DeclareMathOperator{\Frac}{Frac}
\DeclareMathOperator{\trdeg}{trdeg}
\DeclareMathOperator{\cdim}{cdim}
\DeclareMathOperator{\Gal}{Gal}
\DeclareMathOperator{\ed}{ed}
\DeclareMathOperator{\efd}{efd}
\DeclareMathOperator{\Natwithzero}{\mathbb{Z}_{\geq0}}
\DeclareMathOperator{\Natwithoutzero}{\mathbb{Z}_{>0}}
\DeclareMathOperator{\nat}{\Natwithzero}
\title[Existential rank and essential dimension of diophantine sets]
  {Existential rank and essential dimension\\of diophantine sets}
\author{Nicolas Daans}
\date{\today}
\address{Universiteit Antwerpen, Departement Wiskunde, Middelheimlaan 1, 2020 Antwerpen, Belgium}
\curraddr{Université de Mons, Département de Mathematique, Place du Parc 20, 7000 Mons, Belgium}
\email{nicolas.daans@umons.ac.be}
\author{Philip Dittmann}
\address{Technische Universit\"at Dresden, Fakult\"at Mathematik, Institut f\"ur Algebra, 01062 Dresden, Germany}
\curraddr{Department of Mathematics, University of Manchester, Manchester M13 9PL, United Kingdom}
\email{philip.dittmann@manchester.ac.uk}
\author{Arno Fehm}
\address{Technische Universit\"at Dresden, Fakult\"at Mathematik, Institut f\"ur Algebra, 01062 Dresden, Germany}
\email{arno.fehm@tu-dresden.de}
\begin{document}

\maketitle

\begin{center}
\em Dedicated to Alexander Prestel on the occasion of his 80th birthday
\end{center}

\begin{abstract}
We study the minimal number of existential quantifiers needed to define a diophantine set over a field and relate this number 
to the essential dimension of the functor of points associated to such a definition.
\end{abstract}

\section{Introduction}

\noindent
The notion of diophantine sets originates in the study of Hilbert's tenth problem for the integers,
and has subsequently been applied also to other rings and in particular to fields, see e.g.~\cite{Poonen_survey,Shlapentokh,Koenigsmann_survey}.
For a field $K$, a subset $D\subseteq K^n$ is {\em diophantine} if it has a diophantine definition, i.e. if
any of the following three equivalent conditions hold:
\begin{enumerate}
\item\label{diophantine1} $D=\{\underline{x}\in K^n: f_1(\underline{x},\underline{Y}),\dots,f_r(\underline{x},\underline{Y})\mbox{ have a common zero in $K^m$}\}$, for finitely many polynomials
$f_1,\dots,f_r\in K[X_1,\dots,X_n,Y_1,\dots,Y_m]$.
\item\label{diophantine2} $D=\pi(W(K))$ for a $K$-variety $W$ and a morphism $\pi\colon W\rightarrow\mathbb{A}_K^n$, where we identify $K^n=\mathbb{A}_K^n(K)$.
\item\label{diophantine3} $D=\varphi(K)$, the set defined in $K$ by an existential formula $\varphi$ in $n$ free variables in the first-order language of rings with parameters from $K$.
\end{enumerate}
One of the central questions regarding diophantine sets in fields is the following:
\begin{ques}\label{q:Q}
Is $\mathbb{Z}$ diophantine in $\mathbb{Q}$?
\end{ques}
A positive answer to this question would imply that 
the analogue of Hilbert's tenth problem for $\mathbb{Q}$ has a negative answer: There is no algorithm that decides whether a given variety over $\mathbb{Q}$ has a rational point.
In this direction, Koenigsmann \cite{Koenigsmann} showed that $\mathbb{Q}\setminus\mathbb{Z}$ is diophantine in $\mathbb{Q}$,
but there are conjectures in arithmetic geometry suggesting that $\mathbb{Z}$ should not be diophantine in $\mathbb{Q}$, see e.g.~\cite{Mazur}.
Also the analogous question 
in positive characteristic is of great interest:
\begin{ques}\label{q:FqT}
Is $\mathbb{F}_q[t]$ diophantine in $\mathbb{F}_q(t)$?
\end{ques}
The analogue of Koenigsmann's result for $\mathbb{F}_p(t)$ was proven in \cite{EisMor,Daans}.
For further recent results on diophantine sets in fields see for example
\cite{Kollar,AnscombeFehm,Dittmann}.

In this work we introduce and study several complexity measures of diophantine sets, related to the three descriptions of diophantine sets given above:

\begin{defi}\label{def:intro_erk}
Let $D \subseteq K^n$ be a diophantine set.
We define
 the
{\em positive-existential rank} $\qrp_K(D)$,
the {\em essential fibre dimension} $\efd_K(D)$,
and the 
{\em existential rank} $\Erk_K(D)$
 of $D$ as the following nonnegative integers:\footnote{For fully precise (and also more general) definitions see Definitions \ref{def:erk}, \ref{def:erk_K} and \ref{def:efd}, 
 and Remark~\ref{rem:perk_analogues}.
 Remark \ref{rem:perk} and Corollary \ref{cor:efdCharacterisationDiag} explain 
 why the following definitions of $\perk_K$ and $\efd_K$ are special cases of Definition \ref{def:erk_K} respectively \ref{def:efd}.}
\begin{eqnarray*}
\qrp_K(D)&=&\min\{m:\mbox{there exist $f_1,\dots,f_r$ as in (\ref{diophantine1}) in variables $X_1,\dots,X_n,Y_1,\dots,Y_m$}\}\\
\efd_K(D)&=&\min\{m:\mbox{there exists $\pi$ as in (\ref{diophantine2})
with ${\rm dim}(\pi^{-1}(x))\leq m$ for all $x\in\mathbb{A}_K^n$}\}\\
\Erk_K(D)&=&\min\{m:\mbox{there exists $\varphi$ as in (\ref{diophantine3}) with at most $m$ existential quantifiers}\}
\end{eqnarray*}
\end{defi}

The relation between these measures will be studied in Sections \ref{sect:efd} and \ref{sect:pthpowers}.
We find that $\Erk_K(D) = \qrp_K(D)$ unless the former is equal to 0 and the latter is equal to 1 (Corollary~\ref{cor:qrvsqrp}). 
The relation between $\efd_K(D)$ and $\erk_K(D)$ is
more complicated (see e.g.~Theorem~\ref{ed=qr}),
and although in this introduction we state results only for the existential rank, the interplay between $\efd_K(D)$ and $\erk_K(D)$ underlies many of the proofs.

One of the general results on existential rank that we obtain in this way
by making use of properties of essential fibre dimension is the following:

\begin{stel}[{Corollary \ref{cor:finGenOverPerf}}]\label{intro:finGenOverPerf}
Suppose that $K$ is finitely generated over a perfect field.
If $D_1,D_2\subseteq K^n$ are diophantine sets with
$\Erk_K(D_1)>0$, $\Erk_K(D_2)>0$, then
$$
 \Erk_K(D_1\cap D_2) \leq \Erk_K(D_1) + \Erk_K(D_2) -1.
$$
\end{stel}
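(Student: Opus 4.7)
My plan is to route the argument through essential fibre dimension via Theorem~\ref{ed=qr}, form the fibre product of defining morphisms, and save one fibre dimension by a codimension-one cut that preserves $K$-rational points; the finite-generation hypothesis on $K$ is used precisely in the last step.

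First, I would translate the problem to the essential-fibre-dimension side. Using Theorem~\ref{ed=qr} (together with the comparison between $\Erk_K$ and $\efd_K$ developed in Section~\ref{sect:efd}), I pick morphisms $\pi_i\colon W_i \to \mathbb{A}_K^n$ with $\pi_i(W_i(K)) = D_i$ and $\dim \pi_i^{-1}(x) \leq \Erk_K(D_i)$ for every $x \in \mathbb{A}_K^n$, realising the existential rank as an essential fibre dimension. The hypothesis $\Erk_K(D_i) > 0$ is then supposed to translate into the statement that the fibres of $\pi_i$ over points of $D_i$ are genuinely positive-dimensional, since otherwise one could strip off a quantifier.

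Second, I would consider the fibre product $W = W_1 \times_{\mathbb{A}_K^n} W_2$ with its natural projection $\pi$ to $\mathbb{A}_K^n$. Then $\pi(W(K)) = D_1 \cap D_2$, and each fibre of $\pi$ is the product of the corresponding fibres of $\pi_1$ and $\pi_2$, so $\dim \pi^{-1}(x) \leq \Erk_K(D_1) + \Erk_K(D_2)$ for all $x$. After converting back via Theorem~\ref{ed=qr} this already yields the naive bound $\Erk_K(D_1 \cap D_2) \leq \Erk_K(D_1) + \Erk_K(D_2)$, which is one too large. To save the extra quantifier, I would cut $W$ by a generic codimension-one subvariety, for instance via a $K$-rational hyperplane section of $W_1$, producing a closed subvariety $W' \subseteq W$ whose fibres have dimension at most $\Erk_K(D_1) + \Erk_K(D_2) - 1$.

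The hard part, and the main obstacle, is ensuring that this codimension-one cut still surjects onto $D_1 \cap D_2$ via $K$-rational points: one must find a hyperplane $H$ defined over $K$ such that for every $x \in D_1 \cap D_2$ the intersection $\pi_1^{-1}(x) \cap H$ still carries a $K$-rational point. This is precisely where the hypothesis that $K$ is finitely generated over a perfect field becomes crucial: such fields are rich enough, in a Hilbert-irreducibility- or Bertini-with-rational-points-flavoured sense, for a generic $K$-rational hyperplane to meet every positive-dimensional $K$-fibre of $\pi_1$. Once this uniform choice of $H$ is established, the induced morphism $\pi'\colon W' \to \mathbb{A}_K^n$ has the right image and fibre bound, and Theorem~\ref{ed=qr} applied in reverse concludes the argument.
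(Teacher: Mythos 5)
Your high-level idea of routing the argument through essential fibre dimension and Theorem~\ref{ed=qr} is the right one — this is also what the paper does. But there is an off-by-one error at the start that derails the rest, and the part you identify as ``the hard part'' is both unnecessary and incorrect, while the genuinely hard part (in positive characteristic) goes unaddressed.

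The off-by-one error: when Theorem~\ref{ed=qr} applies, it gives $\efd_K(D_i) = \Erk_K(D_i) - 1$ (not $\Erk_K(D_i)$), so by Corollary~\ref{cor:efdCharacterisationDiag} you may choose $\pi_i$ with $\dim\pi_i^{-1}(x) \le \Erk_K(D_i)-1$. Then the fibre product (or equivalently Lemma~\ref{edwedge}) gives $\efd_K(D_1\cap D_2) \le (\Erk_K(D_1)-1)+(\Erk_K(D_2)-1)$, and converting back via the same theorem already gives $\Erk_K(D_1\cap D_2) \le \Erk_K(D_1)+\Erk_K(D_2)-1$, exactly as desired. No hyperplane cut is needed. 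Your reading of $\Erk_K(D_i)>0$ as ``fibres are genuinely positive-dimensional'' is also not correct — $\Erk_K(D)=1$ is compatible with all fibres being finite (e.g.\ $D=K^{(2)}$).

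The cutting argument you introduce to compensate for the extra quantifier would fail in any case: you need a $K$-rational hyperplane $H$ such that $\pi_1^{-1}(x)\cap H$ carries a $K$-point for \emph{every} $x\in D_1\cap D_2$, but the fibre $\pi_1^{-1}(x)$ might have only one $K$-point (or be a curve with sparse $K$-points), so a ``generic'' hyperplane will miss it; no Hilbert-irreducibility or Bertini-type statement asserts that a single hyperplane passes through a prescribed $K$-point of every member of a family.

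Finally, the role of the hypothesis that $K$ is finitely generated over a perfect field is misplaced. It is not a richness/Bertini hypothesis. It is used to verify the hypothesis of Theorem~\ref{ed=qr}, namely that $\qr_K(\pii{n}{p^m})\le 1$ for all $n,m$ when $\car K = p > 0$ — this is exactly Theorem~\ref{thm:finGenOverPerf}, whose proof (via Lemmas~\ref{lem:BmodKpfinite}, \ref{lem:xr+1} and Proposition~\ref{thm:genusChangingFinite}) controls $p$-th power sets using the finiteness of $B(K)/\powx{K}{p}$ and arithmetic of non-conservative curves. Without this verification, Theorem~\ref{ed=qr} simply cannot be invoked; your proposal never checks it. The paper's actual proof is therefore: Theorem~\ref{thm:finGenOverPerf} verifies the $\pii{n}{p^m}$ condition, which feeds into Corollary~\ref{cor:equivalences}(6) (equivalently Theorem~\ref{ed=qr} together with Lemma~\ref{edwedge} and Proposition~\ref{prop:qrvsqrp}), giving the bound directly.
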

This constitutes a saving of one quantifier over the trivial upper bound. 
The proof is particularly involved for fields of positive characteristic,
where we make use for example of arithmetic results from \cite{Jeong}.
In Section~\ref{sect:pthpowers} we also discuss examples where the trivial upper bound cannot be improved.

Additionally, as the name suggests, essential fibre dimension is connected 
to {\em essential dimension},
more precisely to the notion of essential dimension of a functor, as suggested by Merkurjev and introduced in \cite{BerhuyFavi_EssDim}, see Section \ref{sect:geometric}. 
We refer to \cite{Reichstein,Merkurjev_EDSurvey,Merkurjev2017} for surveys on essential dimension.
This relation between essential dimension and essential fibre dimension, and the interaction with existential rank, allows us to exploit known results on essential and canonical dimension of varieties.
For example, building on work by Karpenko, Merkurjev and Totaro on the essential dimension of quadrics \cite{KarpenkoMerkurjev_EssDimQuadrics,Totaro_BiratGeomQuadricsCharTwo} and Severi-Brauer varieties \cite{Karpenko_IncompressibilitySeveriBrauer}, we prove Theorem \ref{stel:erk_quad_cyclic_norm} on the existential rank of the set of elements represented by certain forms, a special case of which is the following:

\begin{stel}[{Corollary \ref{gev:sums_of_squares}}]
There exists a field $K$ of characteristic zero such that for every $m\in\Natwithoutzero$,
the set $\sum_{i=1}^mK\pow 2$ of sums of $m$ squares in $K$ has 
$\Erk_K(\sum_{i=1}^mK\pow 2)=m$.
\end{stel}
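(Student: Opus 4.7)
The plan is to deduce this corollary from Theorem~\ref{stel:erk_quad_cyclic_norm} applied to the quadratic forms $\phi_m = \langle 1, \dots, 1 \rangle$ of rank $m$, whose nonzero represented values over $K$ are exactly the nonzero sums of $m$ squares in $K$. The trivial upper bound $\Erk_K(\sum_{i=1}^m K\pow 2) \leq m$ is immediate from the explicit diophantine definition $\exists X_1 \cdots \exists X_m \, (Y = X_1^2 + \cdots + X_m^2)$, which uses $m$ existential quantifiers.

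For the matching lower bound I would invoke Theorem~\ref{stel:erk_quad_cyclic_norm}, which, as announced in the introduction, estimates $\Erk_K$ of the set of values represented by a quadratic form from below in terms of the essential (equivalently, canonical) dimension of the associated projective quadric. For $\phi_m$ this quadric is a Pfister quadric when $m$ is a power of $2$, and closely related to a subform of one in general. The incompressibility results of Karpenko and Merkurjev on Pfister quadrics then provide the required maximal lower bound on the canonical dimension, which, together with the extra parameter $Y$ recording the represented value, translates into $\Erk_K(\sum_{i=1}^m K\pow 2) \geq m$ --- provided that $K$ is chosen so that all of the relevant Pfister quadrics are simultaneously maximally incompressible over $K$.

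To produce a single characteristic-zero field $K$ realising the lower bound for \emph{every} $m \geq 1$ at once, I would use a direct-limit / generic tower construction: start from a base field such as $\qq$, and iteratively adjoin transcendentals and pass to function fields of suitable generic quadrics, interleaving the steps across all $m$ so that at every finite stage each form $\langle 1,\dots,1\rangle$ of rank $\leq m$ remains anisotropic and its associated quadric remains incompressible. The main obstacle is precisely this simultaneity: enlarging $K$ in order to force the lower bound for one rank must not collapse the canonical dimension of some other rank. This is handled by Karpenko's theorem that passing to the function field of an anisotropic Pfister quadric does not decrease the canonical dimension of an anisotropic Pfister quadric which is not dominated by it, so the interleaved construction and passage to the direct limit preserve the desired lower bounds in every rank simultaneously.
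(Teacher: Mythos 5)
Your overall outline---invoke Theorem~\ref{stel:erk_quad_cyclic_norm}, use incompressibility of quadrics for the lower bound, and build a suitable characteristic-zero field by a limit construction---correctly identifies the ingredients, but there is a genuine gap in the heart of the argument, and the construction you sketch is not the one the paper uses (and would not obviously work).

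The crucial missing step is the passage from a statement about \emph{extensions} of $L$ to a statement about the \emph{complete theory} of $L$. Incompressibility of a quadric over $L$ is a fact about the class of all field extensions of $L$, and hence (via Lemma~\ref{lem:incompressibleFunctionField} and Example~\ref{vb:efdQuadric}) it yields a lower bound on $\efd_{\subTKleq}$, or with a little more work on $\efd_{\subTec{L}}$. But $\erk_L$ is by definition $\erk_{\subTKprec}$, computed modulo the full elementary diagram $\TKprec$, which is a strictly stronger theory; a priori the existential rank could drop when passing from $\Tec{L}$ to $\Tprec{L}$. The paper closes this gap with Proposition~\ref{prop:lowerBoundsLimit}, which produces a field $L$ with $\qq\prec_\exists L$ such that $\Tprec{L}$ and $\Tec{L}$ prove the same $\forall\exists$-sentences, and in particular $\erk_L(\varphi)=\erk_{\subTec{L}}(\varphi)$ for existential $\varphi$. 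Without some surrogate for this proposition, your proposal establishes a lower bound on the wrong quantity. Relatedly, your construction of $L$ by ``iteratively... passing to function fields of suitable generic quadrics'' is unlike the paper's Proposition~\ref{prop:lowerBoundsLimit}, which amalgamates (composita of linearly disjoint) regular extensions in which $\qq$ is existentially closed; this automatically preserves anisotropy of $\sum Y_i^2$ because $\qq\prec_\exists L$, whereas function fields of quadrics tend to destroy anisotropy, and more importantly it produces exactly the equality $\erk_L=\erk_{\subTec{L}}$ you need but do not address.

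Two smaller points. First, the Pfister angle is a red herring: the relevant quadric in the proof of Theorem~\ref{stel:erk_quad_cyclic_norm} is not the quadric of $q=\sum Y_i^2$ itself but the quadric of $q'(Y,Z)=q(Y)-tZ^2$ over $L(t)$, and incompressibility there comes from showing $i_1(q')=1$ via Example~\ref{vb:efdQuadric}; no Pfister structure is used, and this works uniformly for every anisotropic $q$, not just for $m$ a power of two. Second, the actual corollary also asserts a statement about $(L^{(2)}+L^{(2)})^n$ having rank $n+1$, for which the paper additionally uses Corollary~\ref{cor:equivalences}(6) for the upper bound and a padding trick reducing to $\sum_{i=1}^{2n}L^{(2)}$ for the lower bound; your proposal does not touch this part, but it is a separate concern from the main gap above.
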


One hope behind our investigation is that apart from improving our understanding of diophantine sets, studying these complexity measures will eventually also lead to new insights into which sets are diophantine at all. For example, we 
also assign a complexity measure to the field itself and observe the following:

\begin{defi}
The {\em existential rank} of the field $K$ is 
$\Erk(K)=\sup_D\Erk_K(D)$
where $D$ runs over all diophantine subsets $D\subseteq K^n$ for all $n$.
\end{defi}

\begin{fact}[Corollary \ref{cor:erk_Z_in_Q}]\label{fact:er_Z_in_Q}
If $\Erk(\mathbb{Q})=\infty$
and there exists $f\in\mathbb{Z}[X,Y]$ such that $f\colon\mathbb{Q}\times\mathbb{Q}\rightarrow\mathbb{Q}$ is injective, 
then $\mathbb{Z}$ is not diophantine in $\mathbb{Q}$.
If $\Erk(\mathbb{F}_p(t))=\infty$, then $\mathbb{F}_p[t]$ is not diophantine in $\mathbb{F}_p(t)$.
\end{fact}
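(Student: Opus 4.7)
I argue by contraposition in the $\qq$-case: assume $\zz$ is diophantine in $\qq$, so that $k := \Erk_\qq(\zz) < \infty$, and that $f \in \zz[X,Y]$ induces an injection $\qq^2 \to \qq$. The goal is a bound on $\Erk_\qq(D)$ that is finite and uniform as $D$ ranges over all diophantine subsets of all $\qq^n$, contradicting $\Erk(\qq) = \infty$. Iterating $f$ yields injective polynomials $f_n \in \zz[X_1,\dots,X_n]$ defining injections $\qq^n \to \qq$. For diophantine $D \subseteq \qq^n$, the image $f_n(D) \subseteq \qq$ is diophantine, and $\bar x \in D \iff f_n(\bar x) \in f_n(D)$, hence $\Erk_\qq(D) \leq \Erk_\qq(f_n(D))$; the task reduces to uniformly bounding $\Erk_\qq(E)$ for diophantine $E \subseteq \qq$.

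The key external input is the bounded-variable form of the Matiyasevich--Robinson--Davis--Putnam theorem: a universal constant $N$ such that every recursively enumerable subset of $\zz^m$ (for any $m$) admits a diophantine definition over $\zz$ with at most $N$ existential quantifiers. For diophantine $E \subseteq \qq$, the set $\tilde E := \{(a,b) \in \zz^2 : b \neq 0,\ a/b \in E\}$ is semi-decidable (search over rational witnesses) and therefore recursively enumerable; it is thus cut out by some $\tilde q \in \zz[A,B,Z_1,\dots,Z_N]$, and so
\[
E = \{x \in \qq : \exists (a,b,\bar z) \in \zz^{N+2},\ b \neq 0,\ bx = a,\ \tilde q(a,b,\bar z)=0\}.
\]
Absorbing $b \neq 0$ into one extra quantifier yields $\Erk_\qq(E) \leq (N+3) + \Erk_\qq(\zz^{N+2})$, so a finite bound on $\Erk(\qq)$ will follow once $\Erk_\qq(\zz^s)$ is controlled uniformly in $s$.

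That last bound is the technical heart. By injectivity of $f_s$ on $\qq^s$, one has $\bar w \in \zz^s \iff f_s(\bar w) \in f_s(\zz^s)$. Since $f_s(\zz^s) \subseteq \zz$ is recursively enumerable, bounded-variable MRDP supplies $Q_s \in \zz[U,V_1,\dots,V_N]$ with
\[
\bar w \in \zz^s \iff f_s(\bar w) \in \zz \ \wedge\ \exists\, \bar v \in \zz^N,\ Q_s(f_s(\bar w),\bar v) = 0.
\]
This gives $\Erk_\qq(\zz^s) \leq k + N + \Erk_\qq(\zz^N) \leq k + N + Nk$, uniformly in $s$. Combining the two displays produces a universal bound on $\Erk(\qq)$, the desired contradiction. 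I expect the main subtleties to be the bookkeeping between $\zz$- and $\qq$-valued existential quantifiers, the verification that $\tilde E$ and $f_s(\zz^s)$ are genuinely recursively enumerable, and the essential use of the injective polynomial to collapse many integer-membership assertions into a single one (without it, the bound on $\Erk_\qq(\zz^s)$ grows with $s$ and the argument collapses).

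For $\ff_p(t)$, the same scheme applies with $\ff_p[t]$ in place of $\zz$, provided one has (i) a bounded-variable analogue of MRDP for recursively enumerable subsets of $\ff_p[t]^m$, available via the diophantine undecidability work on rational function fields in positive characteristic (Pheidas, Shlapentokh, Eisentr\"ager, and others), and (ii) an injective polynomial $\ff_p(t)^2 \to \ff_p(t)$ over $\ff_p[t]$, whose existence in positive characteristic is known; the second point explains why the injectivity hypothesis is omitted from the $\ff_p(t)$ half of the statement.
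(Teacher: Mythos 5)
Your argument is correct and follows essentially the same strategy as the paper's proof via Lemma~\ref{lem:pol_injection}, Lemma~\ref{lem:universal}, and Proposition~\ref{prop:ZinQ1}: reduce to one free variable via the injective polynomial, then transfer a bounded-quantifier form of Matiyasevich's (resp.\ Demeyer's) theorem from $\zz$ (resp.\ $\ff_p[t]$) back to $\qq$ (resp.\ $\ff_p(t)$) using the hypothesis that $\zz$ is diophantine. The paper packages the bounded-quantifier input as a universal diophantine set $\theta(X,Z)$ rather than as a uniform bound on the number of quantifiers for all r.e.\ sets, but the two formulations are interchangeable. One detour to flag: once $N$ is a fixed universal constant, $s=N+2$ is also fixed, so the trivial bound $\Erk_\qq(\zz^{N+2})\leq(N+2)\cdot\Erk_\qq(\zz)$ already closes the argument; the step you call the ``technical heart'' (controlling $\Erk_\qq(\zz^s)$ uniformly in $s$) is not actually needed. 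Two small further points: $b\neq 0$ may appear directly as a quantifier-free disequality (existential rank, unlike positive-existential rank, allows negation), so no extra quantifier is required for it; and the positive-characteristic analogue of Matiyasevich's theorem you need is Demeyer's theorem, which the paper cites, rather than the undecidability results you list.
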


We have a complete picture for the existential rank of local fields and of pseudo-algebraically closed fields:

\begin{fact}[Corollary \ref{cor:RCFpCF}]
The local fields of characteristic zero have existential rank at most $1$,
in particular $\Erk(\mathbb{C})=0$, $\Erk(\mathbb{R})=1$, $\Erk(\mathbb{Q}_p)=1$.
\end{fact}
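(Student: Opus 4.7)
The plan is to treat the three types of local fields of characteristic zero separately and in each case rely on a quantifier elimination theorem together with a primitive element reduction. For $K = \mathbb{C}$, the theory of algebraically closed fields admits quantifier elimination in the language of rings, so any existential formula is equivalent to a quantifier-free one, yielding $\Erk_{\mathbb{C}}(D) = 0$ for every diophantine $D$ and hence $\Erk(\mathbb{C}) = 0$.

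For $K = \mathbb{R}$, I would invoke Tarski's theorem: every diophantine $D \subseteq \mathbb{R}^n$ is semialgebraic, hence a finite union of basic semialgebraic sets $\{p_0 = 0 \wedge p_1 > 0 \wedge \dots \wedge p_r > 0\}$. Since disjunctions of one-quantifier formulas can share their single quantifier, it suffices to define each basic set with one existential quantifier. My plan is a primitive element argument: the naive definition uses $r$ new variables via $\exists y_1, \dots, y_r \colon p_0 = 0 \wedge \bigwedge_i p_i y_i^2 = 1$, but the extension $\mathbb{R}(X)(\sqrt{p_1}, \dots, \sqrt{p_r})$ is simple, and if $\alpha$ generates it with minimal polynomial $f(X, Y)$ then the hypersurface $\{f(X, \alpha) = 0\}$ is birational to the full radical variety. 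Adding suitable polynomial inequations in $\alpha$ excludes those $\mathbb{R}$-points of the hypersurface that do not lift to $\mathbb{R}$-points of the original variety. As a concrete instance, the open first quadrant $\{(x_1, x_2) \in \mathbb{R}^2 : x_1 > 0 \wedge x_2 > 0\}$ can be expressed, using the minimal polynomial of $\sqrt{x_1} + \sqrt{x_2}$, as
\[
  \bigl\{(x_1, x_2) : \exists \alpha. \, \alpha^4 - 2(x_1+x_2)\alpha^2 + (x_1-x_2)^2 = 0 \,\wedge\, \alpha \neq 0 \,\wedge\, \alpha^2 \neq x_1 \,\wedge\, \alpha^2 \neq x_2 \bigr\},
\]
which uses only one existential quantifier.

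For $K$ a finite extension of $\mathbb{Q}_p$, I would use Macintyre's quantifier elimination in the language of rings augmented by the $n$-th power predicates $P_n$. Every diophantine set is then a Boolean combination of polynomial equations and atomic formulas $P_n(g(x))$, each itself definable by $\exists z. z^n = g(x)$. The conjunctions are again handled via a primitive element reduction of the relevant tower of radical extensions, combined with inequations chosen to exclude the unwanted $K$-points of the hypersurface. Here the essential arithmetic input is that each quotient $K^* / K^{*N}$ is finite: this reduces the determination of the inequations to a finite case analysis on cosets.

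The main obstacle I anticipate is the careful selection of these inequations and the verification that the resulting one-quantifier formula defines exactly the target set. The primitive element exists abstractly, but after specialization its hypersurface typically has spurious $K$-points not coming from the original variety; the inequations must precisely carve out the birational exception locus at the level of $K$-points. This hinges on the specific arithmetic of local fields: that $\mathbb{R}$ is a real closed Pythagorean field (so that sums of squares detect vanishing and square roots detect positivity), and that $K^*/K^{*n}$ is finite in the non-archimedean case (so that Hensel-type arguments give uniform control over the power classes).
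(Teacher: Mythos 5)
Your route is genuinely different from the paper's proof of Corollary~\ref{cor:RCFpCF}. The paper's argument is structural: it first observes (Example~\ref{ex:efd0}) that every existential formula has essential fibre dimension $0$ over ACF, RCF and $p\mathrm{CF}_d$, because in these theories a relatively algebraically closed subfield is an elementary substructure; it then applies Proposition~\ref{qrleqed}, which converts essential fibre dimension $d$ into positive-existential rank at most $d+1$ whenever the power formulas $\pii{n}{p^m}$ have rank $\leq 1$, the latter being Proposition~\ref{prop:pinmperfect} for perfect fields. What you propose is closer to what the paper only sketches in Example~\ref{ex:RCF}: quantifier elimination in an enriched language (Tarski for RCF, Macintyre for $p$CF), followed by a primitive-element reduction of the auxiliary predicates to a single quantifier. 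The key step you are effectively re-deriving by hand is precisely Proposition~\ref{prop:pinmperfect}.

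And that is where the genuine gap sits, which you flag yourself as the ``main obstacle.'' The paper never exhibits a single-quantifier formula for $\pii{n}{m}$. Its proof of Proposition~\ref{prop:pinmperfect} only verifies, pointwise for each model $K$ and each witness $\underline x$, that the one-generated ring $A_0[y']$ (with $y'$ a primitive element for the separable radical extension) detects the property, and then invokes the compactness criterion of Corollary~\ref{cor:generalcriterion}. That criterion yields an equivalent \emph{finite disjunction} of $\exists_1$-formulas --- one formula per degeneration type of the minimal polynomial, with compactness bounding the number of types. Your plan instead asks for a single minimal polynomial together with a fixed finite list of inequations that works uniformly across all $\underline x$; this is not automatic, since the generic minimal polynomial factors differently on different strata, and a priori each stratum could demand its own carving-out conditions. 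Your worked $r=2$ example is correct (I checked the degenerate loci), but no argument is given that such uniformity persists for general $r$; the safe remedy is to allow a finite disjunction and supply the compactness step, exactly as the paper does. The same issue recurs, with the extra wrinkle of negated $P_n$-predicates to be rewritten using coset representatives in $K^\times/K^{\times n}$, in the $p$-adic case.

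A smaller omission: the equalities $\Erk(\mathbb{R})=\Erk(\mathbb{Q}_p)=1$ in the Fact also require the lower bound $\geq 1$, which your proposal does not mention; the paper obtains it from Proposition~\ref{prop:acfErkZero}, since these fields are neither finite nor algebraically closed.
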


\begin{stel}[Proposition \ref{large}]
The local fields of positive characteristic have existential rank $\Erk(\mathbb{F}_q((t)))=\infty$.
\end{stel}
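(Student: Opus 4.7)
The plan is to exhibit, for each integer $n \ge 1$, a diophantine subset $D_n \subseteq K$ with $\Erk_K(D_n) \ge n$; this immediately gives $\Erk(K) = \infty$ by definition.

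The key structural input is the imperfection of $K = \mathbb{F}_q((t))$: since $[K:K^p] = p$, for each integer $k \ge 1$ the monomials $1, t, \ldots, t^{p^k-1}$ form a $K^{p^k}$-basis of $K$. Given $n$, I would pick $k$ with $p^k > n$ and consider
\[
D_n := K^{p^k} + t K^{p^k} + \cdots + t^{n-1} K^{p^k} \subseteq K,
\]
which is a proper $K^{p^k}$-subspace of $K$ of dimension $n$. Its natural definition $x \in D_n \iff \exists y_0, \ldots, y_{n-1}\,\colon\, x = \sum_{i=0}^{n-1} t^i y_i^{p^k}$ yields the easy upper bound $\Erk_K(D_n) \le n$.

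For the matching lower bound, I would exploit that the representation $x = \sum_{i=0}^{n-1} t^i y_i^{p^k}$ is \emph{unique}: this follows from the $K^{p^k}$-linear independence of $1, t, \ldots, t^{n-1}$, and persists in every separable extension $L/K$, since $t$ remains outside $L^p$. Equivalently, the natural parameterization
\[
\mathbb{A}^n \longrightarrow \mathbb{A}^1,\qquad (y_0, \ldots, y_{n-1}) \longmapsto \sum_{i=0}^{n-1} t^i y_i^{p^k},
\]
is injective on $L$-points for every such $L$, so the associated functor of points is generically zero-dimensional over a parameter space of dimension exactly $n$. Via the correspondence between essential dimension of this functor and existential rank developed in Sections~\ref{sect:efd}--\ref{sect:geometric} (and in particular Theorem~\ref{ed=qr}), this forces $\Erk_K(D_n) \ge n$.

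The principal obstacle is translating the uniqueness-of-representation observation into a lower bound on $\Erk_K(D_n)$ that excludes \emph{any} existential definition with fewer than $n$ quantifiers, not only those of the natural shape. This is precisely what the essential dimension machinery of the paper is designed for, and it is also what distinguishes $\mathbb{F}_q((t))$ from local fields of characteristic zero such as $\mathbb{Q}_p$, which are perfect and thus admit no analogous imperfection-based construction.
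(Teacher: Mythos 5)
Your plan identifies the right structural feature (imperfection) and the sets $D_n$ are sensible, but the key step --- deducing the lower bound $\Erk_K(D_n) \ge n$ from the essential-dimension machinery --- does not go through, and the gap is not a technicality.

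First, Theorem~\ref{ed=qr}, which you invoke to pass from a dimension count to a lower bound on existential rank, has as a \emph{hypothesis} that $\qr_{\Sigma_p}(\pii{n}{p^m}) \leq 1$ for all $n,m$. For $\Sigma = \TKprec$ with $K = \mathbb{F}_q((t))$ imperfect and large, this hypothesis fails --- in fact, that it fails is exactly the content of Proposition~\ref{large}, which asserts $\qr_K(\pii{n}{p}) = n$. So invoking Theorem~\ref{ed=qr} here is circular. More broadly, the paper's Example~\ref{ex:pii_efd} shows $\efd_K(\pii{n}{p}) = 0$ while Proposition~\ref{large} gives $\erk_K(\pii{n}{p}) = n$: for imperfect large fields the gap between $\efd + 1$ and $\erk$ can be arbitrarily large, so no version of the $\efd$-to-$\erk$ correspondence controls $\erk$ from above by $\efd+1$ in this setting.

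Second, even if you instead used the unconditional inequality $\qrp_\Sigma(\varphi) \geq \efd_\Sigma(\varphi) + 1$ (Proposition~\ref{edleqqr}), you would need a \emph{lower} bound $\efd_K(D_n) \geq n-1$, and your uniqueness-of-representation observation does not give it. That observation shows (via the canonical dimension of the fibres, Lemma~\ref{lem:edEfdFormulaToSentence}) that $\efd_{\subTKleq}(\varphi) = n-1$, where $\varphi$ defines $D_n$. But $\TKleq \subseteq \TKprec$, so by Lemma~\ref{lem:efd_theory} this only yields $\efd_K(D_n) = \efd_{\subTKprec}(\varphi) \leq n-1$ --- the wrong direction. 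Establishing $\efd_K(D_n) \geq n-1$ (or directly $\erk_K(D_n) \geq n$) requires showing that no small substructure of a saturated elementary extension $K^*$ can certify membership, which is a statement about elementary extensions of $K$ rather than arbitrary extensions.

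This is exactly where the paper does real work that your plan omits: it argues directly about $\erk$ via the substructure criterion (Proposition~\ref{prop:generalcriterion}), using that elementary extensions of $K$ are \emph{separable} over $K$ to locate a variable $x_1$ that is not a $p$-th power in the hypothetical witness field $F$, and then crucially uses \emph{largeness} of $K$ (via $K^* \prec_\exists K^*((t))$, Hensel's lemma, and saturation) to construct a $K$-embedding $\iota \colon F \to K^*$ with $\iota(x_1) \notin (K^*)\pow{p}$, yielding a contradiction. Largeness is the essential input you never use, and without it one cannot rule out that a clever $\exists_{n-1}$-definition of $D_n$ or $\pii{n}{p}$ exists --- indeed Theorem~\ref{thm:finGenOverPerf} shows that over $\mathbb{F}_q(t)$ (imperfect but not large) one has $\qr(\pii{n}{p}) = 1$.
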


%\pagebreak

\begin{stel}[Corollary \ref{cor:erkPAC}]
Let $K$ be a pseudo-algebraically closed field. Then
\begin{enumerate}
\item $\erk(K)=0$ if $K$ is algebraically closed,
\item $\erk(K)=1$ if $K$ is perfect but not algebraically closed,
\item $\erk(K)=\infty$ if $K$ is not perfect.
\end{enumerate}
\end{stel}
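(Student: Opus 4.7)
The three cases require different tools. For \textbf{(1)}, algebraically closed fields admit quantifier elimination in the language of rings (Chevalley--Tarski), so every existential formula is equivalent to a quantifier-free one; this gives $\erk_K(D)=0$ for every diophantine $D$ and hence $\erk(K)=0$. For \textbf{(3)}, the PAC hypothesis is auxiliary: imperfection $K^p\subsetneq K$ suffices. Here I would invoke the results of Section~\ref{sect:pthpowers}: for each $m$, sums of $p$-th powers (or iterates of the $p$-power map) yield diophantine sets whose essential fibre dimension is at least $m$, and together with the $\efd$--$\erk$ bridge from Section~\ref{sect:efd} this produces diophantine sets of arbitrarily large existential rank, so $\erk(K)=\infty$.

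For the main case \textbf{(2)}, I split into the two inequalities. For the lower bound $\erk(K)\geq 1$, note that a diophantine subset of $K$ of existential rank zero is quantifier-free definable, hence a Boolean combination of zero sets of one-variable $K$-polynomials, so finite or cofinite in $K$. To exhibit a diophantine set that is neither, I use that $\Gal(\tilde K/K)$ is a nontrivial profinite group: after passing to a finite subextension if necessary, Kummer theory (or Artin--Schreier theory if $\ell=\car(K)$) produces, for some prime $\ell$, a proper diophantine subgroup of finite index $\geq 2$ in $K^\times$ or $(K,+)$; this is infinite with infinite complement, witnessing $\erk_K\geq 1$.

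For the upper bound $\erk(K)\leq 1$, I begin from a representation $D=\pi(W(K))$ with $W\subseteq\mathbb{A}^{n+m}_K$ a variety, and proceed in two steps. First, I replace every $K$-irreducible component of $W$ that is not absolutely irreducible by the scheme-theoretic intersection of its Galois-conjugate geometric components: since $\Gal(\tilde K/K)$ permutes those components transitively, every $K$-point of the original component lies in that intersection, so no $K$-points are lost while $W$ becomes absolutely irreducible (of strictly smaller dimension). Second, for absolutely irreducible $W$ with fibres of positive dimension, a Bertini argument (valid since $K$ is perfect and infinite) cuts $W$ down by iterated $K$-rational hyperplane sections to an absolutely irreducible curve, and PAC then guarantees Zariski density of the resulting $K$-points. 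Combined with the comparison between essential fibre dimension and existential rank from Section~\ref{sect:efd}, this yields a representation $D=\pi'(V(K))$ with $V\subseteq\mathbb{A}^{n+1}_K$, whence $\erk_K(D)\leq 1$.

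The main obstacle is in this final upper-bound step: the Bertini cut must be arranged so that the resulting curve still sees all of $D$, not merely a Zariski-dense subset of $\overline D$. This is where the density of $K$-points supplied by PAC really does work, in combination with the $\efd$--$\erk$ comparison that guarantees the passage from a geometric bound on fibres to a quantifier bound on definitions.
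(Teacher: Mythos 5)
Your argument for case (3) contains a serious error. You claim imperfection alone suffices, but this is false: $\mathbb{F}_p(t)$ is imperfect, yet Theorem~\ref{thm:finGenOverPerf} shows $\qr_{\mathbb{F}_p(t)}(\pii{n}{p^m}) \leq 1$ for all $n, m$. The PAC hypothesis (or more precisely the largeness it implies) is essential; this is exactly the content of Proposition~\ref{large}, which the paper invokes after noting that PAC fields are large. Moreover, your specific mechanism is backwards: Example~\ref{ex:pii_efd} shows that $\efd_{\Tfields}(\pii{n}{m}) = 0$ always — the essential fibre dimension of the set of tuples of $p$-th powers is \emph{zero}, not $\geq m$. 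The whole interest of this example is that $\efd$ and $\erk$ can diverge arbitrarily far. So there is no ``$\efd$--$\erk$ bridge'' giving $\erk = \infty$ from large $\efd$ here; the infinite rank is proved directly via the model-theoretic argument of Proposition~\ref{large} on witnesses in saturated elementary extensions.

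The lower bound in case (2) is also shakier than you suggest. PAC fields have trivial Brauer group (Ax), so for any finite cyclic extension $L/K$ the norm map is surjective, and there is no guarantee a perfect PAC $K$ fails to be $\ell$-divisible or lacks the $\ell$-th roots of unity — your Kummer/Artin--Schreier recipe does not obviously produce a proper finite-index diophantine subgroup in all cases, and ``passing to a finite subextension'' changes the base field, which is not allowed. The paper sidesteps all this via Proposition~\ref{prop:acfErkZero}: if $\erk(K) = 0$ then $\TKequiv$ is strongly minimal, hence $\omega$-stable, hence (Macintyre) $K$ is algebraically closed.

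For the upper bound in case (2), your Bertini strategy has a gap you yourself flag but do not fill. Cutting $W$ globally to a curve $C$ means $\pi(C(K))$ lies over a curve in $\mathbb{A}^n_K$, losing essentially all of $D$; and even if one cuts only in fibre directions, the cuts must simultaneously preserve a witness over every $x \in D$, which a single Bertini family will not do. Also note that producing $V \subseteq \mathbb{A}^{n+1}_K$ with $D = \pi'(V(K))$ via Corollary~\ref{cor:efdCharacterisationDiag} only shows $\efd_K(D) \leq 1$, hence $\erk_K(D) \leq 2$ — to reach $\erk \leq 1$ one needs $\efd = 0$. The paper's Proposition~\ref{prop:efdPerfPAC} achieves $\efd = 0$ abstractly: taking $K'$ to be the relative algebraic closure of $K_C(\underline x)$, perfectness makes $K/K'$ regular, so $K \otimes_{K'} L$ is a domain whose fraction field is regular over $L$, and the PAC property makes $L$ existentially closed in that regular extension. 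This is cleaner and genuinely uniform over the theory of perfect PAC fields, which is what $\erk(K) \leq \erk_{\mathcal{L}}(\Sigma)$ requires.
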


On the other hand we get only very modest results for global fields,
where Hilbert's irreducibility theorem gives us the following:
\begin{stel}[Corollary \ref{cor:global}]
Every global field $K$ has $\Erk(K)\geq 2$.
\end{stel}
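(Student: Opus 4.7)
The plan is to exhibit a specific diophantine set $D\subseteq K$ with $\qrp_K(D)\geq 2$; by Corollary~\ref{cor:qrvsqrp} this forces $\Erk_K(D)\geq 2$ and hence $\Erk(K)\geq 2$.

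First I would establish the structural dichotomy that $\qrp_K(D)\leq 1$ for $D\subseteq K$ forces $D$ to be either cofinite in $K$ or thin in the sense of Serre. By definition such a $D$ equals $\pi(V(K))$ for some Zariski-closed $V\subseteq\mathbb{A}^2_K$, with $\pi$ the projection to the first coordinate. Decomposing $V$ into its $K$-irreducible components and analyzing each: a component $V_i=\mathbb{A}^2_K$ contributes $K$ to the image; a curve mapping to $\mathbb{A}^1_K$ dominantly and birationally contributes a cofinite subset of $K$; a curve mapping dominantly of degree $d\geq 2$ contributes, by Hilbert's irreducibility theorem applied to the generic fiber, a thin subset of $K$; lower-dimensional components contribute finite sets. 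Since any finite union of such sets is again cofinite or thin, the dichotomy follows.

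Next I would choose $D$ to violate the dichotomy. For the global field $K$, pick a nontrivial separable quadratic extension $L/K$ (which exists for every global field; in characteristic $2$ one uses Artin--Schreier extensions) and set $D=N_{L/K}(L^\times)\cup\{0\}\subseteq K$. This is diophantine via the natural definition using a $K$-basis of $L$. Class field theory for the global field $K$ gives that $N_{L/K}(L^\times)$ is a proper subgroup of $K^\times$ of index two, so $D$ is infinite, coinfinite, and not cofinite. The decisive step is to show $D$ is not thin in $K$. By the quantitative form of Hilbert's irreducibility theorem for global fields (Cohen's theorem for $K=\mathbb{Q}$ and its analogues), thin subsets of $K$ have sparse growth with respect to the natural height, e.g.\ at most $O(H^{1/2}\log H)$ integers of absolute value $\leq H$ for $K=\mathbb{Q}$, whereas the norm group $D$ grows strictly faster, as witnessed for $K=\mathbb{Q}$ and $L=\mathbb{Q}(i)$ by the Landau--Ramanujan estimate $\#(D\cap\mathbb{Z}\cap[-N,N])\sim cN/\sqrt{\log N}$. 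Analogous density asymptotics for norm groups in general number fields and function fields follow from class field theory and standard analytic tools.

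The main obstacle is the uniform non-thinness argument across all global fields, in particular in positive characteristic where one must combine Artin--Schreier norm--trace estimates with polynomial-degree counts. A more robust alternative for formally real global fields is to take $D$ to be the set of sums of four squares, which by Hasse--Minkowski coincides with the totally positive elements of $K$ and has density $\tfrac{1}{2}$ in the ring of integers, manifestly not thin.
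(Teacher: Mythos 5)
Your overall architecture matches the paper's: you take the norm group $D = \operatorname{N}_{L/K}(L^\times)$ of a separable quadratic extension $L/K$, establish the dichotomy that an $\exists_1^+$-definable subset of $K$ must be cofinite or thin, and then argue that $D$ violates both alternatives. The paper reaches the same dichotomy (Corollaries \ref{cor:thinorcofinite} and \ref{cor:thinperfect}), and the choice of $D$ is also the same. The divergence is in how non-thinness of $D$ is established, and this is where your proposal has a real gap.

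The paper's argument is purely algebraic: it views the norm form as a morphism $f \colon \mathbb{A}^n_K \to \mathbb{A}^1_K$ with geometrically integral generic fibre and applies the pullback Lemma~\ref{cor:efd_or_cofinite}, which asserts that if $f(D')$ were [separably] thin then so would $D'$ be; since $D' = K^n\setminus\{0\}$ is not separably thin when $K$ is Hilbertian (and not thin when $K$ is additionally imperfect, by Uchida's result, Lemma~\ref{lem:thin}), the image $D$ is not [separably] thin. This argument is uniform in the global field $K$ and handles the perfect and imperfect cases symmetrically. Your proposal instead relies on analytic density estimates: Cohen's bound that thin subsets of $\mathbb{Z}$ contain $O(N^{1/2}\log N)$ integers of height at most $N$, contrasted with the Landau--Ramanujan asymptotic $\sim cN/\sqrt{\log N}$ for sums of two squares. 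That does work for $K=\mathbb{Q}$ and $L=\mathbb{Q}(i)$, and with Chebotarev-type effort extends to norm groups in general number fields. But you explicitly flag the remaining gap: ``the main obstacle is the uniform non-thinness argument across all global fields, in particular in positive characteristic.'' This gap is genuine; for $\mathbb{F}_q(t)$ one would need a function-field analogue of Cohen's bound covering (not merely separably) thin sets, together with density estimates for norm groups in the purely inseparable-aware sense, and your dichotomy step also does not distinguish separably thin from thin, a distinction the paper must make carefully in characteristic $p$. Your fallback of sums of four squares restricts to formally real $K$ and so does not rescue the function-field case or imaginary number fields. In short: same target set, same structural lemma, but your non-thinness argument is analytic and incomplete exactly where the paper's algebraic pullback is what makes the proof go through uniformly.
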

We think it would be of great interest to determine specifically $\Erk(\mathbb{Q})$ and $\Erk(\mathbb{F}_q(t))$,
in particular due to the connection with Questions \ref{q:Q} and \ref{q:FqT} via Fact \ref{fact:er_Z_in_Q}.
We refer to Section~\ref{sect:global} for some further discussion in this direction.

After this incomplete summary of concrete results
we should add that
in most of this manuscript we work in a much more general setting than presented in this introduction,
namely not in a single field $K$ but in the class of models of some first-order theory of fields.
In this set-up we can study a {\em uniform} version 
of existential rank and positive-existential rank (Sections \ref{sect:erk} and \ref{sect:erkfields}) and essential fibre dimension (Section \ref{sect:efd}) of a diophantine definition.
It is this general setting in which essential fibre dimension relates to essential dimension.

Our proofs are mostly algebraic and model theoretic, 
but use tools and results also from
algebraic and arithmetic geometry (Sections \ref{sect:pthpowers}, \ref{sect:geometric} and \ref{sect:global})
and number theory (Section~\ref{sect:global}).
The use of model theoretic methods makes some of our results inexplicit. For example, in proving Theorem~\ref{intro:finGenOverPerf} we do not write down an existential formula defining $D_1\cap D_2$ with the claimed number of quantifiers
but prove its mere existence.

\begin{opm}
In a separate publication of Karim Becher and N.~D., an approach to establishing upper bounds for the existential rank of diophantine sets via explicit methods will be discussed, providing a constructive proof of Theorem \ref{intro:finGenOverPerf} and related results in the case where the fields are perfect. Some of these results (Theorem \ref{intro:finGenOverPerf} in characteristic 0 and parts of Proposition \ref{prop:pinmperfect}) preceded the work in the current article.
See also \cite[Chapter 3]{Nicolas_thesis}.
\end{opm}

\begin{opm}
Independently from us, Hector Pasten in \cite{Pasten}
also introduces and studies a version of positive-existential rank.
In particular, he obtains a variant of Fact \ref{fact:er_Z_in_Q}
(more precisely, our Proposition \ref{prop:ZinQ1}),
and he deduces from \cite{Kollar} that,
in our definition,
$\erk(\mathbb{C}(t))=\infty$.
Also the recent \cite{EMSW} introduces a notion of rank of an existential formula in the language of rings,
which however is a purely syntactic notion and largely unrelated to our existential rank.
\end{opm}

\subsection*{Acknowledgements}

The authors would like to thank Hector Pasten
for the friendly exchange regarding \cite{Pasten}.

This work was part of N.~D.'s PhD thesis,
prepared under supervision of Karim Becher and P.~D. at the University of Antwerp. Several parts of the present manuscript may therefore also be found in \cite[Chapter 4]{Nicolas_thesis}.

Part of this work was done while
all three authors were participating in the Fall 2020 programme
{\em Decidability, definability and computability in number theory: Part 1 - Virtual Semester} at the Mathematical Sciences Research Institute in Berkeley, California,
and they would like to thank its organizers.
In particular,  P.~D.\ was a postdoctoral fellow of MSRI for the duration of the programme, and as such supported by the US National Science Foundation under Grant No.\ DMS-1928930.
A.~F.~was funded by the Deutsche Forschungsgemeinschaft (DFG) - 404427454.
N.~D. acknowledges funding by the FWO PhD Fellowship fundamental research 51581 and 83494 and by the FWO Odysseus programme (project \emph{Explicit Methods in Quadratic Form Theory}).

\section{Existential rank}\label{sect:preservation}\label{sect:erk}

\noindent
We start by introducing several notions of existential rank
in arbitrary languages.

We work with first-order languages $\La$ and structures, as considered in \cite[Section 2.1]{Hodges_Longer}.
As in \cite[Section 1.1]{Hodges_Longer}, $\La$-structures may be empty, although this can only happen if $\La$ does not contain any constant symbols, and this situation will not occur after the present section.
We allow the disjunction and conjunction over an empty set of formulas, abbreviated $\bot$ and $\top$, respectively.
These are positive quantifier-free sentences which are false, respectively true, in any structure.
See Remark \ref{rem:VerumFalsum} for a discussion on situations in which they can or cannot be dropped from the language.

We will consider in particular {\em existential $\La$-formulas},
i.e.~formulas
built up from quan\-ti\-fier-free formulas using $\wedge$, $\vee$ and $\exists$,
and {\em positive existential $\La$-formulas},
i.e.~formulas 
built up from atomic formulas using $\wedge$, $\vee$ and $\exists$.
Every existential formula is equivalent to a formula
of the form $\exists Y_1,\dots,Y_m\psi$ with $\psi$ quantifier-free,
and every positive existential formula is equivalent to a formula of that form with $\psi$
quantifier-free and not involving negation.
We shorten `existential $\La$-formula' to $\exists$-$\La$-formula 
and `existential $\La$-formula with $m$ quantifiers' to $\exists_m$-$\La$-formula. 
Similarly, an $\exists^+_m$-$\La$-formula is shorthand notation for `positive existential $\La$-formula with $m$ quantifiers'.

For a set $C$, we write $\La(C)$ for the language $\La$ extended with constant symbols for the elements of $C$. In particular, if $K$ is an $\La$-structure, we can consider $K$ to be an $\La(K)$-structure in a natural way.
For an $\La$-theory $\Sigma$
we call two $\La$-formulas $\varphi$, $\psi$ 
in free variables $X_1,\dots,X_n$ {\em equivalent modulo $\Sigma$} if $\Sigma\models\forall X_1,\dots,X_n(\varphi\leftrightarrow\psi)$.
For an $\La$-formula $\varphi$ in free variables $X_1,\dots,X_n$
and an $\La$-structure $K$ we write
$\varphi(K)=\{\underline{x}\in K^n:K\models\varphi(\underline{x})\}$ for the set defined by $\varphi$ in $K$.

So let $\La$ be a language and $\Sigma$ an $\La$-theory.

\begin{defi}\label{def:erk}
Let $\varphi$ be an $\La$-formula in free variables $X_1, \ldots, X_n$. 
We define the \emph{existential rank of $\varphi$ with respect to $\Sigma$ in $\La$} as 
\begin{displaymath}
\qr_{\La, \Sigma}(\varphi) = \inf \lbrace m \in \Natwithzero : \mbox{$\varphi$ is equivalent modulo $\Sigma$ to an $\exists_m$-$\La$-formula} \rbrace \in\Natwithzero\cup\{\infty\},\footnote{We write $\Natwithzero$ for the set of nonnegative integers and $\Natwithoutzero$ for the set of positive integers.}
\end{displaymath}
and the \emph{positive-existential rank of $\varphi$ with respect to $\Sigma$ in $\La$} as
\begin{displaymath}
\qrp_{\La, \Sigma}(\varphi) = \inf \lbrace m \in \Natwithzero : \mbox{$\varphi$ is equivalent modulo $\Sigma$ to an $\exists_m^+$-$\La$-formula} \rbrace \in\Natwithzero\cup\{\infty\}.
\end{displaymath}
When the language $\mathcal{L}$ is clear from the context, we will instead also write $\erk_\Sigma(\varphi)$ and $\perk_\Sigma(\varphi)$,
and similarly for the other definitions later in this section.
\end{defi}

\begin{opm}\label{qrtrivial}
Trivially, $\qr_{\mathcal{L},\Sigma}(\varphi)\leq\qrp_{\mathcal{L},\Sigma}(\varphi)$.
It is also easy to see that 
if $\varphi_1$ and $\varphi_2$ are $\La$-formulas, then
$$
 \qr_{\mathcal{L},\Sigma}(\varphi_1\vee\varphi_2)\leq
\max\{\qr_{\mathcal{L},\Sigma}(\varphi_1),\qr_{\mathcal{L},\Sigma}(\varphi_2)\}
$$
and 
$$
 \qr_{\mathcal{L},\Sigma}(\varphi_1\wedge\varphi_2)\leq
\qr_{\mathcal{L},\Sigma}(\varphi_1)+\qr_{\mathcal{L},\Sigma}(\varphi_2),
$$
and analogously for $\qrp_{\mathcal{L},\Sigma}$.
\end{opm}

\begin{opm}\label{remstatements}
Note that $\erk_{\La, \Sigma}(\varphi)$ and $\perk_{\La, \Sigma}(\varphi)$ depend
only on the equivalence class of $\varphi$ modulo $\Sigma$.
Moreover, we can always reinterpret an existential formula as an existential sentence in an extended language, namely by adding constant symbols for the free variables. This does not affect the existential or positive-existential rank. As such, we may now restrict our attention to existential sentences, as long as we keep the language general.
\end{opm}

\begin{vb}\label{ex:ACF}
The theory
 $\Sigma$ is model complete if and only if $\erk_{\La,\Sigma}(\varphi)<\infty$ for every $\mathcal{L}$-formula $\varphi$,
cf.~\cite[Exercise 3.4.12]{Marker}.
Moreover,
$\Sigma$ has quantifier elimination
if and only if
$\erk_{\La,\Sigma}(\varphi)=0$ for every $\mathcal{L}$-formula $\varphi$.
The latter applies for example to the theory $\Sigma={\rm ACF}$
of algebraically closed fields in the language 
$\mathcal{L}=\mathcal{L}_{\rm ring}$ of rings, see \cite[Theorem 3.2.2]{Marker}.
\end{vb}

Let $K$ be an $\La$-structure and $S \subseteq K$ a subset.
The \emph{$\La$-substructure generated by $S$ in $K$} is the smallest $\La$-substructure of $K$ containing $S$.
If $A$ and $B$ are substructures of $K$,
we say that $A$ is {\em generated by $S$ over $B$}
if $A$ is the substructure of $K$ generated by $B\cup S$.

If $\varphi$ is an existential $\La$-formula and $\underline{x}\in K^n$ for some $\La$-structure $K$,
a {\em witness} for $\varphi(\underline{x})$ is
any tuple $\underline{y}$ from $K$ such that 
the substructure $A$ generated by $\underline{x},\underline{y}$ in $K$ satisfies
$A\models\varphi(\underline{x})$.
If $\varphi$ is equivalent to $\exists Y_1,\dots,Y_m\psi$ with $\psi$ quantifier-free,
then every $\underline{y}\in K^m$ with $K\models\psi(\underline{x},\underline{y})$ is a witness for $\varphi(\underline{x})$.

We start by proving a model-theoretic result which may be considered a quantitative version of the Łoś–Tarski theorem \cite[Theorem 6.5.4]{Hodges_Longer}.

\begin{prop}\label{prop:generalcriterionhom}
For an $\La$-sentence $\varphi$ and $m \in \nat$, the following are equivalent:
\begin{enumerate}[(i)]
\item Every $K \models \Sigma \cup \lbrace \varphi \rbrace$ has an $\La$-substructure $A$ generated by $m$ elements such that every $L \models \Sigma$ for which there exists a homomorphism $A \to L$ satisfies $L \models \varphi$.
\item $\varphi$ is equivalent modulo $\Sigma$ to an
$\exists_m^+$-$\mathcal{L}$-sentence, i.e.~$\perk_{\La, \Sigma}(\varphi) \leq m$.
\end{enumerate}
\end{prop}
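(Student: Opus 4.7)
The plan is to follow the pattern of the Łoś–Tarski preservation theorem, adapted to the positive-existential setting while tracking the number of quantifiers. The direction (ii) $\Rightarrow$ (i) is the easy one: if $\varphi$ is equivalent modulo $\Sigma$ to $\exists Y_1,\ldots,Y_m\,\psi(\underline{Y})$ with $\psi$ positive quantifier-free, then given $K\models\Sigma\cup\{\varphi\}$, I pick witnesses $\underline{a}\in K^m$ with $K\models\psi(\underline{a})$ and take $A$ to be the $\La$-substructure generated by $\underline{a}$. Since positive quantifier-free formulas are preserved under homomorphisms of $\La$-structures, any $L\models\Sigma$ receiving a homomorphism from $A$ satisfies $\psi$ at the image of $\underline{a}$, hence $L\models\varphi$.

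For (i) $\Rightarrow$ (ii) I would run a two-step compactness argument. Let $\Delta$ consist of the negations $\neg\psi$ where $\psi$ ranges over $\exists_m^+$-$\La$-sentences with $\Sigma\models\psi\to\varphi$; I claim $\Sigma\cup\{\varphi\}\cup\Delta$ is inconsistent. Given this, compactness yields finitely many $\psi_1,\ldots,\psi_k$ with $\Sigma\cup\{\varphi\}\models\psi_1\vee\cdots\vee\psi_k$, and this disjunction can be rewritten as a single $\exists_m^+$-$\La$-sentence by sharing the bound variables across the disjuncts, giving an equivalent over $\Sigma$. To prove the claim, suppose $K\models\Sigma\cup\{\varphi\}\cup\Delta$ and apply (i) to obtain an $\La$-substructure $A\subseteq K$ generated by a tuple $a_1,\ldots,a_m$ (padding with repetitions if necessary) with the homomorphism-preservation property. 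Let $\Gamma$ be the positive diagram of $A$, i.e.\ the set of atomic $\La(\{a_1,\ldots,a_m\})$-sentences true in $A$. Any $L\models\Sigma\cup\Gamma$ admits a homomorphism $A\to L$ sending $a_i$ to its interpretation (the generators determine the homomorphism, and $\Gamma$ says exactly that the atomic relations are preserved), so by (i) we get $L\models\varphi$. Compactness then produces a finite $\Gamma_0\subseteq\Gamma$ with $\Sigma\cup\Gamma_0\models\varphi$. Replacing each $a_i$ in the conjunction of $\Gamma_0$ by a fresh variable $Y_i$ yields a positive quantifier-free $\chi(\underline{Y})$, and $\psi:=\exists Y_1,\ldots,Y_m\,\chi$ is an $\exists_m^+$-$\La$-sentence with $\Sigma\models\psi\to\varphi$; so $\neg\psi\in\Delta$ and $K\not\models\psi$. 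But atomic formulas pass from substructures to superstructures, so $K\models\chi(\underline{a})$, hence $K\models\psi$, the desired contradiction.

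The main obstacle is not conceptual but rather bookkeeping around the conventions fixed at the beginning of the section. I need $\psi$ to be literally an $\exists_m^+$-$\La$-sentence, which forces me to allow the empty conjunction $\top$ when $\Gamma_0$ is empty (precisely the reason $\bot,\top$ are admitted as positive quantifier-free formulas here), to invoke the convention that the generating tuple may have repeated entries so that the quantifier count is exactly $m$, and to verify carefully that a disjunction of $\exists_m^+$-sentences collapses to one $\exists_m^+$-sentence with still only $m$ quantifiers after identifying bound variables across disjuncts. A brief check is also needed for the degenerate case where $A$ is empty (possible if $\La$ has no constants and $m=0$), since then the homomorphism-preservation hypothesis must be read in the convention that no $L$ admits a homomorphism from the empty structure into a nonempty one; but this is exactly the setup allowed in the section's conventions.
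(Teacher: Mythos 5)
Your argument follows essentially the same route as the paper: the easy direction picks witnesses and uses preservation of positive quantifier-free formulas under homomorphisms, and the hard direction runs a two-step compactness argument, first on the positive (atomic) diagram of $A$ to produce one $\exists^+_m$-sentence implying $\varphi$ and satisfied by $K$, then on the collection of all such sentences to extract a finite disjunction equivalent to $\varphi$ modulo $\Sigma$. The only flaw is in your closing remark about the degenerate case where $A$ is empty: the correct convention (and the one your own argument implicitly relies on) is that the empty structure \emph{does} admit a homomorphism into every $\La$-structure, via the empty map. If one instead adopted the convention you state, that no $L$ admits a homomorphism from the empty structure, then with $m=0$ and no constant symbols condition $(i)$ would become vacuously true, and $(i)\Rightarrow(ii)$ would fail; for instance, with $\Sigma$ the theory of infinite sets and $\varphi=\exists X\,(X\doteq X)$, condition $(ii)$ forces $\varphi$ to be equivalent to $\top$ modulo $\Sigma$, and this is deduced from $(i)$ precisely because the empty map $A\to L$ witnesses the hypothesis for every $L\models\Sigma$ (compare Remark \ref{rem:VerumFalsum}). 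Your main proof is unaffected since it treats the empty map as a homomorphism; only the stated convention is wrong.
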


\begin{proof}
First assume $(i)$. 
Let $\mathcal{C}$ be the set of $\exists^+_m$-$\La$-sentences $\psi$ with $\Sigma\models\psi\rightarrow\varphi$.
%Note that $\F \in \mathcal{C}$, so $\mathcal{C} \neq \emptyset$.
We claim that for every $K \models \Sigma \cup \lbrace \varphi \rbrace$ there exists some $\psi \in \mathcal{C}$ with $K \models \psi$.
Indeed, by $(i)$, there exists a substructure $A$ of $K$ 
generated by $m$ elements $y_1, \ldots, y_m$
such that $L \models \varphi$ for any $L \models \Sigma$ for which there is a homomorphism $A \to L$.
Thus, if $\Delta$ denotes the positive quantifier-free $\mathcal{L}(Y_1,\dots,Y_m)$-theory of $A$ (where we interpret $Y_i$ as $y_i$),
then $\Sigma\cup\Delta\models\varphi$. %Note that $\T \in \Delta$, whence $\Delta \neq \emptyset$.
By the compactness theorem, there is a finite $\Delta_0 \subseteq \Delta$ %finite and non-empty
with $\Sigma\cup\Delta_0\models\varphi$.
Let $\psi$ denote the $\exists_m^+$-$\mathcal{L}$-sentence
$\exists Y_1,\dots,Y_m\delta$ where $\delta$ is the conjunction of all elements of $\Delta_0$.
Then $\psi\in\mathcal{C}$,
and $K\models\psi$ since $A\models\psi$,
which proves the claim.
Now applying the compactness theorem again, there is a finite %non-empty
subset $\mathcal{C}_0$ of $\mathcal{C}$ such that, for any $K \models \Sigma$, $K \models \varphi$ is equivalent to the existence of some $\psi \in \mathcal{C}_0$ with $K \models \psi$. As a finite disjunction of $\exists^+_m$-$\La$-sentences is logically equivalent to an $\exists^+_m$-$\La$-sentence (Remark \ref{qrtrivial}), this shows that $\varphi$ is equivalent modulo $\Sigma$ to an $\exists_m^+$-$\La$-sentence, 
i.e.~$(ii)$ holds.

Now assume $(ii)$. 
Then $\Sigma\models\varphi\leftrightarrow\psi$
for an $\exists^+_m$-$\La$-sentence $\psi$. 
Take $K \models \Sigma \cup \lbrace \varphi \rbrace$ and  a witness $(y_1, \ldots, y_m)$ for $\psi$. Letting $A$ be the substructure of $K$ generated by $y_1, \ldots, y_m$, we have $A \models \psi$. In particular, $L \models \psi$ 
and hence $L\models\varphi$ for every $L \models \Sigma$ for which there is a homomorphism $A \to L$, so we have verified $(i)$.
\end{proof}

\begin{gev}\label{cor:generalcriterionhom}
For an $\La$-formula $\varphi$,
$\perk_{\La, \Sigma}(\varphi)$ is the infimum over $m\in\Natwithzero$
such that for every $K\models\Sigma$ and every $\underline{x}\in\varphi(K)$
there is a substructure $A$ of $K$ generated by $\underline{x}$
and $m$ further elements
with $L\models\varphi(\rho(\underline{x}))$ for every homomorphism $\rho\colon A\rightarrow L$ into some $L\models\Sigma$.
\end{gev}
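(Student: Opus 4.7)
The plan is to reduce this corollary directly to Proposition \ref{prop:generalcriterionhom} by the standard trick indicated in Remark \ref{remstatements}, namely adjoining constant symbols for the free variables. Let $\varphi$ have free variables $X_1,\dots,X_n$. I would introduce fresh constant symbols $c_1,\dots,c_n$, form the language $\La'=\La(c_1,\dots,c_n)$, and consider the $\La'$-sentence $\varphi'=\varphi(c_1,\dots,c_n)$. The key observation is the bijection between pairs $(K,\underline{x})$ with $K\models\Sigma$ and $\underline{x}\in\varphi(K)$, and models $K'\models\Sigma\cup\{\varphi'\}$ in the language $\La'$, given by interpreting $c_i$ as $x_i$. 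Moreover, by Remark \ref{remstatements} we have $\perk_{\La,\Sigma}(\varphi)=\perk_{\La',\Sigma}(\varphi')$, since an $\exists^+_m$-$\La'$-sentence equivalent to $\varphi'$ modulo $\Sigma$ is the same data as an $\exists^+_m$-$\La$-formula equivalent to $\varphi$ modulo $\Sigma$.

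Next I would translate the substructure condition. An $\La'$-substructure $A'$ of $(K,\underline{x})$ is precisely an $\La$-substructure $A$ of $K$ containing $\underline{x}$; and $A'$ being generated by $m$ elements in $\La'$ corresponds exactly to $A$ being generated over $\underline{x}$ by $m$ further elements in $\La$, because the interpretations of the $c_i$ must lie in any substructure. Similarly, a homomorphism of $\La'$-structures $A'\to L'$ with $L'\models\Sigma$ is the same as a homomorphism of $\La$-structures $\rho\colon A\to L$ with $L\models\Sigma$, together with the requirement that $L'$ interprets $c_i$ as $\rho(x_i)$; and $L'\models\varphi'$ under this interpretation is by definition $L\models\varphi(\rho(\underline{x}))$. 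Under these translations, condition $(i)$ of Proposition \ref{prop:generalcriterionhom} applied to $\varphi'$ matches precisely the condition in the corollary: for every $K\models\Sigma$ and every $\underline{x}\in\varphi(K)$ there is a substructure $A$ of $K$ generated by $\underline{x}$ and $m$ further elements such that $L\models\varphi(\rho(\underline{x}))$ for every homomorphism $\rho\colon A\to L$ into some $L\models\Sigma$.

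With this dictionary in place, the equivalence $(i)\Leftrightarrow(ii)$ of Proposition \ref{prop:generalcriterionhom} applied to $\varphi'$ over $\Sigma$ in the language $\La'$ gives exactly that the infimum of those $m$ satisfying the condition in the corollary equals $\perk_{\La',\Sigma}(\varphi')=\perk_{\La,\Sigma}(\varphi)$. There is no real obstacle here beyond the careful bookkeeping: one should note that condition $(i)$ of the proposition is framed as a property holding for every model, so the infimum is the same whether stated model-wise (as in the proposition) or uniformly over pairs $(K,\underline{x})$ (as in the corollary).
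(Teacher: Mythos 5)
Your proof is correct and follows the paper's own route exactly: the paper proves this corollary in one line, citing Remark \ref{remstatements} to reduce to Proposition \ref{prop:generalcriterionhom} via the trick of adjoining constants for the free variables, which is precisely the dictionary you spell out in detail.
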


\begin{proof}
Remark \ref{remstatements} reduces the claim to Proposition \ref{prop:generalcriterionhom}.
\end{proof}

\begin{opm}\label{rem:VerumFalsum}
  It is in general necessary that we allow $\bot$ and $\top$ as positive quantifier-free sentences.
  Namely, in the theory of an infinite set with no structure but equality, according to Proposition \ref{prop:generalcriterionhom} (with $m=0$) the sentences $\exists X (X \dot{\neq} X)$ and $\exists X (X \doteq X)$ must be equivalent to positive quantifier-free sentences.
  
  If $\Sigma$ contains the theory of fields
  in the language of rings, which is the main setting we pursue, then the sentences $0 \doteq 1$ and $0 \doteq 0$ can take the places of $\bot$ and $\top$, so it does not actually matter whether we allow $\bot$ and $\top$ as positive quantifier-free sentences in their own right.
\end{opm}

\begin{prop}\label{prop:generalcriterion}
For an $\La$-sentence $\varphi$ and $m \in \nat$, the following are equivalent:
\begin{enumerate}[(i)]
\item Every $K \models \Sigma \cup \lbrace \varphi \rbrace$ has a substructure $A$ generated by $m$ elements such that every $L \models \Sigma$ into which $A$ embeds satisfies $L \models \varphi$.
\item $\varphi$ is equivalent modulo $\Sigma$ to an
$\exists_m$-$\mathcal{L}$-sentence, i.e.~$\erk_{\La, \Sigma}(\varphi) \leq m$.
\end{enumerate}
\end{prop}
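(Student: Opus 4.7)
The plan is to imitate closely the proof of Proposition~\ref{prop:generalcriterionhom}, replacing ``positive quantifier-free theory'' and ``homomorphism'' by ``quantifier-free theory'' and ``embedding'' throughout, and exploiting the classical fact that quantifier-free $\mathcal{L}$-formulas are preserved and reflected by embeddings (the analogue, for embeddings, of preservation of positive formulas under homomorphisms).

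For the direction $(i) \Rightarrow (ii)$, I would let $\mathcal{C}$ denote the set of $\exists_m$-$\mathcal{L}$-sentences $\psi$ with $\Sigma \models \psi \to \varphi$, and show that every $K \models \Sigma \cup \{\varphi\}$ satisfies some $\psi \in \mathcal{C}$. Pick the substructure $A \subseteq K$ given by $(i)$, generated by $y_1,\dots,y_m$, and let $\Delta$ be the quantifier-free $\mathcal{L}(Y_1,\dots,Y_m)$-theory of $A$ (interpreting $Y_i$ as $y_i$). The key observation is that any $L \models \Sigma \cup \Delta$ gives rise to an embedding $A \hookrightarrow L$: the map $y_i \mapsto y_i^L$ extends uniquely to $A$ since $A$ is generated by the $y_i$, and the full quantifier-free information in $\Delta$ (both atomic and negated atomic facts) forces this extension to be an embedding. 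Hypothesis $(i)$ then gives $L \models \varphi$, i.e. $\Sigma \cup \Delta \models \varphi$. Compactness produces a finite $\Delta_0 \subseteq \Delta$ with $\Sigma \cup \Delta_0 \models \varphi$, whence $\exists Y_1,\dots,Y_m \bigwedge \Delta_0$ is an $\exists_m$-$\mathcal{L}$-sentence in $\mathcal{C}$ satisfied by $K$. A second application of compactness, together with the fact (Remark~\ref{qrtrivial}) that a finite disjunction of $\exists_m$-sentences is an $\exists_m$-sentence, yields $(ii)$.

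For $(ii) \Rightarrow (i)$, I would take $\psi$ an $\exists_m$-$\mathcal{L}$-sentence equivalent to $\varphi$ modulo $\Sigma$, say $\psi = \exists Y_1,\dots,Y_m \chi$ with $\chi$ quantifier-free. Given $K \models \Sigma \cup \{\varphi\}$, choose a witness $(y_1,\dots,y_m)$ so that $K \models \chi(y_1,\dots,y_m)$, and let $A$ be the substructure of $K$ generated by these elements. Since $\chi$ is quantifier-free, $A \models \chi(y_1,\dots,y_m)$. For any $L \models \Sigma$ admitting an embedding $A \hookrightarrow L$, quantifier-free formulas transfer across the embedding, so $L \models \chi(y_1,\dots,y_m)$ and hence $L \models \psi$, which gives $L \models \varphi$.

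The only delicate point is verifying that $\Sigma \cup \Delta$ really implies the existence of an embedding $A \hookrightarrow L$ and not merely a homomorphism: this is what distinguishes the present proposition from Proposition~\ref{prop:generalcriterionhom} and is the reason we must work with the full quantifier-free diagram of $A$ rather than only its positive part. Once this is clear, no other substantive step requires new ideas beyond those already present in the proof of Proposition~\ref{prop:generalcriterionhom}.
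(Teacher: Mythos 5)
Your proposal is correct and follows precisely the first of the two proof routes the paper itself indicates: mimic the proof of Proposition~\ref{prop:generalcriterionhom} with homomorphisms replaced by embeddings, positive quantifier-free diagrams by full quantifier-free diagrams, and $\exists^+_m$-sentences by $\exists_m$-sentences. You have correctly identified and carried out the one non-trivial substitution, namely that the full quantifier-free diagram of $A$ is needed so that a model of $\Sigma \cup \Delta$ yields an \emph{embedding} (not merely a homomorphism) of $A$; everything else transfers verbatim.
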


\begin{proof}
This can be proven in exactly the same way as Proposition \ref{prop:generalcriterionhom},
where in the proof one needs to replace homomorphisms by embeddings, $\exists^+_m$-$\La$-sentences by $\exists_m$-$\La$-sentences, and positive quantifier-free theory by quantifier-free theory. 
Alternatively, Proposition~\ref{prop:generalcriterion} can be derived by applying Proposition \ref{prop:generalcriterionhom} in an extended language $\La'$ with added symbols for $\neq$ and for the negation of each relation symbol of $\La$. After adding the right (universal) axioms to the theory to describe the required behaviour of these new symbols, $\La$-embeddings correspond to $\La'$-homomorphisms, and quantifier-free $\La$-formulas correspond to positive quantifier-free $\La'$-formulas.
\end{proof}

\begin{gev}\label{cor:generalcriterion}
For an $\La$-formula $\varphi$,
$\erk_{\La, \Sigma}(\varphi)$ is the infimum over $m\in\Natwithzero$
such that for every $K\models\Sigma$ and every $\underline{x}\in\varphi(K)$
there is a substructure $A$ of $K$ generated by $\underline{x}$
and $m$ further elements
with $L\models\varphi(\rho(\underline{x}))$ for every embedding $\rho\colon A\rightarrow L$ into some $L\models\Sigma$.
\end{gev}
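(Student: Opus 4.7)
The plan is to deduce Corollary~\ref{cor:generalcriterion} from Proposition~\ref{prop:generalcriterion} by exactly the same reduction that the authors use to deduce Corollary~\ref{cor:generalcriterionhom} from Proposition~\ref{prop:generalcriterionhom}, namely the reinterpretation of formulas as sentences in an extended language foreshadowed in Remark~\ref{remstatements}. Given an $\La$-formula $\varphi(X_1,\ldots,X_n)$, I would pass to the language $\La' = \La(c_1,\ldots,c_n)$ obtained by adjoining $n$ fresh constant symbols, let $\varphi'$ be the $\La'$-sentence obtained from $\varphi$ by substituting $c_i$ for $X_i$, and view $\Sigma$ as an $\La'$-theory (imposing no new axioms on the $c_i$). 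Since the substitution does not affect the number of existential quantifiers, $\erk_{\La,\Sigma}(\varphi) = \erk_{\La',\Sigma}(\varphi')$.

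The second step is to read off the dictionary between $\La$ and $\La'$. An $\La'$-structure is the same as an $\La$-structure $K$ together with a designated tuple $\underline{x}\in K^n$ interpreting $(c_1,\ldots,c_n)$, and $K\models\Sigma\cup\{\varphi'\}$ is equivalent to $K\models\Sigma$ together with $\underline{x}\in\varphi(K)$. Under this correspondence, an $\La'$-substructure generated by $m$ elements $y_1,\ldots,y_m$ is precisely the $\La$-substructure of $K$ generated by $x_1,\ldots,x_n,y_1,\ldots,y_m$, since constant symbols are forced to lie in every substructure. Similarly, an $\La'$-embedding of such a substructure $A$ into $L\models\Sigma$ amounts to an $\La$-embedding $\rho\colon A\to L$ where the interpretations of the $c_i$ in $L$ are forced to be $\rho(x_i)$, and $L\models\varphi'$ then becomes $L\models\varphi(\rho(\underline{x}))$.

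Applying Proposition~\ref{prop:generalcriterion} to $\varphi'$ over $\Sigma$ in $\La'$ and translating through this dictionary yields exactly the characterization of $\erk_{\La,\Sigma}(\varphi)$ asserted in the corollary. I do not expect any real obstacle: the entire mathematical content sits in Proposition~\ref{prop:generalcriterion}, and the only thing to check is the routine compatibility between $\La$-substructures containing $\underline{x}$ (with their $\La$-embeddings into models of $\Sigma$) and $\La'$-substructures of the corresponding $\La'$-structure (with their $\La'$-embeddings), which is immediate because $\La'$ extends $\La$ only by constant symbols.
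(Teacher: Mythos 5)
Your proposal is correct and is precisely the reduction the paper intends: the paper omits a proof for this corollary, but the analogous Corollary~\ref{cor:generalcriterionhom} is proved by the one-liner ``Remark~\ref{remstatements} reduces the claim to Proposition~\ref{prop:generalcriterionhom},'' and Remark~\ref{remstatements} is exactly the passage to $\La' = \La(c_1,\ldots,c_n)$ and the observation that adding constants for the free variables does not change the existential rank. Your spelled-out dictionary between $\La'$-substructures/embeddings and $\La$-substructures containing $\underline{x}$ (with embeddings matching $\underline{x}$) is the intended, and correct, translation of Proposition~\ref{prop:generalcriterion} into the form of the corollary.
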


A simpler version of Proposition \ref{prop:generalcriterion} can be given when $\Sigma$ is a universal theory, i.e. consists only of universal sentences. This is the case, for example, when $\La$ is the language of rings and $\Sigma$ the theory of commutative rings.

\begin{gev}\label{generalcriterion-universal}
Assume that $\Sigma$ is a universal $\La$-theory.
For an existential $\La$-sentence $\varphi$ and $m \in \nat$, the following are equivalent:
\begin{enumerate}[(i)]
\item Every $K \models \Sigma \cup \lbrace \varphi \rbrace$  has a substructure $A$ generated by $m$ elements such that $A \models \varphi$.
\item $\qr_{\La, \Sigma}(\varphi) \leq m$.
\end{enumerate}
\end{gev}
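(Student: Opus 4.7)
The plan is to derive Corollary \ref{generalcriterion-universal} directly from Proposition \ref{prop:generalcriterion}, exploiting the fact that since $\Sigma$ is a universal theory, every substructure of a model of $\Sigma$ is itself a model of $\Sigma$. This lets one test the embedding condition of Proposition \ref{prop:generalcriterion}(i) at $L = A$ via the identity embedding, so it collapses to the simpler requirement $A \models \varphi$ appearing in (i) of the corollary. The direction (ii) $\Rightarrow$ (i) then follows immediately: from $\qr_{\La,\Sigma}(\varphi) \leq m$ and Proposition \ref{prop:generalcriterion} we obtain, for each $K \models \Sigma \cup \{\varphi\}$, an $m$-generated substructure $A \subseteq K$ with the embedding property, and since $\Sigma$ is universal we have $A \models \Sigma$, so instantiating the property at $L = A$ with the identity gives $A \models \varphi$.

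For the direction (i) $\Rightarrow$ (ii), the plan is to verify condition (i) of Proposition \ref{prop:generalcriterion} from the corollary's hypothesis and then conclude by that proposition. Given $K \models \Sigma \cup \{\varphi\}$ and an $m$-generated substructure $A \subseteq K$ with $A \models \varphi$ supplied by (i), I would adapt the compactness argument from the proof of Proposition \ref{prop:generalcriterionhom} (transferred from homomorphisms to embeddings as in the sketch of Proposition \ref{prop:generalcriterion}): form the quantifier-free $\La(Y_1, \ldots, Y_m)$-diagram $\Delta_A$ of $A$ with the generators interpreted as $Y_1, \ldots, Y_m$, argue that $\Sigma \cup \Delta_A \models \varphi$, extract a finite subdiagram $\Delta_0 \subseteq \Delta_A$ by compactness to obtain an $\exists_m$-sentence $\exists Y_1,\ldots,Y_m \bigwedge \Delta_0$ that holds in $K$ and that implies $\varphi$ modulo $\Sigma$, and finally consolidate over all $K$ by a second compactness step into a single $\exists_m$-sentence equivalent to $\varphi$ modulo $\Sigma$.

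The main obstacle is the crucial step $\Sigma \cup \Delta_A \models \varphi$. In Proposition \ref{prop:generalcriterion} this was immediate from the stronger embedding hypothesis built into (i) there, whereas here one has only $A \models \varphi$ and must leverage universality of $\Sigma$ to transfer this conclusion to arbitrary $N \models \Sigma \cup \Delta_A$. The argument proceeds via the observation that for such $N$ the substructure generated by $Y_1^N, \ldots, Y_m^N$ is isomorphic to $A$ and is itself a model of $\Sigma \cup \{\varphi\}$ by universality; this isomorphic copy, playing the role of the $A$ from (i) inside $N$, is the mechanism through which the compactness argument is closed.
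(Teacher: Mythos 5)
Your (ii) $\Rightarrow$ (i) direction is correct and matches the paper's route: Proposition~\ref{prop:generalcriterion} supplies a substructure $A$ with the embedding property, universality of $\Sigma$ gives $A \models \Sigma$, and instantiating that property at $L = A$ yields $A \models \varphi$.

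The (i) $\Rightarrow$ (ii) direction has a genuine gap at exactly the point you flag as the main obstacle, and your proposed ``mechanism'' does not close it. The claim $\Sigma \cup \Delta_A \models \varphi$ is precisely the embedding condition of Proposition~\ref{prop:generalcriterion}(i) for $A$, namely that every $N \models \Sigma$ into which $A$ embeds satisfies $\varphi$. Your observation that the substructure $B = \langle Y_1^N, \dots, Y_m^N \rangle \cong A$ of $N$ models $\Sigma \cup \{\varphi\}$ is true but says nothing about $N$ itself; applying (i) to $B$ merely hands back a substructure of $B$ and never transfers $\varphi$ upward to $N$. The gap is not merely presentational: with $\La = \Lar$, $\Sigma$ the (universal) theory of commutative rings, and $\varphi = \forall x\,\forall y\,(x\cdot y \doteq 0 \rightarrow x \doteq 0 \vee y \doteq 0)$, condition (i) holds with $m = 0$ (the prime ring of a domain is a domain), yet $\varphi$ is not preserved under ring extensions (e.g.\ $\zz \hookrightarrow \zz[X]/(X^2)$), hence not equivalent modulo $\Sigma$ to any existential sentence, so $\qr_{\La,\Sigma}(\varphi) = \infty$. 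What makes the paper's one-line treatment of this direction work where the corollary is actually invoked (e.g.\ in the proof of Proposition~\ref{prop:infmodels}) is that $\varphi$ is there existential, hence preserved under extensions, so $A \models \varphi$ and $A \hookrightarrow L \models \Sigma$ immediately give $L \models \varphi$, verifying Proposition~\ref{prop:generalcriterion}(i) directly. That preservation fact is the real missing ingredient; redoing the compactness argument from scratch is a longer route that still requires it.
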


\begin{proof}
By Proposition \ref{prop:generalcriterion} the implication $(i) \Rightarrow (ii)$ is immediate, without assumptions on $\Sigma$. For the other implication, note that 
since $\Sigma$ is a universal theory, substructures of models of $\Sigma$ are again models of $\Sigma$.
\end{proof}

\begin{opm}\label{rem:effective}
If $\Sigma$ is an $\La$-theory
such that the set of universal-existential consequences of $\Sigma$ (i.e.~$\La$-sentences $\varphi$ which are built from $\exists$-$\La$-formulas using $\wedge$, $\vee$, $\forall$ and such that $\Sigma \models \varphi$) is recursively enumerable,
then there is an {\em effective} method to find,
for every existential formula $\varphi$ with
$\qr_{\La,\Sigma}(\varphi)\leq m$,
an $\exists_m$-$\La$-formula that is equivalent to $\varphi$ modulo $\Sigma$.
Indeed, one can enumerate
universal-existential consequences of $\Sigma$
until finding a sentence of the form
$$
 \forall X_1,\dots,X_n(\varphi(X_1,\dots,X_n)\leftrightarrow\psi(X_1,\dots,X_n))
$$
with $\psi$ an $\exists_m$-formula (or rather, a universal-existential sentence logically equivalent to it); this procedure terminates by G{\"o}del's completeness theorem.
This applies in particular if $\Sigma$ itself is recursively enumerable,
for example when $\Sigma$ is the theory of fields.
\end{opm}

\begin{defi}
The {\em existential rank of $\Sigma$ in dimension $n$} is
$$
 \Erkn_{\mathcal{L}}(\Sigma) \;=\; \sup\{\Erk_{\mathcal{L},\Sigma}(\varphi) : \varphi\mbox{ an $\exists$-$\mathcal{L}$-formula in at most $n$ free variables}\},
$$
and the {\em existential rank of $\Sigma$} is
$$
 \Erk_\mathcal{L}(\Sigma) \;=\; \sup_{n\in\Natwithzero} \Erkn_{\mathcal{L}}(\Sigma).
$$
\end{defi}

\begin{lem}\label{lem:QE}
$\Erk_\mathcal{L}(\Sigma)=0$ if and only if $\Sigma$ has quantifier elimination.
\end{lem}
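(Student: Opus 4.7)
The plan is to prove the two implications separately. The ``if'' direction is immediate from Example \ref{ex:ACF}: if $\Sigma$ has quantifier elimination, then $\erk_{\La,\Sigma}(\varphi) = 0$ for every $\La$-formula $\varphi$ (not just every existential one), so in particular the supremum $\Erk_\La(\Sigma)$ over existential formulas equals $0$.

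For the ``only if'' direction, I would assume $\Erk_\La(\Sigma) = 0$, which by definition means that every existential $\La$-formula is equivalent modulo $\Sigma$ to an $\exists_0$-$\La$-formula, i.e.\ to a quantifier-free $\La$-formula. I then show, by induction on the complexity of an arbitrary $\La$-formula $\varphi$, that $\varphi$ is equivalent modulo $\Sigma$ to some quantifier-free formula; this is exactly quantifier elimination for $\Sigma$. Atomic formulas are already quantifier-free, and since the class of quantifier-free formulas is closed under $\neg$, $\wedge$ and $\vee$, the Boolean inductive steps are immediate: one just applies the connective in question to the quantifier-free formulas provided by the inductive hypothesis.

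The only substantive case is the existential quantifier $\exists X \varphi$. Here the inductive hypothesis supplies a quantifier-free $\psi$ with $\varphi$ equivalent to $\psi$ modulo $\Sigma$, so $\exists X \varphi$ is equivalent modulo $\Sigma$ to $\exists X \psi$; the latter is existential, hence by the assumption $\Erk_\La(\Sigma) = 0$ is in turn equivalent modulo $\Sigma$ to a quantifier-free formula. Treating $\forall$ as primitive, the universal case reduces to the existential and negation cases via the logical equivalence $\forall X \varphi \leftrightarrow \neg \exists X \neg \varphi$. I do not anticipate any real obstacle: this lemma essentially records that the condition $\Erk_\La(\Sigma) = 0$ is merely a repackaging of quantifier elimination, and the induction proceeds routinely.
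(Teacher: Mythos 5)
Your proof is correct and essentially the paper's argument: the paper cites the standard criterion for quantifier elimination from Marker (Lemma 3.1.5), namely that it suffices for every formula $\exists Y\varphi(\underline{X},Y)$ with $\varphi$ quantifier-free to be equivalent to a quantifier-free formula, while you simply unfold the routine induction that proves that criterion. Both reduce the "only if" direction to exactly the same observation, and the "if" direction is immediate from the definitions in either case.
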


\begin{proof}
It is a standard criterion for quantifier elimination that
every formula of the form $\exists Y\varphi(\underline{X},Y)$ with $\varphi$ quantifier-free
is equivalent to a quantifier-free formula $\psi(\underline{X})$, see \cite[Lemma 3.1.5]{Marker},
which is provided by $\Erk_\mathcal{L}(\Sigma)=0$.
\end{proof}

\begin{opm}\label{rem:perk_analogues}
Just like the existential rank of $\Sigma$ in dimension $n$
we can similarly define the 
{\em positive-existential rank of $\Sigma$ in dimension $n$},
replacing $\exists$ by $\exists^+$ everywhere.
The same applies to Definition \ref{def:erk_K} below,
and with these definitions,
the Lemmas \ref{lem:language}, \ref{lem:constants},  \ref{lem:erk_elem_equiv}
and Corollary \ref{cor:erk_model_theory}
have their obvious positive-existential version.
\end{opm}

\begin{lem}\label{lem:language}
If $\mathcal{L}\subseteq\mathcal{L}'$ are languages,
and $\Sigma\subseteq \Sigma'$ are $\mathcal{L}$- respectively $\mathcal{L}'$-theories,
then $\erk_{\mathcal{L},\Sigma}(\varphi)\geq\erk_{\mathcal{L}',\Sigma'}(\varphi)$
for every $\mathcal{L}$-formula $\varphi$.
In particular, if $\Sigma'$ is in fact an $\mathcal{L}$-theory, then $\Erkn_\mathcal{L}(\Sigma)\geq\Erkn_\mathcal{L}(\Sigma')$ for every $n \in \Natwithzero$.
\end{lem}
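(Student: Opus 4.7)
The proof should be essentially immediate from the definitions, with no serious obstacle. Let me outline the plan.

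The plan is to verify the key inequality directly using Definition \ref{def:erk}, by observing that any $\exists_m$-$\mathcal{L}$-formula can be reinterpreted as an $\exists_m$-$\mathcal{L}'$-formula, and that equivalence modulo $\Sigma$ implies equivalence modulo any extension $\Sigma' \supseteq \Sigma$. In detail, I would fix an $\mathcal{L}$-formula $\varphi$ and, to avoid trivialities, assume $m := \erk_{\mathcal{L},\Sigma}(\varphi) < \infty$. Then by definition there exists an $\exists_m$-$\mathcal{L}$-formula $\psi$ with $\Sigma \models \forall \underline{X}(\varphi \leftrightarrow \psi)$. Because $\mathcal{L} \subseteq \mathcal{L}'$, every symbol occurring in $\psi$ already belongs to $\mathcal{L}'$, so $\psi$ is also an $\exists_m$-$\mathcal{L}'$-formula. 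Because $\Sigma \subseteq \Sigma'$, any sentence entailed by $\Sigma$ is entailed by $\Sigma'$, so $\Sigma' \models \forall \underline{X}(\varphi \leftrightarrow \psi)$ as well. This exhibits $\psi$ as an $\exists_m$-$\mathcal{L}'$-formula witnessing $\erk_{\mathcal{L}',\Sigma'}(\varphi) \leq m$, yielding the desired inequality.

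For the ``in particular'' statement, I would apply the already-proved inequality in the special case $\mathcal{L}' = \mathcal{L}$, which is legitimate precisely under the hypothesis that $\Sigma'$ happens to be an $\mathcal{L}$-theory. For each $\mathcal{L}$-formula $\varphi$ in at most $n$ free variables we then obtain $\erk_{\mathcal{L},\Sigma}(\varphi) \geq \erk_{\mathcal{L},\Sigma'}(\varphi)$, and taking the supremum over all such $\varphi$ yields $\Erkn_{\mathcal{L}}(\Sigma) \geq \Erkn_{\mathcal{L}}(\Sigma')$.

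Since the argument is entirely syntactic and relies only on the monotonicity of logical consequence with respect to enlarging the theory, there is no genuine obstacle; the only thing to be slightly careful about is the (trivial) observation that an $\exists_m$-formula over a smaller language remains an $\exists_m$-formula over a larger language, which is what makes the quantifier count $m$ transfer unchanged. The same proof works verbatim for $\qrp$, giving the positive-existential analogue announced in Remark \ref{rem:perk_analogues}.
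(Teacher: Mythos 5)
Your proof is correct and matches the paper's (one-line) justification, which simply states that the lemma follows immediately from the definition; you have spelled out exactly the syntactic observations that this relies on — that an $\exists_m$-$\mathcal{L}$-formula is also an $\exists_m$-$\mathcal{L}'$-formula, and that $\Sigma \subseteq \Sigma'$ makes every $\Sigma$-consequence a $\Sigma'$-consequence.
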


\begin{proof}
This follows immediately from the definition.
\end{proof}

The previous lemma implies that
$\Erk_\mathcal{L}(\Sigma)\geq\Erk_\mathcal{L}(\Sigma')$.
On the other hand, it is not clear that $\Erk_\mathcal{L}(\Sigma)\geq\Erk_{\mathcal{L}'}(\Sigma)$.
However, the following does hold,
which is one of the reasons why we work with $\erk$ instead of $\Erkone$.

\begin{lem}\label{lem:constants}
For a set $C$ of constants,
$\Erk_\mathcal{L}(\Sigma)\geq\Erk_{\mathcal{L}(C)}(\Sigma)$.
\end{lem}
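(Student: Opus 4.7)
The plan is to reduce an $\mathcal{L}(C)$-formula to an $\mathcal{L}$-formula by treating the finitely many constants that actually appear as fresh free variables, apply the bound $\Erk_\mathcal{L}(\Sigma)$ to that $\mathcal{L}$-formula, and then substitute the constants back in. If $\Erk_\mathcal{L}(\Sigma) = \infty$, the statement is trivial, so we may assume it is finite.

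So let $\varphi(\underline{X})$ be any $\exists$-$\mathcal{L}(C)$-formula in $n$ free variables. Since $\varphi$ is a finite string, it only mentions finitely many constants from $C$, say $c_1, \ldots, c_k$. Let $\tilde\varphi(\underline{X}, Z_1, \ldots, Z_k)$ be the $\mathcal{L}$-formula obtained by replacing each occurrence of $c_i$ by a fresh variable $Z_i$. Then $\tilde\varphi$ is an $\exists$-$\mathcal{L}$-formula in $n+k$ free variables with the same number of existential quantifiers as $\varphi$. By the definition of $\Erk_\mathcal{L}(\Sigma)$ (which takes the supremum over formulas in \emph{any} number of free variables), there exists an $\exists_m$-$\mathcal{L}$-formula $\tilde\psi(\underline{X}, \underline{Z})$ with $m \leq \Erk_\mathcal{L}(\Sigma)$ equivalent to $\tilde\varphi$ modulo $\Sigma$.

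I would then set $\psi(\underline{X}) := \tilde\psi(\underline{X}, c_1, \ldots, c_k)$, which is an $\exists_m$-$\mathcal{L}(C)$-formula, and verify that $\varphi$ and $\psi$ are equivalent modulo $\Sigma$ when $\Sigma$ is viewed as an $\mathcal{L}(C)$-theory. For this, note that any $K \models \Sigma$ as an $\mathcal{L}(C)$-structure is in particular a model of $\Sigma$ as an $\mathcal{L}$-structure with a chosen interpretation $c_i^K$ of each constant; hence for any $\underline{x} \in K^n$,
\[
 K \models \varphi(\underline{x}) \iff K \models \tilde\varphi(\underline{x}, c_1^K, \ldots, c_k^K) \iff K \models \tilde\psi(\underline{x}, c_1^K, \ldots, c_k^K) \iff K \models \psi(\underline{x}),
\]
where the middle equivalence is the equivalence of $\tilde\varphi$ and $\tilde\psi$ modulo $\Sigma$ applied pointwise. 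This yields $\erk_{\mathcal{L}(C), \Sigma}(\varphi) \leq m \leq \Erk_\mathcal{L}(\Sigma)$; taking the supremum over $\varphi$ gives the claim.

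There is no real obstacle here; the lemma is essentially a substitution argument. The only thing to be careful about is precisely the point that justifies working with $\Erk$ rather than $\Erkone$ (as noted in the paragraph preceding the lemma): we genuinely need the bound $\Erk_\mathcal{L}(\Sigma)$ to apply to formulas with extra free variables, since the $k$ new variables $Z_1, \ldots, Z_k$ inflate the total number of free variables from $n$ to $n+k$.
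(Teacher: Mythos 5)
Your proof is correct and follows essentially the same approach as the paper: replace the finitely many constants appearing in the formula by fresh free variables, apply the bound $\Erk_\mathcal{L}(\Sigma)$ to the resulting $\mathcal{L}$-formula (which is exactly where the supremum over all numbers of free variables matters), and substitute the constants back in. Your verification of equivalence modulo $\Sigma$ is slightly more explicit than the paper's, but the argument is the same.
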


\begin{proof}
An existential $\mathcal{L}(C)$-formula is of the form $\varphi(\underline{X},\underline{c})$ with an existential $\mathcal{L}$-formula $\varphi(\underline{X},\underline{Y})$
and $c_1,\dots,c_m\in C$.
Since $\varphi(\underline{X},\underline{Y})$ is equivalent modulo $\Sigma$ to an existential 
$\mathcal{L}$-formula $\psi(\underline{X},\underline{Y})$ with at most $\Erk_\mathcal{L}(\Sigma)$ quantifiers,
$\varphi(\underline{X},\underline{c})$ is equivalent modulo $\Sigma$ to 
the $\mathcal{L}(C)$-formula
$\psi(\underline{X},\underline{c})$ with at most $\Erk_\mathcal{L}(\Sigma)$ quantifiers.
\end{proof}

\begin{defi}\label{def:erk_K}
Let $K$ be an $\mathcal{L}$-structure.
For an $\La(K)$-formula $\varphi$, we let
$$
 \erk_K(\varphi) \;=\; \erk_{\mathcal{L}(K),{\rm Th}_{\mathcal{L}(K)}(K)}(\varphi).
$$
If $D \subseteq K^n$ is a definable subset (where we always allow definitions involving parameters from $K$), we define $\erk_K(D)$ to be $\erk_K(\varphi)$, where $\varphi$ is any $\La(K)$-formula defining $D$ in $K$.
The {\em existential rank} of $K$ is
$$
 \erk(K) \;=\; \erk_{\mathcal{L}(K)}({\rm Th}_{\mathcal{L}(K)}(K)).
$$
We also write $\Erkn(K)=\Erkn_{\mathcal{L}(K)}({\rm Th}_{\mathcal{L}(K)}(K))$.

\end{defi}

\begin{opm}
In other words, $\erk(K)\leq m$ if and only if
for every existential $\mathcal{L}(K)$-formula $\varphi$ in any number of variables 
there is an existential $\mathcal{L}(K)$-formula $\psi$ with at most $m$ quantifiers such that
$K\models\forall\underline{X}(\varphi\leftrightarrow\psi)$.
As $\erk_K(\varphi)$ depends only on the set $D=\varphi(K)$ defined by $\varphi$ in $K$, the notation $\erk_K(D)$ is justified.
\end{opm}

\begin{gev}\label{cor:erk_model_theory}
For every $K\models \Sigma$, we have $\erk(K)\leq\erk_\mathcal{L}(\Sigma)$.
\end{gev}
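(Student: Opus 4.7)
The plan is to chain the two preceding lemmas: use Lemma \ref{lem:constants} to add the elements of $K$ as constants without increasing the rank, and then use Lemma \ref{lem:language} to pass from $\Sigma$ to the larger theory $\mathrm{Th}_{\mathcal{L}(K)}(K)$, which again can only decrease the existential rank.

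More concretely, I would argue as follows. Fix $K\models\Sigma$. Applying Lemma \ref{lem:constants} with $C = K$, we obtain
\[
  \erk_{\mathcal{L}}(\Sigma) \;\geq\; \erk_{\mathcal{L}(K)}(\Sigma),
\]
where on the right we view $\Sigma$ as an $\mathcal{L}(K)$-theory in the obvious way (every $\mathcal{L}$-sentence is an $\mathcal{L}(K)$-sentence). Since $K\models\Sigma$, we have $\Sigma\subseteq \mathrm{Th}_{\mathcal{L}(K)}(K)$ as $\mathcal{L}(K)$-theories, so Lemma \ref{lem:language} (applied with $\mathcal{L}'=\mathcal{L}(K)$ and $\Sigma'=\mathrm{Th}_{\mathcal{L}(K)}(K)$) yields
\[
  \erk_{\mathcal{L}(K)}(\Sigma) \;\geq\; \erk_{\mathcal{L}(K)}(\mathrm{Th}_{\mathcal{L}(K)}(K)) \;=\; \erk(K),
\]
the last equality being just the definition of $\erk(K)$ from Definition \ref{def:erk_K}. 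Combining the two inequalities gives $\erk_{\mathcal{L}}(\Sigma)\geq\erk(K)$.

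There is no real obstacle here: the content of the corollary is already absorbed into Lemmas \ref{lem:constants} and \ref{lem:language}, and the only thing to check is that the inclusion $\Sigma\subseteq \mathrm{Th}_{\mathcal{L}(K)}(K)$ takes place in the common language $\mathcal{L}(K)$, which is what allows Lemma \ref{lem:language} to apply. The same argument, with $\exists$ replaced by $\exists^+$ throughout (cf.\ Remark \ref{rem:perk_analogues}), would also give the analogous statement for positive-existential rank.
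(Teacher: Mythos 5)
Your proof is correct and is essentially the paper's own argument: the paper also proves this by combining Lemma \ref{lem:constants} (to pass from $\mathcal{L}$ to $\mathcal{L}(K)$) with Lemma \ref{lem:language} (to pass from $\Sigma$ to $\mathrm{Th}_{\mathcal{L}(K)}(K)$).
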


\begin{proof}
This follows by combining Lemmas \ref{lem:constants} and \ref{lem:language}.
\end{proof}

\begin{lem}\label{lem:erk_elem_equiv}
If $K$ and $L$ are $\mathcal{L}$-elementarily equivalent, then $\erk(K)=\erk(L)$.
\end{lem}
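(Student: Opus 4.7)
By symmetry of $\equiv$, it suffices to show that $\erk(K) \leq m$ implies $\erk(L) \leq m$ for any $m \in \Natwithzero$. So I would fix such an $m$, take an arbitrary existential $\mathcal{L}(L)$-formula $\varphi(\underline{X})$, and write $\varphi(\underline{X}) = \varphi_0(\underline{X}, \underline{b})$ where $\varphi_0(\underline{X}, \underline{Y})$ is an existential $\mathcal{L}$-formula and $\underline{b}$ is a tuple from $L$. The goal is to exhibit an equivalent $\exists_m$-$\mathcal{L}(L)$-formula for $\varphi$ in $L$.

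The key step I would exploit is that by Definition \ref{def:erk_K}, the assumption $\erk(K) \leq m$ bounds the existential rank of \emph{every} $\exists$-$\mathcal{L}(K)$-formula in any number of free variables, not merely those whose free variables happen to be $\underline{X}$. Applied to the $\mathcal{L}$-formula $\varphi_0(\underline{X}, \underline{Y})$ (regarded as an $\exists$-$\mathcal{L}(K)$-formula in the combined free variables $\underline{X}, \underline{Y}$), this yields an $\exists_m$-$\mathcal{L}$-formula $\psi_0(\underline{X}, \underline{Y}, \underline{E})$ together with parameters $\underline{e} \in K$ such that $K \models \forall \underline{X}\,\forall\underline{Y}\,(\varphi_0(\underline{X},\underline{Y}) \leftrightarrow \psi_0(\underline{X},\underline{Y},\underline{e}))$. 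Existentially quantifying out $\underline{e}$ produces the parameter-free $\mathcal{L}$-sentence $\exists \underline{E}\,\forall\underline{X}\,\forall\underline{Y}\,(\varphi_0(\underline{X},\underline{Y}) \leftrightarrow \psi_0(\underline{X},\underline{Y},\underline{E}))$, which holds in $K$ and hence, by $K \equiv L$, also in $L$. Unwinding a witness $\underline{e}' \in L$ and specializing $\underline{Y}$ to $\underline{b}$ then yields $L \models \forall \underline{X}\,(\varphi(\underline{X}) \leftrightarrow \psi_0(\underline{X}, \underline{b}, \underline{e}'))$, and $\psi_0(\underline{X}, \underline{b}, \underline{e}')$ is an $\exists_m$-$\mathcal{L}(L)$-formula, so $\erk_L(\varphi) \leq m$ and $\erk(L) \leq m$ follows.

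There is no real obstacle to this plan, but the step most easily overlooked is the promotion of the parameter tuple's ``ambient index'' $\underline{Y}$ to a free variable \emph{before} invoking $\erk(K) \leq m$. This maneuver produces a single $\exists_m$-definition of $\varphi_0$ valid uniformly across all choices of $\underline{Y}$, which is exactly what can be transferred across $K \equiv L$ using one first-order sentence. A more naive strategy of specializing $\underline{Y}$ to various $\underline{a} \in K$ before invoking $\erk(K) \leq m$ would give a family of witness formulas depending on $\underline{a}$, and such families do not transfer to $L$ via elementary equivalence alone, without additional saturation hypotheses.
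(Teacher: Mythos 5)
Your proof is correct and follows essentially the same approach as the paper's: write the $\mathcal{L}(L)$-formula as $\varphi_0(\underline{X},\underline{Y})$ with $\underline{Y}$ kept free, apply the bound $\erk(K)\leq m$ to the $\mathcal{L}$-formula $\varphi_0$ in the extended free variables, then transfer the resulting $\mathcal{L}$-sentence $\exists\underline{E}\,\forall\underline{X}\,\forall\underline{Y}(\varphi_0\leftrightarrow\psi_0)$ across $K\equiv L$ and specialize $\underline{Y}$ to $\underline{b}$. Your closing observation that freeing $\underline{Y}$ before invoking $\erk(K)\leq m$ is the crucial maneuver (and that specializing first would not transfer) correctly isolates the point the paper's proof relies on implicitly.
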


\begin{proof}
Let $\varphi(\underline{X},\underline{Y})$ be an existential $\mathcal{L}$-formula and $y_1,\dots,y_m\in L$.
There is an existential $\mathcal{L}$-formula $\psi(\underline{X},\underline{Y},\underline{Z})$ with at most $\erk(K)$ quantifiers and $c_1,\dots,c_r\in K$
such that 

$$
 K\models\forall\underline{X}\forall\underline{Y}(\varphi(\underline{X},\underline{Y})\leftrightarrow\psi(\underline{X},\underline{Y},\underline{c}))
$$ 
and thus
$$
 K\models\exists\underline{Z}\forall\underline{X}\forall\underline{Y}(\varphi(\underline{X},\underline{Y})\leftrightarrow\psi(\underline{X},\underline{Y},\underline{Z})),
$$ 
so if $L\equiv K$ then there exist $z_1,\dots,z_r\in L$ 
with 
$$
 L\models\forall\underline{X}(\varphi(\underline{X},\underline{y})\leftrightarrow\psi(\underline{X},\underline{y},\underline{z})).
$$
As $\varphi(\underline{X},\underline{y})$ was an arbitrary existential $\mathcal{L}(L)$-formula,
this shows $\erk(L)\leq\erk(K)$, and the claim follows by symmetry.
\end{proof}

Finally, for later use, we mention the following general lemmas.
Recall here the notion of a direct limit of $\La$-structures, see for instance \cite[Section 2.4]{Hodges_Longer}.
\begin{lem}\label{lem:directLimit}
%  Let $\Sigma$ be an $\La$-theory.
  The class of $\La$-structures $K$ for which there exists an $\La$-homomorphism $K \to L$ to a model $L$ of $\Sigma$ is closed under direct limits.
\end{lem}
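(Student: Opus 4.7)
The plan is to use a compactness argument on the positive diagram of the direct limit.

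Let $(K_i)_{i \in I}$ be a directed system of $\La$-structures in this class, with direct limit $K$ and canonical maps $\phi_i : K_i \to K$, and fix for each $i$ an $\La$-homomorphism $h_i : K_i \to L_i$ with $L_i \models \Sigma$. Let $\Delta^+(K)$ denote the positive diagram of $K$, i.e., the set of positive atomic $\La(K)$-sentences true in $K$, where each element of $K$ is named by its own constant. Any model of the $\La(K)$-theory $\Sigma \cup \Delta^+(K)$ yields, via the interpretation of these added constants, an $\La$-homomorphism $K \to L$ to a model of $\Sigma$; hence it suffices to prove that $\Sigma \cup \Delta^+(K)$ is satisfiable.

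By the compactness theorem, this reduces to the claim that every finite subset $\Delta_0 \subseteq \Delta^+(K)$ is satisfiable together with $\Sigma$. Given such a $\Delta_0$, let $a_1, \ldots, a_n \in K$ be the elements whose names occur in $\Delta_0$. Each $a_k$ arises as $\phi_{i_k}(a_k'')$ for some $a_k'' \in K_{i_k}$, and each atomic formula in $\Delta_0$ becomes valid at some stage $K_{j_\psi}$. Since $\Delta_0$ is finite and $I$ is directed, one can pick a single index $j$ dominating all these, together with representatives $a_k' \in K_j$ of the $a_k$, such that $K_j$ satisfies every formula of $\Delta_0$ under the assignment $a_k \mapsto a_k'$. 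Applying the given homomorphism $h_j : K_j \to L_j$ and using that positive atomic formulas are preserved under $\La$-homomorphisms, one sees that $(L_j, h_j(a_1'), \ldots, h_j(a_n'))$ is a model of $\Sigma \cup \Delta_0$, as required.

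I expect the only real technical point to be the direct-limit stabilisation step: one must arrange that all formulas of $\Delta_0$ become simultaneously satisfied at a single stage $K_j$, not merely individually at separate stages. This is handled by the defining property of direct limits of $\La$-structures, namely that truth of a positive atomic formula in the limit is witnessed at some finite stage, combined with directedness of $I$ to merge the finitely many stages arising from $\Delta_0$ into one.
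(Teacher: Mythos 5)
Your proof is correct and takes a genuinely different route from the paper. You argue directly via compactness on the positive diagram of the limit: a homomorphism $K \to L$ with $L \models \Sigma$ is the same thing as a model of $\Sigma \cup \Delta^+(K)$, and any finite fragment of $\Delta^+(K)$ is already realised at some stage $K_j$ by directedness, whence $h_j \colon K_j \to L_j$ finishes the job. The paper instead proceeds through axiomatisability: it shows that the class $\mathcal K$ is closed under ultraproducts (hence elementary, citing Hodges Corollary~9.5.10) and under substructures (hence universally axiomatisable by \L o\'s--Tarski, Theorem~6.5.4), and then invokes Hodges Theorem~2.4.6 that universally axiomatisable classes are closed under direct limits. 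Your argument is more elementary and self-contained, avoiding both the ultraproduct characterisation of elementary classes and \L o\'s--Tarski; in exchange the paper's route is shorter given the cited machinery and produces a stronger intermediate statement (universal axiomatisability of $\mathcal K$), which is a reusable fact in its own right. One small point worth being explicit about in your write-up: the ``stabilisation'' step relies on the fact that a positive atomic formula with parameters in a direct limit is witnessed at some finite stage for suitable representatives -- this is automatic when the transition maps are embeddings (the case the paper actually needs, in Lemma~\ref{lem:finiteness}), and it also holds for general homomorphism transition maps by the colimit-of-sets property applied to term equalities together with the definition of the interpretation of relation symbols in the limit. Your proof thus covers the general case with no extra cost.
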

\begin{proof}
  By \cite[Theorem 2.4.6]{Hodges_Longer}, it suffices to show that this class $\mathcal K$ is axiomatised by a set of universal $\La$-sentences.
  This axiomatisability in turn follows from the fact that $\mathcal K$ is closed under passage to ultraproducts and substructures using \cite[Corollary 9.5.10]{Hodges_Longer} (so $\mathcal K$ is axiomatisable) and the Łoś–Tarski theorem \cite[Theorem 6.5.4]{Hodges_Longer}.
\end{proof}

\begin{lem}\label{lem:finiteness}
Let %$\Sigma$ be an $\La$-theory 
 $\varphi$ be an $\La$-sentence. 
Suppose that $K$ is an $\La$-structure such that $L \models \varphi$ for any $L \models \Sigma$ for which there exists a homomorphism $K \to L$. 
Then there exists a finitely generated $\La$-substructure $A$ of $K$ such that $L \models \varphi$ for any $L \models \Sigma$ for which there exists a homomorphism $A \to L$.
\end{lem}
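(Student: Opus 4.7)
The plan is to argue by contraposition, exploiting the direct limit structure on $K$ together with Lemma~\ref{lem:directLimit}. Concretely, suppose no finitely generated substructure $A$ of $K$ has the desired property; I want to show that $K$ itself fails the property, i.e.\ that there is a homomorphism from $K$ to some $L \models \Sigma$ with $L \not\models \varphi$.

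Under the contrapositive hypothesis, for each finitely generated substructure $A \subseteq K$ there exists a model $L_A$ of the $\La$-theory $\Sigma' := \Sigma \cup \lbrace \neg\varphi \rbrace$ together with a homomorphism $A \to L_A$. The finitely generated substructures of $K$, ordered by inclusion, form a directed system whose direct limit is canonically $K$ itself, with the inclusions as the transition maps. Hence every structure in this directed system lies in the class
\[
\mathcal{K} \;=\; \{\, B \text{ an $\La$-structure} : \text{there is an $\La$-homomorphism } B \to M \text{ for some } M \models \Sigma' \,\}.
\]

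By Lemma~\ref{lem:directLimit} applied to the theory $\Sigma'$ (the lemma is stated for an arbitrary $\La$-theory and $\Sigma'$ is still an $\La$-theory, so it applies verbatim), the class $\mathcal{K}$ is closed under direct limits. Therefore $K \in \mathcal{K}$, meaning there is a homomorphism $K \to L$ for some $L \models \Sigma \cup \lbrace \neg\varphi\rbrace$. This directly contradicts the hypothesis on $K$, completing the proof.

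There is no real obstacle here: the entire content of the argument is packaged into Lemma~\ref{lem:directLimit}, and the one conceptual point is to remember that $K$ is the direct limit of its finitely generated substructures and to apply the lemma not to $\Sigma$ but to $\Sigma \cup \lbrace\neg\varphi\rbrace$.
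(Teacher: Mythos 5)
Your proof is correct and takes essentially the same approach as the paper: both apply Lemma~\ref{lem:directLimit} to the theory $\Sigma \cup \{\neg\varphi\}$ and use that $K$ is the direct limit of its finitely generated substructures; your contrapositive phrasing is just a cosmetic reorganization of the paper's direct argument.
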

\begin{proof}
  We apply Lemma \ref{lem:directLimit} to the theory $\Sigma \cup \{ \neg \varphi \}$.
  Since $K$ is not in the class mentioned there, and $K$ is the direct limit of its finitely generated substructures, there exists a finitely generated substructure as desired.
\end{proof}

\section{Theories of fields}\label{sect:erkfields}

\noindent
For the rest of this paper we will study fields and work in 
languages $\mathcal{L}_{\rm ring}(C)$ where $\mathcal{L}_{\rm ring}=\{+,-,\cdot,0,1\}$ is the language of rings
and $C$ a set of constant symbols, 
and with an $\Lar(C)$-theory $\Sigma$ containing the $\mathcal{L}_{\rm ring}$-theory $\Tfields$ of fields.
We recall that we drop the language from the notation
if there is no danger of confusion
and therefore write e.g.~$\qr_\Sigma$ instead of $\qr_{\mathcal{L}_{\rm ring}(C),\Sigma}$.
Let $K$ always be a field.

\begin{opm}
Instead of the language of rings
one might also consider the language of fields $\La_{\rm field}$ extending the language of rings by a unary function symbol $\cdot^{-1}$ for inversion, with the convention $0^{-1}:=0$. 
While substructures of fields in $\mathcal{L}_{\rm ring}$ are subrings,
substructures in $\mathcal{L}_{\rm field}$ are then subfields.
However, we will restrict our attention to $\mathcal{L}_{\rm ring}$, since anyway
$$
 \erk_{\mathcal{L}_{\rm ring}(C),\Sigma}(\varphi) \;=\; 
 \erk_{\mathcal{L}_{\rm field}(C),\Sigma}(\varphi) \;=\;
 \perk_{\mathcal{L}_{\rm field}(C),\Sigma}(\varphi)
$$
for every $\mathcal{L}_{\rm ring}(C)$-formula $\varphi$.
While this can be seen by direct manipulation of the formula,
it also follows immediately from Corollaries \ref{cor:generalcriterionhom} and \ref{cor:generalcriterion}
using the basic facts that homomorphisms between fields are embeddings and every embedding of a ring into a field can be extended to its fraction field.
The precise relation between $\erk_{\mathcal{L}_{\rm ring}(C),\Sigma}$ and $\perk_{\mathcal{L}_{\rm ring}(C),\Sigma}$ will be determined in Proposition \ref{prop:qrvsqrp}.
\end{opm}

We will often work with the following theories of fields:

\begin{defi}\label{def:theories}
We denote 
\begin{enumerate}
\item by $\TKequiv$ the complete $\mathcal{L}_{\rm ring}$-theory ${\rm Th}_{\mathcal{L}_{\rm ring}}(K)$ of $K$,
\item by $\TKleq$ the union of $\Tfields$ with the quantifier-free diagram of $K$, i.e.~the set of atomic and negated atomic $\Lar(K)$-sentences satisfied by $K$,
\item by $\TKec$ the union of $\Tfields$ with the universal $\Lar(K)$-theory of $K$, and 
\item by $\TKprec$ the complete $\Lar(K)$-theory ${\rm Th}_{\mathcal{L}_{\rm ring}(K)}(K)$ of $K$ (also known as the elementary diagram of $K$).
\end{enumerate}
\end{defi}

\begin{opm}\label{rem:theories}
Note that we view $\TKleq$, $\TKec$ and $\TKprec$ as $\Lar(K)$-theories but $\TKequiv$ as an $\Lar$-theory.
We have $\TKleq \subseteq \TKec \subseteq \TKprec$ and $\TKequiv\subseteq\TKprec$.
The models of 
$\TKequiv$ are the fields elementarily equivalent to $K$,
the models of $\TKleq$ are the field extensions of $K$,
the models of $\TKec$ are the field extensions of $K$ in which $K$ is existentially closed
(we write $K\prec_\exists L$ for $K$ existentially closed in $L$), and
the models of $\TKprec$ are the elementary extensions of~$K$.
In particular, $\erk_K$ (see Definition \ref{def:erk_K}) can be written as $\erk_{\TKprec}$.
\end{opm}

\begin{opm}\label{rem:perk}
Since models of $\Tfields$ are integral domains,
a finite disjunction of polynomial equalities $f_i=0$ is equivalent modulo $\Tfields$ to the single equality $\prod_i f_i=0$.
Therefore, every $\exists^+_m$-$\Lar(K)$-formula $\varphi$ is equivalent modulo $\TKleq$
to a positive primitive $\Lar(K)$-formula with $m$ quantifiers,
i.e.~a formula of the form 
\begin{equation}\label{eqn:ppformula}
 \exists Y_1,\dots,Y_m \bigwedge_{i=1}^r f_i(\underline X, \underline Y) \doteq 0
\end{equation}
for suitable $f_i \in K[\underline X, \underline Y]$.
If moreover $K$ is not algebraically closed, 
then $\varphi$ is equivalent modulo $\TKec$
to a formula as in (\ref{eqn:ppformula}) with $r=1$,
since if $g\in K[Z]\setminus K$ has no zero in $K$, and $g^*=Z_0^{{\rm deg}(g)}g(\frac{Z_1}{Z_0})\in K[Z_0,Z_1]$
is the homogenization of $g$, then $z_1,z_2\in K$ satisfy $g^*(z_1,z_2)=0$ if and only if $z_1=z_2=0$.
%form $\exists Y_1,\dots,Y_m f(\underline X, \underline Y) \doteq 0$ for some $f \in K[\underline X, \underline Y]$,
In particular, $\perk_K(\varphi(K))$ agrees with the definition of positive-existential rank in the introduction (Definition~\ref{def:intro_erk}),
and unless $K$ is algebraically closed one could also take $r=1$ in that definition.
\end{opm}

\begin{prop}\label{prop:acfErkZero}
$\erk(K) = 0$ if and only if $K$ is  either finite or algebraically closed.
\end{prop}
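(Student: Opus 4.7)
The plan is to treat the two implications separately.

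For $(\Leftarrow)$: if $K$ is finite, enumerating $K = \{a_1,\ldots,a_N\}$, every subset $S \subseteq K^n$ is defined in $K$ by the quantifier-free $\Lar(K)$-formula $\bigvee_{\underline{b} \in S}\bigwedge_{i=1}^n X_i \doteq b_i$, so $\Th_{\Lar(K)}(K)$ has quantifier elimination and $\erk(K) = 0$ by Lemma \ref{lem:QE}. If $K$ is algebraically closed, then $\mathrm{ACF}$ has quantifier elimination in $\Lar$ by Example \ref{ex:ACF}; this is preserved under adding constants from $K$ (for any $\Lar(K)$-formula, view its parameters as free variables, apply QE in $\Lar$, and substitute the parameters back in), so $\Th_{\Lar(K)}(K)$ has QE and $\erk(K) = 0$.

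For $(\Rightarrow)$: assume $\erk(K) = 0$; we may assume $K$ is infinite and aim to show $K$ is algebraically closed. By Lemma \ref{lem:QE}, $\Th_{\Lar(K)}(K)$ has quantifier elimination. Therefore every $\Lar(K)$-definable subset of $K$ (in one variable) is a Boolean combination of zero sets $\{x \in K : f(x) \doteq 0\}$ with $f \in K[X]$, each of which is either finite (when $f \neq 0$) or all of $K$; so every such subset is finite or cofinite. In model-theoretic terminology, $K$ is \emph{minimal}.

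By Lemma \ref{lem:erk_elem_equiv}, any $M$ with $M \equiv_{\Lar} K$ also has $\erk(M) = 0$, and $M$ is infinite since $K$ is (as $K$ being infinite is expressed by a set of $\Lar$-sentences); so $M$ is minimal as well, by the argument just given. Therefore the $\Lar$-theory of $K$ is \emph{strongly minimal}. The main obstacle, and the step drawing on non-trivial external input, is to pass from strong minimality to algebraic closure; this is precisely Macintyre's classical theorem that an infinite strongly minimal field is algebraically closed, which completes the proof.
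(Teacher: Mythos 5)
Your proof is correct and follows essentially the same route as the paper: the finite case is handled by quantifier-free definitions, the algebraically closed case by lifting quantifier elimination for $\mathrm{ACF}$ to $\Lar(K)$, and the converse by combining Lemma~\ref{lem:erk_elem_equiv} and Lemma~\ref{lem:QE} to deduce strong minimality of $\TKequiv$ and then invoking Macintyre. The only cosmetic differences are that the paper routes the $\mathrm{ACF}$ direction through Corollary~\ref{cor:erk_model_theory} rather than the direct constant-substitution argument you give (the two are equivalent in substance, cf.\ Lemma~\ref{lem:constants}), and the paper explicitly passes from strong minimality to $\omega$-stability before citing Macintyre's Theorem~1, whereas you quote the strongly minimal formulation directly.
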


\begin{proof}
If $K$ is algebraically closed,
then since {\rm ACF} has quantifier elimination (cf.~Example~\ref{ex:ACF}), so does $\TKequiv$, 
hence $0=\erk_{\mathcal{L}_{\rm ring}}(\TKequiv)\geq\erk(K)$
by Lemma \ref{lem:QE} and Corollary \ref{cor:erk_model_theory}.
If $K$ is finite, then trivially every subset of $K$ is quantifier-freely definable in $\Lar(K)$, hence $\erk(K) = 0$.
Conversely, if $\erk(K)=0$, 
then every $L\equiv_{\mathcal{L}_{\rm ring}} K$ satisfies
$\erk(L)=0$ by Lemma \ref{lem:erk_elem_equiv} and therefore
$T_L^\prec$ has quantifier elimination by Lemma \ref{lem:QE}.
In particular, every $L$-definable subset of $L$ is finite or cofinite.
We have thus shown that $\TKequiv$ is strongly minimal and hence $\omega$-stable,
which if $K$ is infinite implies by Macintyre's result \cite[Theorem 1]{Macintyre_CategoricalFields} that $K$ is algebraically closed.\footnote{In \cite[Theorem 3]{Macintyre_CategoricalFields}, Macintyre himself deduces from his result on stability that the only infinite fields whose $\Lar$-theory has quantifier elimination are algebraically closed, whereas we need to consider the $\Lar(K)$-theory of $K$.}
\end{proof}

We now apply the general results in the last section 
as an example to one specific $\exists$-$\Lar$-formula that will play an important role in Sections \ref{sect:efd} and \ref{sect:pthpowers}.
\begin{defi}\label{def:pi}
For $n,m\in\Natwithoutzero$ let
\begin{displaymath}
\pii{n}{m} = \exists Y_1, \ldots, Y_n\bigwedge_{i=1}^n X_i \doteq Y_i^m.
\end{displaymath}
\end{defi}
So in a field $K$, $\pii{n}{m}$ defines the set 
$(K\pow{m})^n$
of $n$-tuples of $m$-th powers.
Clearly $\perk_{\Lar, \emptyset}(\pii{n}{1}) = 0$ for every $n\in\Natwithoutzero$
and $\perk_{\Lar,\emptyset}(\pii{n}{m})\leq n$ for every $n,m\in\Natwithoutzero$.
\begin{lem}\label{lem:powersQFree}
  Let $K$ be an infinite field and $n, m \in \Natwithoutzero$. Then $\qr_{K}(\pii{n}{m}) = 0$ if and only if every element of $K$ is an $m$-th power,
  i.e.~$K=K\pow{m}$.
\end{lem}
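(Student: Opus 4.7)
The ``if'' direction is essentially immediate: if $K = K\pow{m}$, then $\pii{n}{m}$ defines $K^n$, which is the set defined by the quantifier-free $\Lar(K)$-sentence $X_1 \doteq X_1$, so $\qr_K(\pii{n}{m}) = 0$. For the converse, I would assume $\qr_K(\pii{n}{m}) = 0$, so that there is a quantifier-free $\Lar(K)$-formula $\psi(X_1,\dots,X_n)$ defining $(K\pow{m})^n$ in $K$. Substituting the constants $X_2=\dots=X_n = 1$ (which are $m$-th powers since $1 = 1^m$) gives a quantifier-free $\Lar(K)$-formula $\psi(X_1,1,\dots,1)$ whose zero set in $K$ is exactly $K\pow{m}$. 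This reduces everything to showing: if $K\pow{m}$ is quantifier-freely definable as a subset of $K$ in the language $\Lar(K)$, then $K\pow{m} = K$.

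The key observation is that a quantifier-free $\Lar(K)$-formula in one variable $X$ is a Boolean combination of polynomial equations $p_i(X) \doteq 0$ with $p_i \in K[X]$, each of which defines either all of $K$ (when $p_i = 0$) or a finite subset of $K$. Since $K$ is infinite, any Boolean combination of such sets is itself either finite or cofinite in $K$. Hence the displayed assumption forces $K\pow{m}$ to be finite or cofinite.

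To rule out the finite case, I would note that $K\pow{m} = (K^\times)^m \cup \{0\}$, and $(K^\times)^m$ is a subgroup of index at most $m$ in the infinite group $K^\times$, hence infinite. So $K\pow{m}$ is infinite and therefore must be cofinite. Equivalently, $K^\times \setminus (K^\times)^m$ is finite. But $(K^\times)^m$ is a subgroup of the infinite abelian group $K^\times$, and every nontrivial coset has the same cardinality as $(K^\times)^m$, which is infinite. A finite complement therefore forces $(K^\times)^m = K^\times$, whence $K = K\pow{m}$.

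There is no serious obstacle: the proof is a straightforward combination of specialisation of parameters, the classical fact that quantifier-free definable subsets of an infinite field in one variable are finite or cofinite, and the elementary group-theoretic observation that a subgroup of finite index in an infinite group has infinite nontrivial cosets. The only mild subtlety is remembering to add and remove the element $0$ when passing between $K\pow{m}$ and the subgroup $(K^\times)^m$.
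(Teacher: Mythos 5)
Your proof follows the paper's argument closely: reduce to one variable, note that quantifier-free one-variable sets over an infinite field are finite or cofinite, and use the group structure of $K^\times$ to show $(K^\times)^m$ is neither when it is proper. One justification you give is false, though: $(K^\times)^m$ is \emph{not} in general of finite index in $K^\times$ (for instance $(\mathbb{Q}^\times)^2$ has infinite index in $\mathbb{Q}^\times$, since distinct primes land in distinct square classes). What is true, and what you actually need, is simply that $(K^\times)^m$ is infinite; this follows because the $m$-th power map $K^\times \to K^\times$ is a group homomorphism onto $(K^\times)^m$ whose kernel, the $m$-th roots of unity in $K$, has at most $m$ elements, so $(K^\times)^m$ has the same cardinality as the infinite group $K^\times$. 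With that repair, your final step — every coset of $(K^\times)^m$ has the same infinite cardinality, so a proper such subgroup is also co-infinite — is correct and matches the reasoning the paper uses (more tersely) in its own proof.
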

\begin{proof}
  If every element of $K$ is an $m$-th power, then $\pii{n}{m}$ is equivalent modulo $\TKprec$ to the tautological formula $0 \doteq 0$.
  If not every element of $K$ is an $m$-th power,
  then the set $\powx{K}{m}$ of $m$-th powers in the multiplicative group $K^\times$ is an infinite proper subgroup of $K^\times$,
  in particular also co-infinite. 
  Thus $\pii{1}{m}$ is not equivalent  modulo $\TKprec$ to a quantifier-free formula over $K$, and therefore neither is $\pii{n}{m}$.
\end{proof}

\begin{prop}\label{prop:pinmperfect}
Let $m\geq2$ and let $\Sigma$ be the $\Lar$-theory of fields which are either perfect or of characteristic not dividing $m$. Then $\qr_{\Sigma}(\pii{n}{m}) = 1$
for every $n\in\Natwithoutzero$.
\end{prop}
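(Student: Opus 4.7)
My plan is to apply the criterion of Proposition \ref{prop:generalcriterion} via the primitive element theorem. The lower bound $\qr_\Sigma(\pii n m) \geq 1$ is immediate: $\qq \models \Sigma$ is infinite and admits non-$m$-th powers for $m \geq 2$, so Lemmas \ref{lem:powersQFree} and \ref{lem:language} yield $\qr_\Sigma(\pii n m) \geq \qr_{\qq}(\pii n m) \geq 1$.

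For the upper bound, I reinterpret $\pii n m$ as a sentence by adjoining constants $c_1,\dots,c_n$ for the free variables (Remark \ref{remstatements}) and apply Proposition \ref{prop:generalcriterion}. It suffices to show: for every $K \models \Sigma$ with $c_i \in K^m$, say $c_i = y_i^m$ with $y_i \in K$, there exists $y \in K$ such that every ring embedding $\iota\colon A := \zz[c_1,\dots,c_n,y] \hookrightarrow L$ into a field $L \models \Sigma$ forces each $\iota(c_i)$ to lie in $L^m$.

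The key idea is to choose $y$ as a primitive element encoding the $m$-th roots simultaneously. When $\car(K) = 0$ or $p := \car(K) \nmid m$, the polynomial $T^m - c_i$ is separable over the subfield $F_0 \subseteq K$ generated by $c_1,\dots,c_n$ over the prime field, so $F_0(y_1,\dots,y_n)/F_0$ is finite separable and the primitive element theorem yields $y \in K$ with $F_0(y) = F_0(y_1,\dots,y_n)$. Each $y_i \in F_0(y)$ may then be written as $P_i(y,\underline c)/Q(\underline c)$ for polynomials $P_i, Q \in \zz[T, X_1, \dots, X_n]$ with $Q(\underline c) \neq 0$ in $K$, yielding the polynomial identity $P_i(y,\underline c)^m = c_i \cdot Q(\underline c)^m$ holding in $A$. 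Any ring embedding $\iota\colon A \hookrightarrow L$ transports this identity; since $\iota$ is injective, $Q(\iota(\underline c)) \neq 0$ in $L$, and we conclude $\iota(c_i) = (P_i(\iota(y), \iota(\underline c))/Q(\iota(\underline c)))^m \in L^m$.

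The main obstacle is the remaining case, where $K$ is perfect with $p := \car(K) \mid m$. Here $F_0(y_1,\dots,y_n)/F_0$ may be purely inseparable and lack a primitive element (e.g.\ $F_0 = \ff_p(t,s)$, $m = p$, $y_1 = t^{1/p}$, $y_2 = s^{1/p}$). I circumvent this by writing $m = p^k m'$ with $\gcd(m',p) = 1$ and passing to $w_i := y_i^{p^k} \in K$, which satisfies $w_i^{m'} = c_i$; then $F_0(w_1,\dots,w_n)/F_0$ is separable and admits a primitive element $y \in K$. The same polynomial-identity argument yields $\iota(c_i) \in L^{m'}$ for every embedding $\iota\colon A \hookrightarrow L$. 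Since $A$ has characteristic $p$, so does $L$; and $L \models \Sigma$ with $p \mid m$ forces $L$ to be perfect. Hence $L^{m'} = L^m$ (the Frobenius being a bijection on perfect $L$), and $\iota(c_i) \in L^m$ as required.
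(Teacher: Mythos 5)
Your proof is correct and follows the same overall strategy as the paper's proof: the lower bound is obtained identically via $\qq$ and Lemmas \ref{lem:powersQFree}, \ref{lem:language}, and the upper bound combines the substructure criterion with the primitive element theorem. However, your treatment of the case $p=\car(K)\mid m$ is genuinely different. The paper replaces $\Frac(A_0)$ by its perfect hull $K_0$ inside $K$, so that $K_0(\underline{y})/K_0$ becomes separable, and then argues that any model $L$ containing $A=A_0[y']$ automatically contains the purely inseparable extension $K'$ of $\Frac(A)$ (using that $L$, having characteristic $p\mid m$, is perfect). You instead factor $m=p^k m'$ with $p\nmid m'$, pass to the $m'$-th roots $w_i = y_i^{p^k}$, which are separable over $F_0$ without any need to perfect the ground field, extract an explicit polynomial identity in $A$ forcing $\iota(c_i)\in L^{m'}$, and invoke perfectness of $L$ only at the very end to upgrade $L^{m'}$ to $L^m$. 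Your variant avoids the perfect hull altogether and makes the ``embedding forces $m$-th-power-ness'' step fully explicit through the identity $P_i^{m}=c_i Q^m$ (resp.\ $P_i^{m'}=c_i Q^{m'}$), which spells out a detail the paper's single sentence leaves implicit. One cosmetic remark: when you write $A:=\zz[c_1,\dots,c_n,y]$ you should mean the $\Lar$-substructure of $K$ generated by these elements, i.e.\ $R_0[\underline{c},y]$ with $R_0$ the prime subring of $K$ (which is $\ff_p$ in characteristic $p$); this is what Proposition \ref{prop:generalcriterion} requires, and it does not affect your argument.
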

\begin{proof}
Let $n\in\Natwithoutzero$.
Lemma \ref{lem:powersQFree} and Lemma \ref{lem:language} give that $\erk_{\Sigma}(\pii{n}{m}) \geq \erk_\mathbb{Q}(\pii{n}{m})\geq 1$.

Take $K \models \Sigma$ and $x_1, \ldots, x_n \in K$ such that $K \models \pii{n}{m}(\underline{x})$, 
i.e.~$x_1, \ldots, x_n$ are $m$-th powers in $K$. 
Let $A_0$ be the subring of $K$ generated by $\underline{x}$. 
To apply Corollary \ref{cor:generalcriterion} to get $\erk_{\Sigma}(\pii{n}{m})\leq1$, we need to show that there is a subring $A$ of $K$ generated by one element over $A_0$ such that $L \models \pii{n}{m}(\underline{x})$ for any model $L$ of $\Sigma$ containing $A$ as a substructure.
To this end, let $K_0$ be the fraction field of $A_0$ in $K$ if $\car(K) \nmid m$, or the perfect hull of the fraction field of $A_0$ in $K$ if $\car(K) \mid m$. Fix $y_1, \ldots, y_n \in K$ such that $y_i^m = x_i$ for each $i \in \lbrace 1, \ldots, n \rbrace$ and set $K' = K_0(\underline{y})$. As $K_0$ is perfect or of characteristic not dividing $m$, $K'/K_0$ is a separable finite extension. By the primitive element theorem, there exists a $y' \in K'$ such that $K' = K_0(y')$. Now set $A = A_0[y']$. As any model of $\Sigma$ containing $A$ as a subring will also contain $K'$, and $K' \models \pii{n}{m}(\underline{x})$, this $A$ is as desired.
\end{proof}

\begin{vb}\label{ex:RCF}
Let $\Sigma={\rm RCF}$ be the theory of real closed fields in $\mathcal{L}_{\rm ring}$.
By Proposition~\ref{prop:pinmperfect} and Lemma \ref{lem:powersQFree}, $\erk_{\rm RCF}(\pii{n}{2})=1$ for every $n$.
Since the theory ${\rm RCF}$ has quantifier elimination in the language
$\mathcal{L}_{\rm ring}\cup\{<\}$ of ordered rings,
and modulo ${\rm RCF}$ the formulas $X<Y$ and $\neg X<Y$ are equivalent  to 
$\exists Z(Y-X \doteq Z^2\wedge\neg X\doteq Y)$ respectively $\exists Z(X-Y \doteq Z^2)$,
one can eliminate any number $n$ of occurrences of $<$ at the expense of only
$\erk_{\rm RCF}(\pii{n}{2})=1$ existential quantifier.
Therefore, $\erk_{\rm RCF}(\varphi)\leq1$
for every $\mathcal{L}_{\rm ring}$-formula $\varphi$.
In particular,
%together with Proposition~\ref{prop:acfErkZero} 
%and Corollary \ref{cor:erk_model_theory}
we obtain $\erk({\rm RCF})=1$.
A similar argument applies to the theory
$p{\rm CF}_d$ of $p$-adically closed fields
of $p$-rank $d$, cf.~\cite{PR}.
For the special case of existential formulas
we will give a more conceptual proof in Corollary~\ref{cor:RCFpCF}.
\end{vb}

\begin{opm}\label{opm:zhangsun}
In \cite[Theorem~1.2]{ZhangSun}
an explicit $\exists_1$-formula, covering a special case of Proposition~\ref{prop:pinmperfect}
(concerning the field $\mathbb{Q}$, or more generally ordered fields, and $m=2$),
is constructed using elementary methods.
The first arXiv version of the present article precedes the first arXiv version of \cite{ZhangSun}.
\end{opm}

In fact it will follow from Proposition \ref{prop:qrvsqrp} that in the situation of Proposition \ref{prop:pinmperfect} one even has $\qrp_{\Sigma}(\pii{n}{m}) = 1$. In contrast to this, we have the following,
which will be strengthened considerably in Proposition \ref{large}.

\begin{prop}\label{ppowerscharp}
Fix a prime number $p$ and let $\Sigma$ be the $\Lar$-theory of fields of characteristic $p$. 
Then $\qr_{\Sigma}(\pii{n}{p}) = n$ for every $n\in\Natwithoutzero$.
\end{prop}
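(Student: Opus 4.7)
The upper bound $\qr_\Sigma(\pii{n}{p})\leq n$ is immediate from Definition~\ref{def:pi}, so the content is the matching lower bound. The plan is to apply the contrapositive of Corollary~\ref{cor:generalcriterion}: to establish $\qr_\Sigma(\pii{n}{p})>n-1$, it suffices to exhibit a field $K\models\Sigma$ and a tuple $\underline{x}\in\pii{n}{p}(K)$ such that for \emph{every} choice of $n-1$ further elements $z_1,\dots,z_{n-1}\in K$, the $\Lar$-substructure $A\subseteq K$ generated by $\underline{x}$ and $\underline{z}$ admits an embedding $\rho\colon A\hookrightarrow L$ into some $L\models\Sigma$ for which $\rho(x_j)\notin L^p$ for at least one index $j$.

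For the witness, I would take $K=\mathbb{F}_p(s_1,\dots,s_n)$ in $n$ algebraically independent indeterminates and set $x_i=s_i^p$, so that $\underline{x}\in (K^p)^n$ and $K\models\pii{n}{p}(\underline{x})$. The arithmetic input is the standard fact that $s_1,\dots,s_n$ form a $p$-basis of $K$ over $K^p=\mathbb{F}_p(s_1^p,\dots,s_n^p)$, so $[K:K^p]=p^n$. For any given $z_1,\dots,z_{n-1}\in K$, form $A=\mathbb{F}_p[\underline{x},\underline{z}]\subseteq K$ and let $F=\Frac(A)$, viewed inside $K$. Since $\mathbb{F}_p[\underline{x}]$ has fraction field $K^p$, one has $F=K^p(z_1,\dots,z_{n-1})$. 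Each $z_i$ satisfies $z_i^p\in K^p$, hence $[K^p(z_i):K^p]\leq p$, and by multiplicativity $[F:K^p]\leq p^{n-1}<p^n=[K:K^p]$. Thus $F\subsetneq K$, so some $s_j\in K\setminus F$. Taking $L=F$ and $\rho\colon A\hookrightarrow F$ the inclusion now does the job: if $x_j=s_j^p$ were a $p$-th power in $F$, its $p$-th root (unique in any field of characteristic $p$ by injectivity of Frobenius) would be $s_j$, contradicting $s_j\notin F$. So $\rho(x_j)\notin L^p$ and $L\not\models\pii{n}{p}(\rho(\underline{x}))$.

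The main obstacle, conceptually, is to see that imperfection degree is the right invariant: the fraction field of a ring generated by $\underline{x}$ and only $n-1$ additional elements cannot restore enough $p$-th roots to make all of $x_1,\dots,x_n$ simultaneously $p$-th powers, and this deficiency is preserved by the inclusion $A\hookrightarrow F$. Once this is recognised, the degree computation $[F:K^p]\leq p^{n-1}$ is a routine application of multiplicativity of field degrees.
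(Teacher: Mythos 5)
Your proposal is correct and takes essentially the same approach as the paper: both exhibit the witness field $\mathbb{F}_p(t_1^{1/p},\dots,t_n^{1/p})$ (which is your $\mathbb{F}_p(s_1,\dots,s_n)$ after relabelling $s_i = t_i^{1/p}$), compute that the fraction field of any subring generated by $\underline{x}$ and $n-1$ further elements has degree at most $p^{n-1} < p^n$ over $K^p = \mathbb{F}_p(\underline{t})$, and then invoke Corollary~\ref{cor:generalcriterion} with the inclusion into that fraction field. The only difference is presentational: you spell out the uniqueness of $p$-th roots in the final step, which the paper leaves implicit.
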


\begin{proof}
We already have $\qr_{\Sigma}(\pii{n}{p}) \leq n$. 
Let $K = \ff_p(t_1^{1/p}, \ldots, t_n^{1/p})$ for some algebraically independent $t_1, \ldots, t_n$. 
Then $K \models \pii{n}{p}(\underline{t})$.
On the other hand, 
if $A$ is any substructure of $K$ generated by elements $y_1,\dots,y_{n-1}$ over $\mathbb{F}_p[\underline{t}]$,
then $K'={\rm Frac}(A)$ is generated by $y_1,\dots,y_{n-1}$ over $\mathbb{F}_p(\underline{t})$,
but since $K/\mathbb{F}_p(\underline{t})$ is purely inseparably of degree $p^n$
and each $y_i$ has degree at most $p$ over $\mathbb{F}_p(\underline{t})$, 
we get $K'\neq K$ and therefore $K' \not\models \pii{n}{p}(\underline{t})$.
%cannot contain all $n$ elements $t_1,\dots,t_n$,
%and so $K' \not\models \pii{n}{p}(\underline{t})$.
%but no subfield $K'$ of $K$ generated by $n-1$ elements over $\ff_p(t_1, \ldots, t_n)$ satisfies $K' \models \pii{n}{p}(\underline{t})$. 
By Corollary \ref{cor:generalcriterion}, $\qr_{\Sigma}(\pii{n}{p}) > n-1$.
\end{proof}

\section{Essential fibre dimension}\label{sect:efd}

\noindent
In this section we introduce a general notion of essential fibre dimension
and investigate its relation to existential and positive-existential rank.

We fix a set $C$ of constants and an $\Lar(C)$-theory $\Sigma$
containing the $\Lar$-theory $\Tfields$ of fields.
If $K$ is a model of the $\mathcal{L}_{\rm ring}(C)$-theory of fields,
we denote by $K_C$ the fraction field of the smallest substructure of $K$, i.e.~the subfield generated by the interpretations of the constant symbols $c\in C$.
Note that if $C=\emptyset$, then $K_C$ is simply the prime field of $K$.

\begin{defi}\label{def:efd}
Let $\varphi$ be an existential $\Lar(C)$-formula in free variables $X_1, \ldots, X_n$. 
The \emph{essential fibre dimension $\efd_{\Sigma}(\varphi)$ of $\varphi$ with respect to $\Sigma$} is 
the smallest $d \in \Natwithzero$ such that for every $K \models \Sigma$ and every $x_1, \ldots, x_n \in K$ with $K \models \varphi(\underline{x})$,
there exists a subextension $K'/K_C(\underline{x})$ of $K/K_C(\underline{x})$ of transcendence degree 
$\trdeg(K'/K_C(\underline{x}))\leq d$ 
such that $L \models \varphi({\rho(x_1)}, \ldots, {\rho(x_n)})$ for every $L \models \Sigma$ with an $\mathcal{L}_{\rm ring}(C)$-embedding $\rho \colon K' \to L$.
Instead of $\efd_{\subTKprec}(\varphi)$ we will also write $\efd_K(\varphi)$ and $\efd_K(\varphi(K))$.
\end{defi}

\begin{opm}\label{rem:efdinvariant}\label{rem:efdsentences}
Note that $\efd_{\Sigma}(\varphi)$ depends only on the equivalence class of $\varphi$ modulo $\Sigma$.
In particular, $\efd_K(\varphi)$ depends only on the set $D=\varphi(K)$ defined by $\varphi$ in $K$, which justifies the notation $\efd_K(D)$.
As in the case of the existential rank, since we allow extension of the language by adding additional constant symbols, we can limit our study to that of existential sentences, i.e.~without free variables.
Justification for the name {\em essential fibre dimension}
and the connection with the definition of essential fibre dimension in the introduction
will be given in Proposition \ref{prop:efdCharacterisationGeneral} and Corollary \ref{cor:efdCharacterisationDiag}.
\end{opm}

\begin{lem}\label{rem:defEfdFieldVsRing}
For an existential $\Lar(C)$-formula $\varphi$ and $d\in\Natwithzero$,
$\efd_\Sigma(\varphi)\leq d$ if and only if
 for every $\Sigma \models K$ with smallest $\Lar(C)$-substructure $R_0$ and every $\underline x \in K^n$ with $K \models \varphi(\underline x)$, there exists a finitely generated $R_0[\underline x]$-subalgebra $R$ of $K$ with $\trdeg(\operatorname{Frac}(R) / K_C(\underline x)) \leq d$ such that $L \models \varphi(\rho(\underline x))$ for every $L\models\Sigma$ with an $\Lar(C)$-homomorphism $\rho \colon R \to L$. 
\end{lem}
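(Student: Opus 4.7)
My plan is to prove both implications separately. The conceptual bridge between the two formulations is that a ring homomorphism from a field into a field is automatically an embedding, which lets us pass between the ``homomorphism on a subring'' condition and the ``embedding on a subfield'' condition.

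For the implication that the ring-theoretic version implies $\efd_\Sigma(\varphi) \leq d$, I would take the finitely generated $R_0[\underline x]$-subalgebra $R \subseteq K$ provided by the hypothesis and set $K' := \Frac(R) \subseteq K$. This gives a subextension of $K/K_C(\underline x)$ of transcendence degree $\leq d$. Any $\Lar(C)$-embedding $\rho \colon K' \to L$ into some $L \models \Sigma$ restricts to an $\Lar(C)$-homomorphism $R \to L$, and the ring-theoretic hypothesis then delivers $L \models \varphi(\rho(\underline x))$.

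For the converse, assume $\efd_\Sigma(\varphi) \leq d$ and fix $K \models \Sigma$, $\underline x \in K^n$ with $K \models \varphi(\underline x)$, and the witnessing subextension $K'/K_C(\underline x)$ inside $K$ of transcendence degree $\leq d$. Since any $\Lar(C)$-homomorphism from the field $K'$ into a model of $\Sigma$ is automatically injective, the defining condition on $K'$ rephrases as: every $L \models \Sigma$ admitting an $\Lar(C)$-homomorphism $K' \to L$ satisfies $L \models \varphi(\rho(\underline x))$. I would then extend the language by new constant symbols $\underline Z$ interpreted as $\underline x$ (justified by Remark \ref{remstatements}) and apply Lemma \ref{lem:finiteness} to $K'$ and the $\Lar(C \cup \underline Z)$-sentence $\varphi(\underline Z)$. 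This produces a finitely generated $\Lar(C \cup \underline Z)$-substructure $A$ of $K'$, which is exactly a finitely generated $R_0[\underline x]$-subalgebra $R := A$ of $K$, with $\Frac(R) \subseteq K'$, hence $\trdeg(\Frac(R)/K_C(\underline x)) \leq d$, inheriting the homomorphism property from $K'$.

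The main obstacle is the converse direction, where the real content lies in invoking the compactness-based Lemma \ref{lem:finiteness} to descend from a subfield $K'$---typically not finitely generated as a ring over $R_0[\underline x]$---to a finitely generated subalgebra $R$. The only subtle point is verifying the hypothesis of that lemma for $K'$, which uses exactly the ``field-to-field forces injectivity'' observation mentioned above; the language extension that turns the parameters $\underline x$ into constants is routine bookkeeping.
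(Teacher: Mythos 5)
Your proof is correct and follows exactly the same route as the paper, whose own argument is the one-line remark ``This follows from Lemma~\ref{lem:finiteness}.'' You have simply filled in the details: the easy direction by taking $K' = \Frac(R)$, and the converse by adding constants for $\underline{x}$ (per Remark~\ref{remstatements}), noting that homomorphisms out of the field $K'$ into models of $\Sigma$ are automatically embeddings, and then applying Lemma~\ref{lem:finiteness} to descend to a finitely generated $\Lar(C\cup\underline Z)$-substructure, which is precisely a finitely generated $R_0[\underline x]$-subalgebra with $\Frac(R)\subseteq K'$.
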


\begin{proof}
This follows from Lemma \ref{lem:finiteness}.
\end{proof}

\begin{lem}\label{lem:efd_theory}
If $C\subseteq C'$ are sets of constants and $\Sigma\subseteq\Sigma'$ are $\Lar(C)$- respectively $\Lar(C')$-theories,
then $\efd_\Sigma(\varphi)\geq\efd_{\Sigma'}(\varphi)$
for every existential $\Lar(C)$-formula $\varphi$.
\end{lem}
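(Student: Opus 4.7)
The plan is to check the definition directly: strengthening the theory restricts the class of models that need to be considered, and enlarging the constant set $C$ only enlarges the base field over which transcendence degree is measured, so the bounding condition can only become easier.

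In detail, let $d = \efd_\Sigma(\varphi)$; we may assume $d < \infty$. Fix an arbitrary $K \models \Sigma'$ together with $\underline x \in \varphi(K)$. Regarding $K$ as an $\Lar(C)$-structure by forgetting the interpretations of the symbols in $C' \setminus C$, we still have $K \models \Sigma$ since $\Sigma \subseteq \Sigma'$. Applying the hypothesis $\efd_\Sigma(\varphi) \leq d$ yields a subextension $K'$ of $K/K_C(\underline x)$ with $\trdeg(K'/K_C(\underline x)) \leq d$ such that $L \models \varphi(\rho(\underline x))$ for every $L \models \Sigma$ and every $\Lar(C)$-embedding $\rho \colon K' \to L$.

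To convert this into a witness for $\Sigma'$, I would take the compositum $K'' = K' \cdot K_{C'}(\underline x)$ inside $K$. Then $K''$ is a subextension of $K/K_{C'}(\underline x)$, and a transcendence basis of $K'$ over $K_C(\underline x)$ also generates $K''$ over $K_{C'}(\underline x)$ up to algebraic elements, so $\trdeg(K''/K_{C'}(\underline x)) \leq \trdeg(K'/K_C(\underline x)) \leq d$. For the witness property, let $\rho \colon K'' \to L$ be any $\Lar(C')$-embedding with $L \models \Sigma'$. Since $C \subseteq C'$, the restriction $\rho|_{K'} \colon K' \to L$ is an $\Lar(C)$-embedding, and $L \models \Sigma$ since $\Sigma \subseteq \Sigma'$; therefore the witness property for $K'$ gives $L \models \varphi(\rho(\underline x))$. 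This establishes $\efd_{\Sigma'}(\varphi) \leq d$.

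There is no real obstacle here; the only subtlety is remembering that enlarging $C$ to $C'$ forces us to replace $K'$ by a field that contains $K_{C'}(\underline x)$, which is handled by the compositum trick above, and to verify that forgetting the extra constants turns an $\Lar(C')$-embedding into an $\Lar(C)$-embedding, which is immediate from $C \subseteq C'$.
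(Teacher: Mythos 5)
Your proof is correct and is exactly the unpacking of the paper's one-line justification, "This follows directly from the definition." The two subtleties you flag — replacing $K'$ by the compositum $K' \cdot K_{C'}(\underline x)$ to guarantee it sits over the larger base, and observing that an $\Lar(C')$-embedding restricts to an $\Lar(C)$-embedding — are indeed the only points worth checking, and you handle them cleanly.
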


\begin{proof}
This follows directly from the definition.
\end{proof}

\begin{lem}\label{edwedge}
For existential formulas $\varphi_1$ and $\varphi_2$ one has
\begin{eqnarray*}
\efd_{\Sigma}(\varphi_1 \vee \varphi_2)& \leq& \max\{\efd_{\Sigma}(\varphi_1),\efd_{\Sigma}(\varphi_2)\},\\
\efd_{\Sigma}(\varphi_1 \wedge \varphi_2) &\leq& \efd_{\Sigma}(\varphi_1) + \efd_{\Sigma}(\varphi_2).
\end{eqnarray*}
In particular, 
the class of existential formulas of essential fibre dimension zero
is closed under finite conjunctions and disjunctions.
\end{lem}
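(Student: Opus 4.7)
The plan is to verify both inequalities directly from Definition \ref{def:efd}, since the statement is a quantitative analogue of Remark \ref{qrtrivial} for essential fibre dimension and should follow from a straightforward case analysis (for $\vee$) and compositum argument (for $\wedge$). Let $d_i = \efd_\Sigma(\varphi_i)$ for $i=1,2$; we may assume both are finite, otherwise the inequalities are trivial.

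For the disjunction, fix $K\models\Sigma$ and $\underline{x}\in K^n$ with $K\models(\varphi_1\vee\varphi_2)(\underline{x})$. Then $K\models\varphi_i(\underline{x})$ for some $i\in\{1,2\}$, so Definition \ref{def:efd} applied to $\varphi_i$ yields a subextension $K'/K_C(\underline{x})$ of $K/K_C(\underline{x})$ with $\trdeg(K'/K_C(\underline{x}))\leq d_i\leq\max\{d_1,d_2\}$ such that $L\models\varphi_i(\rho(\underline{x}))$, hence $L\models(\varphi_1\vee\varphi_2)(\rho(\underline{x}))$, for every $L\models\Sigma$ with an $\mathcal{L}_{\rm ring}(C)$-embedding $\rho\colon K'\to L$. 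This verifies the first inequality.

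For the conjunction, fix $K\models\Sigma$ and $\underline{x}\in K^n$ with $K\models(\varphi_1\wedge\varphi_2)(\underline{x})$. Applying Definition \ref{def:efd} to each $\varphi_i$ separately inside $K$ gives subextensions $K_1',K_2'$ of $K/K_C(\underline{x})$, each of transcendence degree at most $d_i$ over $K_C(\underline{x})$, with the property that any $\mathcal{L}_{\rm ring}(C)$-embedding of $K_i'$ into a model of $\Sigma$ sends $\underline{x}$ to a tuple satisfying $\varphi_i$. Let $K'=K_1'K_2'$ be the compositum taken inside $K$. By subadditivity of transcendence degree,
\[
\trdeg(K'/K_C(\underline{x})) \;\leq\; \trdeg(K_1'/K_C(\underline{x})) + \trdeg(K_2'/K_C(\underline{x})) \;\leq\; d_1+d_2.
\]
Any $\mathcal{L}_{\rm ring}(C)$-embedding $\rho\colon K'\to L$ into an $L\models\Sigma$ restricts to embeddings of $K_1'$ and $K_2'$, so $L\models\varphi_1(\rho(\underline{x}))\wedge\varphi_2(\rho(\underline{x}))$, establishing the second inequality.

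The argument involves no substantial obstacle; the only mild point to keep in mind is that the compositum $K_1'K_2'$ is formed inside $K$ (so it is automatically a subextension of $K/K_C(\underline{x})$) and that transcendence degree is subadditive over the common ground field $K_C(\underline{x})$, both of which are routine. The closure statement for $\efd_\Sigma=0$ under finite conjunctions and disjunctions follows immediately by induction from the two displayed inequalities.
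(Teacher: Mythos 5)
Your proof is correct and takes essentially the same approach as the paper's: the disjunction case picks whichever $\varphi_i$ holds and reuses the witnessing subextension, and the conjunction case takes the compositum of the two witnessing subextensions inside $K$ and applies subadditivity of transcendence degree. The only cosmetic difference is that the paper first reduces to sentences via Remark~\ref{rem:efdsentences} (absorbing $\underline{x}$ into constants), whereas you carry the free variables through explicitly; both are fine.
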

\begin{proof}
Assume without loss of generality that $\varphi_1$ and $\varphi_2$ are sentences (Remark \ref{rem:efdsentences}).

If $K\models\Sigma \cup \lbrace \varphi_1 \wedge \varphi_2 \rbrace$, then by assumption there exists for $i\in \lbrace 1, 2 \rbrace$ a subfield $K_i$ of $K$ of transcendence degree $d_i \leq \efd_{\Sigma}(\varphi_i)$ over $K_C$ such that $L \models \varphi_i$ for each $L \models \Sigma$ into which $K_i$ embeds over $K_C$. 
Then the compositum $K_1K_2$ is a subfield of $K$ of transcendence degree at most $d_1 + d_2$ over $K_C$, and one has $L \models \varphi_1 \wedge \varphi_2$ for every $L \models \Sigma$ into which $K_1K_2$ embeds. This shows that $\efd_{\Sigma}(\varphi_1 \wedge \varphi_2) \leq d_1 + d_2$.

The statement for $\varphi_1\vee\varphi_2$ is proven similarly.
\end{proof}

\begin{vb}\label{ex:pii_efd}
For every $n, m \in \Natwithoutzero$ we have $\efd_{\Tfields}(\pii{n}{m})=0$ (recall $\pii{n}{m}$ from Definition~\ref{def:pi}).
This shows that in general the existential rank of an existential formula can be arbitrarily large compared to its essential fibre dimension, as for a prime number $p$ and $n \in \Natwithoutzero$, $\qr_{\Tfields}(\pii{n}{p}) = n$ by Proposition \ref{ppowerscharp} and Lemma \ref{lem:language}.
\end{vb}

\begin{vb}
For every $n \in \nat$ there is an existential
%algebraic existential 
$\Lar$-formula $\varphi$ with $\efd_{\Tfields}(\varphi)=0$ in free variables $X_0, \ldots, X_n$ such that for any field $K$ and $x_0, \ldots, x_n \in K$ we have $K \models \varphi(x_0, \ldots, x_n)$ if and only if the polynomial $f=\sum_{i=0}^n x_iT^i\in K[T]$ is reducible. 
Indeed, $f$ is reducible over $K$ if and only if it is reducible over the relative algebraic closure $K'$ of 
the subfield generated by $\underline{x}$, as the coefficients of the factors of $f$ are symmetric polynomials in its roots.
\end{vb}

\begin{vb}\label{ex:efd0}
If $\Sigma$ is one of the theories
${\rm ACF}$, ${\rm RCF}$ or  $p{\rm CF}_d$ (cf.~Example \ref{ex:RCF}) in $\Lar$ (so $C=\emptyset$),
then $\efd_\Sigma(\varphi)=0$ for every existential $\mathcal{L}_{\rm ring}$-formula $\varphi$: 
In each of these theories, a relatively algebraically closed subfield is an elementary substructure
(see \cite[Theorems 3.4 and 5.1]{PR} for $p{\rm CF}_d$),
so if $K\models\Sigma$ and $K\models\varphi(\underline{x})$, 
then the relative algebraic closure $K'$ of $K_C(\underline{x})$ in $K$
satisfies $K'\prec K$
and hence $K'\models\Sigma\cup\{\varphi(\underline{x})\}$.
\end{vb}

It is not hard to see that modulo $\Tfields$, every quantifier-free formula is equivalent to an $\exists_1^+$-formula. One can invoke Corollary \ref{cor:generalcriterionhom} for this, but it can also be seen more directly: put the formula in disjunctive normal form, 
use that a conjunction of finitely many inequalities is equivalent to one inequality by virtue of the fact that all models are domains
(cf.~Remark \ref{rem:perk}), 
then finally replace the (at most) one remaining inequality by an $\exists_1^+$-formula by using that an element of a model is non-zero if and only if it has an inverse. In particular, we obtain that
\begin{equation}\label{eqn:erkperk}
\qr_{\Sigma}(\varphi) \leq \qrp_{\Sigma}(\varphi) \leq \qr_{\Sigma}(\varphi) + 1.
\end{equation}
for any $\Lar(C)$-theory $\Sigma$ containing $\Tfields$.
We will now see that, in all but some well-understood cases, $\qr_{\Sigma}(\varphi) = \qrp_{\Sigma}(\varphi)$.

\begin{prop}\label{edleqqr}
Let $\Sigma$ be an $\mathcal{L}_{\rm ring}(C)$-theory containing $\Tfields$. 
For every existential formula $\varphi$ we have
\begin{displaymath}
\efd_{\Sigma}(\varphi) \leq \max\lbrace\qrp_{\Sigma}(\varphi) - 1, 0\rbrace.
\end{displaymath}
\end{prop}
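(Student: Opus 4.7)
The plan is to set $m = \perk_\Sigma(\varphi)$ and, for each $K \models \Sigma \cup \{\varphi\}$, produce a subfield $K' \subseteq K$ with $\trdeg(K'/K_C) \leq \max\{m-1, 0\}$ such that every $\Lar(C)$-embedding $\rho \colon K' \to L$ into a model $L \models \Sigma$ forces $L \models \varphi$. By Remark \ref{rem:efdsentences} I may assume $\varphi$ is a sentence, and I may assume $m < \infty$.

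The base case $m=0$ is straightforward: $\varphi$ is equivalent modulo $\Sigma$ to a positive quantifier-free sentence whose truth depends only on the interpretations of the constants in $C$, so $K' = K_C$ suffices because $\Lar(C)$-embeddings preserve equalities of closed $\Lar(C)$-terms.

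For $m \geq 1$, I would apply Proposition \ref{prop:generalcriterionhom} to obtain an $\Lar(C)$-substructure $A \subseteq K$ generated by $m$ elements $y_1, \ldots, y_m$ with the property that every $L \models \Sigma$ admitting an $\Lar(C)$-homomorphism $A \to L$ satisfies $L \models \varphi$. Write $d = \trdeg(K_C(y_1,\ldots,y_m)/K_C) \leq m$ and reorder so that $y_1, \ldots, y_d$ are algebraically independent over $K_C$. If $d \leq m-1$, then $K' := K_C(y_1, \ldots, y_m)$ contains $A$ and has transcendence degree at most $m-1$ over $K_C$, so any $\Lar(C)$-embedding $\rho \colon K' \to L$ restricts to an $\Lar(C)$-homomorphism $A \to L$, giving $L \models \varphi$.

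The main obstacle is the case $d = m$, where $y_1, \ldots, y_m$ are algebraically independent over $K_C$ and $y_m \notin K_C(y_1, \ldots, y_{m-1})$, so no subfield of transcendence degree $<m$ contains all of $A$. The key observation is that algebraic independence forces the evaluation map $\langle C \rangle_K[T_1, \ldots, T_m] \to A$ sending $T_i \mapsto y_i$ to be an isomorphism; in particular $A$ is a polynomial ring in $y_m$ over $B := \langle C \rangle_K[y_1, \ldots, y_{m-1}]$, which is contained in $K' := K_C(y_1, \ldots, y_{m-1})$. Given an $\Lar(C)$-embedding $\rho \colon K' \to L$, I would restrict it to $B$ and extend to an $\Lar(C)$-homomorphism $A \to L$ by mapping $y_m$ to any element of $L$, say $0_L$; the freeness of $A$ as a $B$-algebra makes this well-defined. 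The property of $A$ supplied by Proposition \ref{prop:generalcriterionhom} then yields $L \models \varphi$, completing the bound $\efd_\Sigma(\varphi) \leq m - 1$.
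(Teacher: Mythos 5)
Your proof is correct and follows essentially the same approach as the paper's: take $m$ generators arising from the positive-existential characterisation of $\perk_\Sigma$, and in the degenerate case of full algebraic independence exploit the freedom granted by positivity to send a generator anywhere. The only cosmetic difference is that the paper, in that case, sends \emph{all} the $y_i$ to $0$ and so lands on $K' = K_C$ of transcendence degree $0$, whereas you free up only $y_m$ and keep $K' = K_C(y_1, \dots, y_{m-1})$ of transcendence degree $m-1$; both fall within the required bound.
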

\begin{proof}
Without loss of generality $\varphi$ is an $\exists^+_m$-sentence with $m = \qrp_{\Sigma}(\varphi)$
(Remarks \ref{remstatements} and \ref{rem:efdsentences}). 
Given any $K \models \Sigma \cup \lbrace \varphi \rbrace$, let $(y_1, \ldots, y_m) \in K^m$ be a witness for $\varphi$. 
Then $K_C[\underline{y}] \models \varphi$. 
If $y_1, \ldots, y_m$ are algebraically independent over $K_C$, then there is a homomorphism $K_C[\underline{y}] \to K_C$ mapping each $y_i$ to $0$, whence also $K_C \models \varphi$, and we let $K'=K_C$. 
Otherwise $\trdeg(K_C(\underline{y})/K_C) \leq m-1$, and we let $K'=K_C(\underline{y})$. 
In either case $\trdeg(K'/K_C)\leq\max \lbrace m-1, 0 \rbrace$ and $K' \models \varphi$. 
This shows that $\efd_{\Sigma}(\varphi) \leq \max \lbrace m-1, 0 \rbrace$.
\end{proof}

\begin{lem}\label{lem:overring}
Let $S/R$ be an extension of integral domains such that $S$ is finitely generated over $R$. Set $K = \Frac(R)$, $L = \Frac(S)$ and assume that $L/K$ is a simple algebraic extension. Then there exists an overring of $S$ which is generated over $R$ by one element.
\end{lem}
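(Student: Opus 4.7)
The plan is to find an integral primitive element for $L/K$ inside $L$, use it to show $S$ is contained in $R[\theta_0, 1/q]$ for some $q \in R$, and then exhibit a single element $\gamma \in L$ whose $R$-algebra contains both $\theta_0$ and $1/q$.

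First I would put everything in a normal form. Since $L/K$ is simple algebraic, write $L = K(\theta_0)$; after multiplying $\theta_0$ by a common $R$-denominator of the coefficients of its monic $K$-minimal polynomial, I may assume $\theta_0$ is integral over $R$ with monic minimal polynomial $f(X) = X^n + c_{n-1}X^{n-1} + \cdots + c_0 \in R[X]$. Because $L = K[\theta_0]$, each generator $s_i$ of $S$ can be written as $s_i = p_i(\theta_0)/q_i$ for some $p_i \in R[X]$ and $q_i \in R \setminus \{0\}$. Setting $q := \prod_i q_i$ gives $S \subseteq R[\theta_0, 1/q]$, and it now suffices to produce $\gamma \in L$ with $R[\gamma] \supseteq R[\theta_0, 1/q]$.

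The candidate I propose is $\gamma := \theta_0 + 1/q$, equivalently $\gamma = (q\theta_0 + 1)/q$. Put $\theta := q\gamma = q\theta_0 + 1$; this is still an integral primitive element of $L/K$, and its monic minimal polynomial is obtained by substituting $\theta_0 = (\theta - 1)/q$ into $f(\theta_0) = 0$ and clearing the denominator $q^n$, which yields
\[
g(X) = (X-1)^n + c_{n-1} q (X-1)^{n-1} + c_{n-2} q^2 (X-1)^{n-2} + \cdots + c_0 q^n \in R[X].
\]
The key feature of $g$ is the congruence $g(0) \equiv (-1)^n \pmod{q}$ in $R$, so $g(0) = (-1)^n + q m$ for some $m \in R$.

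To finish, I would plug $\theta = q\gamma$ into $g(\theta) = 0$ and observe that every non-constant term of the resulting expression in $\gamma$ carries a factor of $q$. Isolating the constant term gives $g(0) = -q\, M(\gamma)$ for some $M \in R[\gamma]$, so $g(0)/q \in R[\gamma]$. Combined with $g(0)/q = (-1)^n/q + m$, this produces $(-1)^n/q \in R[\gamma]$ and hence $1/q \in R[\gamma]$. Then $\theta_0 = \gamma - 1/q \in R[\gamma]$ as well, so $R[\gamma] \supseteq R[\theta_0, 1/q] \supseteq S$, as desired. The genuinely delicate point — and the reason for the shift by $1$ in the definition of $\gamma$ — is the congruence $g(0) \equiv (-1)^n \pmod{q}$: the naive candidate $\gamma = \theta_0/q$ would instead produce $g(0) \equiv c_0 \pmod{q}$, and $c_0$ need not be coprime to $q$ in $R$. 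Shifting $\theta_0$ by $1/q$ replaces $c_0$ by the unit $(-1)^n$, and it is this substitution that ultimately allows $1/q$ to be extracted from $R[\gamma]$.
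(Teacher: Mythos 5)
Your proof is correct, but it follows a genuinely different route from the paper's. The paper works directly with an arbitrary primitive element $\alpha$ of $L/K$: it writes each generator $\beta_i$ of $S$ as $\beta_i/\alpha = f_i/g$ with $f_i \in R[\alpha]$, $g \in R$ (using that $\beta_i/\alpha \in L = K[\alpha]$ since $\alpha$ is algebraic), and then observes that $\beta_i = (\alpha/g)\, f_i$ together with $R[\alpha] \subseteq R[\alpha/g]$ gives $S \subseteq R[\alpha/g]$ in two lines. You instead first normalize a primitive element $\theta_0$ to be integral over $R$, show $S \subseteq R[\theta_0, 1/q]$, and then prove the self-contained fact that when $\theta_0$ is integral over $R$ and $q \in R\setminus\{0\}$, the two-generator ring $R[\theta_0, 1/q]$ equals the monogenic ring $R[\theta_0 + 1/q]$. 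That reduction step — shifting by $1/q$ so that the constant term of the transformed minimal polynomial becomes a unit mod $q$, then isolating $1/q$ from the resulting $R[\gamma]$-relation — is the heart of your argument and does not appear in the paper. The paper's proof is shorter and avoids the integrality normalization entirely by exploiting the extra factor of $\alpha$; your proof is more computational but produces as a byproduct a clean statement about rings of the form $R[\theta_0, 1/q]$ being monogenic over $R$, which could be of independent use.

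One small presentational note: you do not actually need $g$ to be the \emph{minimal} polynomial of $\theta = q\gamma$; the argument only uses that $g \in R[X]$ is monic, vanishes at $\theta$, and has constant term congruent to $(-1)^n$ modulo $q$. Phrasing it as ``a monic $R$-integral dependence relation for $\theta$'' would make the proof slightly more robust and avoid having to justify minimality.
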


\begin{proof}
By assumption, $L = K[\alpha]$ for some $\alpha \in L$, and $S = R[\beta_1, \ldots, \beta_m]$ for certain $\beta_1, \ldots, \beta_m \in S$. Using that $\alpha$ is algebraic over $K$ and a primitive element of $L/K$, we can find $f_1, \ldots, f_m \in R[\alpha]$ and $g \in R \setminus \lbrace 0 \rbrace$ such that $\frac{\beta_i}{\alpha} = \frac{f_i}{g}$ for each $i \in \lbrace 1, \ldots, m \rbrace$. Then $S \subseteq R[\frac{\alpha}{g}] \subseteq L$.
\end{proof}

\begin{prop}\label{prop:qrvsqrp}
Let $\Sigma$ be an $\Lar(C)$-theory containing $\Tfields$, and $\varphi$ an existential formula. 
Then precisely one of the following holds:
\begin{enumerate}[$\rm(i)$]
\item\label{item1} $\qr_{\Sigma}(\varphi) = \qrp_{\Sigma}(\varphi)$,
\item\label{item2} $\qr_{\Sigma}(\varphi) = 0$ and $\qrp_{\Sigma}(\varphi) = 1$,
\item\label{item3} $\qr_{\Sigma}(\varphi) = 1 = \efd_{\Sigma}(\varphi)$, $\qrp_{\Sigma}(\varphi) = 2$, and there exists some finite $L \models \Sigma$ and a tuple $\underline{a}$ in $L$ such that $L \not\models \varphi(\underline{a})$, but $M \models \varphi(\underline{a})$ for every infinite $M \models \Sigma$ extending $L$. 
\end{enumerate}
\end{prop}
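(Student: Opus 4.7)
The plan is to combine the bound $\qrp_\Sigma(\varphi)\leq\qr_\Sigma(\varphi)+1$ from \eqref{eqn:erkperk} with a case analysis on $\qr_\Sigma(\varphi)$. The three listed cases are pairwise mutually exclusive (they prescribe incompatible values of $(\qr_\Sigma(\varphi),\qrp_\Sigma(\varphi))$), so it suffices to check exhaustiveness. If $\qrp_\Sigma(\varphi)=\qr_\Sigma(\varphi)$ we are in (i); if $\qrp_\Sigma(\varphi)=\qr_\Sigma(\varphi)+1$ and $\qr_\Sigma(\varphi)=0$ we are in (ii). The remaining work then splits into (a) excluding $\qrp_\Sigma(\varphi)=\qr_\Sigma(\varphi)+1$ whenever $\qr_\Sigma(\varphi)\geq 2$, and (b) deriving the extra assertions of case (iii) when $\qr_\Sigma(\varphi)=1$ and $\qrp_\Sigma(\varphi)=2$.

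For (a), set $n=\qr_\Sigma(\varphi)\geq 2$ and aim to prove $\qrp_\Sigma(\varphi)\leq n$, contradicting $\qrp_\Sigma(\varphi)=n+1$. Reducing to a sentence via Remark \ref{remstatements}, write $\varphi$ as $\exists Y_1,\dots,Y_n\,\psi(\underline Y)$ with $\psi$ quantifier-free, place $\psi$ in disjunctive normal form, and consolidate the negated atoms in each disjunct to one polynomial inequality via the domain property (cf.\ Remark \ref{rem:perk}), yielding $\psi\equiv\bigvee_j(G_j(\underline Y)=0\wedge h_j(\underline Y)\neq 0)$. By Remark \ref{qrtrivial} it is enough to show $\qrp_\Sigma(\varphi_j)\leq n$ for each disjunct $\varphi_j=\exists\underline Y(G_j=0\wedge h_j\neq 0)$. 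I plan to apply Proposition \ref{prop:generalcriterionhom}: given a witness $(y_1,\dots,y_n)$ in some $K\models\Sigma$, the element $h_j(\underline y)^{-1}$ lies in $\Frac(R_0[y_1,\dots,y_n])$ (with $R_0$ the smallest $\Lar(C)$-substructure of $K$), and when $y_n$ is algebraic over $\Frac(R_0[y_1,\dots,y_{n-1}])$ the primitive element theorem combined with Lemma \ref{lem:overring} supplies an overring of $R_0[y_1,\dots,y_n,h_j(\underline y)^{-1}]$ generated by $n$ elements over $R_0$; any $\Lar(C)$-homomorphism of this subring into a model of $\Sigma$ then preserves $\varphi_j$. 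The main obstacle is the degenerate configuration in which $y_1,\dots,y_n$ are algebraically independent over $R_0$ (so $G_j$ is effectively trivial), which I expect to handle by analysing $\exists\underline Y(h_j\neq 0)$ directly: over any infinite model in $\Sigma$ this formula is equivalent to a quantifier-free condition on the coefficients of $h_j$, forcing $\qrp\leq 1\leq n$.

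For (b), Proposition \ref{edleqqr} gives $\efd_\Sigma(\varphi)\leq 1$ at once, so it remains only to exclude $\efd_\Sigma(\varphi)=0$. If $\efd_\Sigma(\varphi)=0$, then every $\underline x\in\varphi(K)$ for $K\models\Sigma$ has an algebraic subextension of $K_C(\underline x)$ in $K$ forcing $\varphi$ in every further $\Sigma$-embedding, and a compactness-plus-primitive-element argument packages this data into a single polynomial equation producing an $\exists^+_1$-definition of $\varphi$, contradicting $\qrp_\Sigma(\varphi)=2$. For the finite-field condition, the strict gap $\qrp_\Sigma(\varphi)>\qr_\Sigma(\varphi)$ makes the criteria of Propositions \ref{prop:generalcriterion} and \ref{prop:generalcriterionhom} diverge, so there exist $K\models\Sigma\cup\{\varphi\}$, a $1$-generated subring $A=R_0[\underline x,y]$ of $K$ valid for the embedding criterion, and an $\Lar(C)$-homomorphism $\rho\colon A\to L'$ with $L'\models\Sigma\cup\{\neg\varphi\}$ necessarily having nontrivial kernel $\mathfrak m$ and factoring as $A\twoheadrightarrow A/\mathfrak m\hookrightarrow L'$. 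The residue field $L:=A/\mathfrak m$ is finite---directly a quotient of $\mathbb F_p[y]$ in positive characteristic; via specialisation to a positive residue characteristic in characteristic zero---and $L\not\models\varphi(\underline a)$ for $\underline a:=\rho(\underline x)$ since universal sentences (hence $\neg\varphi$) descend from $L'$ to the subfield $L$. The hardest part will be justifying that every infinite $M\models\Sigma$ extending $L$ satisfies $\varphi(\underline a)$: the plan is to argue that such an $M$ contains enough (a transcendental element, or a suitable algebraic root) to reconstruct a genuine $\Lar(C)$-embedding $A\hookrightarrow M$ sending $\underline x$ to $\underline a$, so that Proposition \ref{prop:generalcriterion} applied in $M$ gives $M\models\varphi(\underline a)$.
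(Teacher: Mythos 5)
Your broad strategy (reduce to sentences, case-split on $\qr_\Sigma(\varphi)$, apply the criteria of Propositions \ref{prop:generalcriterion} and \ref{prop:generalcriterionhom}, and use Lemma \ref{lem:overring} to pass from an embedding to a homomorphism witness) shares real DNA with the paper's proof, but it is missing the single device that makes both parts go through: rather than fixing a raw witness $\underline y$, the paper chooses a subring $B_0$ satisfying the embedding criterion (\ref{conddagger2}) whose fraction field has \emph{minimal} transcendence degree over $K_C$. That minimality is what lets them split into the tame Case 1 (some generator algebraic, feed into Lemma \ref{lem:overring}) and the degenerate Case 2 (all generators algebraically independent) and, crucially, derive that in Case~2 one must already have $m=1$ and $K_C$ finite. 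Without this, your argument has two genuine gaps.

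In part (a), your fallback for the fully independent witness asserts that $\exists\underline Y\,(h_j\neq 0)$ has $\qrp_\Sigma\leq 1$ because over infinite models it is quantifier-free definable. This is false precisely when $\Sigma$ admits finite models: the formula $\exists Y\,\neg(Y^4\doteq Y)$ from Example \ref{vb:weirdErkPerk} has $\qrp_\Sigma=2$, and this failure is exactly what case (iii) of the proposition is designed to track. It happens that for $n\geq 2$ the bound $\qrp\leq 2\leq n$ is still available, but your stated reasoning does not deliver the needed good subring $A$ for this $K$ under Proposition \ref{prop:generalcriterionhom}, and applying Lemma \ref{lem:overring} to $R_0[\underline y, h_j(\underline y)^{-1}]$ over $R_0[y_1,\dots,y_{n-1}]$ fails because that extension of fraction fields is transcendental, not simple algebraic. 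The paper avoids this entirely because minimality of $B_0$ shows Case 2 cannot arise when $m\geq 2$.

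In part (b), three steps are underjustified. First, if $\efd_\Sigma(\varphi)=0$, your compactness-plus-primitive-element argument produces a $1$-generated subring good for the \emph{embedding} criterion (Proposition \ref{prop:generalcriterion}), i.e.\ $\qr_\Sigma(\varphi)\leq 1$, which you already knew; it does not give $\qrp_\Sigma(\varphi)\leq 1$ (the homomorphism criterion), so no contradiction is obtained from $\qrp_\Sigma(\varphi)=2$. The paper derives the contradiction differently, by noting that $\efd=0$ would supply a good $B$ of transcendence degree $0$, contradicting minimality of $B_0$. Second, the finiteness of $L=A/\mathfrak m$ is not automatic: $R_0$ can be infinite (or of characteristic zero) when $C$ is large, and a nontrivial kernel does not by itself force a finite residue field. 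The paper's argument shows that $K_C$ is finite as a consequence of the Case 2 analysis, and only then passes to a finite $L$. Third, the final step -- embedding $A$ into an arbitrary infinite $M\models\Sigma$ extending $L$ -- only works because in Case 2 the field $F=\Frac(B_0)$ is a purely transcendental extension of the finite field $K_C$, so it embeds into an elementary extension of $M$ by sending generators to independent transcendentals; for a generator that is algebraic over $\Frac(R_0)$ (which your setup does not rule out), the desired root need not exist in $M$.

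Incorporating the minimization device and then tracking Cases 1 and 2 as the paper does would repair all of these; as written, the proposal does not constitute a complete proof.
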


\begin{proof}
First of all the three cases are clearly mutually exclusive.
Let $m = \qr_{\Sigma}(\varphi)$
and assume again without loss of generality that $\varphi$ is an $\exists_m$-$\Lar(C)$-sentence (Remarks \ref{remstatements} and \ref{rem:efdinvariant}). 
Note that $m\leq\qrp_{\Sigma}(\varphi) \leq m+1$ by (\ref{eqn:erkperk}).
If $m = 0$ we are in case (\ref{item1}) or (\ref{item2}),
so assume that $m\geq 1$.
We want to argue that we are either in case {\rm(\ref{item1})} or {\rm(\ref{item3})}.

To this end we will apply Proposition~\ref{prop:generalcriterionhom}. 
So let $K$ be a model of $\Sigma \cup \lbrace \varphi \rbrace$
and let $R_0$ denote its smallest substructure. Note that $K_C = \Frac(R_0)$.
To prove that $\qrp_\Sigma(\varphi) \leq m$, we want to show that
\begin{equation}\label{conddagger}
\begin{minipage}{14cm}
{there exists a subring $A$ of $K$ generated by $m$ elements over $R_0$
 such that
$L\models\varphi$ for every $L\models\Sigma$ 
which has an $\mathcal{L}_{\rm ring}(C)$-homomorphism $A\rightarrow L$.}
\end{minipage}
\end{equation}
We will show that either this criterion is always satisfied, or we are in case {\rm(\ref{item3})}.
By Proposition \ref{prop:generalcriterion} we have that
\begin{equation}\label{conddagger2}
\begin{minipage}{14cm}
{there exists a subring $B$ of $K$ generated by $m$ elements over $R_0$ such that $L \models \varphi$ for every $L \models \Sigma$ which has an $\Lar(C)$-embedding $B \to L$.}
\end{minipage}
\end{equation}
Let $B_0$ be some subring of $K$ such that (\ref{conddagger2}) is satisfied for $B = B_0$
and such that furthermore $\trdeg(\Frac(B_0)/K_C) \leq \trdeg(\Frac(B)/K_C)$ for all subrings $B$ of $K$ for which (\ref{conddagger2}) is satisfied. Set $F = \Frac(B_0)$. As $F$ is generated by $m$ elements $a_1, \ldots, a_m$ over $K_C$, clearly $\trdeg(F/K_C) \leq m$. We consider two cases.

\textit{Case 1: $\trdeg(F/K_C) < m$.}
In this case $a_1, \ldots, a_m$ are algebraically dependent. Without loss of generality $a_m$ is algebraic over $K_C(a_1, \ldots, a_{m-1})$. By Lemma \ref{lem:finiteness} there exists a subring $S$ of $F$ finitely generated over $R_0$ such that $L \models \varphi$ for every $L \models \Sigma$ which has an $\Lar(C)$-homomorphism $S \to L$. 
Without loss of generality, $S$ contains $a_1,\dots,a_{m-1}$.
Lemma \ref{lem:overring} applied to the extension $S/R_0[a_1, \ldots, a_{m-1}]$ yields the existence of an overring $A$ of $S$ generated by one element over $R_0[a_1, \ldots, a_{m-1}]$ and therefore by $m$ elements over $R_0$, whence (\ref{conddagger}) is satisfied.

\textit{Case 2: $\trdeg(F/K_C) = m$.}
In this case $a_1, \ldots, a_m$ are algebraically independent, and $F/K_C$ is a purely transcendental extension.
If $L$ is an infinite model of $\Sigma$ with an $\Lar(C)$-embedding $K_C \to L$, then this embedding extends to an embedding $F \to L^\ast$ for some elementary extension $L^\ast$ of $L$, by mapping $a_1, \ldots, a_m$ to any $m$ elements which are algebraically independent over $L$. 
By (\ref{conddagger2}) for $B=B_0\subseteq F$ it follows that $L^\ast \models \varphi$, whence also $L \models \varphi$.

In particular, as $K_C(a_1)$ is infinite, any $L \models \Sigma$ with an $\Lar(C)$-embedding $K_C(a_1) \to L$ satisfies $L \models \varphi$, i.e. (\ref{conddagger2}) is satisfied with $B = R_0[a_1]$. 
The choice of $B_0$ implies that $m = 1$. 
Furthermore, there exists $L \models \Sigma$ into which $K_C$ embeds such that $L \not\models \varphi$, but such that $M \models \varphi$ for all infinite $M \models \Sigma$ extending $L$ by the previous paragraph. 
In particular, $L$ and hence $K_C$ are finite.

Finally, since $K_C$ is finite, $K_C = R_0$ is a finite field. If we had $\efd_\Sigma(\varphi) = 0$, then by Lemma \ref{rem:defEfdFieldVsRing} there would be a finite extension $K'/K_C$ contained in $K/K_C$ such that $L \models \varphi$ for any $L \models \Sigma$ into which $K'$ embeds.
But then $K'$ would be generated by one element over $K_C$ and hence (\ref{conddagger2}) would be satisfied with $B = K'$, 
contradicting the choice of $B_0$.
We infer via Proposition \ref{edleqqr} that $0 < \efd_\Sigma(\varphi) < \qrp_\Sigma(\varphi) \leq m + 1 = 2$, whence $\efd_\Sigma(\varphi) = 1$ and $\qrp_\Sigma(\varphi) = 2$. As such, we are in case {\rm(\ref{item3})}, and this concludes the proof.
\end{proof}

\begin{gev}\label{cor:qrvsqrp}
Let $K$ be a field and $D \subseteq K^n$ a diophantine set. Then either $\erk_K(D) = \qrp_K(D)$ or $D$ is quantifier-freely definable.
\end{gev}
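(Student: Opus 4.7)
The plan is to deduce Corollary \ref{cor:qrvsqrp} as a direct consequence of Proposition \ref{prop:qrvsqrp}, applied to the theory $\Sigma = \TKprec$. I would begin by fixing an existential $\Lar(K)$-formula $\varphi$ defining $D$, and then invoke the trichotomy given by that proposition.

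In case (i) of Proposition \ref{prop:qrvsqrp}, the equality $\erk_\Sigma(\varphi) = \qrp_\Sigma(\varphi)$ is, by Definition \ref{def:erk_K} (together with its positive-existential analogue noted in Remark \ref{rem:perk_analogues}), exactly the first conclusion $\erk_K(D) = \qrp_K(D)$. In case (ii), $\erk_\Sigma(\varphi) = 0$ means precisely that $\varphi$ is equivalent modulo $\TKprec$ to a quantifier-free $\Lar(K)$-formula, so $D$ is quantifier-freely definable and we land in the second conclusion.

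The only nontrivial step is handling case (iii). It produces a finite $L \models \TKprec$; since the models of $\TKprec$ are precisely the elementary extensions of $K$ (Remark \ref{rem:theories}), and an elementary extension of $K$ that happens to be finite must have the same cardinality as $K$, we conclude that $K$ itself is finite. But over a finite field every subset of $K^n$ is already quantifier-freely definable, e.g.\ as a finite disjunction of conjunctions $X_1 \doteq x_1 \wedge \dots \wedge X_n \doteq x_n$ using parameters from $K$, so $D$ again satisfies the second conclusion.

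The main obstacle to anticipate is simply recognising that the seemingly exotic case (iii) of the proposition collapses to the quantifier-free alternative when specialised to $\Sigma = \TKprec$. Once this collapse is verified, the dichotomy claimed in the corollary follows immediately from the trichotomy in the proposition.
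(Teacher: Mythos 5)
Your proof is correct and takes essentially the same approach as the paper: both apply Proposition \ref{prop:qrvsqrp} with $\Sigma = \TKprec$ and rest on the observation that case (iii) forces a finite model of $\TKprec$, hence $K$ finite, hence every subset of $K^n$ is quantifier-freely definable. The only cosmetic difference is the order of the case split (the paper disposes of finite $K$ up front and then notes case (iii) cannot occur for infinite $K$, whereas you run through the three cases of the proposition and extract the finiteness of $K$ from case (iii)); the substance is the same, and your minor imprecision ("same cardinality as $K$" rather than simply noting $K \subseteq L$ finite forces $K$ finite) does not affect the argument.
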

\begin{proof}
If $K$ is finite, then any subset of $K^n$ is quantifier-freely definable. If $K$ is infinite, then so is any model of $\TKprec$, whence case (\ref{item3}) in Proposition \ref{prop:qrvsqrp} cannot occur when applied to $\TKprec$, and we get the desired result.
\end{proof}

The previous proof uses that
case (\ref{item3}) in Proposition \ref{prop:qrvsqrp} can only occur when $\Sigma$ admits finite models, 
but note that $\Sigma$ also needs to admit infinite models since otherwise $\efd_\Sigma(\varphi) > 0$ is impossible.
We view this case as degenerate.
See Example \ref{vb:weirdErkPerk} for a situation in which it occurs.

It is of interest to determine how existential rank can change when finite models are excluded from the models of a given theory (and therefore case (\ref{item3}) is avoided).
The following proposition shows that existential rank cannot then drop arbitrarily far.
\begin{prop}\label{prop:infmodels}
  Let $\Sigma$ be an $\Lar(C)$-theory containing $\Tfields$.
  Denote by $\Sigma_\infty$ the theory of infinite models of $\Sigma$.
  Then for every existential $\Lar(C)$-formula $\varphi$ we have 
  $$
   \erk_\Sigma(\varphi) \leq \max\{ 2, \erk_{\Sigma_\infty}(\varphi) \}.
   $$
\end{prop}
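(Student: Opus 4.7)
The plan is to apply Proposition~\ref{prop:generalcriterion}. Set $m := \erk_{\Sigma_\infty}(\varphi)$; we may assume $m < \infty$ (else there is nothing to show), and let $\psi$ be an $\exists_m$-$\Lar(C)$-formula with $\Sigma_\infty \models \varphi \leftrightarrow \psi$. By Remark~\ref{remstatements} assume that $\varphi$ and $\psi$ are sentences. A compactness argument applied to the scheme ``$K$ has at least $n$ elements'' gives $N \in \Natwithoutzero$ such that every model of $\Sigma$ of cardinality at least $N$ satisfies $\varphi \leftrightarrow \psi$. It then suffices to show that for every $K \models \Sigma \cup \{\varphi\}$ there is a substructure $A \subseteq K$ generated over the minimal substructure $R_0$ by at most $\max\{2,m\}$ elements such that every $L \models \Sigma$ admitting an embedding $A \hookrightarrow L$ satisfies $L \models \varphi$.

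If $K$ is finite, then $K = R_0[\alpha]$ by the primitive element theorem, so $A := K$ works: $L$ contains a copy of $K$, and $\varphi$ is existential hence preserved under extensions. If $K$ is infinite, applying Proposition~\ref{prop:generalcriterion} to $\Sigma_\infty$ yields a witness tuple $(y_1, \ldots, y_m) \in K^m$ for $\psi$; set $A_0 := R_0[y_1, \ldots, y_m]$, so that $A_0 \models \psi$ and any embedding of $A_0$ propagates the $\psi$-witnesses. If $A_0$ is infinite, then $A := A_0$ already works with $m$ generators, since no finite structure embeds it and any infinite $L \models \Sigma$ is a model of $\Sigma_\infty$.

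The delicate case is when $A_0$ is finite while $K$ is infinite; then $A_0$ is itself a finite field, so $A_0 = R_0[\beta]$ for some $\beta$ by the primitive element theorem, and I adjoin a single further element $t \in K$ to form $A := R_0[\beta, t]$, which has at most $2 \leq \max\{2,m\}$ generators. If $K$ contains a $t$ transcendental over $A_0$, then $A \supseteq R_0[t]$ is infinite, so every embedding target is infinite and satisfies $\psi$ hence $\varphi$. Otherwise $K/A_0$ is algebraic and infinite; then $K$ contains elements of arbitrarily high degree over the finite field $A_0$ (else $K$ would itself be finite), so I pick $t$ with $|A_0(t)| > N$. Any $L$ embedding $A$ then has $|L| \geq |A| > N$, the compactness bound forces $L \models \varphi \leftrightarrow \psi$, and $L \models \psi$ holds since $A \supseteq A_0$.

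The main obstacle is precisely this finite-$A_0$ sub-case: a priori the $\psi$-witness substructure may embed into various small finite models of $\Sigma$, some of which could falsify $\varphi$, and this cannot be prevented within the bare $m$-generator budget supplied by $\Sigma_\infty$. The constant $2$ in the bound reflects exactly the cost of encoding the finite field $A_0$ by one primitive element plus one extra element of $K$ to push any embedding target above the compactness threshold $N$.
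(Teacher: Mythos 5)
Your proof is correct, and it takes a genuinely different route from the paper's. The paper proceeds "bottom-up": after obtaining the same compactness bound $N$ and choosing an integer $M$ so that $Y^M - Y$ vanishes identically on all fields of cardinality at most $N$, it explicitly writes down a formula $\varphi' = \varphi_1 \vee \varphi_2 \vee \varphi_3$ -- where $\varphi_1$ and $\varphi_2$ are disjunctions over the finitely many small finite fields generated by roots of $Y^M - Y$ that do or do not satisfy $\varphi$ and $\varphi_\infty$, and $\varphi_3$ bolts an extra disjunct onto $\psi$ ruling out those small fields -- and then verifies directly that $\varphi'$ is equivalent to $\varphi$ modulo $\Sigma$. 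This requires the paper to first reduce to the case of finitely many constant symbols, since $\varphi_3$ contains the clause $\bigvee_{c \in C} c^M \dot\neq c$. You instead verify condition (i) of Proposition~\ref{prop:generalcriterion} by a case analysis on the model $K$, splitting on whether $K$ is finite, whether the witness substructure $A_0$ is infinite, and whether $K$ contains a transcendental over $A_0$ or has elements of high algebraic degree. Both proofs rest on the same two pillars -- the compactness-derived size threshold $N$ and the one-generator principle for finite fields -- but your verification-of-criterion approach avoids the reduction to finite $C$ and the auxiliary polynomial $Y^M-Y$ entirely. The cost is that the equivalent $\exists_{\max\{2,m\}}$-formula remains entirely implicit (it is manufactured by the compactness argument inside Proposition~\ref{prop:generalcriterion}), whereas the paper's route produces a formula that is explicit modulo the constants $N$ and $M$.

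Two small points of exposition. First, the phrase ``applying Proposition~\ref{prop:generalcriterion} to $\Sigma_\infty$ yields a witness tuple'' is slightly misleading: the witness tuple is produced directly from $K \models \Sigma_\infty$ and $K \models \varphi$, hence $K \models \psi$ (as $\Sigma_\infty \models \varphi \leftrightarrow \psi$), and $\psi$ being an $\exists_m$-sentence; no further appeal to the criterion is needed there. Second, in the finite-$A_0$, transcendental-$t$ sub-case, it is worth saying explicitly that $R_0$ is a finite integral domain hence a field, so that $A_0$ is a simple extension of the subfield $R_0$ and the primitive element $\beta$ exists; you do say this but the sequence of deductions (subring of field is a domain, finite domain is a field, extension of finite fields is simple) deserves to be made visible. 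Neither point is a gap.
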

\begin{proof}
  We may again assume without loss of generality
  that $\varphi$ is a sentence (Remarks \ref{remstatements} and \ref{rem:efdinvariant}).
  Write $d = \erk_{\Sigma_\infty}(\varphi)$, so that $\varphi$ is equivalent modulo $\Sigma_\infty$ to an $\Lar(C)$-sentence $\varphi_\infty = \exists Y_1, \dotsc, Y_d \psi$ with $\psi$ quantifier-free.
  We may suppose that the set of constant symbols $C$ is finite, by restricting to the constant symbols which occur in $\varphi$ or $\varphi_\infty$.
  By the compactness theorem, there exists $N\in\Natwithoutzero$ such that $\varphi\leftrightarrow\varphi_\infty$ holds in all models of $\Sigma$ of cardinality at least $N$.
  Choose $M > 1$ such that in all fields of cardinality at most $N$ the polynomial $Y^M - Y$ is identically zero.

  For every finite model $K$ of $\Sigma$, there exists an existential $\Lar(C)$-sentence $\varphi_K$ which holds in an $\Lar(C)$-structure $L$ if and only if $K$ can be embedded into $L$.
  We can take $\varphi_K$ to be an $\exists_1$-sentence,
  which follows for example from Corollary \ref{generalcriterion-universal}
  applied to the empty theory,
  as the finite field $K$ is generated by one element as a ring.

  Let $\mathcal{K}$ be the class of (isomorphism classes of) $\Lar(C)$-structures which are fields of positive characteristic at most $M$
%  in which $\forall Y(Y^M\dot=Y)$ holds;
%   generated by elements $x$ satisfying $x^M = x$; 
  generated as rings by roots of $Y^M-Y$;
  there are only finitely many isomorphism classes in $\mathcal{K}$.
  Let $\mathcal{K}_\varphi$ respectively $\mathcal{K}_{\varphi_\infty}$ 
  be the subclass of those $K \in \mathcal{K}$ satisfying $\varphi$ respectively $\varphi_\infty$.
  Note that for $L\models\Sigma$,
\begin{equation}\label{eqn:phiphiinfty}
   L\notin\mathcal{K} \quad\Longrightarrow\quad L\models\exists Y (Y^M \dot\neq Y) \quad\Longrightarrow\quad L\models\varphi\leftrightarrow\varphi_\infty.
\end{equation}
%   is not in $\mathcal{K}$,
%  then $L\models\exists Y (Y^M \dot\neq Y)$,
%  and $L$ is of cardinality greater than $N$
%  and therefore $L\models\varphi\leftrightarrow\varphi_\infty$.
%  In particular, we have $L\models\varphi$
%  as soon as $L\models\varphi_K$
%  for some $K\in\mathcal{K}_\varphi$, 
%  or $L\models\varphi_K$
%  for some $K\in\mathcal{K}_{\varphi_\infty}$
%  and in addition $L\models\exists Y (Y^M \dot\neq Y)$. %$L\notin\mathcal{K}$.
 We now let
 \begin{eqnarray*}
 \varphi_1 &=&  \bigvee_{K \in \mathcal{K}_\varphi} \varphi_K,\\
 \varphi_2 &=& \exists Y (Y^M \dot\neq Y) \wedge \bigvee_{K \in \mathcal{K}_{\varphi_\infty}} \varphi_K,\\
 \varphi_3 &=& \exists Y_1, \dotsc, Y_d \big(\psi \wedge (\bigwedge_{m=1}^M m \dot\neq 0 \vee\bigvee_{i=1}^d Y_i^M \dot\neq Y_i \vee \bigvee_{c \in C} c^M \dot\neq c)\big),
\end{eqnarray*}
and claim that $\varphi$ is equivalent modulo $\Sigma$ to
$\varphi'=\varphi_1\vee\varphi_2\vee\varphi_3$.
By virtue of Remark \ref{qrtrivial} we have
$\erk_\Sigma(\varphi_1)\leq 1$,
$\erk_\Sigma(\varphi_2)\leq 2$,
$\erk_\Sigma(\varphi_3)\leq d$,
and thus
$\erk_\Sigma(\varphi') \leq \max\{2, d\}$, so this will suffice to finish the proof.
So let $L\models\Sigma$.  
  
%  We now claim that $\varphi$ is equivalent modulo $\Sigma$ to the  sentence
%\begin{eqnarray*} \varphi' &=& \exists Y_1, \dotsc, Y_d \big(\psi \wedge (\bigwedge_{m=1}^M m \dot\neq 0 \vee \bigvee_{i=1}^d Y_i^M \dot\neq Y_i \vee \bigvee_{c \in C} c^M \dot\neq c)\big)\\
% &&\vee\;
%    \big(\exists Y (Y^M \dot\neq Y) \wedge \bigvee_{K \in \mathcal{K}_{\varphi_\infty}} \varphi_K \big) \vee
%    \bigvee_{K \in \mathcal{K}_\varphi} \varphi_K
%\end{eqnarray*}
%  Since $\erk_\Sigma(\varphi') \leq \max\{2, d\}$ (Remark \ref{qrtrivial}), this will suffice to finish the proof.

If $L\models\varphi_1$, then $L\models\varphi_K$ for some $K\in\mathcal{K}_\varphi$ and hence $L\models\varphi$.
If $L\models\varphi_2$, then $L\models\exists Y (Y^M \dot\neq Y)$ and $L\models\varphi_K$ for some $K\in\mathcal{K}_{\varphi_\infty}$
and hence $L\models\varphi_\infty$;
by (\ref{eqn:phiphiinfty}) this implies $L\models\varphi$.
If $L\models\varphi_3$, then
$L\models\varphi_\infty$ and $L\notin\mathcal{K}$,
which, again by (\ref{eqn:phiphiinfty}), implies $L\models\varphi$.
Putting everything together we see that $L\models\varphi'\rightarrow\varphi$.

Conversely, let $L\models\varphi$.
If $L\in\mathcal{K}$, then $L\models\varphi_1$.
If $L\notin\mathcal{K}$,
then $L\models\varphi_\infty$ by (\ref{eqn:phiphiinfty}),
so there exists a tuple $\underline{y}$ with $L\models\psi(\underline{y})$;
if the substructure $K$ generated by $\underline{y}$ is in $\mathcal{K}$ (and hence in $\mathcal{K}_{\varphi_\infty}$), then $L\models\varphi_2$, otherwise 
$K$ is either of characteristic greater than $M$ or
either one of the $y_i$ or one of the constants $c$ is not a zero of $Y^M-Y$, and hence $L\models\varphi_3$.
Thus we see that $L\models\varphi\rightarrow\varphi'$.
%
%
%
%
%  First observe that every $K \models \Sigma \cup \{ \varphi' \}$ either embeds some model of $\varphi$ (if it satisfies the third clause of the disjunction), or satisfies $\varphi_\infty$ and additionally has an element $x$ satisfying $x^M \neq x$ or has characteristic zero or greater than $M$ (if it satisfies the first or second clause of the disjunction).
%  Therefore $\Sigma \models \varphi' \rightarrow \varphi$.
%
%  Conversely, every model of $\Sigma \cup \{ \varphi \}$ either embeds a structure in $\mathcal{K}_\varphi$ (and therefore satisfies the third clause), or does not lie in $\mathcal{K}$ and satisfies $\psi(\underline y)$ for some $\underline y$, in which latter case it either satisfies the first or the second clause of the disjunction depending on whether the substructure generated by $\underline y$ lies in $\mathcal K$ or not.
%  Thus $\Sigma \models \varphi \rightarrow \varphi'$.
\end{proof}

For an $\Lar(C)$-theory $\Sigma$ containing $\Tfields$ and a prime number $p$, set $\Sigma_p = \Sigma \cup \lbrace p \doteq 0 \rbrace$. Its models are precisely the models of $\Sigma$ of characteristic $p$.
Recall the $\Lar$-formula $\pii{n}{m}$ from Definition \ref{def:pi}.

\begin{prop}\label{qrleqed}
Let $\Sigma$ be an $\Lar(C)$-theory containing $\Tfields$. 
Suppose that 
$\qr_{\Sigma_p}(\pii{n}{p^m}) \leq 1$
 for every prime number $p$ and every $n, m \in \Natwithoutzero$. Then  
 $$
  \qrp_{\Sigma}(\varphi) \leq \efd_{\Sigma}(\varphi) + 1
 $$ 
 for every existential formula $\varphi$.
\end{prop}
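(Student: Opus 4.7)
The plan is to apply Proposition~\ref{prop:generalcriterionhom}: after reducing to the case that $\varphi$ is a sentence via Remark~\ref{rem:efdsentences}, it suffices to show that for every $K \models \Sigma \cup \{\varphi\}$, with smallest subring $R_0$, there is a subring $A \subseteq K$ generated over $R_0$ by at most $d+1$ elements (where $d = \efd_\Sigma(\varphi)$) such that every $\Lar(C)$-homomorphism $A \to L$ with $L \models \Sigma$ forces $L \models \varphi$. Fixing such a $K$, the $\efd$ hypothesis together with Lemma~\ref{rem:defEfdFieldVsRing} produces a finitely generated $R_0$-subalgebra $R \subseteq K$ with $\trdeg(\Frac(R)/K_C) \leq d$ whose homomorphisms to models of $\Sigma$ force $\varphi$; it therefore suffices to find $A \supseteq R$ of the desired form, since homomorphisms $A \to L$ then restrict to $R \to L$.

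Choose a transcendence basis $a_1, \dotsc, a_d \in R$ of $F := \Frac(R)$ over $K_C$ (after clearing denominators) and put $F_0 = K_C(\underline a)$. When the finite extension $F/F_0$ is separable (automatic in characteristic~$0$), the primitive element theorem yields $\alpha \in F$ with $F = F_0(\alpha)$, and Lemma~\ref{lem:overring} applied to $R[\alpha]$ over $R_0[\underline a]$ produces an overring $A \supseteq R$ generated by one element over $R_0[\underline a]$, hence by $d+1$ elements over $R_0$, completing the separable case.

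The remaining case is $\car K = p > 0$ with $F/F_0$ inseparable. Let $F_s = F_0(\alpha)$ be the separable closure of $F_0$ in $F$, and choose generators $\beta_1, \dotsc, \beta_k$ of the purely inseparable extension $F/F_s$ with $\beta_i^{p^m} = \gamma_i \in F_s$ for a common~$m$. The tuple $(\gamma_1, \dotsc, \gamma_k)$ satisfies $\pii{k}{p^m}$ in $K$, so the assumption $\qr_{\Sigma_p}(\pii{k}{p^m}) \leq 1$ combined with Corollary~\ref{cor:generalcriterion} provides $y \in K$ such that every embedding of $B := R_0[\underline\gamma, y]$ into a model $L \models \Sigma_p$ preserves the $p^m$-th power property of each $\gamma_i$. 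The final step is to package $\alpha$ and $y$ together with $\underline a$ into $d+1$ generators producing the desired $A$: using a variant of the classical identity $F_0(\alpha + \beta) = F_0(\alpha, \beta)$ for a separable $\alpha$ and a purely inseparable $\beta$ (which holds because the $F_0(\omega)$-gcd of the separable minimal polynomial of $\alpha$ and a polynomial $T^{p^m}-c$ vanishing at $\alpha$ is forced to be linear), one produces $w \in K$ with $F_0(\alpha, y) \subseteq F_0(w)$; then Lemma~\ref{lem:overring} applied to $R[\alpha, \underline\gamma, y]$ over $R_0[\underline a]$ yields $A$ containing $R$ and generated by $d+1$ elements over $R_0$. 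A homomorphism $\rho \colon A \to L$ with $L \models \Sigma$ must have $L \models \Sigma_p$ since $\car A = p$; the hypothesis forces each $\rho(\gamma_i)$ to be a $p^m$-th power in $L$, and the uniqueness of $p^m$-th roots in characteristic $p$ then gives the required extension of $\rho$ along the purely inseparable extension $F_s \hookrightarrow F$, yielding a homomorphism $R \to L$ and thus $L \models \varphi$.

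The main obstacle is the combination step producing $w$: the witness $y$ supplied by the hypothesis exists abstractly without any prescribed algebraic structure over $F_0$, and in particular if $y$ is transcendental over $F_0$ or badly purely inseparable the naive primitive-element recipe does not directly apply. Resolving this requires either exploiting the freedom of choice of $y$ within the quantifier-free definable set of witnesses guaranteed by the hypothesis, or adapting the primitive-element construction to the specific structure of purely inseparable extensions in positive characteristic.
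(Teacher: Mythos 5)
Your separable case (hence also all of characteristic zero) matches the paper's argument: Lemma~\ref{rem:defEfdFieldVsRing}, the primitive element theorem, and Lemma~\ref{lem:overring} are the right ingredients there. In the inseparable case, however, the gap is more structural than the one you flag at the end. The root problem is your insistence that $A \supseteq R$. If $A$ is generated over $R_0$ by $\underline{a}$ plus one further element, then $\Frac(A)$ is a \emph{simple} extension of $F_0 := K_C(\underline{a})$, so $\Frac(A) \supseteq F := \Frac(R)$ would force $F/F_0$ to be simple --- which is exactly what fails when there are several inseparable generators $\beta_i$. Correspondingly Lemma~\ref{lem:overring} does not apply to $R[\alpha,\underline{\gamma},y]$ over $R_0[\underline{a}]$, because $\Frac(R[\alpha,\underline{\gamma},y]) = F(y)$ contains the non-simple extension $F/F_0$. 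And if you really did have $A \supseteq R$, then $\rho(\gamma_i) = \rho(\beta_i)^{p^m}$ would automatically be a $p^m$-th power, the hypothesis on $\pii{k}{p^m}$ would be unused, and there would be nothing to ``extend''; your closing sentence invoking uniqueness of $p^m$-th roots to ``yield a homomorphism $R \to L$'' is internally inconsistent with the earlier claim $A \supseteq R$.

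The idea you are missing is to \emph{drop} the requirement $\Frac(A) \supseteq F$. In the paper, $\Frac(A) = K_3 := K_2(w)$, where $K_2$ is the relative separable closure of $K_1 := K_C(\underline{t})$ in the subfield $K'$ afforded by Lemma~\ref{rem:defEfdFieldVsRing}, and $w$ is a witness in $K$ for the $\exists_1$-formula $\psi$ equivalent to $\pii{k}{p^m}$ modulo $\Sigma_p$ supplied by the hypothesis. Crucially $K_3/K_1$ has at most one non-separable generator (a primitive element of the separable $K_2/K_1$, plus $w$), so the general primitive element theorem gives $K_3 = K_1(y)$ and $A = R_0[\underline{t}, y]$ has $d+1$ generators. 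An embedding $A \hookrightarrow L$ extends to $K_3 \hookrightarrow L$; since $w$ witnesses $\psi$, the images of the $v_i = u_i^{p^m}$ are $p^m$-th powers in $L$; and uniqueness of $p^m$-th roots in characteristic $p$ then extends the embedding across the purely inseparable step $K'/K_2$, giving $K' \hookrightarrow L$ and thus $L \models \varphi$. You had the uniqueness argument at the end, but in a framework in which it cannot do any work. A secondary point: the paper first reduces to proving $\erk_\Sigma(\varphi) \leq d+1$ via Proposition~\ref{prop:qrvsqrp} and then applies Proposition~\ref{prop:generalcriterion} (embeddings). Your direct $\perk$ route via Proposition~\ref{prop:generalcriterionhom} would need the passage from a homomorphism $A \to L$ to a homomorphism $\Frac(A) \to L$, which is available only when $\rho$ is injective; once you give up $A \supseteq R$ this step is no longer automatic, and the $\erk$-first reduction is the cleaner route.
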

\begin{proof}
By virtue of Proposition \ref{prop:qrvsqrp} it suffices to show that $\qr_{\Sigma}(\varphi) \leq \efd_{\Sigma}(\varphi) + 1$. Furthermore we may restrict again to the case where $\varphi$ is an $\Lar(C)$-sentence. 

Let $K$ be any model of $\Sigma \cup \lbrace \varphi \rbrace$
and let $p$ be the characteristic exponent of $K$. 
By Lemma \ref{rem:defEfdFieldVsRing} there exists a subfield $K'$ of $K$, finitely generated and of transcendence degree $d \leq \efd_{\Sigma}(\varphi)$ over $K_C$ such that $L \models \varphi$ for any $L\models\Sigma$ into which $K'$ embeds.
Fix a transcendence basis $\lbrace t_1, \ldots, t_d \rbrace$ of $K'/K_C$ and set $K_1 = K_C(t_1, \ldots, t_d)$. 
Then $K'/K_1$ is a finite extension. Let $K_2$ be the relative separable closure of $K_1$ in $K'$. We get that $K' = K_2(u_1, \ldots, u_k)$ where $u_1, \ldots, u_k \in K'$ are purely inseparable over $K_2$. Choose $m \in \Natwithoutzero$ such that for each $i \in \lbrace 1, \ldots, k \rbrace$, $v_i = u_i^{p^m} \in K_2$.

By our hypothesis, there exists an $\exists_1$-formula $\psi$ in free variables $X_1, \ldots, X_k$ which is equivalent to $\pii{k}{p^m}$ in all models of $\Sigma$ of characteristic $p$. 
Since $K \models \pii{k}{p^m}(v_1, \ldots, v_k)$, also $K \models \psi(v_1, \ldots, v_k)$. 
Let $w \in K$ be a witness for $\psi(v_1, \ldots, v_k)$ and set $K_3 = K_2(w)$. By the primitive element theorem
in its general form \cite[Section 6.10]{vdW_Algebra}, we may write $K_3 = K_C(t_1, \ldots, t_d, y)$ for some $y \in K_3$. 
Let $R_0$ denote the smallest substructure of $K$
and let $A=R_0[t_1, \ldots, t_d, y]$.

Suppose $L$ is any model of $\Sigma$ into which $A$ embeds.
Then this embedding extends to an embedding of $K_3 = \Frac(A)$ into $L$,
and $L$ also has characteristic exponent $p$. 
After identifying $K_3$ with a subfield of $L$, we have that $L \models \psi(v_1, \ldots, v_k)$ as $w$ is a witness, whence also $L \models \pii{n}{p^m}(v_1, \ldots, v_k)$, whereby 
$$
 K' \subseteq K_3(v_1^{p^{-m}}, \ldots, v_k^{p^{-m}})\subseteq L,
$$ 
so $L \models \varphi$ by the choice of $K'$. 
We have verified condition \textit{(i)} of Proposition \ref{prop:generalcriterion} for being equivalent to an $\exists_{d+1}$-$\Lar(C)$-formula, and this concludes the proof.
\end{proof}

The following corollary should be compared with
Examples \ref{ex:ACF} and \ref{ex:RCF}
where this is stated for 
$\erk$ instead of $\perk$,
but for arbitrary instead of existential $\mathcal{L}_{\rm ring}$-formulas:

\begin{gev}\label{cor:RCFpCF}
If $\Sigma$ is one of ${\rm ACF}$, ${\rm RCF}$ or $p{\rm CF}_d$,
then $\perk(\Sigma)\leq1$.
In particular, $\erk(K)\leq1$ for every $K\models\Sigma$.
\end{gev}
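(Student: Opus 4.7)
The plan is to deduce the statement directly from Proposition \ref{qrleqed}, combined with the observation in Example \ref{ex:efd0} that $\efd_{\Sigma}(\varphi) = 0$ for every existential $\Lar$-formula $\varphi$ when $\Sigma$ is any of $\mathrm{ACF}$, $\mathrm{RCF}$, or $p\mathrm{CF}_d$. Once the hypothesis of Proposition \ref{qrleqed} is checked, we immediately obtain $\perk_\Sigma(\varphi) \leq \efd_\Sigma(\varphi) + 1 \leq 1$ uniformly in $\varphi$, and hence $\perk(\Sigma) \leq 1$.

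First I would verify the hypothesis of Proposition \ref{qrleqed}, namely that $\erk_{\Sigma_p}(\pii{n}{p^m}) \leq 1$ for every prime $p$ and all $n, m \in \Natwithoutzero$. For $\Sigma = \mathrm{ACF}$, the theory $\Sigma_p$ is the theory of algebraically closed fields of characteristic $p$; since Frobenius is surjective on such fields, every element is a $p^m$-th power, so $\pii{n}{p^m}$ is equivalent modulo $\Sigma_p$ to a true quantifier-free formula and its existential rank is $0$. For $\Sigma \in \{\mathrm{RCF}, p\mathrm{CF}_d\}$, every model has characteristic $0$, hence $\Sigma_p$ is inconsistent for each prime $p$ and the condition is vacuous.

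Having obtained $\perk(\Sigma) \leq 1$, for the second assertion I would appeal to the positive-existential analogues of Lemmas \ref{lem:language} and \ref{lem:constants} together with Corollary \ref{cor:erk_model_theory} (as flagged in Remark \ref{rem:perk_analogues}) to conclude $\perk(K) \leq \perk(\Sigma) \leq 1$ for every $K \models \Sigma$. Since $\erk(K) \leq \perk(K)$ always, this yields $\erk(K) \leq 1$ as required.

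The main subtle point I anticipate is the last step, where one must ensure that the parameter-expansion underlying Corollary \ref{cor:erk_model_theory} genuinely transfers from $\erk$ to $\perk$: $\erk(K)$ is computed in $\Lar(K)$ with all elements of $K$ as constants, whereas we have bounded $\perk(\Sigma)$ in the bare language $\Lar$. However, the proofs of Lemmas \ref{lem:language} and \ref{lem:constants} are purely formal and transfer verbatim (which is exactly the content of Remark \ref{rem:perk_analogues}), so no new argument is needed.
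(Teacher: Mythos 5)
Your proposal is correct and follows essentially the same route as the paper: both deduce $\perk(\Sigma)\leq 1$ by combining Proposition \ref{qrleqed} with Example \ref{ex:efd0}, and then pass to $\erk(K)\leq 1$ via the trivial inequality between $\erk$ and $\perk$ and the model-theoretic comparison lemmas. The only (harmless) micro-difference is in checking the hypothesis of Proposition \ref{qrleqed}: you observe directly that $\Sigma_p$ is inconsistent for ${\rm RCF}$ and $p{\rm CF}_d$, and that Frobenius surjectivity makes $\pii{n}{p^m}$ trivial for ${\rm ACF}_p$, whereas the paper cites Proposition \ref{prop:pinmperfect} by noting all models of $\Sigma$ are perfect; both verifications are valid.
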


\begin{proof}
As all models of $\Sigma$ are perfect,
the assumption $\erk_{\Sigma_p}(\pii{n}{p^m})\leq1$ is satisfied by
Proposition \ref{prop:pinmperfect}.
The first claim is then obtained by applying Proposition \ref{qrleqed} to Example~\ref{ex:efd0},
and the second claim follows from that via Corollary \ref{cor:erk_model_theory} and (\ref{eqn:erkperk}).
\end{proof}

\begin{stel}\label{ed=qr}
Let $\Sigma$ be an $\Lar(C)$-theory containing $\Tfields$ such that $\qr_{\Sigma_p}(\pii{n}{p^m}) \leq 1$ for every prime number $p$ and every $n, m \in \Natwithoutzero$. Then for any existential formula $\varphi$ we have
\begin{displaymath}
\efd_{\Sigma}(\varphi) = \max \lbrace \qrp_{\Sigma}(\varphi) - 1, 0 \rbrace.
\end{displaymath}
\end{stel}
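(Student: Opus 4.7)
The theorem is an immediate combination of Propositions \ref{edleqqr} and \ref{qrleqed}, and my plan is to present it as such.

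First I would note that Proposition \ref{edleqqr} (which is established without any assumption on $\Sigma$ beyond containing $\Tfields$) already provides one of the two inequalities, namely
\[
\efd_{\Sigma}(\varphi) \;\leq\; \max\{\qrp_{\Sigma}(\varphi) - 1, 0\}.
\]
So the only content to extract is the reverse inequality $\max\{\qrp_{\Sigma}(\varphi) - 1, 0\} \leq \efd_{\Sigma}(\varphi)$.

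For this, I would invoke Proposition \ref{qrleqed}, whose hypothesis $\qr_{\Sigma_p}(\pii{n}{p^m}) \leq 1$ is exactly the standing assumption of the theorem. This proposition gives $\qrp_{\Sigma}(\varphi) \leq \efd_{\Sigma}(\varphi) + 1$, or equivalently $\qrp_{\Sigma}(\varphi) - 1 \leq \efd_{\Sigma}(\varphi)$. Since $\efd_{\Sigma}(\varphi) \geq 0$ trivially, this yields $\max\{\qrp_{\Sigma}(\varphi) - 1, 0\} \leq \efd_{\Sigma}(\varphi)$, completing the proof.

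There is no real obstacle here; all the work has already been done in the two preceding propositions, and the statement is simply the combination of the two bounds into an equality. The only (very minor) subtlety is the use of the $\max$ to handle the edge case where $\qrp_{\Sigma}(\varphi) = 0$ (so that $\qrp_{\Sigma}(\varphi) - 1$ would be negative), which the formulation of Proposition \ref{edleqqr} already accounts for.
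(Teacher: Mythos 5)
Your proposal is correct and is exactly the paper's own argument: the paper's proof of Theorem \ref{ed=qr} simply reads ``Combine Proposition \ref{qrleqed} and Proposition \ref{edleqqr},'' which is precisely the two-inequality combination you spell out.
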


\begin{proof}
Combine Proposition \ref{qrleqed} and Proposition \ref{edleqqr}.
\end{proof}

\begin{gev}\label{edqrgev}
Let $\Sigma$ be an $\Lar(C)$-theory satisfying the hypotheses of Theorem \ref{ed=qr}. Then for any existential formulas $\varphi_1$ and $\varphi_2$ with $\qrp_{\Sigma}(\varphi_1), \qrp_{\Sigma}(\varphi_2) \geq 1$ we have
\begin{displaymath}
\qrp_{\Sigma}(\varphi_1 \wedge \varphi_2) \leq \qrp_{\Sigma}(\varphi_1) + \qrp_{\Sigma}(\varphi_2) - 1.
\end{displaymath}
\end{gev}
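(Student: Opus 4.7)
The plan is to chain together Theorem \ref{ed=qr} and Lemma \ref{edwedge}, using essential fibre dimension as an intermediary between the positive-existential ranks of $\varphi_1$, $\varphi_2$ and their conjunction. This is exactly the kind of situation the introduction alludes to when it says that the interplay between $\efd$ and $\erk$ underlies many of the proofs: the additive behaviour of $\perk$ on conjunctions (saving one quantifier) reduces to the purely additive behaviour of $\efd$.

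First, I would apply Theorem \ref{ed=qr} to each $\varphi_i$. Since by hypothesis $\qrp_\Sigma(\varphi_i) \geq 1$, the maximum in the statement is attained by $\qrp_\Sigma(\varphi_i) - 1$, giving the equality
\[
\efd_\Sigma(\varphi_i) = \qrp_\Sigma(\varphi_i) - 1 \qquad (i = 1, 2).
\]
Next, I would invoke the conjunction bound from Lemma \ref{edwedge} to get
\[
\efd_\Sigma(\varphi_1 \wedge \varphi_2) \leq \efd_\Sigma(\varphi_1) + \efd_\Sigma(\varphi_2) = \qrp_\Sigma(\varphi_1) + \qrp_\Sigma(\varphi_2) - 2.
\]

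Finally, I would apply Theorem \ref{ed=qr} once more, now to $\varphi_1 \wedge \varphi_2$, to pass back from $\efd$ to $\qrp$. Regardless of whether $\qrp_\Sigma(\varphi_1 \wedge \varphi_2)$ is zero or positive, the theorem yields $\qrp_\Sigma(\varphi_1 \wedge \varphi_2) \leq \efd_\Sigma(\varphi_1 \wedge \varphi_2) + 1$; combining this with the previous inequality gives the desired bound $\qrp_\Sigma(\varphi_1 \wedge \varphi_2) \leq \qrp_\Sigma(\varphi_1) + \qrp_\Sigma(\varphi_2) - 1$.

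There is no real obstacle here: all the heavy lifting has already been done in establishing Theorem \ref{ed=qr}, and the argument is a two-line formal manipulation once we observe that the hypothesis $\qrp_\Sigma(\varphi_i) \geq 1$ is precisely what is needed so that the $\max$ in Theorem \ref{ed=qr} does not cause us to lose the saving of one quantifier. The only thing worth being careful about is not accidentally applying the equality form of Theorem \ref{ed=qr} to $\varphi_1 \wedge \varphi_2$ without verifying its positive-existential rank is at least one; using the inequality $\qrp \leq \efd + 1$ (valid unconditionally under the hypotheses of the theorem) sidesteps this issue.
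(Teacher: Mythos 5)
Your proof is correct and takes exactly the same route as the paper, whose entire proof is the one-liner ``Follows from Theorem \ref{ed=qr} via Lemma \ref{edwedge}.'' You have filled in the details faithfully, including the careful observation that the hypotheses $\qrp_\Sigma(\varphi_i) \geq 1$ are needed to remove the $\max$, and that for the final step one only needs the inequality $\qrp_\Sigma \leq \efd_\Sigma + 1$ (which is Proposition \ref{qrleqed}, one half of Theorem \ref{ed=qr}), so no case distinction on $\qrp_\Sigma(\varphi_1 \wedge \varphi_2)$ is required.
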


\begin{proof}
Follows from Theorem \ref{ed=qr} via Lemma \ref{edwedge}.
\end{proof}

\begin{vb}\label{ex:perfect}
By virtue of Proposition \ref{prop:pinmperfect}, the theory of perfect fields and any extension thereof, e.g.~the theory of fields of characteristic zero, satisfies the hypotheses of Proposition \ref{qrleqed}, Theorem \ref{ed=qr} and Corollary \ref{edqrgev}.
In the next section we will prove that
the complete theory of any field finitely generated over a perfect field satisfies these hypotheses as well.
\end{vb}

\begin{opm}
If $\Sigma$ is an $\Lar(C)$-theory satisfying the  hypotheses of Theorem \ref{ed=qr} and such that 
all models are infinite or all models are finite, then one may replace $\qrp_\Sigma$ with $\erk_\Sigma$ in Corollary \ref{edqrgev} by virtue of Proposition \ref{prop:qrvsqrp}. Furthermore, if either $\erk_\Sigma(\varphi_1) > 1$ or $\erk_\Sigma(\varphi_2) > 1$, then the assumption that all models of $\Sigma$ are infinite of all models of $\Sigma$ are finite can be dropped by virtue of Proposition \ref{prop:qrvsqrp}. On the other hand, one has the following.
\end{opm}
\begin{vb}\label{vb:weirdErkPerk}
Let $\Sigma$ be the $\Lar$-theory of perfect fields and consider the $\Lar$-sentences $\varphi_1 = \exists Y_1 (Y_1^2 + Y_1 \doteq 1)$, $\varphi_2 = \exists Y_2 \neg (Y_2^4 \doteq Y_2)$, $\varphi = \varphi_1 \wedge \varphi_2$. We claim that $\erk_{K}(\varphi) = 2$. To see that $\erk_{K}(\varphi) > 1$, let $K$ be the perfect hull of $\ff_4(t)$ and note that $K \models \varphi$, but for any subfield $K'$ of $K$ generated by one element over $\ff_2$ and $L$ the perfect hull of $K'$, $L \not\models \varphi$. Now invoke Proposition \ref{prop:generalcriterion} to conclude. This example also shows by Corollary~\ref{edqrgev} that $\qrp_\Sigma(\varphi_2) = 2$, illustrating that case (\ref{item3}) in Proposition \ref{prop:qrvsqrp} can occur.
\end{vb}

\begin{gev}\label{cor:equivalences}
Let $K$ be a field of characteristic $p\geq0$. The following are equivalent:
\begin{enumerate}
\item $p=0$ or $\qr_{K}(\pii{n}{p^m}) \leq 1$ for every $n, m \in \Natwithoutzero$.
\item $\qrp_{K}(\varphi) \leq \efd_{K}(\varphi) + 1$ for every $\exists$-$\Lar(K)$-formula $\varphi$.
\item $\efd_{K}(\varphi) = \max \{ \qrp_{K}(\varphi) - 1, 0 \}$ for every $\exists$-$\Lar(K)$-formula $\varphi$.
\item $\efd_{K}(\varphi) = \max \{ \erk_{K}(\varphi) - 1, 0 \}$ for every $\exists$-$\Lar(K)$-formula $\varphi$.
\item $\qrp_{K}(\varphi_1 \wedge \varphi_2) \leq \qrp_{K}(\varphi_1) + \qrp_{K}(\varphi_2) - 1$
for every $\exists$-$\Lar(K)$-formulas $\varphi_1$ and $\varphi_2$ with $\qrp_{K}(\varphi_1), \qrp_{K}(\varphi_2) \geq 1$.
\item $\erk_{K}(\varphi_1 \wedge \varphi_2) \leq \erk_{K}(\varphi_1) + \erk_{K}(\varphi_2) - 1$
for every $\exists$-$\Lar(K)$-formulas $\varphi_1$ and $\varphi_2$ with $\erk_{K}(\varphi_1), \erk_{K}(\varphi_2) \geq 1$.
\end{enumerate}
\end{gev}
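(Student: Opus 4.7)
I will establish the cyclic implications $(1) \Rightarrow (2) \Leftrightarrow (3) \Leftrightarrow (4) \Rightarrow (5) \Leftrightarrow (6) \Rightarrow (1)$ by applying the results of this section to $\Sigma = \TKprec$. First, the finite case is immediate: if $K$ is finite, Frobenius is surjective so $K = K^{p^m}$ and $\qr_K(\pii{n}{p^m}) = 0$ giving (1); every subset of $K^n$ is quantifier-freely definable, so $\erk_K(\varphi) = 0$ and $\qrp_K(\varphi) \leq 1$ for all $\varphi$; and since $\TKprec$ has $K$ as its unique model up to isomorphism, $\efd_K(\varphi) = 0$. All six conditions then hold trivially, so I assume $K$ infinite from now on.

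Under this assumption every model of $\TKprec$ is infinite, so case (\ref{item3}) of Proposition \ref{prop:qrvsqrp} never occurs; hence for every existential $\Lar(K)$-formula $\varphi$ either $\erk_K(\varphi) = \qrp_K(\varphi)$ or $\erk_K(\varphi) = 0$ and $\qrp_K(\varphi) = 1$. This immediately gives (3) $\Leftrightarrow$ (4), since $\max\{\erk_K(\varphi) - 1, 0\}$ and $\max\{\qrp_K(\varphi) - 1, 0\}$ agree in both cases; a small case split (using $\qrp \leq \erk + 1$ from (\ref{eqn:erkperk}) and the fact that conjoining with a quantifier-free formula does not increase $\erk$) similarly yields (5) $\Leftrightarrow$ (6). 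The equivalence (2) $\Leftrightarrow$ (3) follows directly from Proposition \ref{edleqqr}, which always supplies $\efd_K(\varphi) \leq \max\{\qrp_K(\varphi) - 1, 0\}$. For (1) $\Rightarrow$ (2), I apply Proposition \ref{qrleqed} to $\Sigma = \TKprec$: in characteristic $0$ the hypothesis $\qr_{\Sigma_p}(\pii{n}{p^m}) \leq 1$ is vacuous because each $\Sigma_p$ is inconsistent, while in characteristic $p_0 > 0$ only $\Sigma_{p_0} = \Sigma$ is consistent and the hypothesis reduces to (1). For (3) $\Rightarrow$ (5), given $\qrp_K(\varphi_i) \geq 1$, one reads off from (3) that $\efd_K(\varphi_i) = \qrp_K(\varphi_i) - 1$, and Lemma \ref{edwedge} then gives $\efd_K(\varphi_1 \wedge \varphi_2) \leq \qrp_K(\varphi_1) + \qrp_K(\varphi_2) - 2$, from which (2) (which follows from (3)) yields $\qrp_K(\varphi_1 \wedge \varphi_2) \leq \efd_K(\varphi_1 \wedge \varphi_2) + 1 \leq \qrp_K(\varphi_1) + \qrp_K(\varphi_2) - 1$.

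The only genuinely new step is (5) $\Rightarrow$ (1). Assume (5) and $p > 0$, and fix $n, m \in \Natwithoutzero$. If $K = K^{p^m}$ then $\pii{n}{p^m}$ defines all of $K^n$ and $\qr_K(\pii{n}{p^m}) = 0$, so assume $K \neq K^{p^m}$. Lemma \ref{lem:powersQFree} then gives $\qr_K(\pii{n'}{p^m}) \geq 1$ and hence $\qrp_K(\pii{n'}{p^m}) \geq 1$ for every $n' \geq 1$, while $\qrp_K(\pii{1}{p^m}) \leq 1$ trivially since $\pii{1}{p^m}$ is already $\exists^+_1$. Now observe that $\pii{n}{p^m}(X_1, \dotsc, X_n)$ is logically equivalent to $\pii{n-1}{p^m}(X_1, \dotsc, X_{n-1}) \wedge \pii{1}{p^m}(X_n)$, so applying (5) with both conjuncts of $\qrp \geq 1$ gives $\qrp_K(\pii{n}{p^m}) \leq \qrp_K(\pii{n-1}{p^m}) + 1 - 1 = \qrp_K(\pii{n-1}{p^m})$; iterating, $\qrp_K(\pii{n}{p^m}) \leq \qrp_K(\pii{1}{p^m}) = 1$, and $\qr \leq \qrp$ finishes. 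I expect the most tedious part to be the careful bookkeeping between $\erk$ and $\qrp$ for formulas of rank $0$ or $1$ in verifying (3) $\Leftrightarrow$ (4) and (5) $\Leftrightarrow$ (6); thanks to Proposition \ref{prop:qrvsqrp} this introduces no substantial difficulty, only a brief check of boundary cases.
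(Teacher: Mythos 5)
Your proof is correct, and it uses the same toolkit as the paper (Propositions \ref{edleqqr}, \ref{qrleqed}, \ref{prop:qrvsqrp}, Lemma \ref{edwedge}, and the decomposition $\pii{n}{p^m} \equiv \bigwedge_i \pii{1}{p^m}(X_i)$), but you organise the implications differently. The paper proves (1)$\Rightarrow$(2),(3),(5) all at once by citing Theorem \ref{ed=qr} and Corollary \ref{edqrgev}, then closes the cycle by showing (2)$\Rightarrow$(1) and (3)$\Rightarrow$(1) via Example \ref{ex:pii_efd} (which records $\efd_K(\pii{n}{p^m})=0$), and separately (5)$\Rightarrow$(6)$\Rightarrow$(1) via the conjunction decomposition. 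You instead establish the genuinely linear chain (1)$\Rightarrow$(2)$\Leftrightarrow$(3)$\Leftrightarrow$(4)$\Rightarrow$(5)$\Leftrightarrow$(6), plus (5)$\Rightarrow$(1), never invoking the essential-fibre-dimension computation of Example \ref{ex:pii_efd}; your (2)$\Leftrightarrow$(3) from Proposition \ref{edleqqr} alone and your (3)$\Rightarrow$(5) via Lemma \ref{edwedge} are in effect unwindings of the proofs of Theorem \ref{ed=qr} and Corollary \ref{edqrgev}. Two small remarks. First, you prove both directions of (5)$\Leftrightarrow$(6), whereas the paper only needs (5)$\Rightarrow$(6); your (6)$\Rightarrow$(5) does go through, but the "small case split" deserves to be spelled out: given $\qrp_K(\varphi_i)\geq 1$, one splits according to whether $\erk_K(\varphi_i)=0$ or $\geq 1$, using the dichotomy from Proposition \ref{prop:qrvsqrp} (which gives $\qrp_K\leq\max\{\erk_K,1\}$), the monotonicity of $\erk_K$ under conjunction with a quantifier-free formula, and the fact that $\qrp_K(\varphi_i)\geq 1$ forces the right-hand side to be at least the left even in degenerate cases. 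Second, your explicit treatment of the finite-$K$ case is correct and is genuinely needed for your (5)$\Rightarrow$(1) step (Lemma \ref{lem:powersQFree} requires $K$ infinite); the paper's version avoids this by routing through $\efd$. Overall both approaches are valid; yours trades the one extra example (essential fibre dimension zero) for a slightly longer verification of the (5),(6) equivalence.
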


\begin{proof}
The implications from (1) to (2), (3) and (5) follow from
Theorem \ref{ed=qr} and Corollary \ref{edqrgev}
with $\Sigma=\TKprec$.
The equivalence of (3) and (4) is obtained from
Proposition~\ref{prop:qrvsqrp},
since case (\ref{item3}) there cannot occur.
If $\erk_K(\varphi_i)\geq 1$, then $\perk_K(\varphi_i)=\erk_K(\varphi_i)$, again by Proposition \ref{prop:qrvsqrp}, so (5) implies (6).
For the converse, suppose that $p>0$.
Since $\efd_K(\pii{n}{p^m})=0$ (Example \ref{ex:pii_efd}), both (2) and (3) imply 
$\perk_K(\pii{n}{p^m})\leq 1$ and hence (1).
Finally, $\erk_K(\pii{1}{p^m})\leq 1$, and $\pii{n}{p^m}$ is equivalent to
$\pii{1}{p^m}(X_1)\wedge\dots\wedge\pii{1}{p^m}(X_n)$, so (6) implies (1).
\end{proof}

The following proposition gives another example of a theory of fields in which every existential formula has essential fibre dimension zero.
For background on pseudo-algebraically closed (PAC) fields see \cite[Chapter 11]{FJ}.

\begin{prop}\label{prop:efdPerfPAC}
  Let $\Sigma$ be the $\Lar$-theory of perfect PAC fields.
  For any existential $\Lar$-formula $\varphi$, we have $\efd_\Sigma(\varphi) = 0$ and $\qrp_\Sigma(\varphi) \leq 1$.
\end{prop}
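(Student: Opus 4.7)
The plan is to first establish $\efd_\Sigma(\varphi) = 0$, from which $\perk_\Sigma(\varphi) \leq 1$ will follow by Proposition \ref{qrleqed}: the hypothesis $\erk_{\Sigma_p}(\pii{n}{p^m}) \leq 1$ holds because every model of $\Sigma$ is perfect, so Proposition \ref{prop:pinmperfect} applies via Lemma \ref{lem:language}. By Remark \ref{rem:efdsentences} I may assume $\varphi = \exists Y_1, \dots, Y_m\, \psi(\underline Y)$ is a sentence with $\psi$ quantifier-free, reducing the task to the following: for each $K \models \Sigma$ with $K \models \varphi$, produce a subfield $K' \subseteq K$ algebraic over the prime field $K_C$ such that $L \models \varphi$ for every $L \models \Sigma$ admitting an $\Lar$-embedding $K' \to L$.

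Given such a $K$ and a witness $\underline y \in K^m$, I would consider the constructible set $W \subseteq \mathbb{A}^m_{K_C}$ cut out by $\psi$ over the prime field, stratify it as a finite disjoint union $W = \bigsqcup_i W_i$ of irreducible locally closed $K_C$-subvarieties, and pick a stratum $W_{i_0}$ containing $\underline y$. Setting $V$ to be the reduced Zariski closure of $W_{i_0}$, the set $W_{i_0}$ is open and dense in the irreducible reduced $K_C$-variety $V$, and $\underline y \in W_{i_0}(K)$. I then define $K'$ to be the algebraic closure of $K_C$ inside the function field of $V$; this is a finite extension of $K_C$ (in particular $\trdeg(K'/K_C) = 0$), and by its very definition $V$ is geometrically integral as a $K'$-variety.

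The $K$-rational point $\underline y$ factors through $\Spec K \to \Spec K' \to V$, picking out an $\Lar$-embedding $K' \hookrightarrow K$ whose image lies in $K \cap \overline{K_C}$; I identify $K'$ with that subfield of $K$. For any $L \models \Sigma$ with an $\Lar$-embedding $\rho \colon K' \to L$, the base change $V_\rho := V \times_{K',\rho} L$ is a geometrically integral $L$-variety, and its open subvariety $(W_{i_0})_\rho$ is nonempty (the closed complement of $W_{i_0}$ in $V$ has strictly smaller dimension than $V$, which is preserved under the flat base change along $\rho$), hence itself geometrically integral over $L$. The PAC property of $L$ then produces an $L$-rational point of $(W_{i_0})_\rho$, which projects to an element of $W(L)$ witnessing $\varphi$ in $L$.

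The hard part will be the geometric bookkeeping in the middle paragraph: singling out the right irreducible subvariety through $\underline y$ and recognising its intrinsic field of definition (the algebraic closure of $K_C$ in its function field) as a subfield of $K$ determined by $\underline y$, so that the PAC hypothesis on $L$ can be applied to a genuinely geometrically integral $L$-variety after base change along the embedding $\rho$, rather than to something that might merely be irreducible but break up geometrically.
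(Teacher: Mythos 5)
Your approach is a valid alternative route, but one step is genuinely incomplete and not mere bookkeeping. The paper takes $K'$ to be the relative algebraic closure of $K_C(\underline x)$ inside $K$, observes that $K/K'$ is regular since $K$, and hence $K'$, is perfect, then invokes Bourbaki's result that $K \otimes_{K'} L$ is a domain whose fraction field is regular over $L$, and finishes using that PAC fields are existentially closed in regular extensions. This is intrinsic — no stratification and no choices — and produces a possibly infinite extension $K'/K_C$, which is fine because $\efd$ only bounds transcendence degree. You instead extract a finitely generated $K' \subseteq K \cap \overline{K_C}$ from the geometry of the constructible set $W$ defined by the quantifier-free matrix; both proofs rest on the same PAC input (absolutely irreducible varieties acquiring rational points, equivalently existential closedness in regular extensions).

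The gap in your version is precisely the step you defer. When you write that ``the $K$-rational point $\underline y$ factors through $\Spec K \to \Spec K' \to V$, picking out an $\Lar$-embedding $K' \hookrightarrow K$,'' the diagram is read in the wrong direction: what is needed is a structure morphism $W_{i_0} \to \Spec K'$, i.e.\ $K' \subseteq \mathcal{O}(W_{i_0})$, so that composing $\Spec K \xrightarrow{\underline y} W_{i_0} \to \Spec K'$ yields the embedding. But $K'$ lives a priori only in $K_C(V) = \Frac(\mathcal{O}(W_{i_0}))$, and the local ring at the image of $\underline y$ need not contain it. A concrete obstruction: take $W$ to be the plane curve $Y_1^2 + Y_2^2 = 0$ over $\mathbb{Q}$ and $W_{i_0} = W$ (which is irreducible and closed, hence an admissible stratum); its function field contains $i = Y_2/Y_1$, so $K' = \mathbb{Q}(i)$, yet if $\underline y$ maps to the origin the local ring there is $\mathbb{Q}[Y_1,Y_2]_{(Y_1,Y_2)}/(Y_1^2+Y_2^2)$, which does not contain $i$, and the residue field is $\mathbb{Q}$. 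For the same reason the base change $V \times_{K',\rho} L$ is not even defined until $V$ is a $K'$-scheme. The repair is to choose the stratification more carefully: in each stratum shrink to an open dense affine on which $K'$ is regular, re-stratify the removed closed subset, and iterate by Noetherian induction, fixing the whole decomposition before $\underline y$ is given. With that in place the rest of your argument — geometric integrality and denseness surviving base change, then PAC supplying an $L$-point of $(W_{i_0})_\rho$ — does go through, but the paper's tensor-product argument sidesteps all of this.
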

\begin{proof}
  Since $\qrp_\Sigma(\varphi) \leq \efd_\Sigma(\varphi) + 1$ by Proposition \ref{qrleqed} and Example \ref{ex:perfect}, it suffices to prove the statement about essential fibre dimension.
  Let $K$ be a perfect PAC field, $\underline x$ a tuple in $K$ with $K \models \varphi(\underline x)$, and $K' \subseteq K$ the relative algebraic closure of the subfield generated by $\underline x$. 
  Since $K$ is perfect, so is $K'$, and thus $K/K'$ is regular.
  By definition of $\efd$, it suffices to show that we have $L \models \varphi(\underline x)$ for every perfect PAC extension field $L/K'$.
  By \cite[Ch.~V §17 Proposition 8]{Bourbaki_AlgebreII}, $K \otimes_{K'} L$ is an integral domain and its fraction field $F$ is a regular extension of $L$.
  Since $F \models \varphi(\underline x)$ as $\varphi$ is existential and $K$ embeds into $F$, 
  and $L$ is existentially closed in its regular extension $F$ by the PAC property
  (cf.~\cite[Proposition 11.1.3]{FJ} and \cite[Proposition 3.1.1]{ErshovMVF}), we have $L \models \varphi(\underline x)$ as desired.
\end{proof}

\begin{opm}
  In contrast to the theories of Example \ref{ex:efd0}, it is not true in general that a relatively algebraically closed subfield of a perfect PAC field is an elementary substructure, or even itself a perfect PAC field.
  A related result to Proposition \ref{prop:efdPerfPAC} is already proved in \cite[Proposition 3.2]{vdDries_RemarkOnAx}, with a proof similar to ours.
  Phrasing the proof in terms of essential fibre dimension seems very natural to us.
\end{opm}

\section{\texorpdfstring{Tuples of $p$-th powers in characteristic $p$}{Tuples of p-th powers in characteristic p}}\label{sect:pthpowers}

\noindent
In this section we 
discuss the existential rank of 
the $\Lar$-formula $\pii{n}{m}$ from Definition~\ref{def:pi},
which in a field $K$ defines the set $(K\pow{m})^n$ of $n$-tuples of $m$-th powers, in the case where $m=p^k$ is a power of the characteristic.
In particular,
we will prove in Theorem \ref{thm:finGenOverPerf} that finitely generated extensions of a perfect field satisfy the equivalent conditions in Corollary \ref{cor:equivalences}, whereas in Proposition \ref{large} it is shown that for imperfect large fields these conditions are never satisfied.

\begin{prop}\label{prop:PToPToTheK}
 Fix a prime number $p$, let $\Sigma$ be an $\Lar$-theory containing $\Tfields\cup\{p\doteq0\}$, %the theory of fields of characteristic $p$,
 and let $n \in \Natwithoutzero$. 
 If $\qr_{\Sigma}(\pii{n}{p}) \leq 1$, then also $\qr_{\Sigma}(\pii{n}{p^k}) \leq 1$ for every $k \in \Natwithoutzero$.
\end{prop}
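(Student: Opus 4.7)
The plan is to produce an explicit $\exists_1$-$\Lar$-formula equivalent to $\pii{n}{p^{k+1}}$, built directly from the $\exists_1$-formula provided by the hypothesis. Writing the hypothesis as $\pii{n}{p}(\underline X)\leftrightarrow\exists Y\,\chi(\underline X,Y)$ modulo $\Sigma$ with $\chi$ quantifier-free, I claim that for every $k\geq 0$,
\begin{displaymath}
\pii{n}{p^{k+1}}(\underline X) \text{ is equivalent modulo } \Sigma \text{ to } \exists Z\,\chi(\underline X,Z^{p^k}).
\end{displaymath}
Since $Z^{p^k}$ is a term in $\Lar$, the right-hand side is a genuine $\exists_1$-$\Lar$-formula, and the case $k \to k-1$ of this claim yields the desired bound $\qr_\Sigma(\pii{n}{p^k})\leq 1$.

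The main tool will be a \emph{Frobenius trick}: for any $K\models\Sigma$, any quantifier-free $\Lar$-formula $\theta$, and any tuple $\underline a$ from $K$, one has $K\models\theta(\underline a)\leftrightarrow\theta(\underline a^p)$. This holds on atoms $f(\underline a)\doteq 0$ or $f(\underline a)\dotneq 0$ because $f\in\zz[\underline A]$ reduces modulo $p$ to a polynomial with coefficients in $\ff_p$ (which Frobenius fixes), giving $f(\underline a)^p=f(\underline a^p)$, and in a field an element vanishes iff its $p$-th power does; the equivalence then propagates through Boolean combinations.

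To establish the direction $\pii{n}{p^{k+1}}(\underline x)\Rightarrow\exists Z\,\chi(\underline x,Z^{p^k})$, I take $x_i=y_i^{p^{k+1}}$ and set $w_i=y_i^p$, so that $w_i^{p^k}=x_i$ and each $w_i$ is a $p$-th power; by the hypothesis I then pick some $z_0\in K$ with $\chi(\underline w, z_0)$. Applying the Frobenius trick $k$ times produces $\chi(\underline w,z_0)\leftrightarrow\chi(\underline w^{p^k},z_0^{p^k})=\chi(\underline x,z_0^{p^k})$, so $z_0$ is the required witness.

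For the converse I will induct on $k$. The base case $k=0$ is immediately the hypothesis. For the inductive step, assuming $\chi(\underline x,z_0^{p^k})$ holds, the substitution $Y=z_0^{p^k}$ shows $\exists Y\,\chi(\underline x,Y)$, hence $\pii{n}{p}(\underline x)$ by the hypothesis; by injectivity of Frobenius on fields there is a unique tuple $\underline{x}'$ with $\underline{x}'^p=\underline x$. One more application of the Frobenius trick yields $\chi(\underline x,z_0^{p^k})=\chi(\underline{x}'^p,z_0^{p^k})\leftrightarrow\chi(\underline{x}',z_0^{p^{k-1}})$, so $\exists Z\,\chi(\underline{x}',Z^{p^{k-1}})$ holds, and by the inductive hypothesis $\underline{x}'$ is a tuple of $p^k$-th powers; consequently $\underline x=\underline{x}'^p$ consists of $p^{k+1}$-th powers. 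The one point requiring care is tracking the Frobenius trick through negations in $\chi$, but this is routine because Frobenius is an injective field endomorphism, so $a\doteq 0$ and $a\dotneq 0$ each transfer along it.
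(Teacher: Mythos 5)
Your proof is correct and takes essentially the same route as the paper's: both establish the equivalence of $\pii{n}{p^k}$ with the formula obtained by substituting $Y^{p^{k-1}}$ for the witness variable in the given $\exists_1$-formula, using the Frobenius endomorphism (and its injectivity for the reverse direction) plus induction. The only cosmetic difference is that you spell out the "Frobenius trick" on quantifier-free formulas as a separate observation, whereas the paper invokes it in one line as "raising to the $p^{k-1}$-th power is an $\Lar$-endomorphism."
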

\begin{proof}
  It suffices to show that $\pii{n}{p^k}$ is equivalent modulo $\Sigma$ to an existential formula with one quantifier.
  Let $\varphi(X_1, \dotsc, X_n, Y)$ be a quantifier-free formula such that \[\Sigma \models \forall X_1, \ldots, X_n ( \pii{n}{p} \leftrightarrow \exists Y \varphi). \]
  We claim that for every $k > 0$, \[ \Sigma \models \forall X_1, \ldots X_n ( \pii{n}{p^k} \leftrightarrow \exists Y \varphi(X_1, \dotsc, X_n, Y^{p^{k-1}})) .\]
  To see this, let $K \models \Sigma$. If $x_1, \dotsc, x_n \in K$ are all $p^k$-th powers, then in particular the elements $x_i' = x_i^{p^{1-k}}$ are all $p$-th powers, so there exists a $y'$ with $K \models \varphi(x_1', \dotsc, x_n', y')$. As $K$ is of characteristic $p$, raising all elements to the $p^{k-1}$-th power is an $\Lar$-endomorphism of $K$, so $K \models \varphi(x_1, \dotsc, x_n, {y'}^{p^{k-1}})$. This proves one implication.

  Assume conversely that $x_1, \dotsc, x_n, y \in K$ are elements with $K \models \varphi(x_1, \dotsc, x_n, y^{p^{k-1}})$; we have to show that the $x_i$ are all $p^k$-th powers. Because of the defining property of $\varphi$, the $x_i$ are all $p$-th powers. If $k=1$, this is already the claim, so suppose $k>1$.
  Then $y^{p^{k-1}}$ is in particular a $p$-th power, and because the $p$-th power map is an injective endomorphism of $K$, we obtain
  \[ K \models \varphi(x_1^{1/p}, \dotsc, x_n^{1/p}, y^{p^{k-2}}) ,\]
  which inductively means that the $x_i^{1/p}$ are all $p^{k-1}$-th powers. Therefore the $x_i$ are all $p^k$-th powers, finishing the proof.  
\end{proof}

\begin{lem}\label{lem:pairsOfPthPowersReduction}
Let $K_0$ be a perfect field of characteristic $p>0$,
$K/K_0$ an extension,
and $\Sigma={\rm Th}_{\Lar(K_0)}(K)$.
Let $h\in K_0[X,Y]$ and $0\neq g\in K_0[X]$ such that for $x,y\in K$
with $g(x)\neq 0$,
$h(x,y)\in K\pow{p}$ implies that $x,y\in K\pow{p}$.
Then $\erk_{\Sigma}(\pii{n}{p})\leq 1$ for every $n\in\Natwithoutzero$.
\end{lem}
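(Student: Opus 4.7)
My plan is to exhibit an explicit $\exists_1$-$\Lar(K_0)$-formula equivalent to $\pii{n}{p}$ modulo $\Sigma$, which will give $\erk_\Sigma(\pii{n}{p}) \leq 1$. The approach combines an iterated use of $h$ with a case split on the vanishing pattern of $g$.

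First, I will record a useful consequence of $K_0$ being perfect: for any extension $K'/K_0$ and any $x \in K'$ algebraic over $K_0$, the extension $K_0(x)/K_0$ is separable and hence itself perfect, so $x \in K_0(x)\pow{p} \subseteq K'\pow{p}$. In particular, any $x \in K'$ with $g(x) = 0$ is automatically a $p$-th power in $K'$, regardless of the hypothesis on $h$.

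Next, I will define an iterated polynomial $H_n \in K_0[X_1, \ldots, X_n]$ by $H_1(X_1) = X_1$ and $H_n(X_1, \ldots, X_n) = h(X_1, H_{n-1}(X_2, \ldots, X_n))$ for $n \geq 2$, and I will establish two complementary properties valid in any $K' \models \Sigma$: $(a)$ if $x_1, \ldots, x_n \in K'\pow{p}$ then $H_n(\underline{x}) \in K'\pow{p}$; and $(b)$ if $H_n(\underline{x}) \in K'\pow{p}$ and $g(x_1), \ldots, g(x_{n-1})$ are all nonzero in $K'$, then $x_1, \ldots, x_n \in K'\pow{p}$. For $(a)$ I use perfectness once more: writing $h = \sum a_{ij} X^i Y^j$ with $a_{ij} = b_{ij}^p \in K_0$, one has $h(u^p, v^p) = (\sum b_{ij} u^i v^j)^p$ for all $u, v \in K'$, and iterating this Frobenius-compatibility produces $H_n(u_1^p, \ldots, u_n^p)$ as a $p$-th power in $K'$. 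For $(b)$, the hypothesis on $h$ is first-order expressible in $\Lar(K_0)$ and hence holds in every $K' \models \Sigma$ by completeness of $\Sigma$; iterating it from the outermost application of $h$ inwards yields the conclusion.

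Putting these together, in any $K' \models \Sigma$ and for any $\underline{x} \in (K')^n$, setting $S = \{i : g(x_i) \neq 0\}$, one has $\underline{x} \in (K'\pow{p})^n$ if and only if $H_{|S|}$ applied to the subtuple $(x_i)_{i \in S}$ is a $p$-th power in $K'$: the forward direction follows from $(a)$ on the subtuple, and the backward direction combines $(b)$ on the subtuple with the first step, which guarantees $x_j \in K'\pow{p}$ for $j \notin S$. Case-splitting explicitly over the $2^n$ subsets $S \subseteq \{1, \ldots, n\}$ (to encode the vanishing pattern of $g$ on $\underline{x}$ by quantifier-free atoms) and commuting a single $\exists Y$ past the finite disjunction, I arrive at the $\exists_1$-$\Lar(K_0)$-formula
\[
\exists Y \bigvee_{S \subseteq \{1, \ldots, n\}} \left( \bigwedge_{i \in S} g(X_i) \dotneq 0 \;\wedge\; \bigwedge_{j \notin S} g(X_j) \doteq 0 \;\wedge\; H_{|S|}\bigl((X_i)_{i \in S}\bigr) \doteq Y^p \right),
\]
equivalent to $\pii{n}{p}$ modulo $\Sigma$ (using the convention $H_0 \doteq 0$ for $S = \emptyset$, so that the last conjunct is witnessed by $Y = 0$). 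The main obstacle is handling the indices where $g$ vanishes, since the hypothesis on $h$ provides no information there; this is exactly what the first step resolves by exploiting that $K_0$ is perfect.
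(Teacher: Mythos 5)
Your proof is correct and takes essentially the same approach as the paper: exploit perfectness of $K_0$ to conclude that zeros of $g$ in any $K'\models\Sigma$ are automatically $p$-th powers and that $h$ preserves $p$-th powers, then iterate $h$ to fold the $n$ conditions $X_i\in K'^{(p)}$ into a single one. The paper argues by induction on $n$, casing at each step on whether $g(X_n)$ vanishes and invoking Remark~\ref{qrtrivial} to bound the quantifier count of the resulting disjunction, whereas you make the final $\exists_1$-formula fully explicit by disjoining over all $2^n$ vanishing patterns of $g$ on the $X_i$; you also spell out, a bit more carefully than the paper does, that the hypothesis on $h$ is a first-order $\Lar(K_0)$-sentence and hence transfers to every model of $\Sigma$.
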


\begin{proof}
Since $K_0$ is perfect, the zeros of $g$ in $K$ are in $K\pow{p}$.
Furthermore, we get that $h(x, y) \in K^{(p)}$ for all $x, y \in K\pow{p}$.
Thus $\pii{n}{p}$ is equivalent modulo $\Sigma$ to 
$$ 
 (g(X_n) = 0 \wedge \pii{n-1}{p}(X_1, \dotsc, X_{n-1})) \vee (g(X_n)\neq 0 \wedge \pii{n-1}{p}(X_1, \dotsc, X_{n-2}, h(X_{n}, X_{n-1}))).
$$
Inductively, both parts of the disjunction are equivalent modulo $\Sigma$ to an $\exists_1$-$\Lar(K_0)$-formula, 
hence so is the whole formula (Remark \ref{qrtrivial}).
\end{proof}

The proof of Theorem \ref{thm:finGenOverPerf} below is based on applying the preceding lemma with a polynomial $h$ obtained as a variation of the polynomial $f$ in the following lemma.

\begin{lem}\label{lem:pairsOfPthPowersGoodCase}
  Let $K$ be a field of characteristic $p > 0$ and define the polynomial
  $$
   f(X,Y) = X^{p+1}Y + X^{p+1}Y^{p^2} + X^{2p+1} + X \in \ff_p[X,Y].
$$ 
 Given $x \in K^\times$ and $y \in K$ such that $f(x, y)\in K\pow{p}$, either 
  \begin{enumerate}
  \item $x,y\in K\pow{p}$, or 
  \item $x\notin K\pow{p}$ and for all (Krull) valuations $v$ on $K$ the value $v(x)$ is divisible by $p$ in the value group $v(K^\times)$ (written additively).
  \end{enumerate}
\end{lem}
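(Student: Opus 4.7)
The plan is to exploit two reformulations of $f$ coming from characteristic $p$ identities. Since $(xy^p + x^2 + 1)^p = x^p y^{p^2} + x^{2p} + 1$ in characteristic $p$, we have $f(x,y) = x(x^p y + \alpha^p)$ with $\alpha = xy^p + x^2 + 1$; equivalently, $f(x,y) = x^{p+1} W$ where $W = y + y^{p^2} + \tau^p$ and $\tau = x + x^{-1}$. The first form handles the case $x \in K^{(p)}$ and the second drives the valuation-theoretic analysis for $x \notin K^{(p)}$.

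If $x = x_0^p \in K^{(p)}$, then three of the four terms of $f(x_0^p,y)$, namely $x^{p+1}y^{p^2}$, $x^{2p+1}$, and $x$, are $p$-th powers, so their sum is too (as $K^{(p)}$ is additively closed in characteristic $p$), while $x^{p+1}y = (x_0^{p+1})^p y$ is a $p$-th power multiple of $y$. Hence $f(x,y) \in K^{(p)}$ iff $y \in K^{(p)}$, yielding case $(1)$.

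Now suppose $x \notin K^{(p)}$; we must show $v(x) \in p \cdot v(K^\times)$ for every valuation $v$. If $W = 0$, then $y + y^{p^2} = -\tau^p$ is a $p$-th power, which in characteristic $p$ forces $y = c^p$ for some $c \in K$ (since $y + y^{p^2} = a^p$ gives $y = (a - y^p)^p$); taking $p$-th roots yields $c + c^{p^2} = -\tau$, so $x + x^{-1} = -(c + c^{p^2})$, i.e., $x$ satisfies the quadratic $X^2 + (c+c^{p^2})X + 1 = 0$. A short valuation analysis of this relation shows that whenever $v(x) \neq 0$, necessarily $v(c) < 0$ and $v(x) = \pm p^2 v(c) \in p \cdot v(K^\times)$.

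If instead $W \neq 0$, then since $f(x,y) = x \cdot x^p \cdot W$ with $x^p \in K^{(p)}$, the hypothesis $f(x,y) \in K^{(p)}$ is equivalent to $xW \in K^{(p)}$, which requires $v(x) + v(W) \in p \cdot v(K^\times)$ for every $v$. The key point is that $v(y^{p^2}) = p^2 v(y)$ and $v(\tau^p) = p \cdot v(\tau)$ both automatically lie in $p \cdot v(K^\times)$. Assuming for contradiction some $v$ with $v(x) \notin p \cdot v(K^\times)$, in particular $v(x) \neq 0$ and $v(\tau^p) = -p|v(x)|$, a short case split on the sign of $v(y)$ shows that $v(y)$ is never the minimum of $\{v(y), v(y^{p^2}), v(\tau^p)\}$, so this minimum is attained by either $v(y^{p^2})$ or $v(\tau^p)$. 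The only way uniqueness fails is $v(y^{p^2}) = v(\tau^p)$, i.e., $p^2 v(y) = \mp p v(x)$, which immediately gives $v(x) \in p\cdot v(K^\times)$, contradicting our assumption. Hence the minimum is unique, $v(W) \in p \cdot v(K^\times)$, so $v(xW) = v(x) + v(W) \notin p \cdot v(K^\times)$, contradicting $xW \in K^{(p)}$. The main obstacle is precisely this case analysis: one must identify $v(y^{p^2}) = v(\tau^p)$ as the only possible tie in the minimum and check that it is exactly the configuration ruled out by $v(x) \notin p \cdot v(K^\times)$; the delicate part is keeping track of signs while verifying that $v(y)$ never wins the minimum, which hinges on the specific role of $\tau^p$ in $W$.
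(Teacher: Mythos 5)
Your proof is correct, and its core strategy is the same as the paper's: pull out a power of $x$ from $f(x,y)$ and show the remaining factor has valuation divisible by $p$ by identifying the dominating summand. The difference is in the choice of factorization. The paper works with $z = f(x,y)/x = x^p y + x^p y^{p^2} + x^{2p} + 1$, and the crucial observation there is that under the hypothesis $v(x) \notin p \cdot v(K^\times)$ the three summands $x^p y^{p^2}$, $x^{2p}$, $1$ automatically have pairwise distinct values; combined with $x^p y$ never being strictly minimal, this makes the minimum unique in one stroke, shows $z \neq 0$ for free, and directly gives $p \mid v(z)$. Your choice $W = f(x,y)/x^{p+1} = y + y^{p^2} + \tau^p$ with $\tau = x + x^{-1}$ is more symmetric, but since $\tau^p$ packages $x^p + x^{-p}$ into one summand you lose the pairwise-distinctness for free and must separately dispose of the tie $v(y^{p^2}) = v(\tau^p)$ and the degenerate case $W = 0$ (the latter via the quadratic $x^2 + (c+c^{p^2})x + 1 = 0$). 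Both of your extra cases are argued correctly, and the tie case does immediately contradict $v(x) \notin p\cdot v(K^\times)$ as you claim; but if you expand $\tau^p = x^p + x^{-p}$ and work with the four summands $y, y^{p^2}, x^p, x^{-p}$ of $W$, you recover exactly the paper's pairwise-distinctness argument and the extra cases disappear. One very small point worth making explicit: your ``case split on the sign of $v(y)$'' should also cover $y = 0$ (conventionally $v(0) = \infty$), where $v(\tau^p)$ is trivially the unique minimum and the contradiction still follows.
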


\begin{proof}
  Since $K\pow{p}$ is a subfield of $K$, if $x \in K\pow{p}$ and $f(x,y) \in K\pow{p}$, then also $x^{p+1}y \in K\pow{p}$ and hence $y \in K\pow{p}$. 

  Assume on the other hand that there exists a valuation $v$ on $K$ such that $v(x)$ is not divisible by $p$.
  Let $z = f(x,y)/x = x^p y + x^p y^{p^2} + x^{2p} + 1$. We claim that $v(z)$ is divisible by $p$.
  To see this, first observe that the three summands $x^p y^{p^2}$, $x^{2p}$ and $1$ have distinct value with respect to $v$, using that $v(x^p)$ is not divisible by $p^2$.
The summand $x^p y$ has valuation strictly larger than one of these three other terms, depending on the signs of $v(x)$ and $v(y)$.
  Therefore $v(z)$ is equal to the value of the unique summand with the smallest value, and therefore divisible by $p$ since $v(x^py^{p^2})$, $v(x^{2p})$ and $v(1)$ are divisible by $p$.

  We deduce that $v(f(x,y))=v(x)+v(z)$ is not divisible by $p$ and therefore $f(x,y)$ is not a $p$-th power. This concludes the proof.
\end{proof}

We view case (2), $x$ not being a $p$-th power but having value divisible by $p$ with respect to all valuations on $K$, as a degenerate situation; indeed we will prove in Lemma \ref{lem:BmodKpfinite} that this only occurs for finitely many $p$-th power classes when $K$ is finitely generated over a perfect field.

\begin{opm}
In a field $K$ of characteristic $p > 0$ where the degenerate case (2) does not occur, we can apply Lemma \ref{lem:pairsOfPthPowersReduction} with $K_0 = \ff_p$, $h$ equal to the polynomial $f$ from Lemma~\ref{lem:pairsOfPthPowersGoodCase} and $g(X) = X$. Hence, in such a field, $\erk_\subTKequiv(\pii{n}{p})\leq 1$ for every $n\in\Natwithoutzero$.

An example is the fraction field $K$ of a unique factorisation domain $R$ with $p$-divisible unit group: If $x \in K^\times$ is such that $v(x)$ is divisible by $p$ for every valuation $v$ on $K$, then in the prime factorisation of $x$, all prime elements of $R$ must occur with multiplicity divisible by $p$. Since $R^\times$ is $p$-divisible, this already forces $x$ to be a $p$-th power.

 This applies for instance to purely transcendental extensions $K$ (of finite or infinite transcendence degree) of perfect fields $K_0$ of characteristic $p$. In particular, Lemma \ref{lem:pairsOfPthPowersGoodCase} yields $\qr_{\subTKequiv}(\pii{n}{p}) \leq 1$ for such $K$, which in the case of finite transcendence degree will be generalized in Theorem~\ref{thm:finGenOverPerf}.
\end{opm}

We now prove that the degenerate case in Lemma \ref{lem:pairsOfPthPowersGoodCase} can be controlled in certain situations.
Let $K$ be a field with $\car(K) = p > 0$.
We write $B(K)$ for the set of $x\in K^\times$ for which $v(x)$ is divisible by $p$ for every discrete valuation $v$ on $K$, by which we mean a Krull valuation with value group isomorphic to $\zz$.
Then $B(K)$ is a subgroup of $K^\times$ containing $\powx{K}{p}$.
We shall show in Lemma \ref{lem:BmodKpfinite} that $B(K)/\powx{K}{p}$ is finite when $K$ is a finitely generated extension of a perfect field $K_0$.

We first need a well-known result from algebraic geometry, which we prove for lack of a reference.
Here and subsequently, for a field $K$, a \emph{$K$-variety} $V$ (or {\em variety over $K$}) is a $K$-scheme of finite type, and $V(K)$ is the set of $K$-rational points, i.e.\ the set of morphisms $\Spec K \to V$ which, composed with the structure morphism $V \to \Spec K$, yield the identity on $\Spec K$.

\begin{lem}\label{lem:finitePicTorsion}
  Let $V$ be a geometrically integral geometrically normal projective variety over a field $K_0$ and $\operatorname{Pic}(V)$ the Picard group of invertible sheaves on $V$. Then for every $n\in\Natwithoutzero$ the $n$-torsion subgroup $\operatorname{Pic}(V)[n]$ is finite.
\end{lem}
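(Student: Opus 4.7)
The plan is to reduce to the case where $K_0$ is algebraically closed and then exploit the structure of the Picard variety.

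First I would pass to an algebraic closure $\overline{K_0}$ of $K_0$. Since $V$ is geometrically integral and proper, $H^0(V_{\overline{K_0}}, \mathcal{O}) = \overline{K_0}$, and standard Galois descent combined with Hilbert's Theorem 90 then shows that the pullback map $\operatorname{Pic}(V) \to \operatorname{Pic}(V_{\overline{K_0}})$ is injective. More explicitly, the kernel is classified by $H^1(\Gal(\overline{K_0}/K_0), \overline{K_0}^\times)$, which vanishes. It therefore suffices to bound the $n$-torsion of $\operatorname{Pic}(V_{\overline{K_0}})$, so from now on we may assume $K_0$ is algebraically closed.

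Next I would invoke the theory of the Picard scheme. For $V$ geometrically integral, normal, and projective over the algebraically closed field $K_0$, the Picard functor is representable by a group scheme $\operatorname{Pic}_{V/K_0}$ locally of finite type, and the reduced identity component $A := (\operatorname{Pic}^0_{V/K_0})_{\operatorname{red}}$ is an abelian variety over $K_0$ (this is the classical Picard variety construction of Chevalley, recast by Grothendieck in FGA). On $K_0$-points this yields the short exact sequence
\begin{displaymath}
0 \to A(K_0) \to \operatorname{Pic}(V) \to \operatorname{NS}(V) \to 0,
\end{displaymath}
in which the Néron--Severi group $\operatorname{NS}(V)$ is finitely generated by the theorem of the base. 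Applying the snake lemma to multiplication by $n$ on this sequence exhibits $\operatorname{Pic}(V)[n]$ as an extension of a subgroup of $\operatorname{NS}(V)[n]$ by $A(K_0)[n]$. The former is finite because $\operatorname{NS}(V)$ is finitely generated, and the latter has cardinality at most $n^{2 \dim A}$ because $A$ is an abelian variety. Hence $\operatorname{Pic}(V)[n]$ is finite.

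The step I expect to require the most care is invoking the structural input about the Picard scheme in the normal (rather than smooth) projective setting: in positive characteristic $\operatorname{Pic}^0_{V/K_0}$ itself may fail to be reduced, and one really must pass to its reduced subscheme to obtain an abelian variety; it is precisely the projectivity and normality assumptions that guarantee the resulting group scheme is proper (hence an abelian variety) rather than merely a semi-abelian variety. Once that structural fact is in hand, the finiteness of $\operatorname{Pic}(V)[n]$ follows routinely from Hilbert 90, the theorem of the base, and the Néron--Severi exact sequence.
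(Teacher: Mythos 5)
Your proof is correct and follows essentially the same route as the paper's: reduce to an algebraically closed base (the paper cites a Stacks Project tag, while you give the underlying Hilbert 90 / descent argument directly — same content), identify $\operatorname{Pic}(V)$ with $K_0$-points of the Picard scheme, split off the Néron–Severi group (finitely generated, hence finite $n$-torsion) from $(\operatorname{Pic}^0_{V/K_0})_{\mathrm{red}}$ (an abelian variety by normality and properness, hence finite $n$-torsion), and conclude from the resulting short exact sequence. Your closing remark correctly identifies the one nontrivial structural input — that normality and projectivity are what force $(\operatorname{Pic}^0)_{\mathrm{red}}$ to be proper — which is exactly the point for which the paper cites Grothendieck's Bourbaki exposé.
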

\begin{proof}
  Since $\operatorname{Pic}(V)$ injects into the Picard group of the base change of $V$ to the algebraic closure of $K_0$ by \cite[Tag 0CC5]{StacksProject}, we may assume that $K_0$ is algebraically closed.

  Now we use the theory of the Picard scheme.
  The group $\operatorname{Pic}(V)$ is equal to the group of $K_0$-points of the Picard group scheme $\operatorname{Pic}_{V/K_0}$ \cite[Proposition 8.1.4 and Theorem 8.2.3]{BLR}.
  The connected component of the identity of $\operatorname{Pic}_{V/K_0}$ is denoted $\operatorname{Pic}^0_{V/K_0}$.
  The quotient $\operatorname{Pic}_{V/K_0}(K_0)/\operatorname{Pic}^0_{V/K_0}(K_0)$ is a finitely generated group (the Néron-Severi group) by \cite[Theorem 8.4.7]{BLR}. In particular it has finite $n$-torsion.
  On the other hand, $\operatorname{Pic}^0_{V/K_0}(K_0)$ agrees with the group of $K_0$-points of the reduction $(\operatorname{Pic}^0_{V/K_0})_{\rm red}$, which is an abelian variety by \cite[Corollaire 3.2]{Grothendieck_Bourbaki236} since $V$ is normal.
  Therefore $\operatorname{Pic}^0_{V/K_0}(K_0)$ has finite $n$-torsion by the general theory of abelian varieties \cite[Theorem 7.4.38]{Liu}.

  Finally, the short exact sequence \[ 0 \to \operatorname{Pic}^0_{V/K_0}(K_0) \to \operatorname{Pic}_{V/K_0}(K_0) \to \operatorname{Pic}_{V/K_0}(K_0)/\operatorname{Pic}^0_{V/K_0}(K_0) \to 0 \] remains left-exact after taking $n$-torsion, so $\operatorname{Pic}_{V/K_0}(K_0)[n]$ is finite, as desired.
\end{proof}

\begin{lem}\label{lem:BmodKpfinite}
  Let $K$ be a finitely generated extension of a perfect field $K_0$ of characteristic $p>0$.
  Then $B(K)/\powx{K}{p}$ is finite.
\end{lem}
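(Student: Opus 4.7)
The plan is to embed $B(K)/\powx{K}{p}$ into the finite group $\operatorname{Pic}(X)[p]$ for a suitable smooth projective model $X$ of $K$ over $K_0$, and then conclude by Lemma~\ref{lem:finitePicTorsion}.

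First, I would replace $K_0$ by its relative algebraic closure in $K$: since $K/K_0$ is finitely generated, this is a finite (hence still perfect) extension of $K_0$, and it does not affect $B(K)$ or $\powx{K}{p}$. With this reduction $K_0$ is algebraically closed in $K$, which ensures that any integral projective $K_0$-variety with function field $K$ is automatically geometrically integral.

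Next, I would fix a smooth projective geometrically integral $K_0$-variety $X$ with function field $K$ and construct a homomorphism $\varphi \colon B(K) \to \operatorname{Pic}(X)[p]$ as follows. For $x \in B(K)$, the principal Weil divisor $\operatorname{div}_X(x) = \sum_Z v_Z(x) \cdot Z$, summed over the codimension-one points $Z$ of $X$, has all coefficients in $p\zz$, because each $v_Z$ is a discrete valuation on $K$. Writing $\operatorname{div}_X(x) = pD$ for a Weil divisor $D$, smoothness of $X$ makes $D$ Cartier, so $[D] \in \operatorname{Pic}(X)$ is well-defined and $p$-torsion, and I set $\varphi(x) = [D]$. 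If $\varphi(x) = 0$, then $D = \operatorname{div}_X(y)$ for some $y \in K^\times$, whence $x/y^p \in H^0(X, \mathcal{O}_X)^\times$. Projectivity and geometric integrality of $X$ together with $K_0$ algebraically closed in $K$ yield $H^0(X, \mathcal{O}_X) = K_0$, and perfectness of $K_0$ gives $K_0^\times = \powx{K_0}{p} \subseteq \powx{K}{p}$, so $x \in \powx{K}{p}$. Thus $\varphi$ descends to an injection $B(K)/\powx{K}{p} \hookrightarrow \operatorname{Pic}(X)[p]$, which is finite by Lemma~\ref{lem:finitePicTorsion}.

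The main obstacle is the existence of a smooth projective model in the previous step, which over a perfect base field of positive characteristic can fail in high transcendence degree, where resolution of singularities is not known in full generality. I would handle this via a separable form of de Jong's alteration theorem, passing to a finite separable extension $K'/K$ that admits such a model; the separability of $K'/K$ ensures that any $y \in K'$ with $y^p \in K$ lies already in $K$ (since $Y^p - y^p = (Y-y)^p$ forces the minimal polynomial of $y$ over $K$ to be linear), so that $B(K)/\powx{K}{p}$ injects into $B(K')/\powx{K'}{p}$, after checking that $B(K) \subseteq B(K')$ (which follows from the fact that the restriction of a discrete valuation on $K'$ to $K$ is either trivial or a discrete valuation on $K$ up to renormalization). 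This reduces the finiteness claim from $K$ to $K'$.
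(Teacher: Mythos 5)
Your proposal is correct and follows essentially the same route as the paper: pass to a finite separable extension $K'/K$ with a smooth projective geometrically integral model via de Jong's alterations, map $B(K')$ into the finite $p$-torsion of the Picard/class group of that model (Lemma~\ref{lem:finitePicTorsion}), and use properness together with perfectness of the base to identify the kernel with $\powx{K'}{p}$. You also spell out the injectivity of $B(K)/\powx{K}{p} \to B(K')/\powx{K'}{p}$ via separability, which the paper asserts without detail.
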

\begin{proof}
  Select a proper integral variety $V_0/K_0$ with function field $K$. By \cite[Theorem 4.1 and Remark 4.2]{deJong}, there exists a projective geometrically integral variety $V$ smooth over a finite extension $L_0/K_0$ such that some open subvariety of $V$ is étale over $V_0$, so that (by étaleness at the generic point) the function field $L$ of $V$ is a finite separable extension of $K$.
  We will proceed to show that $B(L)/\powx{L}{p}$ is finite.
  Since the natural homomorphism $B(K)/\powx{K}{p} \to B(L)/\powx{L}{p}$ is injective, this suffices to prove the claim.

  By Lemma \ref{lem:finitePicTorsion} (observing that $V$ is geometrically normal since it is smooth over $L_0$), the $p$-torsion of $\operatorname{Pic}(V)$ is finite. Since the Picard group of $V$ is isomorphic to the group of Weil divisor classes $\operatorname{Cl}(V)$ as $V$ is regular, see e.g.\ \cite[Corollary 7.1.19 and Proposition 7.2.16]{Liu}, we deduce that $\operatorname{Cl}(V)[p]$ is finite.

  For every $f \in B(L)$, the Weil divisor of $f$ as a rational function on $V$ is the $p$-fold multiple of another divisor. By associating to $f$ this divisor $\frac{1}{p}(f)$, we obtain a homomorphism $d \colon B(L) \to \operatorname{Div}(V) \to \operatorname{Cl}(V)$, whose image is in fact contained in $\operatorname{Cl}(V)[p]$.
  If $f \in B(L)$ is in the kernel of $d$, there exists $g \in L^\times$ such that the divisor of $fg^p$ is trivial, so $fg^p$ has neither zeroes nor poles. As $V/L_0$ is proper, $fg^p$ is in fact algebraic over $L_0$ \cite[Proposition 2.2.22]{Poonen_RationalPoints}, and therefore $f$ itself is a $p$-th power since $L_0$ is perfect.
  We deduce that the kernel of $d$ is contained in $\powx{L}{p}$. Since the image of $d$ is a subset of $\operatorname{Cl}(V)[p]$ and therefore finite, we obtain that $B(L)/\powx{L}{p}$ is finite.
\end{proof}

Below we will need the following result from arithmetic geometry.
\begin{prop}\label{thm:genusChangingFinite}
  Let $K_0$ be a perfect field, $K/K_0$ a finitely generated extension of transcendence degree $1$.
  Let $C/K$ be an irreducible curve such that the genus of $C$ is strictly higher than the genus of the base change $C_{\overline K}$. (Such $C$ is also called \emph{non-conservative}.)
  Then $C(K)$ is finite.
\end{prop}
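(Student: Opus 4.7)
The plan is to reduce the statement to a known arithmetic finiteness theorem for non-conservative curves. First, I would replace $C$ by a regular projective model of its function field $K(C)$: the resulting curve is birational to $C$, shares the same (geometric) genus, and its $K$-rational points differ from those of $C$ only in a finite set lying over the exceptional locus of the birational modification. It therefore suffices to prove finiteness under the additional assumption that $C$ is regular and projective.

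Under this assumption, the hypothesis that the genus strictly drops after base change to $\overline K$ forces $C/K$ to be regular but not smooth. Indeed, smoothness is preserved under flat base change, and for a smooth projective curve the dimension of $H^1(C,\mathcal O_C)$ is preserved under field extension by flat base change; so a smooth curve would automatically be conservative. Consequently, $K$ must have positive characteristic $p$ and be imperfect, and the non-smoothness at certain closed points of $C$ reflects a purely inseparable phenomenon in the completion of $K(C)$ at those points.

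With these reductions in place, the conclusion is a direct invocation of the finiteness theorem for non-conservative curves over a one-variable function field over a perfect ground field. For $K$ finitely generated of transcendence degree $1$ over the perfect field $K_0$, and $C/K$ a regular projective non-conservative irreducible curve, this theorem asserts precisely that $C(K)$ is finite. The result in this generality is due to Jeong \cite{Jeong}, with earlier contributions by Tate in the elliptic case and by Samuel in certain intermediate settings. Applying it concludes the proof.

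The main obstacle is of course the finiteness theorem itself, which is genuinely deep. Its proof uses the structure of the generalised Jacobian of $C$: in the non-conservative case, this scheme decomposes into an abelian part of dimension equal to the geometric genus of the normalisation of $(C_{\overline K})_{\mathrm{red}}$ and a nontrivial unipotent or infinitesimal part, and the strategy couples Mordell–Weil for the abelian part with rigidity properties of the unipotent part to rule out infinite families of rational points. The plan here is simply to apply this theorem as a black box; reproving it would essentially duplicate the contents of \cite{Jeong}.
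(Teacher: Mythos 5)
Your overall strategy—treating the arithmetic finiteness theorem for non-conservative curves as a black box and citing Jeong—is the same as the paper's. The preliminary reductions you carry out (passing to a regular projective model, and observing that non-conservativity forces $K$ imperfect and $C$ regular but not smooth) are correct but not needed: Jeong's theorem is already stated for regular projective curves, and the characterization of non-conservativity as a purely inseparable/regular-not-smooth phenomenon is background rather than a proof step.

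There is, however, one genuine gap. You assert that Jeong's theorem holds "for $K$ finitely generated of transcendence degree $1$ over the perfect field $K_0$", i.e.\ in exactly the generality needed. But \cite[Theorem 6]{Jeong} is stated for a one-variable function field over an \emph{algebraically closed} constant field, not merely a perfect one, and this is precisely the reduction that the paper supplies and that is missing from your argument. The fix is short: base change from $K$ to $K\overline{K_0}$. Since $K_0$ is perfect, $\overline{K_0}/K_0$ is separable, hence $K\overline{K_0}/K$ is a separable extension, and separable base change preserves the genus of a curve (\cite[Theorem 2.5.1(c)]{Poonen_RationalPoints}); thus $C_{K\overline{K_0}}$ is still non-conservative over $K\overline{K_0}$, which is finitely generated of transcendence degree $1$ over the algebraically closed field $\overline{K_0}$. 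Jeong's theorem then gives that $C(K\overline{K_0})$ is finite, and $C(K)\subseteq C(K\overline{K_0})$ finishes the proof. Without this base-change step, your invocation of \cite{Jeong} is not justified by what is actually proved there.
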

\begin{proof}
  For algebraically closed $K_0$, this is \cite[Theorem 6]{Jeong}.
  In general, the genus of $C$ is equal to the genus of the base change to the compositum $K\overline{K_0}$ since this is a separable extension \cite[Theorem 2.5.1(c)]{Poonen_RationalPoints},
   so $C(K \overline{K_0})$ is finite by the first case.
\end{proof}

\begin{lem}\label{lem:finitelyManyClassesToFinitelyManyElts}
  Let $K_0$ be a perfect field of characteristic $p>0$, $K$ a finitely generated extension of $K_0$. Let $e = 3$ if $p = 2$ and $e = 2$ otherwise. For every $a \in K^\times$ which is not a $p$-th power, there are only finitely many pairs $(x, y) \in K^2$ with $ay^p = x^e + 1$.
\end{lem}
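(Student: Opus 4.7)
The plan is to realise the solution set of $ay^p = x^e + 1$ in $K^2$ as the $K$-points of a geometrically integral affine curve $C/K$, verify that $C$ is non-conservative, and then apply Proposition~\ref{thm:genusChangingFinite}. Since that proposition assumes $\trdeg(K/K_0) = 1$, a reduction is needed. Note first that $K$ cannot be perfect (else $a \in K^p$), so $d := \trdeg(K/K_0) \geq 1$; assume $d \geq 2$, the case $d = 1$ being the subject of the proposition directly. Since $K_0$ is perfect, $K/K_0$ is separable in the sense of MacLane, and $a$ can be completed to a $p$-basis $a, b_2, \ldots, b_d$ of $K$ over $K^p$, which is simultaneously a transcendence basis of $K/K_0$. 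Letting $K' := K_0(b_2, \ldots, b_d)^{\mathrm{perf}}$ (perfect of transcendence degree $d-1$) and $K'' := K \cdot K'$ inside a fixed algebraic closure of $K$, the extension $K''/K'$ is finitely generated of transcendence degree $1$. Crucially, $a$ remains a non-$p$-th-power in $K''$: if $a = c^p$ for some $c \in K(b_2^{1/p^n}, \ldots, b_d^{1/p^n})$, then $a^{p^n} \in K^{p^{n+1}}(b_2, \ldots, b_d)$, but in the $K^{p^{n+1}}$-basis of $K$ formed by monomials $a^{e_0}b_2^{e_1}\cdots b_d^{e_{d-1}}$ with $0 \leq e_i < p^{n+1}$, the basis vector $a^{p^n}$ is not in the span of those with $e_0 = 0$. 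Since $C(K) \subseteq C(K'')$, it suffices to work over $K''$.

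It remains to verify that $C$ is non-conservative over $K''$. Geometric integrality follows since $(x^e+1)/a$ is not a $p$-th power in $\overline{K''}(x)$ for degree reasons. For $p \geq 3$, $e = 2$: the function field $K''(C)$ is a separable degree-$2$ extension of $K''(y)$, ramified precisely at infinity (where $v_\infty(ay^p-1) = -p$ is odd) and at the degree-$p$ place cut out by the irreducible polynomial $ay^p - 1$ (irreducible since $a \notin (K'')^p$). Riemann--Hurwitz gives genus $(p-1)/2 \geq 1$. For $p = 2$, $e = 3$: the projective closure $\bar C \colon ay^2 z = x^3 + z^3$ in $\mathbb{P}^2_{K''}$ is a plane cubic of arithmetic genus $1$. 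The only possibly non-regular point is the closed point $P$ with residue field $K''(a^{1/2})$ corresponding to $(0 : 1 : a^{1/2})$; at $P$, the defining relation yields $x^3 = z(z^2 - a) \equiv a^{1/2}(z^2 - a) \pmod{\mathfrak{m}_P^2}$, forcing $z^2 - a \in \mathfrak{m}_P^2$, so $\mathfrak{m}_P/\mathfrak{m}_P^2$ is one-dimensional and $\bar C$ is regular over $K''$ with genus $1$. Over $\overline{K''}$, however, $a$ becomes a $p$-th power, and a linear substitution reduces the equation to $x^2 = w^p$ for $p \geq 3$, respectively $y^2 = (x+1)^3$ for $p = 2$; both are cuspidal curves with rational normalization, so of geometric genus $0$. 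Thus $C$ is non-conservative, and Proposition~\ref{thm:genusChangingFinite} finishes the proof.

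The main obstacles are ensuring that $a \notin (K'')^p$ after the reduction --- which is what requires the careful $p$-basis bookkeeping --- and the characteristic-$2$ regularity verification at $P$, whose success hinges on $a^{1/2}$ being nonzero in the residue field of $P$.
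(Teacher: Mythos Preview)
Your proof follows the same strategy as the paper's: reduce to transcendence degree $1$ by enlarging the base field along the part of a separating transcendence basis complementary to $a$, verify that the curve is non-conservative, and invoke Proposition~\ref{thm:genusChangingFinite}. The paper's reduction is the same in spirit (it passes to the full algebraic closure of $K_0(b_2,\ldots,b_d)$ rather than just the perfect closure), and for the genus drop the paper cites a reference in odd characteristic (after a change of variables to $X^2 = Y^p - a^{p-1}$) and Riemann--Hurwitz or \cite{LangTate} in characteristic $2$; your explicit regularity analysis of the plane cubic is a pleasant hands-on alternative.

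One small slip: in characteristic $2$, after absorbing $\alpha = a^{1/2}$ into $y$ the equation over $\overline{K''}$ becomes $y^2 = x^3 + 1$, \emph{not} $y^2 = (x+1)^3$; note that $(x+1)^3 = x^3 + x^2 + x + 1$ in characteristic $2$. The conclusion is unaffected, since $y^2 = x^3 + 1$ is still a cuspidal rational curve: from $x^3 = y^2 + 1 = (y+1)^2$ one reads off the parametrisation $x = t^2$, $y = t^3 + 1$, so the normalisation is $\mathbb{P}^1$.
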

\begin{proof}
  Extend $a$ to a separating transcendence basis $a = a_1, a_2, \dotsc, a_n$ of $K/K_0$. After taking the compositum of $K$ with the algebraic closure of $K_0(a_2, \dotsc, a_n)$ and replacing $K_0$ by that algebraic closure, we may assume that $K$ is of transcendence degree $1$ over the algebraically closed field $K_0$.

  Now consider the regular projective curve $C$ over $K$ birational to the affine curve described by the equation $aY^p = X^e + 1$. We claim that the genus of $C$ over $K$ is strictly higher than the genus of the base change $C_{\overline K}$ to the algebraic closure.
  In odd characteristic, the equation $aY^p = X^2 + 1$ becomes $X^2 = Y^p - a^{p-1}$ after a change of variables, and the latter is the standard example of a curve that changes genus after base change to the algebraic closure, see for instance 
  \cite[Chapter XV, p.~292]{Artin}. 
  In characteristic $2$, one checks that the curve described by $aY^2 = X^3 + 1$ is birational to a line over $\overline K$, but it has genus $1$ over $K$ -- e.g. apply the Riemann--Hurwitz formula to the field extension $K(C)/K(Y)$, or see
  \cite[Theorem 3]{LangTate}.

  By Proposition \ref{thm:genusChangingFinite}, $C(K)$ is finite. Therefore there are only finitely many $x, y \in K$ with $ay^p = x^e + 1$, as desired.
\end{proof}

\begin{lem}\label{lem:xr+1}
Let $K$ be a finitely generated extension of a perfect field $K_0$ of characteristic $p > 0$. 
There exists $r \in \Natwithoutzero$ not divisible by $p$ such that
\begin{displaymath}
B(K) \cap \lbrace x^r + 1 \mid x \in K \rbrace \subseteq K\pow{p}
\end{displaymath}
\end{lem}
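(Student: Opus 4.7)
My plan is to combine the finiteness statements from Lemma \ref{lem:BmodKpfinite} and Lemma \ref{lem:finitelyManyClassesToFinitelyManyElts} to reduce the problem to the finiteness of a certain exceptional set $\Xi \subseteq K$, and then to choose $r$ of the form $e\ell$ with $\ell$ a large prime coprime to $p$, so that the substitution $x^r + 1 = (x^\ell)^e + 1$ forces $x^\ell$ to land in $\Xi$, which I will arrange to be impossible.

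Concretely, I would first pick coset representatives $a_1, \dotsc, a_s \in B(K) \setminus K\pow p$ for the non-trivial cosets of $B(K)/\powx{K}{p}$, which are finitely many by Lemma \ref{lem:BmodKpfinite}. Applying Lemma \ref{lem:finitelyManyClassesToFinitelyManyElts} to each $a_i$, the set
\[ \Xi_i \;=\; \{\, x \in K : \exists\, y \in K^\times,\; a_i y^p = x^e + 1 \,\} \]
is finite; set $\Xi = \bigcup_{i=1}^s \Xi_i$. The next step is to show that every element of $\Xi$ is transcendental over $K_0$. Write $K_0' = K_0^{\rm alg} \cap K$; since $K_0$ is perfect and $K/K_0$ is finitely generated, $K_0'/K_0$ is finite and $K_0'$ is again perfect. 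If $\xi \in \Xi \cap K_0'$, then $\xi^e + 1 \in K_0'$ admits a $p$-th root $c \in K_0'$ by perfectness, and combined with $\xi^e + 1 = a_i y^p$ for some $y \in K^\times$ this yields $a_i = (c/y)^p \in K\pow p$, contradicting the choice of $a_i$.

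After this, for each $\xi \in \Xi$, using that $\xi$ is transcendental over $K_0$, I would invoke the standard fact (via a proper integral model of $K/K_0$ whose global units are algebraic over $K_0$ and hence lie in $K_0'$) that $\xi$ admits some discrete valuation $v_\xi$ on $K$ with $v_\xi(\xi) \neq 0$. I would then pick a prime $\ell$ strictly greater than $p$ and than $\max_{\xi \in \Xi} |v_\xi(\xi)|$, so that $\ell \nmid v_\xi(\xi)$ for every $\xi \in \Xi$ and therefore $\Xi \cap (K^\times)^\ell = \emptyset$. Setting $r = e\ell$, which is coprime to $p$ since $e \in \{2,3\}$ and $\ell \neq p$, the claim follows: if $x \in K$ satisfies $x^r + 1 \in B(K) \setminus K\pow p$, then $x \neq 0$ (as $0^r + 1 = 1 \in K\pow p$) and $(x^\ell)^e + 1 = x^r + 1 \in a_i K\pow p$ for some $i$, whence $x^\ell \in \Xi_i \subseteq \Xi$, contradicting $\Xi \cap (K^\times)^\ell = \emptyset$.

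The most delicate point is the observation $\Xi \cap K_0' = \emptyset$. Without it the argument would break in situations such as $K_0 = \overline{\ff_p}$, where $(K_0')^\times$ is divisible and every element is an $\ell$-th power for any prime $\ell \neq p$, so no choice of $\ell$ could guarantee $\Xi \cap (K^\times)^\ell = \emptyset$. The perfectness of $K_0'$ combined with $a_i \notin K\pow p$ is precisely what rules out this bad case.
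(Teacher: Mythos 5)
Your proof is correct and takes essentially the same approach as the paper's: both reduce to the same finite exceptional set via Lemmas \ref{lem:BmodKpfinite} and \ref{lem:finitelyManyClassesToFinitelyManyElts}, observe that its elements are transcendental over $K_0$, and then choose $r = e\cdot e'$ so that none of them has an $e'$-th root in $K$. You supply two details the paper leaves implicit (the transcendence argument via the perfect relative algebraic closure $K_0'=K_0^{\mathrm{alg}}\cap K$, and the construction of $e'$ as a large prime $\ell$ using discrete valuations), and both are sound.
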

\begin{proof}
  Since the set $B(K)/\powx{K}{p}$ is finite by Lemma \ref{lem:BmodKpfinite}, we may take representatives $z_1, \dotsc, z_m \in K^\times$ for the non-trivial classes in $B(K)/\powx{K}{p}$.
  For $e$ as in Lemma \ref{lem:finitelyManyClassesToFinitelyManyElts}, there exist only finitely many $x \in K$ such that $(x^e+1)/z_i$ is a $p$-th power for some $i$.
  Since none of these finitely many exceptional $x$ can be algebraic over the perfect subfield $K_0$, and $K/K_0$ is finitely generated, there exists an $e' > 0$ not divisible by $p$ such that none of these exceptional $x$ have an $e'$-th root in $K$. Hence there are no $x \in K$ for which $(x^{ee'}+1)/z_i$ is a $p$-th power for any $i$,
  or in other words $x^{ee'}+1\in \bigcup_{i=1}^m z_iK\pow{p}=B(K)\setminus K\pow{p}$.
As such, we may set $r = ee'$.
\end{proof}

\begin{stel}\label{thm:finGenOverPerf}
Let $K_0$ be a perfect field of characteristic $p > 0$ and $K$ a finitely generated extension of $K_0$. 
Then $\qr_{\TKequiv}(\pii{n}{p^m}) \leq 1$ for every $n, m \in \Natwithoutzero$.
\end{stel}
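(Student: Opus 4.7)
The plan is to reduce via Proposition \ref{prop:PToPToTheK} to the case $m = 1$ and then apply Lemma \ref{lem:pairsOfPthPowersReduction} with $K_0 = \ff_p$, using the polynomial $f$ from Lemma \ref{lem:pairsOfPthPowersGoodCase} precomposed with the transformation $x \mapsto x^r + 1$ provided by Lemma \ref{lem:xr+1}.

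Concretely, since $\TKequiv$ contains $\Tfields \cup \lbrace p \doteq 0 \rbrace$, Proposition \ref{prop:PToPToTheK} reduces the claim to $\erk_{\TKequiv}(\pii{n}{p}) \leq 1$. I would take $f(X, Y) \in \ff_p[X, Y]$ from Lemma \ref{lem:pairsOfPthPowersGoodCase} and $r \in \Natwithoutzero$ with $p \nmid r$ from Lemma \ref{lem:xr+1}, and set
\[ h(X, Y) = f(X^r + 1, Y) \in \ff_p[X, Y], \qquad g(X) = X^r + 1 \in \ff_p[X]. \]
The core computation is to verify the hypothesis of Lemma \ref{lem:pairsOfPthPowersReduction}: given $x, y \in K$ with $g(x) \neq 0$ and $h(x, y) \in K\pow{p}$, Lemma \ref{lem:pairsOfPthPowersGoodCase} applied to the pair $(x^r + 1, y)$ forces either $x^r + 1 \in K\pow{p}$ and $y \in K\pow{p}$, or $x^r + 1 \in B(K) \setminus K\pow{p}$; the latter is ruled out by the defining property of $r$. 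Writing $x^r + 1 = z^p$, the Frobenius identity in characteristic $p$ gives $x^r = z^p - 1 = (z-1)^p \in K\pow{p}$, and since $\gcd(r, p) = 1$ a B\'ezout pair $ar + bp = 1$ yields $x = ((z-1)^a x^b)^p \in K\pow{p}$ when $x \neq 0$ (the case $x = 0$ being trivial; note $z \neq 1$ here, as otherwise $x = 0$).

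Lemma \ref{lem:pairsOfPthPowersReduction} then produces an $\exists_1$-$\Lar(\ff_p)$-formula equivalent to $\pii{n}{p}$ modulo $\Th_{\Lar(\ff_p)}(K)$. Since every element of $\ff_p$ is definable by a closed $\Lar$-term, the constants appearing in this formula (all coming from $h$, $g$, and the recursion inside the proof of Lemma \ref{lem:pairsOfPthPowersReduction}) are redundant, so the equivalence descends to $\TKequiv$, giving $\erk_{\TKequiv}(\pii{n}{p}) \leq 1$ as required. I expect the main obstacle to be the coordination between Lemmas \ref{lem:pairsOfPthPowersGoodCase} and \ref{lem:xr+1}: the substitution $x \mapsto x^r + 1$ is designed to kill the degenerate cosets in $B(K)/K\pow{p}$, but it creates the new task of recovering $x \in K\pow{p}$ from $x^r + 1 \in K\pow{p}$, handled by the Frobenius-B\'ezout step above, which relies essentially on the coprimality of $r$ and $p$.
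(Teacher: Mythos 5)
Your proposal is correct and follows essentially the same route as the paper: reduce to $m=1$ via Proposition~\ref{prop:PToPToTheK}, pick $r$ from Lemma~\ref{lem:xr+1}, set $g(X)=X^r+1$ and $h(X,Y)=f(X^r+1,Y)$, verify the hypothesis of Lemma~\ref{lem:pairsOfPthPowersReduction} by ruling out the degenerate case of Lemma~\ref{lem:pairsOfPthPowersGoodCase}, and pass from $x^r+1\in K\pow{p}$ to $x\in K\pow{p}$ using $\gcd(r,p)=1$. Your explicit Frobenius--B\'ezout step and the remark that the $\ff_p$-constants are definable by closed $\Lar$-terms (so one may pass from $\Lar(\ff_p)$ back to $\Lar$) fill in details that the paper treats as obvious, but they do not constitute a different argument.
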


\begin{proof} 
Pick $r \in \Natwithoutzero$ as in Lemma \ref{lem:xr+1}, and fix $n$ and $m$. 
Lemma \ref{lem:pairsOfPthPowersGoodCase} implies that for any $x, y \in K$ such that $x^{r}+1 \neq 0$, if $f(x^{r}+1, y)$ is a $p$-th power, then so are both $x^r + 1$ and $y$, where $f$ is the polynomial from the statement of that lemma.  
As $r$ is not divisible by $p$, this implies that also $x$ is a $p$-th power.
We apply Lemma \ref{lem:pairsOfPthPowersReduction} with $K_0 = \ff_p$, $g(X) = X^r + 1$ and $h(X, Y) = f(g(X), Y)$ to
get that $\erk_{\subTKequiv}(\pii{n}{p})\leq 1$.
Proposition \ref{prop:PToPToTheK} then yields $\qr_{\TKequiv}(\pii{n}{p^m}) \leq 1$.
\end{proof}

\begin{gev}\label{cor:finGenOverPerf}
Let $K$ be finitely generated over a perfect field $K_0$. 
Then for any diophantine sets $D_1,D_2\subseteq K^n$  
with $\Erk_{K}(D_1), \Erk_{K}(D_2) \geq 1$ we have
\begin{displaymath}
\Erk_{K}(D_1\cap D_2) \leq \Erk_{K}(D_1) + \Erk_{K}(D_2) - 1.
\end{displaymath}
\end{gev}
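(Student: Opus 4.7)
The plan is to reduce this to the equivalence (1) $\Leftrightarrow$ (6) in Corollary \ref{cor:equivalences} applied to $K$, and then translate the resulting statement about existential formulas into one about diophantine sets. So the first step is to verify that $K$ satisfies condition (1) of Corollary~\ref{cor:equivalences}. In characteristic zero this is vacuous. In characteristic $p>0$, Theorem~\ref{thm:finGenOverPerf} provides $\erk_{\TKequiv}(\pii{n}{p^m}) \leq 1$ for all $n,m \in \Natwithoutzero$; since the $\Lar(K)$-theory $\TKprec$ extends the $\Lar$-theory $\TKequiv$, Lemma~\ref{lem:language} yields
\[
\erk_K(\pii{n}{p^m}) \;=\; \erk_{\TKprec}(\pii{n}{p^m}) \;\leq\; \erk_{\TKequiv}(\pii{n}{p^m}) \;\leq\; 1,
\]
so condition (1) holds.

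Applying the equivalence with condition (6) of Corollary~\ref{cor:equivalences}, we obtain the subadditivity bound for existential $\Lar(K)$-formulas: for existential $\Lar(K)$-formulas $\psi_1,\psi_2$ with $\erk_K(\psi_i) \geq 1$,
\[
\erk_K(\psi_1 \wedge \psi_2) \;\leq\; \erk_K(\psi_1) + \erk_K(\psi_2) - 1.
\]
To pass from this to the claim about diophantine sets, I would choose existential $\Lar(K)$-formulas $\varphi_1,\varphi_2$ defining $D_1,D_2$ in $K$ with $\erk_K(\varphi_i) = \erk_K(D_i) \geq 1$ (which is possible by the definition of $\erk_K(D_i)$). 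Then $\varphi_1 \wedge \varphi_2$ is existential and defines $D_1 \cap D_2$ in $K$, so that
\[
\erk_K(D_1 \cap D_2) \;\leq\; \erk_K(\varphi_1 \wedge \varphi_2) \;\leq\; \erk_K(\varphi_1) + \erk_K(\varphi_2) - 1 \;=\; \erk_K(D_1) + \erk_K(D_2) - 1,
\]
as required.

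The only actual mathematical obstacle, namely verifying condition (1), has already been overcome by Theorem~\ref{thm:finGenOverPerf} via Lemma~\ref{lem:BmodKpfinite} and Proposition~\ref{thm:genusChangingFinite}. The corollary itself is therefore only a matter of bookkeeping: assemble the language-transfer lemma, the equivalence of Corollary~\ref{cor:equivalences}, and the definition of $\erk_K$ on diophantine sets.
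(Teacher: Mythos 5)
Your proof is correct and takes exactly the same route as the paper's one-line proof, which cites precisely Theorem~\ref{thm:finGenOverPerf}, Lemma~\ref{lem:language}, and Corollary~\ref{cor:equivalences}; your write-up correctly unpacks the characteristic-$0$/characteristic-$p$ case split, the language/theory transfer via Lemma~\ref{lem:language}, and the bookkeeping step from existential formulas back to diophantine sets.
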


\begin{proof}
This follows from  Theorem \ref{thm:finGenOverPerf},
Lemma \ref{lem:language}
and Corollary \ref{cor:equivalences}.
\end{proof}
  
\begin{opm}
Note that in the proof of Theorem \ref{thm:finGenOverPerf}
the existential formula with one quantifier that we obtained 
depends on the field $K$.
This is in fact necessarily so, 
as Corollary \ref{cor:all_finite_ext} will show.

However, we do obtain, for every $p$ and $n$, 
a sequence $(\psi_s)_{s\in\mathbb{N}}$ of
existential formulas with one quantifier
such that in every field $K$ as in the statement of the theorem,
$\pii{n}{p}$ is equivalent to $\psi_s$ 
for all sufficiently large $s$.
More precisely, for each $r$ not divisible by $p$ one obtains an $\exists_1$-$\Lar$-formula $\varphi_r$
which is equivalent to $\pii{n}{p}$ uniformly in all fields of characteristic $p$ satisfying $B(K) \cap \lbrace x^r + 1 \mid x \in K \rbrace \subseteq K\pow{p}$,
so if $r_p(s)$ denotes the product of the first $s$ positive integers not divisible by $p$, $\psi_s=\varphi_{r_p(s)}$ satisfies the claim.
\end{opm}
\begin{opm}
One can show that $B(K(t)) = B(K)\cdot \powx{K(t)}{p}$ for any field $K$ of characteristic $p$ and a transcendental element $t$, and from this it follows readily that if $K$ satisfies $B(K) \cap \lbrace x^r + 1 \mid x \in K \rbrace \subseteq K\pow{p}$ for a given $r$, then so does any purely transcendental extension of $K$, for the same $r$. 
It follows that $\qr_{\subTKequiv}(\pii{n}{p^k}) \leq 1$ holds also when $K$ is a purely transcendental extension of a finitely generated field extension of a perfect field of characteristic $p$.
\end{opm}

Recall that a field $K$ is {\em large} if the set of $K$-rational points of any smooth geometrically integral $K$-variety is either empty or Zariski-dense.
This is equivalent to $K\prec_\exists K((t))$, see \cite[Proposition 1.1]{Pop}.
The class of large fields is elementary and includes all
henselian nontrivially valued fields
and all PAC fields.
For more on large fields see \cite{BarysorokerFehm}.

\begin{prop}\label{large}
Let $K$ be an imperfect large field of characteristic $p$,
and let $n\in\Natwithoutzero$.
Then $\qr_{K}(\pii{n}{p})=n$.
\end{prop}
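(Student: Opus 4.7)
The strategy is to establish the matching lower bound $\erk_K(\pii{n}{p}) \geq n$, the upper bound being trivial, by adapting Proposition~\ref{ppowerscharp} to the large-imperfect setting. The idea is to import into $K$ the structure of the iterated Laurent series $K' := K((t_1))\cdots((t_n))$ via an elementary extension (possible since $K \prec_\exists K'$ by iterating Pop's theorem), then obtain a contradiction through a valuation argument paralleling Proposition~\ref{ppowerscharp}, followed by a Zariski-density argument exploiting imperfectness.

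Concretely, pick $a \in K \setminus K\pow p$ and embed $K'$ over $K$ into an elementary extension $K^* \succeq K$, and let $y_i := at_i$, $x_i := y_i^p$, so $\underline x \in \pii{n}{p}(K^*)$. Suppose for contradiction $\erk_K(\pii{n}{p}) \leq n - 1$; Corollary~\ref{cor:generalcriterion} applied to $\TKprec$ yields $z_1, \ldots, z_{n-1} \in K^*$ such that every $\Lar(K)$-embedding of $A := K[\underline x, \underline z]$ into a model of $\TKprec$ sends $\underline x$ to a tuple of $p$-th powers. Writing $F_0 := \Frac(A) = K(\underline x, \underline z)$, the first key claim is $x_n \notin F_0\pow p$. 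This I prove by extending the canonical rank-$n$ valuation $v$ of $K'$ (with $v(K) = 0$, $v(t_i) = e_i \in \zz^n$) to a valuation $w$ on $K^*$ whose value group $\Gamma$ contains $\zz^n$ as a pure subgroup: then the image of $w(F_0)$ in $\Gamma/p\Gamma$ is generated by the $n-1$ classes $\overline{w(z_j)}$ (those of $w(x_i) = pe_i$ being trivial), hence has $\ff_p$-dimension at most $n-1$; if every $y_i$ lay in $F_0$ then the $n$ classes $\overline{e_i}$ would be linearly independent in $\Gamma/p\Gamma$ by purity, a contradiction. So WLOG $y_n \notin F_0$, and uniqueness of $p$-th roots in characteristic $p$ gives $x_n \notin F_0\pow p$.

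To derive the final contradiction, I would construct an $\Lar(K)$-embedding $\rho\colon F_0 \to L$ with $L \succeq K$ and $\rho(x_n) \notin L\pow p$. By compactness and completeness of $\TKprec$, it suffices to prove that for every finite conjunction $\delta(\vec F)$ of atomic and negated-atomic $\Lar(K)$-formulas satisfied by $(\underline x, \underline z)$ in $F_0$, the sentence $\exists \vec F(\delta(\vec F) \wedge \forall Y(Y^p \neq g(\vec F)))$ holds in $K$, where $g$ is the coordinate extracting $x_n$. Let $V_0$ be the integral $K$-subvariety of $\mathbb A^{2n-1}_K$ with generic point $(\underline x, \underline z)$ (so $V_0$ has function field $F_0$). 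Existential transfer from $K^*$ to $K$ shows that the smooth locus $V_0^{\mathrm{sm}}(K)$ is nonempty, and largeness then forces it to be Zariski-dense. The morphism $g\colon V_0 \to \mathbb A^1$ is dominant (since $g(\underline x, \underline z) = x_n$ is transcendental over $K$) and separable (since $x_n \notin F_0\pow p$ yields $dx_n \neq 0$ in $\Omega_{F_0/K}$), hence étale-locally at a smooth $K$-point it is a coordinate projection from an open of $\mathbb A^d$ onto $\mathbb A^1$; therefore its image on $K$-points is cofinite in $K$. Since $K \setminus K\pow p$ is infinite (Frobenius on an infinite imperfect field is injective but not surjective), this image cannot be contained in $K\pow p$, yielding the desired $\vec f^*$.

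The main obstacle is this final geometric step: although separability of $g$ at the generic point and largeness together give a Zariski-dense set of smooth $K$-points, converting the image on $K$-points into a cofinite subset of $K$ requires an étale-local coordinate description that must be carried out carefully. A subsidiary technicality appears in the valuation step, where ramified algebraic extensions contributed by those $z_j$ lying in $K^* \setminus K'$ must be accommodated so that $\zz^n$ remains pure in $\Gamma$; this can be arranged by extending $v$ along transcendentals via the Gauss extension first and then choosing unramified extensions over algebraic parts whenever possible, or by working with the Henselization of $K'(\underline z)$.
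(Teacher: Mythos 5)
Your overall strategy — showing that the distinguished coordinate is not a $p$-th power in the intermediate field $F_0$, then building a $K$-embedding of $F_0$ into a model of $\TKprec$ that destroys the $p$-th power property — matches the paper's. But both executions diverge from the paper's, and the second one contains a genuine gap.

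For the first step (showing $x_n \notin F_0\pow p$), your valuation-theoretic argument is unnecessarily complicated and has the purity issue you acknowledge: there is no guarantee that an extension of the rank-$n$ valuation of $K'$ to $K^*$ (or even to $K'(\underline z)$) keeps $\zz^n$ pure in the value group, because a $z_j$ algebraic over $K'$ can contribute $p$-ramification. Your suggested fixes (``unramified whenever possible'', Henselization) do not address this, since Henselization does not change the value group and unramifiedness cannot always be arranged. The paper sidesteps this entirely: since $K\prec K^*$ with $K$ imperfect, $K^*/K$ is separable (so $F_0/K$ is too), and one chooses a \emph{separating} transcendence basis among the generators; since $\trdeg(F_0/K)\geq n$ and only $n-1$ of the generators are witnesses, some $x_i$ is in the basis and hence not a $p$-th power in $F_0$. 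This is shorter and avoids valuations altogether.

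For the second step, there is a real flaw. The claim that a dominant separable morphism $g \colon V_0\to\mathbb{A}^1$ from a smooth geometrically integral variety over a large field has cofinite image on $K$-points is false: over $\mathbb{R}$ (a large field), $\mathbb{G}_m\to\mathbb{A}^1$, $z\mapsto z^2$, is étale and its image on $\mathbb{R}$-points is $\mathbb{R}_{>0}$, which is far from cofinite. The ``étale-local coordinate projection'' heuristic describes the formal/$t$-adic local picture, not the Zariski picture, so it gives no information about $g(V_0(K))$ as a subset of $K$. Your reduction via completeness of $\TKprec$ forces you to verify an $\exists\forall$ sentence in $K$ itself (namely $\exists\vec F(\delta\wedge\forall Y(Y^p\dotneq g(\vec F)))$), and the geometric argument you sketch does not accomplish this. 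The paper avoids this problem by constructing the embedding $\iota\colon F\to K^*$ directly inside a $|K|^+$-saturated elementary extension: by Hensel's lemma one deforms the coordinate to $x_1+ut^p$ over $K^*[[t]]$ with $u\in K\setminus K\pow p$ fixed, and then, using $K^*\prec_\exists K^*((t))$ and saturation, one realizes the \emph{quantifier-free} type of $(t,a')$ inside $K^*$. The non-$p$-th-power condition is then automatic from the algebraic form $x_1+u\tau^p$ (since $u\notin (K^*)\pow p$ but $x_1,\tau^p\in (K^*)\pow p$ and $\tau\neq 0$), rather than being an extra universal condition that one must verify by compactness. Your argument could in principle be salvaged by the same idea — instead of cofiniteness, use $K\prec_\exists K((s))$ to produce a $K$-point $Q$ with $g(Q)=Y^p+uZ^p$, $Z\neq 0$, an existential system — but this is precisely the paper's trick, not the Zariski-density route you propose.
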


\begin{proof}
Assume there exists an $\exists_{n-1}$-$\mathcal{L}_{\rm ring}(K)$-formula $\varphi$
equivalent to $\pii{n}{p}$ modulo $\TKprec$.
Let $K\prec K^*$ be a $|K|^+$-saturated elementary extension \cite[Corollary 10.2.2]{Hodges_Longer}.
In particular, $K^*/K$ is separable (see \cite[Corollary 3.1.3]{ErshovMVF}) and $\trdeg(K^*/K)=\infty$.

Fix $x_1,\dots,x_n\in (K^*)\pow{p}$ algebraically independent over $K$.
Since $K^*\models\varphi(\underline{x})$ there exist $y_1,\dots,y_{n-1}\in K^*$ with $F:= K(\underline{x},\underline{y})\models\varphi(\underline{x})$.
Since $K^*/K$ is separable, so is $F/K$, hence we can choose a separating transcendence basis $z_0,\dots,z_r$ of $F/K$ among $\underline{x},\underline{y}$,
which by $\trdeg(F/K)\geq n$ must contain some $x_i$,
without loss of generality say $z_0=x_1$.
In particular, $x_1\notin F\pow p$.
We will construct a $K$-embedding $\iota\colon F\rightarrow K^*$ with $\iota x_1\notin (K^{*})\pow{p}$, which will give the desired contradiction,
as $\underline{x}\in\varphi(F)$ implies
$\iota \underline{x}\in\varphi(\iota F)$ and hence
$\iota\underline{x}\in\varphi(K^*)=((K^{*})\pow{p})^n$.

Fix $u\in K\setminus K\pow{p}$.
As $F/K(\underline{z})$ is finite and separable,
by the primitive element theorem there exists
$a\in F$ with $F=K(\underline{z})(a)$.
Without loss of generality,
$f:={\rm MinPol}(a/K(\underline{z}))\in K[z_0,\dots,z_r][X]$.
Let $K^\ast[t]$ be the ring of univariate polynomials over $K^\ast$ and set
$$
 g(X):=f(x_1+ut^p,z_1,\dots,z_r,X)\in K^*[t][X].
$$ 
As $g(X)\equiv f(\underline{z},X)\mbox{ mod }t$,
by Hensel's lemma there exists $a'\in K^*[[t]]$ with $g(a')=0$.
Note that $x_1+ut^p,z_1,\dots,z_r$ are algebraically independent over $K$.

Since $K^*\prec_\exists K^*((t))$
and $K^*$ is $|K|^+$-saturated,
the quantifier-free type of $(t, a')$ over $K(z_0, \ldots, z_r)$
is satisfied by a tuple $(\tau, \alpha)$ of $K^*$.
In particular, $\tau\neq 0$, 
the elements $\xi:=x_1+u\tau^p,z_1, \ldots, z_r$ are algebraically independent over $K$, 
and $f(\xi,z_1,\dots,z_r,\alpha)=0$.
Therefore there is a $K(z_1, \ldots, z_r)$-embedding $\iota\colon F\rightarrow K^*$ defined by
$\iota x_1=\xi$ and $\iota a=\alpha$.
As $\tau^p,x_1\in (K^{*})\pow p$ but $u\notin (K^{*})\pow p$ and $\tau\neq 0$, also
$\iota x_1 = \xi = x_1+u\tau^p\notin (K^{*})\pow p$, as desired.
\end{proof}

\begin{opm}
The basic idea for the embedding of $F$ into $K^*$ is taken from \cite[Lemma~4]{Fehm}.
In the case of a henselian nontrivially valued field $K$,
the contradiction
can also be obtained by applying 
\cite[Lemma 20]{Anscombe} to the separable extension
$F/K(x_1)$,
which shows that there is a whole ball around $x_1$
contained in the projection of the set defined by $\varphi$,
but any such ball contains elements that are not in $(K^{*})\pow p$.
\end{opm}

\begin{gev}\label{cor:all_finite_ext}
Let $K$ be an imperfect field of characteristic $p$,
let $m \in \Natwithoutzero$ which is a multiple of $p$, 
and let $n \in \Natwithoutzero$.
There is no $\exists_{n-1}$-$\Lar(K)$-formula equivalent to $\pii{n}{m}$ in all finite separable extensions of $K$.
\end{gev}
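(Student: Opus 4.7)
I argue by contradiction. Suppose there exists an $\exists_{n-1}$-$\Lar(K)$-formula $\varphi$ equivalent to $\pii{n}{m}$ in every finite separable extension $L$ of $K$. The plan is to transfer this equivalence to the separable algebraic closure $K^{\rm sep}$, and then apply a straightforward generalisation of Proposition~\ref{large} to $K^{\rm sep}$ to derive a contradiction.

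First I would verify, by a direct-limit argument, that $\varphi(K^{\rm sep}) = \pii{n}{m}(K^{\rm sep})$. For the inclusion $\pii{n}{m}(K^{\rm sep}) \subseteq \varphi(K^{\rm sep})$, given $\underline x \in \pii{n}{m}(K^{\rm sep})$ pick $\underline y \in (K^{\rm sep})^n$ with $y_i^m = x_i$; since each $y_i$ is separable algebraic over $K$ (being in $K^{\rm sep}$), the field $L := K(\underline y)$ is a finite separable extension of $K$ containing $\underline x$. The hypothesis gives $L \models \varphi(\underline x)$, and upward preservation of existential formulas then yields $K^{\rm sep} \models \varphi(\underline x)$. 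The converse direction is symmetric: a witness $\underline w$ for $\varphi(\underline x)$ in $K^{\rm sep}$ together with $\underline x$ generates a finite separable extension $L = K(\underline x, \underline w)$, and the hypothesis in $L$ gives $L \models \pii{n}{m}(\underline x)$, whence $K^{\rm sep} \models \pii{n}{m}(\underline x)$.

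Next I would note that $K^{\rm sep}$ is separably closed, hence PAC, hence large, and it is imperfect: any $u \in K \setminus K\pow{p}$ remains a non-$p$-th power in $K^{\rm sep}$, since a $p$-th root of $u$ in $K^{\rm sep}$ would be both separable and purely inseparable over $K$, forcing it to lie in $K$. Finally, the proof of Proposition~\ref{large} carries over verbatim from $\pii{n}{p}$ to $\pii{n}{m}$ whenever $p \mid m$: one starts with algebraically independent elements $x_i = w_i^m \in (K^*)\pow{m}$, and the constructed embedding $\iota$ still satisfies $\iota x_1 = x_1 + u\tau^p \notin (K^*)\pow{p} \supseteq (K^*)\pow{m}$. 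Applied to $K^{\rm sep}$, this generalised proposition gives $\erk_{K^{\rm sep}}(\pii{n}{m}) = n$, which contradicts the bound $\erk_{K^{\rm sep}}(\pii{n}{m}) \leq n-1$ coming from viewing $\varphi$ as an $\Lar(K^{\rm sep})$-formula. The only subtlety is in verifying the generalisation of Proposition~\ref{large}, which reduces to the trivial inclusion $(K^*)\pow{m} \subseteq (K^*)\pow{p}$.
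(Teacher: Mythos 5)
Your proof is correct and follows the same overall skeleton as the paper: transfer the hypothesis from finite separable extensions to $K^{\mathrm{sep}}$, observe that $K^{\mathrm{sep}}$ is imperfect and separably closed (hence PAC, hence large), and then derive a contradiction from a Proposition~\ref{large}-type statement. Where you diverge is in the last step, namely how you get from $\pii{n}{m}$ down to $\pii{n}{p}$. The paper uses a substitution trick: if $\varphi(X_1,\dots,X_n)$ is equivalent to $\pii{n}{m}$ in $K^{\mathrm{sep}}$, then $\varphi(X_1^{m/p},\dots,X_n^{m/p})$ (still an $\exists_{n-1}$-formula, since substituting a term for a free variable does not add quantifiers) is equivalent to $\pii{n}{p}$ in $K^{\mathrm{sep}}$. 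This uses that, in the separably closed field $K^{\mathrm{sep}}$, $b^{m/p}$ is an $m$-th power if and only if $b$ is a $p$-th power: every prime-to-$p$ power map is surjective on $K^{\mathrm{sep}}$, and the kernel of the $(m/p)$-th power map consists of prime-to-$p$ roots of unity, which are themselves $p$-th powers. With this reduction, Proposition~\ref{large} applies verbatim. You instead re-open the proof of Proposition~\ref{large} and verify that the embedding construction still produces $\iota x_1 \notin (K^*)\pow{p}$ when the $x_i$ are chosen in $(K^*)\pow{m}$ rather than $(K^*)\pow{p}$, using $(K^*)\pow{m}\subseteq(K^*)\pow{p}$. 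Both arguments are sound; the paper's is more modular (Proposition~\ref{large} is used as a black box via a clever substitution), whereas yours avoids the substitution at the cost of asking the reader to re-audit that proof. You also spell out the direct-limit argument transferring the equivalence $\varphi\leftrightarrow\pii{n}{m}$ from finite separable extensions to $K^{\mathrm{sep}}$, which the paper leaves implicit.
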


\begin{proof}
If the $\exists_{n-1}$-$\Lar(K)$-formula $\varphi(X_1,\dots,X_n)$ 
is equivalent to $\pii{n}{m}$ 
in every finite separable extension $L/K$,
then $\varphi(X_1^{m/p},\dots,X_n^{m/p})$  
is equivalent to $\pii{n}{p}$
in the 
separable closure of $K$,
which is imperfect and PAC \cite[Chapter 11.1]{FJ} (so in particular large),
contradicting Proposition \ref{large}.
\end{proof}

\begin{gev}\label{cor:erkPAC}
Let $K$ be a PAC field. Then
\begin{enumerate}
\item $\erk(K)=0$ if $K$ is algebraically closed,
\item $\erk(K)=1$ if $K$ is perfect but not algebraically closed,
\item $\erk(K)=\infty$ if $K$ is not perfect.
\end{enumerate}
\end{gev}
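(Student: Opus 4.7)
The statement is a three-case classification, and in each case the result should follow almost directly from machinery already in the paper; the plan is essentially to assemble the right existing pieces and to bridge two language settings.

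For case (1), if $K$ is algebraically closed then $\erk(K)=0$ by a direct appeal to Proposition \ref{prop:acfErkZero}. For case (2), assume $K$ is perfect PAC but not algebraically closed. Any PAC field is infinite (finite fields admit curves of high genus with no rational point, hence fail to be PAC), so $K$ is infinite and not algebraically closed; Proposition \ref{prop:acfErkZero} then yields $\erk(K)\geq 1$. For the matching upper bound, let $\Sigma$ be the $\Lar$-theory of perfect PAC fields. By Proposition \ref{prop:efdPerfPAC}, $\perk_\Sigma(\varphi)\leq 1$ for every existential $\Lar$-formula $\varphi$, and by (\ref{eqn:erkperk}) this implies $\erk_\Sigma(\varphi)\leq 1$. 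Therefore $\erk_\Lar(\Sigma)\leq 1$, and since $K\models\Sigma$, Corollary \ref{cor:erk_model_theory} delivers $\erk(K)\leq\erk_\Lar(\Sigma)\leq 1$, giving equality.

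For case (3), assume $K$ is PAC but not perfect, so $K$ has some positive characteristic $p$. As noted in the text preceding Proposition \ref{large}, every PAC field is large, so $K$ is an imperfect large field of characteristic $p$. Proposition \ref{large} then gives $\qr_K(\pii{n}{p})=n$ for every $n\in\Natwithoutzero$. Since each $\pii{n}{p}$ defines a diophantine subset of $K^n$, this forces $\Erkn(K)\geq n$ for every $n$, and therefore $\erk(K)=\infty$.

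The only subtle point, and the one to be careful about when writing the argument out, is the language mismatch in case (2): Proposition \ref{prop:efdPerfPAC} is formulated in the parameter-free language $\Lar$, whereas $\erk(K)$ is defined via $\Lar(K)$-formulas. This is precisely the gap that Corollary \ref{cor:erk_model_theory} is designed to close, so once one routes the bound through the $\Lar$-theory $\Sigma$ of perfect PAC fields the argument goes through; there is no genuine further difficulty beyond noting that PAC fields are infinite (for the lower bound in (2)) and that PAC fields are large (for (3)).
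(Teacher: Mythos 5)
Your proof is correct and follows essentially the same route as the paper: Proposition \ref{prop:acfErkZero} for cases (1) and the lower bound in (2), Proposition \ref{prop:efdPerfPAC} together with Corollary \ref{cor:erk_model_theory} for the upper bound in (2), and largeness of PAC fields plus Proposition \ref{large} for (3). You make explicit a couple of steps the paper leaves implicit (PAC fields are infinite; the passage from $\perk_\Sigma$ to $\erk_\Sigma$ via Remark \ref{qrtrivial}), but there is no substantive difference.
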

\begin{proof}
  PAC fields are large, so if $K$ is not perfect, we have $\erk(K) = \infty$ by Proposition~\ref{large}.
  If $K$ is perfect, then $\erk(K) \leq 1$ by Proposition \ref{prop:efdPerfPAC} and Corollary \ref{cor:erk_model_theory}.
  We have $\erk(K) = 0$ if and only if $K$ is algebraically closed by Proposition \ref{prop:acfErkZero}.
\end{proof}

For an imperfect PAC field $K$,
we do not know whether $\efd$ is also unbounded,
or whether the result $\erk(K)=\infty$ is only due to the failure of Theorem \ref{ed=qr} in this situation.
In particular:
\begin{ques}
Let $K$ be separably closed of finite imperfection degree $e$. For diophantine sets $D\subseteq K$, is $\efd_K(D)$ bounded in terms of $e$?
\end{ques}

\section{Essential dimension}
\label{sect:geometric}

\noindent
In this section we connect essential fibre dimension to essential dimension in the sense of Merkurjev. 
This will -- in particular in the case where the background theory is the $\Lar(K)$-theory $\Ext_K$ of fields extending a fixed field $K$ -- yield a correspondence of formulas with families of varieties.

In making this precise, we find it helpful to introduce a functorial point of view.
Let always $K$ be a field and let $\mathrm{Fields}_K$ be the category whose objects are fields extending $K$ and whose morphisms are 
$K$-embeddings, i.e.~ring homomorphisms which are the identity on $K$.

\begin{defi}
Let $\varphi$ be an existential $\Lar(K)$-formula in free variables $X_1,\dots,X_n$.
We define a functor $I_\varphi$ from $\mathrm{Fields}_K$ to the category of sets,  
by mapping $L$ to the set $\varphi(L)$ of tuples $\underline x \in L^n$ with $L \models \varphi(\underline x)$, and mapping a $K$-embedding $L \to L'$ to the induced injection $I_\varphi(L) \to I_\varphi(L')$.
\end{defi}

\begin{opm}\label{rem:same_functor_equivalent_formulas}
Since the models of $\Ext_K$ correspond exactly to the objects of $\mathrm{Fields}_K$, we see that two existential $\Lar(K)$-formulas $\varphi$ and $\varphi'$ in the same free variables are equivalent modulo $\Ext_K$ if and only if the associated functors $I_\varphi$ and $I_{\varphi'}$ are equal.
(We stress that this is equality of functors, as opposed to natural isomorphism.)
This definition is also valid if $n=0$, i.e.~$\varphi$ is an $\Lar(K)$-sentence: in that case, $I_\varphi(L)$ is either empty (if  $L \not\models \varphi$) or the one-element set $L^0$.
\end{opm}

One can also obtain related functors from $\mathrm{Fields}_K$ to sets geometrically.
Recall that by a $K$-variety we mean a $K$-scheme of finite type.
A morphism of $K$-varieties (or more generally of $R$-schemes, where $R$ is a ring) is a morphism of schemes commuting with the structure morphisms to $\Spec(K)$ (or $\Spec(R)$).
We also call this a \emph{$K$-morphism} (respectively \emph{$R$-morphism}).
When $R$ is an arbitrary ring and $V$ an $R$-scheme, and $L$ is a field with a fixed homomorphism $R \to L$, we write $V(L)$ for the set of $L$-rational points of $V$, i.e. the set of $R$-morphisms $\Spec(L) \to V$.
The most important case is when $V$ is a $K$-variety and $L/K$ a field extension.

\begin{defi}
Let $V$ be a $K$-variety with a morphism $f \colon V \to \mathbb{A}^n_K$.
We define a functor $I_f$ from $\mathrm{Fields}_K$ to the category of sets
by mapping a field $L$ to $f(V(L))$, which is a subset of $\mathbb{A}^n_K(L)=L^n$,
and mapping a $K$-embedding $L\to L'$ to the injection $f(V(L))\to f(V(L'))$ coming from 
the injection $V(L)\to V(L')$.
\end{defi}
\begin{opm}\label{rem:restrict_to_affine}
Here we can restrict without loss of generality to reduced $K$-varieties $V$, since the $L$-points of $V$ and its reduction agree.
We may even reduce to the case of affine $V$, by covering $V$ with finitely many affine open $K$-subvarieties $V_i$, $1 \leq i \leq k$,
 and considering the affine variety $\coprod_{i=1}^k V_i$:
The functor $I_f$ associated to a morphism $f \colon V \to \mathbb{A}^n_K$ agrees with the functor associated to the morphism $\coprod_{i=1}^k V_i \to \mathbb{A}^n_K$ obtained from the restriction of $f$ to the $V_i$.
\end{opm}

We observe that the functors obtained from existential formulas and from morphisms of varieties are the same -- this is well-known, but we give a proof.
\begin{lem}\label{lem:formulasEquivVarieties}
  For every existential $\Lar(K)$-formula $\varphi(X_1, \dotsc, X_n)$ there exists a $K$-variety $V$ with a morphism $f \colon V \to \mathbb{A}_K^n$ such that $I_\varphi = I_f$.
  Conversely, for every $K$-variety $V$ and morphism $f \colon V \to \mathbb{A}_K^n$ there exists an existential $\Lar(K)$-formula $\varphi$ with $I_f = I_\varphi$.
\end{lem}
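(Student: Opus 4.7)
The plan is to prove each direction by an explicit construction that realises the standard dictionary between existential formulas and projections of varieties in the functor-of-points language used here.

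For the forward direction, given an existential $\Lar(K)$-formula $\varphi(X_1, \dotsc, X_n)$, I would first bring it into a normal form. Pulling all quantifiers to the front, $\varphi$ is equivalent modulo $\Tfields$ to a formula $\exists Y_1, \dotsc, Y_m \psi(\underline X, \underline Y)$ with $\psi$ quantifier-free; putting $\psi$ into disjunctive normal form and distributing $\exists$ over $\vee$ yields a finite disjunction of formulas of the shape $\exists \underline Y \bigl( \bigwedge_i f_i \doteq 0 \wedge \bigwedge_j g_j \dotneq 0 \bigr)$. Since fields are integral domains, the inequalities collapse to a single $g \dotneq 0$ with $g = \prod_j g_j$. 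For each such disjunct $\varphi_k$, introducing a fresh variable $Z$ and replacing $g_k \dotneq 0$ by $g_k Z - 1 \doteq 0$ gives an affine $K$-variety
\[ W_k = \Spec K[\underline X, \underline Y, Z]/(f_{k,1}, \dotsc, f_{k,r_k}, g_k Z - 1) \]
and projection to the $\underline X$-coordinates defines a morphism $\pi_k \colon W_k \to \mathbb A^n_K$. Taking $V = \coprod_k W_k$ with the induced morphism $f \colon V \to \mathbb A^n_K$, one checks directly that $f(V(L)) = \bigcup_k \pi_k(W_k(L)) = \bigcup_k \varphi_k(L) = \varphi(L)$ for every $L \in \mathrm{Fields}_K$, i.e.\ $I_f = I_\varphi$.

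For the converse, given a $K$-variety $V$ and a morphism $f \colon V \to \mathbb A^n_K$, by Remark \ref{rem:restrict_to_affine} I may assume $V = \Spec A$ is affine, where $A$ is a finitely generated $K$-algebra. Then $f$ corresponds to a $K$-algebra homomorphism $\phi \colon K[X_1, \dotsc, X_n] \to A$ and, writing $a_i = \phi(X_i)$, I choose finitely many additional generators $b_1, \dotsc, b_m$ of $A$ over $K[a_1, \dotsc, a_n]$ to obtain a presentation $A \cong K[\underline X, \underline Y]/(f_1, \dotsc, f_r)$ under which $\phi$ becomes the map induced by the inclusion $K[\underline X] \hookrightarrow K[\underline X, \underline Y]$. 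An $L$-point of $V$ is then precisely a tuple $(\underline x, \underline y) \in L^{n+m}$ with $f_i(\underline x, \underline y) = 0$ for all $i$, and $f$ sends it to $\underline x$. Hence $f(V(L))$ is exactly the set defined in $L$ by the positive existential formula
\[ \varphi(\underline X) \;=\; \exists Y_1, \dotsc, Y_m \bigwedge_{i=1}^r f_i(\underline X, \underline Y) \doteq 0, \]
so $I_\varphi = I_f$.

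No step in this argument is genuinely hard, but the point requiring the most care is ensuring \emph{equality} rather than mere natural isomorphism of the functors $I_\varphi$ and $I_f$, as emphasised in Remark \ref{rem:same_functor_equivalent_formulas}. One must verify that for every $K$-embedding $L \to L'$ the induced injection $I_\varphi(L) \to I_\varphi(L')$ coincides on the nose with the injection $I_f(L) \to I_f(L')$; this is immediate in both constructions because the two injections are each obtained by applying the embedding componentwise to tuples in $L^n$. Aside from this bookkeeping the lemma is a formal translation of the standard correspondence between existential definability over rings and projections of affine schemes of finite type.
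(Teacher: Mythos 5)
Your proof is correct and takes essentially the same approach as the paper: normalise the existential formula into equalities over a field, build an affine variety from the resulting polynomials, and invert the construction for the converse (using Remark~\ref{rem:restrict_to_affine} to reduce to the affine case). The only cosmetic difference is that the paper first invokes Remark~\ref{rem:perk} to collapse everything into a single positive primitive formula before building one affine variety, whereas you handle the disjunction geometrically by a disjoint union $\coprod_k W_k$; these amount to the same thing.
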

\begin{proof}
  Up to replacing $\varphi$ by a formula which is equivalent modulo $\Ext_K$, we may assume that $\varphi$ is positive primitive, i.e.~of the form 
  $\exists \underline Y \bigwedge_{i=1}^r f_i(\underline X, \underline Y) \doteq 0$ for suitable $f_i \in K[\underline X, \underline Y]$
  (Remark \ref{rem:perk}).
Then the affine $K$-variety $V = \operatorname{Spec}(K[\underline X, \underline Y]/(f_1, \dotsc, f_r))$ with the natural morphism to $\mathbb{A}_K^n = \operatorname{Spec}(K[\underline X])$ is as desired.

For the converse direction, if a given $V$ is affine, one can construct a suitable $\varphi$ by inverting the construction above.
The case of an arbitrary $K$-variety can be reduced to the affine case (Remark \ref{rem:restrict_to_affine}).
\end{proof}

\begin{defi}[{\cite[Section 2a]{Merkurjev_EDSurvey}}]
  Let $I$ be a (covariant) functor from $\mathrm{Fields}_K$ to sets. 
  The \emph{essential dimension} of $I$, denoted $\ed(I)$, is defined to be the infimum over all $n\in\Natwithzero$ such that for every field extension $L/K$ and every $x \in I(L)$ there exists a subextension $L_0/K$ of $L/K$ with $\trdeg(L_0/K) \leq n$ such that $x$ is in the image of $I(L_0)$ under the map $I(L_0 \hookrightarrow L)$.\footnote{In particular, the functor assigning the empty set to every object of $\mathrm{Fields}_K$ has essential dimension zero. The definition in \cite{Merkurjev_EDSurvey} does not cover this case. While an argument can be made that the essential dimension of this functor should be $-\infty$ instead, zero is more convenient for us.}
\end{defi}
The essential dimension of functors arising from morphisms of varieties is briefly treated in \cite[Section 11.d]{Merkurjev_EDSurvey}, although there it is assumed that $V$ is irreducible.
We give some special attention to the essential dimension of the functors associated to $\Lar(K)$-sentences, or equivalently $K$-varieties $V$ with a morphism to $\mathbb{A}_K^0=\operatorname{Spec}(K)$,
which must automatically be the structure morphism of $V$.
\begin{defi}[{\cite[§4]{Merkurjev_EDSurvey}}]
Let $V$ be a $K$-variety.
The {\em canonical dimension} of $V$ is $\cdim(V)=\ed(I_V)$,
where $I_V$ is $I_f$ for the structure morphism 
$f\colon V \to \operatorname{Spec}(K)$.
An irreducible variety $V$ with $\cdim(V)=\dim(V)$ is called {\em incompressible}.
\end{defi}

The following lemma 
reduces the determination of essential dimension and essential fibre dimension to canonical dimension.
A related result is considered in \cite[Theorem 11.4]{Merkurjev_EDSurvey}.

\begin{lem}\label{lem:edEfdFormulaToSentence}
  Let $\varphi$ be an existential $\Lar(K)$-formula in $n$ free variables, 
  and $f \colon V \to \mathbb{A}_K^n$ a morphism of $K$-varieties with $I_f = I_\varphi$.
  Let $D$ be the image of $f$, as a subset of the scheme $\mathbb{A}_K^n$.
  Then
  \begin{equation}\label{eqn:efd}
   \efd_{\Ext_K}(\varphi) \;=\; \sup_{x \in D} \cdim(f^{-1}(x))
  \end{equation} 
  and
  \begin{equation}\label{eqn:ed}
    \ed(I_\varphi) \;=\; \ed(I_f) \;=\; \sup_{x \in D} \big(\dim(\overline{\{x\}}) + \cdim(f^{-1}(x))\big), 
  \end{equation}
  where $f^{-1}(x)$ is the scheme-theoretic fibre of $x$ considered as a $K(x)$-variety,
  and ${\rm dim}(\overline{\{x\}})$ is the dimension of the Zariski closure of the singleton set $\{ x\}$ in $\mathbb{A}_K^n$.
\end{lem}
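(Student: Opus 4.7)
The equality $\ed(I_\varphi) = \ed(I_f)$ is immediate from $I_\varphi = I_f$ (Remark~\ref{rem:same_functor_equivalent_formulas}), so only the outer equalities require work. The plan is to set up a dictionary between tuples $\underline x \in I_\varphi(L)$ and scheme-theoretic data on $V$. Such a tuple corresponds to a $K$-morphism $\Spec(L) \to \mathbb{A}_K^n$; let $x \in D$ denote its scheme-theoretic image point. Then the residue field $K(x)$ is canonically isomorphic to the subfield $K(\underline x) \subseteq L$ generated by the coordinates of $\underline x$, endowing $L$ with a natural $K(x)$-algebra structure and giving $\dim(\overline{\{x\}}) = \trdeg(K(x)/K) = \trdeg(K(\underline x)/K)$. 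By the universal property of the scheme-theoretic fibre, lifts of $\underline x$ to $V(L)$ correspond bijectively with $L$-rational points of the $K(x)$-variety $f^{-1}(x)$; and for any $K$-embedding $\rho \colon L \to L'$, the restriction of $\rho$ identifies $K(\underline x)$ with $K(\rho(\underline x))$, so a point of $f^{-1}(x)(L)$ pulls back to one of $f^{-1}(x)(L')$, yielding $\rho(\underline x) \in I_\varphi(L')$.

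For (\ref{eqn:efd}), write $s$ for the right-hand side. To show $\efd_{\Ext_K}(\varphi) \leq s$, given $\underline x \in I_\varphi(L)$ with associated point $x \in D$, the bound $\cdim(f^{-1}(x)) \leq s$ provides a $K(x)$-subfield $K' \subseteq L$ with $\trdeg(K'/K(x)) \leq s$ and $f^{-1}(x)(K') \neq \emptyset$. Any $K$-embedding $\rho \colon K' \to L'$ is then automatically compatible with the $K(x)$-algebra structure on the target (via $K(x) \cong K(\rho(\underline x))$), and by the dictionary produces $\rho(\underline x) \in I_\varphi(L')$. For the reverse inequality, fix $x \in D$ and any extension $L/K(x)$ with $f^{-1}(x)(L) \neq \emptyset$; then $\underline x \in I_\varphi(L)$, so applying $\efd_{\Ext_K}(\varphi) \leq d$ with the choice $\rho = \id_{K'}$ yields $K' \subseteq L$ containing $K(x)$ with $\trdeg(K'/K(x)) \leq d$ and $\underline x \in I_\varphi(K')$, hence $f^{-1}(x)(K') \neq \emptyset$. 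This gives $\cdim(f^{-1}(x)) \leq d$.

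Equation (\ref{eqn:ed}) is handled by the same arguments, tracking one more transcendence degree. For $\ed(I_\varphi) \leq \sup$, one repeats the proof above with the $K'$ constructed there viewed as a subextension of $L/K$, using $\trdeg(K'/K) = \trdeg(K'/K(x)) + \dim(\overline{\{x\}})$. For the reverse inequality, fix $x \in D$ and $L/K(x)$ with $f^{-1}(x)(L) \neq \emptyset$; applying $\ed(I_\varphi) \leq d$ to $\underline x \in I_\varphi(L)$ gives a subextension $L_0/K$ of $L/K$ with $\trdeg(L_0/K) \leq d$ and $\underline x \in I_\varphi(L_0)$. The key observation is that $\underline x \in L_0^n$ automatically forces $K(\underline x) = K(x) \subseteq L_0$, so $\trdeg(L_0/K(x)) \leq d - \dim(\overline{\{x\}})$ and $f^{-1}(x)(L_0) \neq \emptyset$, whence $\cdim(f^{-1}(x)) \leq d - \dim(\overline{\{x\}})$. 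The main obstacle throughout is not mathematical but notational: carefully maintaining the distinction between, and the natural identification of, the residue field $K(x)$ and its incarnations as $K(\underline x) \subseteq L$ or $K(\rho(\underline x)) \subseteq L'$, and the resulting compatibilities of $K(x)$-algebra structures.
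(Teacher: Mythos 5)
Your proof is correct and follows essentially the same approach as the paper: the key dictionary between a tuple $\underline x\in I_\varphi(L)$ and the point $x\in D$ with residue field $K(x)\cong K(\underline x)$, and the identification of lifts of $\underline x$ to $V$ with rational points of the $K(x)$-variety $f^{-1}(x)$. The paper's version is considerably terser; you have merely made explicit the identifications of $K(x)$-algebra structures that the paper leaves implicit, which is harmless.
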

\begin{proof}
  For a field extension $L/K$ with a tuple $\underline a\in L^n$ such that $\underline a \in f(V(L))$ and therefore $L \models \varphi(\underline a)$, by definition there exists a subextension $F/K(\underline a)$ of $L/K(\underline a)$ of transcendence degree at most $\efd_{\Ext_K}(\varphi)$, such that $F \models \varphi(\underline a)$ and therefore $\underline a \in f(V(F))$.
  This means that $\cdim(f^{-1}(\underline a)) \leq \efd_{\Ext_K}(\varphi)$.
  Conversely, we find that $\efd_{\Ext_K}(\varphi)$ is bounded by the supremum of $\cdim(f^{-1}(\underline a))$, where $\underline a$ varies over $n$-tuples in extensions of $K$ satisfying $\varphi$.
Since the field extensions $K(\underline a)/K$ with fixed $n$-tuples $\underline a$ that satisfy $\varphi$ in some extension $L/K(\underline a)$ are (up to isomorphism over $K$) precisely the residue fields of points in $D$, we obtain (\ref{eqn:efd}).

  For (\ref{eqn:ed}), replacing $\efd_{\Ext_K}(\varphi)$ by $\ed(I_\varphi)$, we need to replace transcendence degrees over $K(\underline a)$ by transcendence degrees over $K$ in the argument above.
  Since transcendence degrees are additive, and for $x \in \mathbb{A}_K^n$ the transcendence degree of $K(x)$ over $K$ is equal to $\dim(\overline{\{x\}})$, we obtain (\ref{eqn:ed}).
\end{proof}

The following easy lemma is also observed in \cite[Theorem 11.4]{Merkurjev_EDSurvey} in a slightly different context.
\begin{lem}\label{lem:edDim}
  For any morphism of $K$-varieties $f \colon V \to \mathbb{A}_K^n$, we have $\ed(I_f) \leq \dim(V)$.
\end{lem}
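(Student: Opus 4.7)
The plan is to unfold the definition of $\ed(I_f)$ and exhibit, for each $L$-point of $V$, a subfield of $L$ of transcendence degree at most $\dim V$ over $K$ through which the point factors.

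Concretely, fix a field extension $L/K$ and a tuple $\underline{a} \in I_f(L) = f(V(L))$. By definition there exists an $L$-point $v \in V(L)$, i.e.\ a $K$-morphism $\Spec(L) \to V$, with $f \circ v = \underline{a}$. The image of the unique point of $\Spec(L)$ under $v$ is some (scheme-theoretic) point $x \in V$, and the morphism $v$ factors uniquely as $\Spec(L) \to \Spec(K(x)) \to V$, where $K(x)$ is the residue field at $x$ and the second arrow is the canonical one. In particular, we obtain a $K$-embedding $K(x) \hookrightarrow L$ and a $K(x)$-point $v_0 \in V(K(x))$ whose base change to $L$ recovers $v$. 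Taking the image under $f$, we see that $\underline{a}$ lies in the image of $f(V(L_0)) \to f(V(L))$, where $L_0$ denotes the subfield of $L$ isomorphic to $K(x)$.

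It remains to bound $\trdeg(L_0/K)$. Since $L_0 \cong K(x)$ as $K$-algebras and $\trdeg(K(x)/K) = \dim\overline{\{x\}}$ (the dimension of the Zariski closure of $x$ in $V$), we obtain $\trdeg(L_0/K) = \dim\overline{\{x\}} \leq \dim V$. Combining this with the previous paragraph and applying the definition of essential dimension yields $\ed(I_f) \leq \dim V$.

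There is essentially no obstacle here: the only thing to be careful about is distinguishing the scheme-theoretic point $x \in V$ from the $L$-valued point $v$, and invoking the correct fact that $\trdeg(K(x)/K)$ equals the dimension of the closure of $\{x\}$ in $V$, which is at most $\dim V$. Everything else is a direct translation between the functor-of-points language and the definition of $\ed$.
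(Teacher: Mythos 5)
Your argument is correct and is exactly the paper's proof, which simply records the one-line observation that any $L$-rational point of $V$ is already defined over a subextension $F/K$ of transcendence degree at most $\dim(V)$; you have usefully unfolded this by identifying $F$ with the residue field $K(x)$ of the image point and invoking $\trdeg(K(x)/K)=\dim\overline{\{x\}}\leq\dim V$.
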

\begin{proof}
  For a field extension $L/K$, any $L$-rational point of $V$ is already defined over a subextension $F/K$ of transcendence degree at most $\dim(V)$.
\end{proof}

The following lemma about incompressible varieties will be used in Section \ref{sect:lowerBoundsCompleteTheories}.

\begin{lem}\label{lem:incompressibleFunctionField}
  Let $V$ be an incompressible integral $K$-variety.
  Then the function field $K(V)/K$ has no subextension $L/K$ of transcendence degree smaller than $\dim(V)$ with $V(L) \neq \emptyset$.
\end{lem}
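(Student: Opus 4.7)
The plan is to argue by contradiction. Suppose there is a subextension $L/K$ of $K(V)/K$ with $V(L)\neq\emptyset$ and $e:=\trdeg(L/K)<d:=\dim(V)$; I will try to derive $\cdim(V)\leq e$, contradicting $\cdim(V)=\dim(V)=d$ from incompressibility.

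First I would exploit the hypothesis $V(L)\neq\emptyset$ to construct a rational self-map of $V$ with small image. Fix a $K$-morphism $q\colon\Spec L\to V$ realizing the given $L$-point. The inclusion $L\hookrightarrow K(V)$ gives a $K$-morphism $\Spec K(V)\to\Spec L$; composing with $q$ yields a $K(V)$-point of $V$, i.e.\ a rational map $\tau\colon V\dashrightarrow V$. The scheme-theoretic image of $\tau$ coincides with that of $q$, which is $\overline{\{\eta\}}$ for the point $\eta\in V$ whose residue field $K(\eta)$ embeds into $L$; thus $\dim\overline{\{\eta\}}=\trdeg(K(\eta)/K)\leq e$.

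Next I would use $\tau$ to verify $\cdim(V)\leq e$. Given any field extension $M/K$ with $V(M)\neq\emptyset$ and any $p\in V(M)$ with image $\eta'\in V$ (so $K(\eta')\hookrightarrow M$), I must produce $M_0\subseteq M$ of trdeg at most $e$ with $V(M_0)\neq\emptyset$. When $\eta'$ is the generic point of $V$ one gets an induced embedding $K(V)\hookrightarrow M$, and then the image of $L$ under $L\hookrightarrow K(V)\hookrightarrow M$ serves as $M_0$, with $\trdeg(M_0/K)=e$ and $V(M_0)\neq\emptyset$ thanks to the $L$-point of $V$. When $\eta'$ is a non-generic point, one either has $\dim\overline{\{\eta'\}}\leq e$ and may take $M_0=K(\eta')$, or one reduces further by applying $\tau$ on a neighbourhood of $\eta'$ and specializing into $\overline{\{\eta\}}$, iterating if necessary.

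The main obstacle is precisely this non-generic subcase, where $p$ lies in $V(M)$ with image point of transcendence degree strictly between $e$ and $d-1$, and in particular may fall into the indeterminacy locus of $\tau$. Handling this cleanly requires either an induction on $\dim V$ (delicate because sub-varieties of an incompressible variety need not inherit any incompressibility property) or the known equivalence between incompressibility of $V$ and the absence of non-dominant rational self-maps $V\dashrightarrow V$, from which $\cdim(V)\leq\dim\overline{\tau(V)}\leq e$ falls out directly. Either way, once $\cdim(V)\leq e$ is achieved, combining with $\cdim(V)=d$ gives $d\leq e$, contradicting the assumption $e<d$ and concluding the proof.
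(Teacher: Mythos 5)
Your route establishes $\cdim(V)\leq e$ directly from the definition of canonical dimension, but as you correctly identify, the non-generic case is a genuine gap: when the image $\eta'$ of an $M$-point satisfies $e<\dim\overline{\{\eta'\}}<d$, it may lie in the indeterminacy locus of $\tau$, and there is no valuative-criterion argument available to ``specialize into $\overline{\{\eta\}}$'' because $V$ is not assumed complete. The ``known equivalence'' you would like to invoke (essentially that $\cdim(V)\leq\dim\overline{\tau(V)}$ for any rational self-map $\tau$) is a real theorem, but in the literature it is established for complete (and usually regular) varieties, so citing it is not justified for the lemma as stated; moreover, the direction of the equivalence you actually need is precisely the content of this lemma, so the argument would be somewhat circular unless you appeal to the quantitative, completeness-dependent form. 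The induction idea runs into the problem you yourself note.

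The paper's proof is direct and avoids the issue entirely by using the definition of $\cdim(V)$ ``from the other side''. Since $\cdim(V)=\dim(V)>\dim(V)-1$, the infimum in the definition of essential dimension provides a \emph{witness} extension $F/K$: a field $F$ with $V(F)\neq\emptyset$ such that no subextension $F_0/K$ of $F/K$ with $\trdeg(F_0/K)<\dim(V)$ satisfies $V(F_0)\neq\emptyset$. Taking any $x\in V(F)$ and letting $\eta'\in V$ be its image point, the residue field $K(\eta')$ embeds in $F$ and $V(K(\eta'))\neq\emptyset$, so by the choice of $F$ we must have $\trdeg(K(\eta')/K)\geq\dim(V)$; this forces $\eta'$ to be generic, hence $K(V)\cong K(\eta')$ embeds $K$-isomorphically into $F$. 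Any $L\subseteq K(V)$ with $V(L)\neq\emptyset$ is then a subextension of $F/K$, so again by the choice of $F$ it has $\trdeg(L/K)\geq\dim(V)$, which is the lemma. Note that the full strength of $\cdim(V)=\dim(V)$ is used exactly to force the image point of the $F$-point to be \emph{generic}; this is the step that your compression argument cannot secure for arbitrary $M$.
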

\begin{proof}
  Since $\cdim(V) = \dim(V)$, there exists a field extension $F/K$ with $V(F) \neq \emptyset$ such that no subextension $F_0/K$ with $\trdeg(F_0/K) < \dim(V)$ satisfies $V(F_0) \neq \emptyset$.
  Let $x \in V(F)$. Then the residue field $K(x)$ of $x$ is a subfield of $F$ with $V(K(x)) \neq \emptyset$ and is hence of transcendence degree at least $\dim(V)$ over $K$.
  This already forces $x$ to be generic, so $K(x)$ is isomorphic to the function field $K(V)$.
  In particular, $K(V) \subseteq F$ has no subfield $L$ of transcendence degree over $K$ smaller than $\dim(V)$ with $V(L) \neq \emptyset$.
\end{proof}

We now obtain a number of examples where we can compute the essential fibre dimension of a sentence, by referring to earlier work on the canonical dimension of varieties.

\begin{vb}\label{vb:efdQuadric}
  Let $n \in \Natwithoutzero$ and let $q$ be a quadratic form over $K$ in $n+1$ variables.
  Suppose that $q$ is anisotropic (i.e.\ has no non-trivial zero over $K$) and let $V$ be the integral projective hypersurface described by $q$.
  Suppose that the first Witt index $i_1(q)$ (i.e.\ the maximal $K(V)$-dimension of a subspace of $K(V)^{n+1}$ consisting of zeros of $q$) is $1$.
  Then by \cite[Example 4.14]{Merkurjev_EDSurvey} (essentially due to Karpenko--Merkurjev and Totaro), $V$ is incompressible.\footnote{The references in \cite[Example 4.14]{Merkurjev_EDSurvey} are somewhat imprecise.
  One can derive the incompressibility from \cite[Proposition 4.3]{Merkurjev_EDSurvey} and \cite[Corollary 75.6]{ElmanKarpenkoMerkurjev}, where the latter corollary incorporates results from \cite{KarpenkoMerkurjev_EssDimQuadrics} and \cite{Totaro_BiratGeomQuadricsCharTwo}.}
  Hence an existential $\Lar(K)$-sentence $\varphi$ such that a field extension $L/K$ satisfies $\varphi$ if and only if $q$ becomes isotropic over $L$ (i.e.\ $I_\varphi = I_V$) has essential fibre dimension $\efd_\subTKleq(\varphi) = \cdim(V) = n-1$ by Lemma \ref{lem:edEfdFormulaToSentence}.
  One can take for $\varphi$ the $\exists^+_n$-$\Lar(K)$-sentence expressing that one of the $n+1$ dehomogenizations of $q$ has a zero, yielding $\qrp_\subTKleq(\varphi) \leq n$; the latter also follows from Proposition \ref{prop:smoothErk} below, at least if $V$ is smooth (e.g.\ $\car(K) \neq 2$).
  %(if $\car(K) \neq 2$, $\erk_\subTKleq(\varphi) \leq n$ also follows from Proposition \ref{prop:smoothErk}).
 If $n=1$, then since $\varphi$ cannot be quantifier-free,
 this immediately gives $\erk_\subTKleq(\varphi)=\perk_\subTKleq(\varphi)=1$.
 If $n>1$, we get $\perk_\subTKleq(\varphi) \geq n$ by Proposition \ref{edleqqr},
 and then $\erk_\subTKleq(\varphi) = \perk_\subTKleq(\varphi) = n$ by Proposition \ref{prop:qrvsqrp}, since case (\ref{item3}) there cannot occur as a quadratic form anisotropic over a field $L$ is also anisotropic over $L(t)$.
\end{vb}

\begin{vb}\label{vb:edCyclicDivisionAlgebra}
Let $A/K$ be a central division algebra of prime power degree $d = p^n > 1$
and let $V$ be the Severi--Brauer variety associated to $A$.
If $\varphi$ is an existential $\Lar(K)$-sentence with $I_\varphi=I_V$ (Lemma \ref{lem:formulasEquivVarieties}),
then a field extension $L/K$ satisfies $\varphi$ if and only if
$A$ splits over $L$, i.e.~if and only if the base change $A_L$ is isomorphic to the algebra of $d \times d$-matrices over $L$.
By \cite[Theorem 4.3]{Karpenko_IncompressibilitySeveriBrauer}, 
$V$ is incompressible, hence
$\efd_\subTKleq(\varphi) = \cdim(V) = d-1$ by Lemma \ref{lem:edEfdFormulaToSentence}.
Since $V$ is smooth of dimension $d-1$,
it will follow from Proposition \ref{prop:smoothErk} below that $\perk_\subTKleq(\varphi) = d$.
 By Proposition \ref{prop:qrvsqrp}, we also have $\erk_\subTKleq(\varphi) = d$ since the base field $K$ is necessarily infinite by Wedderburn's little theorem.
\end{vb}

The following lemma gives a geometric criterion for a functor $I_f$ to be a subfunctor of another functor $I_g$.
Here a locally closed partition of a $K$-variety is a finite collection of locally closed $K$-subvarieties which is a partition of the underlying topological space.
\begin{lem}\label{lem:varietiesSubfunctor}
  Let $f \colon V \to \mathbb{A}_K^n$ and $g \colon W \to \mathbb{A}_K^n$ be morphisms of $K$-varieties.
  Then the following are equivalent:
  \begin{enumerate}[\quad$(1)$]
  \item $f(V(L)) \subseteq g(W(L))$ for all field extensions $L/K$.
  \item There exist a locally closed partition $V = \bigcup_{i = 1}^k V_i$ and morphisms $V_i \to W$ making the triangle 
  \[ \xymatrix{
      V \ar[dr]_f & V_i \ar@{_{(}->}[l] \ar[r] & W \ar[dl]^g \\
      & \mathbb{A}_K^n &
} \]
commute.
\end{enumerate}
\end{lem}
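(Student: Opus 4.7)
My plan is to handle the two directions separately. For $(2)\Rightarrow(1)$, which is essentially bookkeeping, I would argue as follows: given any $L/K$ and any $x\in V(L)$, viewed as a morphism $\Spec L\to V$, its image is a single scheme point of $V$, lying in exactly one piece $V_i$. Since $\Spec L$ is reduced, $x$ factors through the locally closed immersion $V_i\hookrightarrow V$ (taking the reduced induced subscheme structure; since the functor $I_f$ depends only on $V_{\rm red}$ by Remark \ref{rem:restrict_to_affine}, we may freely reduce to the case where $V$ is reduced). Composing the resulting morphism $\Spec L\to V_i$ with the given $V_i\to W$ and invoking commutativity of the triangle yields a $y\in W(L)$ with $g(y)=f(x)$, so $f(x)\in g(W(L))$.

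For the harder implication $(1)\Rightarrow(2)$, I would first rephrase the condition using the fibre product $X:=V\times_{\mathbb{A}_K^n}W$, with projections $\pi_V\colon X\to V$ and $\pi_W\colon X\to W$. An $L$-point of $X$ is precisely a pair $(x,y)\in V(L)\times W(L)$ with $f(x)=g(y)$, so condition $(1)$ says exactly that $\pi_V$ is surjective on $L$-points for every field $L/K$. Moreover, a morphism $V_i\to W$ completing the required triangle is the same datum as a section of $\pi_V$ over $V_i$ followed by $\pi_W$. Thus it suffices to prove: whenever $\pi\colon X\to V$ is a morphism of $K$-varieties which is surjective on $L$-points for every field extension $L/K$, there is a locally closed partition of $V$ over which $\pi$ admits a section. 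I would prove this by noetherian induction on $\dim V$: it is enough to produce a section over some non-empty open $U\subseteq V$, since the hypothesis is inherited by the restriction of $\pi$ to the preimage of $V\setminus U$ (endowed with the reduced induced subscheme structure), and applying the induction hypothesis to $V\setminus U$ then partitions it.

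To construct such a section on a non-empty open, I would use the classical spreading-out principle. For each irreducible component $V'$ of $V$, the inclusion $\Spec K(V')\hookrightarrow V$ of the generic point is a $K(V')$-point of $V$, which by hypothesis lifts to an element of $X(K(V'))$, i.e.~a morphism $\Spec K(V')\to X$ over $V$. Since $X$ is of finite type over $V$, a standard argument (reduce to the case $V'$ affine with coordinate ring $A$; the lift factors through some affine open $\Spec B\subseteq X$; finitely many generators of $B$ over $A$ all lie in a single localization $A_f$) shows that this generic lift extends to a section over a non-empty open $U_{V'}\subseteq V'$. After shrinking the $U_{V'}$ to avoid the other (finitely many) irreducible components of $V$, the disjoint union $U=\bigsqcup_{V'}U_{V'}$ is an open of $V$ on which $\pi$ admits a section. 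Composing with $\pi_W$ yields the required morphism $U\to W$. The main subtle point I expect is being careful with the scheme structure on the pieces of the partition so that the factorization of $L$-points through them is legitimate in both directions; using the reduced induced subscheme structure throughout, together with the reduction to reduced ambient $V$, should handle this cleanly.
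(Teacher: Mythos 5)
Your proof is correct and follows essentially the same strategy as the paper's: for $(1)\Rightarrow(2)$, take the generic point of each irreducible component, use the hypothesis to produce a lift, spread it out over a dense open, and iterate on the closed complement, terminating by noetherianness. The only cosmetic difference is that you package the commuting-triangle data as a section of the fibre-product projection $V\times_{\mathbb{A}_K^n}W\to V$, whereas the paper directly turns the $L$-point $y\in W(L)$ into a rational map $S\dashrightarrow W$; both are the same spreading-out argument.
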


\begin{proof}
  The implication $(2)\Rightarrow(1)$ is clear, since $f(V_i(L)) \subseteq g(W(L))$ for any $L/K$.

  For $(1)\Rightarrow(2)$, we may replace $V$ by its reduction.
  Let $x \in V$ be the generic point of an irreducible component $S \subseteq V$, and $L = K(x)$ its function field.
  Then $x \in V(L)$, and so $f(x) \in f(V(L)) \subseteq g(W(L))$, so let $y \in W(L)$ with $g(y) = f(x)$.
  Using the correspondence between $L$-points and rational maps from $S$, we obtain an open neighbourhood $U \subseteq S$ of $x$ and a morphism $U \to W$ making the triangle to $\mathbb{A}_K^n$ commute.
  By removing from $U$ the irreducible components of $V$ distinct from $S$, we may assume that $U$ is open in $V$.
  Replace $V$ by the complement of $U$ with the reduced scheme structure, and repeat until we are left with $V = \emptyset$.
  This procedure terminates because $V$ is noetherian as a topological space.
\end{proof}

In the remainder of this section, we give geometric characterisations for essential fibre dimension and essential dimension.
We begin with a characterisation of essential fibre dimension, see Proposition \ref{prop:efdCharacterisationGeneral} and its corollaries.
Since we want this characterisation to work in full generality, we temporarily depart from only working in the theory $\TKleq$ for some field $K$ as in the rest of the section.

When working in $\Lar(C)$-theories of rings, it is useful to consider the free ring $\zz_C = \mathbb{Z}[X_c \colon c \in C]$ on the constants $C$: every $\Lar(C)$-structure which is a ring is naturally a $\zz_C$-algebra, and $\Lar(C)$-homomorphisms between such rings naturally corresponds to $\zz_C$-algebra homomorphisms. Recall that if $K$ is an $\Lar(C)$-structure and a field, $K_C$ is the subfield of $K$ generated by the interpretations of the constant symbols $c \in C$. In other words, it is the fraction field of the image of $\zz_C$ under the canonical homomorphism $\zz_C \to K_C$.
\begin{lem}\label{lem:efd_char}
  Let $\Sigma$ be an $\Lar(C)$-theory containing the theory of fields, 
  let $\varphi$ be an existential $\Lar(C)$-sentence, and let $d = \efd_\Sigma(\varphi)$.
  Then there exists a finitely presented $\zz_C$-algebra $S$ such that
  \begin{enumerate}
  \item $S \otimes_{\zz_C} \Frac(\zz_C/\mathfrak p)$ has Krull dimension at most $d$ for every $\mathfrak p\in{\rm Spec}(\zz_C)$, and 
  \item a field $K \models \Sigma$ satisfies $\varphi$ if and only if there exists a $\zz_C$-homomorphism $S \to K$.
  \end{enumerate}
\end{lem}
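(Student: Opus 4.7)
The plan is to produce $S$ as a finite product $S = S_{K_1} \times \cdots \times S_{K_m}$ of finitely presented $\zz_C$-algebras associated to witness models $K_i \models \Sigma \cup \{\varphi\}$, combining the essential fibre dimension bound with a compactness argument in the spirit of Proposition \ref{prop:generalcriterionhom}. For each $K \models \Sigma \cup \{\varphi\}$, Lemma \ref{rem:defEfdFieldVsRing} furnishes a finitely generated $\zz_C$-subalgebra $R_K \subseteq K$ with $\trdeg(\Frac(R_K)/K_C) \leq d$ such that every $L \models \Sigma$ receiving a $\zz_C$-homomorphism from $R_K$ satisfies $\varphi$. We choose generators $a_1, \dotsc, a_n$ of $R_K$ so that $a_1, \dotsc, a_{d'}$ (with $d' \leq d$) form a transcendence basis of $\Frac(R_K)$ over $K_C$; for each $i > d'$ we pick a polynomial $h_i(T_1, \dotsc, T_{d'}, T_i) \in R_0[\underline T]$ vanishing at $(a_1, \dotsc, a_{d'}, a_i)$ whose leading coefficient $\ell_i \in R_0[T_1, \dotsc, T_{d'}]$ in $T_i$ is nonzero, and lift $h_i$ and $\ell_i$ to $\tilde h_i, \tilde \ell_i \in \zz_C[\underline T]$. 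Applying the compactness argument from the proof of Proposition \ref{prop:generalcriterionhom} to $R_K$ extracts finitely many polynomials $g_1, \dotsc, g_r \in \ker(\zz_C[Y_1, \dotsc, Y_n] \to R_K)$ such that any $L \models \Sigma$ admitting $\underline b \in L^n$ with $g_j(\underline b) = 0$ for all $j$ already satisfies $\varphi$.

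We then set
\[
 S_K \;=\; \zz_C[Y_1, \dotsc, Y_n, Z_{d'+1}, \dotsc, Z_n] \,\big/\, \bigl(g_1, \dotsc, g_r,\; \{\tilde h_i\}_{i>d'},\; \{\tilde \ell_i \cdot Z_i - 1\}_{i>d'}\bigr),
\]
which is finitely presented over $\zz_C$. The assignment $Y_j \mapsto a_j$, $Z_i \mapsto \ell_i(a_1, \dotsc, a_{d'})^{-1}$ defines a $\zz_C$-homomorphism $S_K \to K$ (the inverses exist by algebraic independence of $a_1, \dotsc, a_{d'}$ over $K_C$), while any $\zz_C$-homomorphism $S_K \to L$ with $L \models \Sigma$ forces $L \models \varphi$ through the relations $g_j$. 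The main obstacle is then the uniform Krull dimension bound in~(1): fibre dimension of a finitely generated algebra is only upper semi-continuous, so the naive $R_K$ itself may well have fibres over certain primes $\mathfrak p \subseteq \zz_C$ of dimension strictly exceeding $d'$. The auxiliary variables $Z_i$ and relations $\tilde \ell_i Z_i = 1$ are introduced precisely to rule out this degeneracy by forcing the leading coefficients to be units. Concretely, setting $k_\mathfrak p = \Frac(\zz_C/\mathfrak p)$: if the reduction of some $\tilde \ell_i$ modulo $\mathfrak p$ is zero as a polynomial in $k_\mathfrak p[Y_1, \dotsc, Y_{d'}]$, then $S_K \otimes_{\zz_C} k_\mathfrak p$ is the zero ring; otherwise the $Z_i$ are eliminated via $Z_i = \tilde \ell_i^{-1}$, each $\tilde h_i$ becomes monic in $Y_i$ upon inverting $\tilde \ell_i$, and $Y_{d'+1}, \dotsc, Y_n$ are integral over the localization $k_\mathfrak p[Y_1, \dotsc, Y_{d'}][\tilde \ell_{d'+1}^{-1}, \dotsc, \tilde \ell_n^{-1}]$. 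Either way $\dim(S_K \otimes_{\zz_C} k_\mathfrak p) \leq d' \leq d$.

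Finally, the positive existential $\Lar(C)$-sentence $\psi_K$ asserting the existence of a tuple satisfying the defining relations of $S_K$ obeys $K \models \psi_K$ and $\Sigma \models \psi_K \to \varphi$. A second compactness argument as in the proof of Proposition \ref{prop:generalcriterionhom} produces finitely many $K_1, \dotsc, K_m$ with $\Sigma \models \varphi \leftrightarrow \bigvee_{i=1}^m \psi_{K_i}$, and we set $S = S_{K_1} \times \cdots \times S_{K_m}$. Any $\zz_C$-homomorphism $S \to K$ into a field $K$ factors through exactly one projection $S \to S_{K_i}$ (since the kernel is prime and contains all but one of the orthogonal idempotents corresponding to the factors), so such a homomorphism exists for $K \models \Sigma$ precisely when some $\psi_{K_i}$ holds in $K$, i.e., when $K \models \varphi$. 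The product $S$ is finitely presented as a finite product of finitely presented $\zz_C$-algebras, and $S \otimes_{\zz_C} k_\mathfrak p \cong \prod_i (S_{K_i} \otimes_{\zz_C} k_\mathfrak p)$ has Krull dimension equal to the maximum over the factors, hence at most $d$, as required.
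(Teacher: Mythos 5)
Your proof is correct, and it takes a genuinely different route from the paper's on the key step: controlling the fibre dimension of the finitely presented approximation. The paper first chooses $S$ with a map to $R$ such that $S \otimes_{\zz_C} K_C \to R \otimes_{\zz_C} K_C$ is an isomorphism (using Lemma~\ref{lem:directLimit} and finite presentation of $R\otimes_{\zz_C}K_C$ over $K_C$), notes the fibre over $\mathfrak p_0$ then has dimension $\leq d$, and invokes the general upper semicontinuity of fibre dimension \cite[Theorem~14.110]{GoertzWedhorn} to cut $\Spec S$ down to a basic open on which all fibres have dimension at most $d$. You instead bypass semicontinuity entirely: by writing down explicit algebraic relations $\tilde h_i$ for the dependent generators, together with the relations $\tilde\ell_i Z_i=1$ forcing the leading coefficients to be units, you make each $S_K\otimes_{\zz_C} k_{\mathfrak p}$ either zero or integral over a localization of a polynomial ring in $d'\leq d$ variables. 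This is more hands-on and elementary (one only needs the going-up bound for integral extensions) at the cost of more bookkeeping with the lifts and leading coefficients; the paper's version is shorter once the semicontinuity theorem is taken as a black box, and it also avoids the need to choose a transcendence basis among the generators. The surrounding compactness arguments (extracting the $g_j$, and then the finite covering by $\psi_{K_i}$) and the product construction $S=S_{K_1}\times\cdots\times S_{K_m}$ with the observation that a homomorphism to a field factors through a unique factor match the paper's strategy. One small imprecision: you say "the naive $R_K$ itself may well have fibres over certain primes $\mathfrak p$ of dimension strictly exceeding $d'$" — since $R_K$ is an $R_0=\zz_C/\mathfrak p_0$-algebra, it actually has zero fibres over $\mathfrak p\neq\mathfrak p_0$; the issue is rather that the naive finitely presented truncation $\zz_C[\underline Y]/(g_1,\dotsc,g_r)$ may have large fibres over other primes, which is exactly what your $\tilde h_i$ and $\tilde\ell_i Z_i - 1$ relations are designed to prevent.
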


\begin{proof}
  Let $\mathcal S$ be the class of finitely presented $\zz_C$-algebras $S$ that satisfy (1)
  and are such that $K \models \varphi$ for every $K \models \Sigma$ with a $\zz_C$-homomorphism $S \to K$.

  Let $K \models \Sigma \cup \{ \varphi \}$, consider $K$ as a $\zz_C$-algebra in the natural way, and write $\mathfrak{p}_0$ for the kernel of $\zz_C \to K$, so that $K_C \cong \Frac(\zz_C/\mathfrak{p}_0)$.
  By Lemma \ref{rem:defEfdFieldVsRing}, there exists a finitely generated $\zz_C/\mathfrak{p}_0$-subalgebra $R$ of $K$, which we may also view as a $\zz_C$-algebra, such that
  $\trdeg(\Frac(R)/K_C) \leq d$
  and $L \models \varphi$ for every $L \models \Sigma$ with a $\zz_C$-homomorphism $R \to L$.
  Note that $\trdeg(\Frac(R)/K_C)$ is precisely the Krull dimension of the finitely generated integral $K_C$-algebra $R \otimes_{\zz_C} K_C$,
  see e.g.~\cite[Theorem 5.22]{GoertzWedhorn}.

  Since the $\zz_C$-algebra $R$ is a direct limit of finitely presented $\zz_C$-algebras \cite[Exercise 10.21]{GoertzWedhorn}, by Lemma \ref{lem:directLimit} applied to the theory $\Sigma \cup \{ \neg\varphi \}$ there is a finitely presented $\zz_C$-algebra $S$ with a homomorphism $S \to R$ such that $L \models \varphi$ for every $L \models \Sigma$ with a $\zz_C$-homomorphism $S \to L$.
  Since $R \otimes_{\zz_C} K_C$ is finitely presented as a $K_C$-algebra as it is finitely generated, and direct limits commute with tensor products, we may even assume that $S \otimes_{\zz_C} K_C \to R \otimes_{\zz_C} K_C$ is an isomorphism.
  
  Write $\mathfrak q$ for the kernel of the composite homomorphism $S \to R \to K$.
  Then $\dim(S \otimes_{\zz_C} K_C) \leq d$ means that the fibre of $\mathfrak q$ of the morphism $\Spec S \to \Spec \zz_C$, i.e.\ the fibre above $\mathfrak p_0 \in \Spec \zz_C$, has dimension at most $d$.
  By semicontinuity of the fibre dimension \cite[Theorem 14.110]{GoertzWedhorn}, after localising at finitely many elements of $S$, we may suppose that all fibres of $\Spec S \to \Spec \zz_C$ have dimension at most $d$, i.e.~$S$ satisfies (1).

  We have shown that for every $K \models \Sigma \cup \{ \varphi \}$, there exists a $\zz_C$-homomorphism $S \to K$ for some $S \in \mathcal S$.
  For every $S \in \mathcal S$ we can find an $\Lar(C)$-sentence $\varphi_S$ such that for every $\Lar(C)$-structure $R$ which is a ring, we have $R \models \varphi_S$ if and only if there is a $\zz_C$-homomorphism $S \to R$: namely, if $S \cong \zz_C[X_1, \dotsc, X_m]/(f_1, \dotsc, f_k)$ for some polynomials $f_i$, then we may take
  \[ \varphi_S = \exists X_1, \dotsc, X_m \bigwedge_{i=1}^k f_i(X_1, \dotsc, X_m) \doteq 0 .\]
  Since we have shown that every $K \models \Sigma \cup \{ \varphi \}$ has a $\zz_C$-homomorphism from some $S \in \mathcal S$, every such $K$ must satisfy some $\varphi_S$.
  The compactness theorem implies that there are in fact finitely many $S_1, \dotsc, S_n \in \mathcal S$ such that every $K \models \Sigma \cup \{ \varphi \}$ has a $\zz_C$-homomorphism from some $S_i$.
  Now $S = S_1 \times \cdots \times S_n$ is a finitely presented $\zz_C$-algebra satisfying all required conditions.
\end{proof}

\begin{prop}\label{prop:efdCharacterisationGeneral}
  Let $\Sigma$ be an $\Lar(C)$-theory containing $\Tfields$.
  %and let $\zz_C = \mathbb{Z}[X_c \colon c \in C]$.
  An existential $\Lar(C)$-formula $\varphi(X_1, \dotsc, X_n)$ has $\efd_\Sigma(\varphi) \leq d$ if and only if there exists a $\zz_C$-scheme $V$ of finite presentation and a $\zz_C$-morphism $f \colon V \to \mathbb{A}_{\zz_C}^n$, all of whose fibres have dimension at most $d$, such that for all $K \models \Sigma$ we have $\varphi(K) = f(V(K))$.
\end{prop}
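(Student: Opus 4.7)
The plan is to derive both directions from material already developed, with Lemma \ref{lem:efd_char} doing the heavy lifting for the forward direction and the backward direction being a direct geometric translation.

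For the forward direction, I would first reduce to the sentence case by extending the language. Set $C' = C \cup \{X_1, \ldots, X_n\}$ and let $\tilde\varphi$ be the $\Lar(C')$-sentence obtained by treating the free variables as constants; viewing $\Sigma$ as an $\Lar(C')$-theory $\Sigma'$, models of $\Sigma'$ correspond to pairs $(K, \underline x)$ with $K \models \Sigma$ and $\underline x \in K^n$, with $K_{C'} = K_C(\underline x)$, so $\efd_{\Sigma'}(\tilde\varphi) = \efd_\Sigma(\varphi) \leq d$. Apply Lemma \ref{lem:efd_char} to obtain a finitely presented $\zz_{C'}$-algebra $S$ such that $\dim(S \otimes_{\zz_{C'}} \Frac(\zz_{C'}/\mathfrak{p})) \leq d$ for every $\mathfrak{p} \in \Spec \zz_{C'}$, and such that a field $K$ (as an $\Lar(C')$-structure) satisfies $\tilde\varphi$ iff there is a $\zz_{C'}$-homomorphism $S \to K$. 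Now identify $\Spec \zz_{C'} = \Spec \zz_C[X_1,\ldots,X_n] = \mathbb{A}_{\zz_C}^n$ and set $V = \Spec S$ with the structure morphism $f \colon V \to \mathbb{A}_{\zz_C}^n$; $V$ is a finitely presented $\zz_C$-scheme since a finitely presented $\zz_{C'}$-algebra is finitely presented as a $\zz_C$-algebra. The first condition becomes the fiber dimension bound, and the second unwinds to $\varphi(K) = f(V(K))$ for every $K \models \Sigma$, since a $\zz_{C'}$-homomorphism $S \to K$ is precisely a $K$-point $v$ of $V$ together with a compatible interpretation $X_i \mapsto x_i$, i.e.\ a $K$-point of $V$ with $f(v) = \underline x$.

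For the backward direction, given $V$ and $f$ with the stated properties, I would check the definition of essential fibre dimension directly. Take $K \models \Sigma$ and $\underline x \in \varphi(K) = f(V(K))$, and pick $v \in V(K)$ with $f(v) = \underline x$. The morphism $v \colon \Spec K \to V$ factors through its image point $p \in V$ as $\Spec K \to \Spec K(p) \to V$, with $K(p) \subseteq K$. The image $f(p)$ is the point $q \in \mathbb{A}_{\zz_C}^n$ corresponding to $\underline x$, whose residue field is $K_C(\underline x)$, so $K_C(\underline x) \subseteq K(p)$. Since $p$ lies in the fiber $f^{-1}(q)$, which is a scheme of finite type over $K_C(\underline x)$ of dimension at most $d$, the standard fact $\trdeg(k(x)/k) \leq \dim X$ for a point of a scheme of finite type over $k$ yields $\trdeg(K(p)/K_C(\underline x)) \leq d$. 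Setting $K' = K(p)$, any $L \models \Sigma$ with $\Lar(C)$-embedding $\rho \colon K' \to L$ gives an $L$-point $\Spec L \to \Spec K' \to V$ whose image under $f$ is $\rho(\underline x)$, so $\rho(\underline x) \in f(V(L)) = \varphi(L)$, as required.

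The substantive work is packaged inside Lemma \ref{lem:efd_char} (via the compactness theorem and semicontinuity of fiber dimension); the present proposition mostly repackages that algebraic output geometrically. The main points to handle with care are the reduction to a sentence by adding free variables as constants (making sure $K_{C'} = K_C(\underline x)$ and $\zz_{C'} = \zz_C[\underline X]$ align with $\mathbb{A}_{\zz_C}^n$), and, in the backward direction, invoking the bound $\trdeg(k(p)/k(q)) \leq \dim f^{-1}(q)$ which comes from the general theory of schemes of finite type over a field. No deeper obstacle is expected.
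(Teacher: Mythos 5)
Your proposal is correct and follows essentially the same path as the paper's proof. In the forward direction, both you and the paper reduce to the sentence case by absorbing the free variables into the constants and then invoke Lemma~\ref{lem:efd_char}; your careful unwinding of the identification $\Spec \zz_{C'} \cong \mathbb{A}_{\zz_C}^n$ is the same bookkeeping the paper leaves tacit. In the backward direction, the paper also reduces to the sentence case and then passes to $W = V \otimes_{\zz_C} K_C$ and invokes Lemma~\ref{lem:edDim} to extract a small subfield over which $W$ has a point, whereas you skip the reduction and directly take $K' = K(p)$, the residue field of the image point $p$ of the chosen $K$-point $v$, using the standard bound $\trdeg(k(p)/k(q)) \leq \dim f^{-1}(q)$. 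These are two presentations of the same observation: indeed the proof of Lemma~\ref{lem:edDim} in the paper is precisely that an $L$-point of a $d$-dimensional variety factors through a subfield of transcendence degree at most $d$, which is the residue-field argument you wrote out explicitly. No gap.
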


\begin{proof}
  By adding the free variables $X_1, \dotsc, X_n$ to the language as constants, it suffices to treat the case $n=0$, i.e.\ $\varphi$ is a sentence, and $f \colon V \to \Spec \zz_C$ is the structure morphism.

  If $\efd_\Sigma(\varphi) \leq d$, we may apply Lemma \ref{lem:efd_char} to obtain a finitely presented $\zz_C$-scheme $S$ for which $V = \Spec S$ is as desired: namely, $V$ has a $K$-point for precisely those $K \models \Sigma$ for which $K \models \varphi$, and all fibres of $V$ over $\Spec \zz_C$ are of dimension at most $d$.

  Assume conversely that there is a $\zz_C$-scheme $V$ of finite presentation satisfying the conditions,
  and let $K \models \Sigma \cup \{ \varphi \}$. 
  Write $\mathfrak p$ for the kernel of the natural homomorphism $\zz_C \to K$.
  We have $V(K) \neq \emptyset$, so writing $W = V \otimes_{\zz_C} \Frac(\zz_C/\mathfrak p)$, we have $W(K) \neq \emptyset$, and therefore $W(K') \neq \emptyset$ for some subfield $K'$ of $K$ of transcendence degree at most $\cdim(W) \leq \dim(W) \leq d$ (Lemma \ref{lem:edDim}) over $K_C$.
  Over every field $L \models \Sigma$ with an embedding of $K'$, $W$ and therefore $V$ have a rational point,
  and thus $L\models\varphi$.
  This proves $\efd_\Sigma(\varphi) \leq d$.
\end{proof}

We separately note the special cases of Proposition \ref{prop:efdCharacterisationGeneral} in the case of the theories $\TKleq$ and $\TKprec$
(see Definition \ref{def:theories}).
\begin{gev}\label{cor:formulaToVarietyEfd}
  An existential $\Lar(K)$-formula $\varphi$ has $\efd_{\subTKleq}(\varphi) \leq d$ if and only if $I_\varphi = I_f$ for a morphism $f \colon V \to \mathbb{A}_K^n$ of $K$-varieties with all fibres of dimension at most $d$.
\end{gev}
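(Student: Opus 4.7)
The plan is to deduce this corollary directly from Proposition \ref{prop:efdCharacterisationGeneral} specialized to the theory $\subTKleq$ with constants $C = K$. The key observation is that the models of $\subTKleq$ are precisely the field extensions $L/K$, and for any such $L$ the canonical ring homomorphism $\zz_K \to L$ factors through the natural quotient $\zz_K \to K$. This allows passage freely between $\zz_K$-schemes of finite presentation and $K$-varieties via base change.

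For the forward direction, I would invoke Proposition \ref{prop:efdCharacterisationGeneral} to obtain a $\zz_K$-scheme $V_0$ of finite presentation with a $\zz_K$-morphism $f_0 \colon V_0 \to \mathbb{A}^n_{\zz_K}$, all of whose fibres have dimension at most $d$, such that $\varphi(L) = f_0(V_0(L))$ for every $L/K$. Forming the base change $V = V_0 \times_{\Spec \zz_K} \Spec K$ with its induced morphism $f \colon V \to \mathbb{A}^n_K$ yields a $K$-variety. I would then check two things: representation is preserved, i.e.\ $f(V(L)) = f_0(V_0(L)) = \varphi(L)$ for all $L/K$, because any $\zz_K$-homomorphism from $V_0$ to $L$ factors through $V$; and fibre dimensions are unchanged, since the fibre of $f$ over a point $x \in \mathbb{A}^n_K$ is canonically identified with the fibre of $f_0$ over the image of $x$ in $\mathbb{A}^n_{\zz_K}$ (the residue fields coincide).

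For the converse, suppose $f \colon V \to \mathbb{A}^n_K$ is a morphism of $K$-varieties with all fibres of dimension at most $d$ and $I_f = I_\varphi$. I would verify the definition of $\efd_{\subTKleq}(\varphi) \leq d$ directly: given $L/K$ and $\underline x \in \varphi(L) = f(V(L))$, lift to an $L$-point $y \in V(L)$, and let $p$ be its scheme-theoretic image in $V$. Since $f(p) \in \mathbb{A}^n_K$ has residue field $K(\underline x)$, the point $p$ lies in the fibre $f^{-1}(\underline x)$, a $K(\underline x)$-variety of dimension at most $d$, so $\trdeg(k(p)/K(\underline x)) \leq d$. Setting $L' = k(p) \subseteq L$ produces the required subextension of $L/K(\underline x)$: for any $K$-embedding $\rho \colon L' \to M$, the composite $\Spec M \to \Spec L' \to V$ is an $M$-point of $V$ lying over $\rho(\underline x)$, witnessing $\rho(\underline x) \in f(V(M)) = \varphi(M)$.

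The main subtlety is the base-change compatibility of fibre dimensions and of $L$-points in the forward direction, which I expect to be routine but needs care because $\zz_K$ is typically much larger than $K$ and not Noetherian. Otherwise, the argument is essentially a translation between the scheme-theoretic and the model-theoretic viewpoints, using that the $K$-rational residue data of a tuple $\underline x$ is captured by the subfield $K(\underline x)$.
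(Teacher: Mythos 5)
Your proof is correct and follows essentially the same route as the paper. The forward direction is identical: apply Proposition~\ref{prop:efdCharacterisationGeneral} with $\Sigma = \TKleq$ and base change along $\zz_K \to K$, the point being exactly what you verify, namely that every field extension $L/K$ receives $\zz_K$ through $K$ so that $V_0(L) = V(L)$, and that fibre dimensions are preserved because the residue fields of corresponding points agree. For the converse, the paper simply cites the first part of Lemma~\ref{lem:edEfdFormulaToSentence} together with Lemma~\ref{lem:edDim} ($\cdim \leq \dim$); your direct argument, lifting $\underline x$ to an $L$-point, taking its scheme-theoretic image $p$, and using $\trdeg(k(p)/K(\underline x)) \leq \dim f^{-1}(x) \leq d$, is exactly the content of those two lemmas inlined, so there is no substantive difference.
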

\begin{proof}
  For necessity, assume that $\efd_{\subTKleq}(\varphi) \leq d$, apply Proposition \ref{prop:efdCharacterisationGeneral} with $\Sigma = \TKleq$, and take the base change of $f$ and $V$ as obtained there along $\zz_C \to K$.
  Sufficiency follows from the first part of Lemma \ref{lem:edEfdFormulaToSentence}: if $I_\varphi = I_f$, then $\efd_{\subTKleq}(\varphi)$ is bounded by the canonical dimension of the fibres, and therefore by the dimension of the fibres (Lemma \ref{lem:edDim}).
\end{proof}

\begin{gev}\label{cor:efdCharacterisationDiag}
  A diophantine set $D \subseteq K^n$ has $\efd_K(D) \leq d$ if and only if $D = f(V(K))$ for a $K$-variety $V$ and a morphism $f \colon V \to \mathbb{A}_K^n$ with all fibres of dimension at most $d$.
\end{gev}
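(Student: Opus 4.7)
The plan is to derive this corollary by combining Corollary \ref{cor:formulaToVarietyEfd} (which handles the $\TKleq$ case) with Lemma \ref{lem:efd_theory} and Proposition \ref{prop:efdCharacterisationGeneral}. By Definition \ref{def:erk_K} and its analogue for essential fibre dimension, $\efd_K(D)=\efd_{\subTKprec}(\varphi)$ for any existential $\Lar(K)$-formula $\varphi$ defining $D$ in $K$, so the task reduces to translating between the geometric characterisation and $\efd_{\subTKprec}$. Since $K$ is itself a model of $\TKprec$ (and of $\TKleq$), and the two theories agree on what happens in $K$, most of the work has already been done.

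For the sufficiency direction, I would start from a $K$-variety $V$ with morphism $f\colon V\to\mathbb{A}_K^n$ whose fibres have dimension at most $d$ and satisfy $D=f(V(K))$. Invoking Lemma \ref{lem:formulasEquivVarieties}, I obtain an existential $\Lar(K)$-formula $\varphi$ with $I_\varphi=I_f$; this $\varphi$ defines $D$ in $K$, since $\varphi(K)=f(V(K))=D$. Corollary \ref{cor:formulaToVarietyEfd} then gives $\efd_{\subTKleq}(\varphi)\leq d$, and Lemma \ref{lem:efd_theory} applied to the inclusion $\TKleq\subseteq\TKprec$ upgrades this to $\efd_K(D)=\efd_{\subTKprec}(\varphi)\leq d$.

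For the necessity direction, I would apply Proposition \ref{prop:efdCharacterisationGeneral} to $\Sigma=\TKprec$ with the set of constants $C=K$. This produces a $\zz_K$-scheme $V_0$ of finite presentation with a $\zz_K$-morphism $f_0\colon V_0\to\mathbb{A}_{\zz_K}^n$, all of whose fibres have dimension at most $d$, such that $\varphi(L)=f_0(V_0(L))$ for every $L\models\TKprec$. Base-changing along the canonical ring map $\zz_K\to K$ yields a $K$-scheme of finite presentation (hence a $K$-variety) $V=V_0\otimes_{\zz_K} K$ with morphism $f\colon V\to\mathbb{A}_K^n$; base change preserves the fibre dimensions, so all fibres of $f$ still have dimension at most $d$. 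Finally, since $K\models\TKprec$, we have $f(V(K))=f_0(V_0(K))=\varphi(K)=D$, as required.

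There is no serious obstacle here: the only point of mild subtlety is checking that the base change $V_0\otimes_{\zz_K} K$ indeed has the claimed fibre dimensions and that $K$-rational points are unaffected, both of which are standard properties of base change of finitely presented schemes. The corollary is thus essentially a translation of Proposition \ref{prop:efdCharacterisationGeneral} (or Corollary \ref{cor:formulaToVarietyEfd}) into the language of diophantine subsets of a single field.
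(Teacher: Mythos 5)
Your proof is correct and follows essentially the same route as the paper: for sufficiency, pass from $(V,f)$ to a formula $\varphi$ via Lemma~\ref{lem:formulasEquivVarieties}, then apply Corollary~\ref{cor:formulaToVarietyEfd} and Lemma~\ref{lem:efd_theory}; for necessity, apply Proposition~\ref{prop:efdCharacterisationGeneral} with $\Sigma=\TKprec$ and base-change along $\zz_C\to K$. The minor extra remarks about fibre dimension and rational points being preserved under base change are correct and implicit in the paper's one-line treatment.
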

\begin{proof}
  For necessity, apply Proposition \ref{prop:efdCharacterisationGeneral} with $\Sigma = \TKprec$, and take the base change of $f$ and $V$ as obtained there along $\zz_C \to K$.
  For sufficiency, assume that $V$ and $f$ exist as in the statement, and let $\varphi$ be an existential $\Lar(K)$-formula with $I_\varphi = I_f$ (Lemma \ref{lem:formulasEquivVarieties}).
  Then $\efd_K(D)=\efd_{\subTKprec}(\varphi) \leq \efd_{\subTKleq}(\varphi) \leq d$ by
  Lemma \ref{lem:efd_theory} and Corollary \ref{cor:formulaToVarietyEfd}.
\end{proof}

In the same vein as Corollary \ref{cor:formulaToVarietyEfd}, in good situations we also obtain a bound for the existential rank in terms of the dimension of fibres.
Note, however, that this is in general only a sufficient but not a necessary criterion.
\begin{prop}\label{prop:smoothErk}
  Let $f \colon V \to \mathbb{A}_K^n$ be a morphism of $K$-varieties with all fibres of dimension at most $d$.
  Suppose that $V$ can be covered by finitely many locally closed subvarieties $V_i$ such that, letting $W_i$ be the schematic closure of $f(V_i)$ in $\mathbb{A}_K^n$, the induced morphisms $V_i \to W_i$ are smooth.
  Then an $\exists$-$\Lar(K)$-formula $\varphi$ with $I_\varphi = I_f$ has $\perk_\subTKleq(\varphi) \leq d+1$.
\end{prop}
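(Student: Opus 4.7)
The plan is to produce an explicit positive-existential formula with at most $d+1$ quantifiers equivalent to $\varphi$ modulo $\subTKleq$, by covering $V$ with affine pieces that are smooth closed hypersurfaces in a product with affine space of suitable dimension, and then reading off one existential quantifier per fibre coordinate plus one for the hypersurface equation.

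First, using Lemma \ref{lem:formulasEquivVarieties} I replace $\varphi$ by a positive-existential formula with $I_\varphi = I_f$, and then apply the disjunction rule from Remark \ref{qrtrivial} to the hypothesised cover $V = \bigcup_i V_i$. This reduces the problem to the case where $f \colon V \to W$ is a single smooth morphism of relative dimension $e \leq d$, with $W$ a closed subvariety of $\mathbb{A}_K^n$.

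Second, I cover $V$ by finitely many affine open subschemes $U$, each of which admits a closed embedding $U \hookrightarrow W \times \mathbb{A}_K^{e+1}$ presenting $U$ as a smooth closed hypersurface $\{P_U \doteq 0\}$, with $P_U \in K[\underline{X}, Y_1, \ldots, Y_e, T]$ monic in $T$. Such a cover is obtained from the local structure theorem for smooth morphisms: each point of $V$ has an open neighbourhood on which $V \to W$ factors as an étale map to $W \times \mathbb{A}_K^e$ followed by projection, and a standard étale presentation $U = \Spec((A[Y_1, \ldots, Y_e, T]/P)_{P'})$ with $A = K[W]$. A careful choice of primitive element $T$, combined with a reparametrisation that absorbs the localisation at $P' = \partial P/\partial T$ into a closed polynomial equation (for instance, the substitution $y \mapsto 1/y$ turning the open $\{y^3 \doteq x,\, y \neq 0\}$ into the smooth closed hypersurface $\{xy^3 \doteq 1\}$ in $\mathbb{A}_K^2$), produces a genuinely smooth closed hypersurface $\{P_U \doteq 0\}$ with no extra open conditions on the base $W$.

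Third, for such a piece $U = \{P_U \doteq 0\}$ and any field extension $L/K$, the smoothness of the hypersurface ensures that every $L$-point of $\{P_U \doteq 0\}$ automatically satisfies $\partial P_U/\partial T \neq 0$ and hence lies in $U$. Consequently, $f(U(L)) \subseteq L^n$ is defined by the positive-existential formula
\[
 \psi_U(\underline{X}) \;=\; \bigl(\text{defining equations of } W \text{ in } \mathbb{A}_K^n\bigr) \;\wedge\; \exists Y_1, \ldots, Y_e, T \colon P_U(\underline{X}, Y, T) \doteq 0,
\]
which uses exactly $e + 1 \leq d+1$ existential quantifiers, without any extra quantifier for an inverse of $\partial P_U/\partial T$.

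Finally, taking the disjunction over the finitely many pieces of the cover and applying Remark \ref{qrtrivial} once more, I obtain a positive-existential formula with at most $d+1$ quantifiers that is equivalent modulo $\subTKleq$ to $\varphi$, proving $\perk_\subTKleq(\varphi) \leq d+1$. I expect the subtle point to be the second step: constructing smooth closed hypersurface presentations for the affine opens $U$ without introducing additional open conditions on the base $W$ would otherwise cost an extra existential quantifier, and this is precisely what distinguishes the bound $d+1$ from the naive bound $d+2$ one would get from an unmodified standard étale presentation.
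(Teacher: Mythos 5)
Your proposal follows the same geometric skeleton as the paper (reduce to a single smooth $V \to W$, then use the local structure of smooth/étale morphisms), but it diverges at exactly the step you flag as subtle, and it is there that the argument has a genuine gap. You assert that every affine open $U$ of $V$ admits a \emph{closed} embedding into $W \times \mathbb{A}_K^{e+1}$ cutting out a smooth hypersurface $\{P_U \doteq 0\}$, with no localisation left over. The standard structure theorem for étale morphisms only produces a presentation of the form $U \cong \Spec\bigl((A[T]/P)_Q\bigr)$, i.e.\ an \emph{open} subscheme of a hypersurface, and the passage to a genuine closed hypersurface is not a general fact. Your $\{y^3 \doteq x,\ y\neq 0\} \rightsquigarrow \{xy^3 \doteq 1\}$ example relies on the special circumstance that the open condition is the non-vanishing of a single coordinate; in general the open condition is the non-vanishing of $Q$, and Rabinowitsch's trick for that costs a further variable $S$ with $SQ \doteq 1$, yielding $e+2$ quantifiers, not $e+1$. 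You even claim $P_U$ monic in $T$, which contradicts your own example ($xy^3-1$ is not monic) and is incompatible with $U$ being a proper open subscheme of a finite cover. No reason is given why a judicious primitive element always absorbs the localisation, and I do not believe it does: whether a given smooth affine $W$-scheme of relative dimension $e$ embeds as a hypersurface in $W \times \mathbb{A}_K^{e+1}$ (with the prescribed map to $W$) is a delicate question, and shrinking $U$ alone does not resolve it because the base $W$ cannot be shrunk without reintroducing an inequality.

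The paper sidesteps this entirely. It does not attempt a positive presentation of the local pieces. Instead it uses the locally closed image $H \subseteq W \times \mathbb{A}_K^{e+1}$ of the étale chart, which is defined by a quantifier-free formula $\psi_H$ allowing negations, giving only $\erk_\subTKleq(\varphi) \leq d+1$. The positivity is then recovered abstractly: since $\efd_\subTKleq(\varphi) \leq d$ by Corollary \ref{cor:formulaToVarietyEfd}, Proposition \ref{prop:qrvsqrp} forces $\perk_\subTKleq(\varphi) \leq d+1$ as well. This model-theoretic conversion from $\erk$ to $\perk$ is the device that saves the quantifier you tried to save geometrically, and it is exactly the ingredient missing from your proposal. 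If you want a purely geometric argument you would need to actually prove the hypersurface-presentation claim, which as written is unsubstantiated; otherwise, invoking Proposition \ref{prop:qrvsqrp} after bounding the plain existential rank, as the paper does, closes the gap.
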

\begin{proof}
  Since $\efd_\subTKleq(\varphi) \leq d$ by Corollary \ref{cor:formulaToVarietyEfd}, it suffices to show that $\erk_\subTKleq(\varphi) \leq d+1$, as then $\perk_\subTKleq(\varphi) \leq d+1$ follows from Proposition \ref{prop:qrvsqrp}.
  Since the disjunction of finitely many $\exists_{d+1}$-$\Lar(K)$-formulas is itself equivalent to an $\exists_{d+1}$-$\Lar(K)$-formula (Remark \ref{qrtrivial}), it suffices to treat the case $V_1=V$, writing $W=W_1$.

  Let $x \in V$.
  According to \cite[Proposition 2.2.11]{BLR}, smoothness implies that there is an open neighbourhood $U \subseteq V$ of $x$ such that $f \colon U \to W$ factors through the projection $W \times_K \mathbb{A}_K^e \to W$, with $U \to W \times_K \mathbb{A}_K^e$ an étale morphism and $e \leq d$ the dimension of the fibre $f^{-1}(f(x))$.

  Because of the local structure of étale morphisms given in \cite[Proposition 2.3.3]{BLR}, after possibly shrinking $U$ to a smaller open neighbourhood of $x$, $U \to W \times_K \mathbb{A}_K^e$ factors through a locally closed embedding $U \hookrightarrow W \times_K \mathbb{A}_K^{e+1}$.
  Write $H \subseteq W \times_K \mathbb{A}_K^{e+1} \subseteq \mathbb{A}_K^{n+e+1}$ for its image, which is locally closed and therefore defined by a quantifier-free $\Lar(K)$-formula $\psi_H(X_1, \dotsc, X_{n+e+1})$, in the sense that over any field $L/K$, $\psi_H$ is satisfied by precisely those tuples in $L$ which lie in $H(L)$.
  Letting $\varphi_U(X_1, \dotsc, X_n) = \exists X_{n+1}, \dotsc, X_{n+e+1} \psi_H$, we therefore have $I_{f|_U} = I_{\varphi_U}$, and $\erk_\subTKleq(\varphi_U) \leq e+1 \leq d+1$.

  Varying the point $x \in V$ we start with and using the compactness of $V$, we can find finitely many $U_1, \dotsc, U_k$ as above covering $V$.
  Then an existential formula $\varphi$ with $I_\varphi = I_f$ is equivalent modulo $\TKleq$ to the disjunction of the $\varphi_{U_i}$, and therefore equivalent to an $\exists_{d+1}$-$\Lar(K)$-formula (Remark \ref{qrtrivial}).
\end{proof}

\begin{opm}
  If the base field $K$ is of characteristic zero, then for every morphism $f \colon V \to \mathbb{A}_K^n$ we can find a stratification of $V$ into pieces $V_i$ on which $f$ is smooth as in the hypothesis of Proposition \ref{prop:smoothErk}: This follows by first passing to the reduction of the irreducible components of $V$ and then repeatedly using generic smoothness (see for instance \cite[Exercise 6.2.9]{Liu}), where the crucial assumption of characteristic zero yields that $f$ is smooth at generic points.

  In this situation, Corollary \ref{cor:formulaToVarietyEfd} and Proposition \ref{prop:smoothErk} thus yield a proof within the geometric paradigm that $\perk_\subTKleq(\varphi) \leq \efd_\subTKleq(\varphi) + 1$ for any $\exists$-$\Lar(K)$-formula $\varphi$.
  We already obtained this result much earlier in 
  Proposition \ref{qrleqed}
  % Theorem \ref{ed=qr}
  using different methods and in greater generality.
\end{opm}

We next aim to characterise essential dimension in a similar manner as essential fibre dimension (Corollary \ref{cor:edCharacterisation}).
We begin with the following proposition, which also leads to the independently interesting Corollary \ref{cor:finDisjEdEfdDim}.

\begin{prop}\label{prop:unionFlatIncompressible}
  Let $V$ be a $K$-variety and $f \colon V \to \mathbb{A}_K^n$ a morphism.
  Then there are finitely many morphisms $g_i \colon V_i \to \mathbb{A}_K^n$ of $K$-varieties, such that the functor $I_f$ is equal to the functor $I_g$ associated to the disjoint union $g \colon \coprod_i V_i \to \mathbb{A}_K^n$,
  where the $g_i$ can be chosen such that their image $U_i$ is irreducible and locally closed in $\mathbb{A}_K^n$, and $V_i \to U_i$ is flat with generic fibre $(V_i)_\eta$ satisfying $\cdim((V_i)_\eta) = \dim((V_i)_\eta)$, and further the $U_i$ are disjoint.
\end{prop}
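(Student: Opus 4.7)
I plan to prove the proposition by noetherian induction on $\dim \overline{f(V)}$, peeling off one top-dimensional piece of the image at a time. Since $V(L) = V_{\mathrm{red}}(L)$ for every field extension $L/K$, I may replace $V$ by its reduction; the base case $\overline{f(V)} = \emptyset$ is trivial.

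For the inductive step, I pick an irreducible component $T$ of $\overline{f(V)}$ of maximal dimension. By Chevalley's theorem, $f(V)$ is constructible in $\mathbb{A}_K^n$, so $f(V) \cap T$ contains a dense open of $T$, which I take disjoint from the other irreducible components of $\overline{f(V)}$; call this open $U_0$, and set $W = f^{-1}(U_0)$. Discarding the irreducible components of $W$ whose images are not dense in $U_0$, and shrinking $U_0$ to remove the closures of those images, ensures that every remaining component of $W$ dominates $U_0$. Applying generic flatness and shrinking $U_0$ further, I arrange that each remaining component is flat over $U_0$. Combining these into a scheme $W'$ (disjoint union if needed) yields a flat morphism $W' \to U_0$ whose generic fibre $F = (W')_\eta$ is a disjoint union of integral $K(\eta)$-varieties of common dimension $d$.

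The main technical step is to arrange the condition $\cdim(F) = d$. If this already holds, I set $V_1 = W'$ and $U_1 = U_0$. Otherwise, for an irreducible component $F_j$ of $F$ with $c_j = \cdim(F_j) < d$, I attempt to extract an incompressible closed subvariety $Z_j \subseteq F_j$ of dimension $c_j$: by the definition of canonical dimension there is a finitely generated $L_0/K(\eta)$ of transcendence degree $c_j$ with $F_j(L_0) \neq \emptyset$, and the Zariski closure of the image of such an $L_0$-point is an incompressible closed subvariety of the required dimension. Spreading $Z_j$ out yields a closed subscheme $\tilde Z_j$ of $W'$ over a dense open $U_0' \subseteq U_0$ which is flat over $U_0'$ (by generic flatness) with incompressible generic fibre $Z_j$. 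The main obstacle is that this compression does not automatically preserve the image of $L$-points in $U_0$, since $\tilde Z_j(L) \to U_0'(L)$ can be a proper subset of $W'(L) \to U_0'(L)$; one must argue via a secondary iteration, decomposing the residual $L$-point image into pieces whose images lie over proper locally closed subsets of $U_0$ (which are handled by the main induction on $\dim \overline{f(V)}$), to achieve functor equality while reducing the fibre dimension. I expect this to be the substantive part of the proof.

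Finally, for the complement $V \setminus f^{-1}(U_1)$, the closure of its image in $\mathbb{A}_K^n$ has strictly smaller dimension than $\overline{f(V)}$, so by the inductive hypothesis it decomposes as required. Disjointness of the $U_i$'s is maintained by restricting the images produced by the induction to the complement of the previously chosen $U_j$'s, which remains locally closed in $\mathbb{A}_K^n$; iterating this procedure yields the full decomposition.
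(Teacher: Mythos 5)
Your outer structure -- Chevalley, induction peeling off one top-dimensional irreducible piece of the image at a time, generic flatness, and a final complement step -- matches the paper's proof closely. The gap is in the step you yourself flag as ``the substantive part'': arranging $\cdim(F) = \dim(F)$ for the generic fibre.

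Your proposal is to extract an incompressible \emph{closed subvariety} $Z_j \subseteq F_j$ of dimension $c_j = \cdim(F_j)$ and spread it out. But this cannot work as the basic mechanism: passing to a closed subvariety does not preserve the functor of points. For a field $L/K(\eta)$ one can perfectly well have $F_j(L) \neq \emptyset$ with $Z_j(L) = \emptyset$ --- canonical dimension only guarantees that \emph{some} extension of $L$ of bounded transcendence degree has a point of $F_j$, not that the point lands in $Z_j$. Consequently, $I_{\tilde Z_j}$ can be strictly smaller than $I_{W'}$ over all of $U_0$, and the ``residual'' $L$-points you want to push into a secondary iteration need not be supported over proper locally closed subsets of $U_0$: they can be dense in $U_0$, so your proposed induction on $\dim \overline{f(V)}$ does not apply to them. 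There is no obvious way to terminate.

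The paper avoids this entirely by \emph{not} compressing inside $W = f^{-1}(x)$. Instead it uses Lemma~\ref{lem:edEfdFormulaToSentence} to identify $\cdim W$ with $\efd_{\subTKleq}(\varphi_W)$ for a corresponding sentence $\varphi_W$ over $K(x)$, then invokes Corollary~\ref{cor:formulaToVarietyEfd} to produce a \emph{new} $K(x)$-variety $W'$ of dimension $\cdim W$ with $I_W = I_{W'}$ --- equality of functors, built in by construction rather than hoped for. Lemma~\ref{lem:varietiesSubfunctor} then furnishes morphisms $W' \to W$ and $W_i \to W'$ (after locally closed partitioning) which can be spread out over a dense open $U$ of the stratum, and this is what propagates the functor equality $I_{g_1} = I_{f|_Z}$ from the generic point to $U$. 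Without some analogue of that passage to an abstractly defined $W'$ and the two-way morphisms of Lemma~\ref{lem:varietiesSubfunctor}, your construction has no mechanism to certify functor equality over $U_0$, so the argument as proposed has a genuine hole.
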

\begin{proof}
  Let $D$ be the image of $f$, as a subset of the scheme $\mathbb{A}_K^n$.
  By Chevalley's theorem, $D$ is constructible.
  Furthermore, $D$ is a noetherian topological space  \cite[Lemma 1.25(1)]{GoertzWedhorn}.
  Let $S_0 \subseteq D$ be an irreducible component of the topological space $D$, $S$ its closure in $\mathbb{A}_K^n$ with the reduced scheme structure,  $x \in S$ the generic point, and $W=f^{-1}(x)$ the fibre of $x$.
  
  Applying Lemma \ref{lem:formulasEquivVarieties} to pass from $W$ to an $\exists$-$\Lar(K(x))$-sentence $\varphi_W$ with $I_W = I_{\varphi_W}$ and therefore $\efd_{\subTKleq}(\varphi_W) = \cdim W$ by Lemma \ref{lem:edEfdFormulaToSentence}, and then using Corollary \ref{cor:formulaToVarietyEfd}, we obtain a $K(x)$-variety $W'$ of dimension $\dim W' = \cdim W$ with $I_W = I_{W'}$ as functors on $\mathrm{Fields}_{K(x)}$.
  By Lemma \ref{lem:varietiesSubfunctor} twice, this means that after possibly replacing $W'$ by the disjoint union of a locally closed partition of $W'$, we have a $K(x)$-morphism $W' \to W$ and a locally closed partition $W = W_1 \cup \dotsb \cup W_m$ with $K(x)$-morphisms $W_i \to W'$.
  Now we spread everything out as in \cite[Theorem 3.2.1]{Poonen_RationalPoints}: 
  There exists a dense open subset $U \subseteq S$ and a $K$-variety $Z'$ with a morphism $Z' \to U$ as well as a locally closed partition $Z_1 \cup \dotsb \cup Z_m$ of $Z = f^{-1}(U)$ and $U$-morphisms $Z' \to Z$ and $Z_i \to Z'$, such that $Z'$ and the $Z_i$ have $W'$ and the $W_i$ as their fibres over $x$.
  Shrinking $U$ to a possibly smaller dense open subset of $S$, we may assume that $Z' \to U$ is flat by generic flatness (see for instance \cite[Corollary 10.85]{GoertzWedhorn}), and that $U \cap D$ is open in $D$ (as opposed to only open in $S$), by removing from $U$ the finitely many irreducible components of $D$ distinct from $S_0$.

  Write $V_1$ for $Z'$ and $g_1$ for the morphism $V_1 = Z' \to U$.
  By construction, the functor $I_{g_1}$ is equal to the functor $I_{f|_Z}$:
  by two applications of Lemma \ref{lem:varietiesSubfunctor}, we have both $I_{g_1}(L) \subseteq I_{f|_Z}(L)$ and vice versa for every $L/K$.
  Thus $I_f$ is equal to the functor associated to the morphism $V_1 \cup (V \setminus f^{-1}(U)) \to \mathbb{A}_K^n$ given by $g_1$ and the restriction of $f$, where $V \setminus f^{-1}(U)$ is the complement of the preimage of $U$ with the reduced scheme structure.

  Now pass from $f \colon V \to \mathbb{A}_K^n$ to the restriction $V \setminus f^{-1}(U) \to \mathbb{A}_K^n$, which removes $U$ from $D$, and repeat the process.
  This terminates after finitely many steps since $D$ is noetherian.
  By construction, the functor associated to the morphism $g$ constructed by gluing the morphisms $g_i$ thus obtained is equal to the functor $I_f$.
\end{proof}

\begin{gev}\label{cor:finDisjEdEfdDim}
  Every existential $\mathcal{L}_{\rm ring}(K)$-formula $\varphi$ is equivalent modulo $\Ext_K$ to a finite disjunction of existential $\mathcal{L}_{\rm ring}(K)$-formulas $\varphi_i$ which have 
  $$
   \ed(I_{\varphi_i}) \;=\; \efd_{\Ext_K}(\varphi_i) + \dim(\varphi_i(\overline{K}))
$$ 
 and are mutually exclusive in the sense that no tuple in a field extension $L/K$ can satisfy both $\varphi_i$ and $\varphi_j$ for some indices $i \neq j$.
  Here we write $\dim(\varphi_i(\overline{K}))$ for the dimension of the set of tuples (with the Zariski topology) defined by $\varphi_i$ over the algebraic closure $\overline{K}$.
\end{gev}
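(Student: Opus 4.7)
The plan is to convert $\varphi$ into geometric data, apply the decomposition in Proposition \ref{prop:unionFlatIncompressible}, and then translate back to formulas. Specifically, I would use Lemma \ref{lem:formulasEquivVarieties} to pick a $K$-morphism $f \colon V \to \mathbb{A}_K^n$ with $I_\varphi = I_f$, and apply Proposition \ref{prop:unionFlatIncompressible} to obtain morphisms $g_i \colon V_i \to \mathbb{A}_K^n$ whose disjoint union realises the functor $I_f$, with images $U_i \subseteq \mathbb{A}_K^n$ irreducible, locally closed, and pairwise disjoint, and with each $V_i \to U_i$ flat with incompressible generic fibre $(V_i)_{\eta_i}$. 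Lemma \ref{lem:formulasEquivVarieties} applied once more yields existential $\Lar(K)$-formulas $\varphi_i$ with $I_{\varphi_i} = I_{g_i}$; then by Remark \ref{rem:same_functor_equivalent_formulas} the disjunction $\bigvee_i \varphi_i$ is equivalent to $\varphi$ modulo $\Ext_K$, and mutual exclusivity is immediate because any $\underline a \in I_{\varphi_i}(L)$ lies in $U_i(L)$ and the $U_i$ are disjoint.

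Next, I would observe that since $(V_i)_{\eta_i}$ is incompressible and in particular irreducible, and $V_i \to U_i$ is flat with integral target, $V_i$ itself must be irreducible: every irreducible component of the source of a flat morphism with integral target dominates that target, so if $V_i$ had two components, $(V_i)_{\eta_i}$ would be reducible. Hence $V_i \to U_i$ is a flat morphism of finite type between irreducible noetherian schemes, and all its fibres have dimension equal to $\dim((V_i)_{\eta_i})$. Combining this with Lemma \ref{lem:edDim} yields $\cdim(g_i^{-1}(x)) \leq \dim((V_i)_{\eta_i})$ for every $x \in U_i$, with equality at $x = \eta_i$ by incompressibility.

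Finally, I would apply Lemma \ref{lem:edEfdFormulaToSentence} to rewrite
\[ \efd_{\Ext_K}(\varphi_i) = \sup_{x \in U_i} \cdim(g_i^{-1}(x)) \quad \text{and} \quad \ed(I_{\varphi_i}) = \sup_{x \in U_i}\bigl(\dim \overline{\{x\}} + \cdim(g_i^{-1}(x))\bigr). \]
Combined with $\dim \overline{\{x\}} \leq \dim U_i$, with equality only at $x = \eta_i$ (irreducibility of $U_i$), both suprema are attained at the generic point, giving $\efd_{\Ext_K}(\varphi_i) = \dim((V_i)_{\eta_i})$ and $\ed(I_{\varphi_i}) = \dim U_i + \dim((V_i)_{\eta_i})$. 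Since $g_i$ is surjective onto $U_i$ and its fibres over $\overline K$-valued points of $U_i$ are non-empty varieties over an algebraically closed field, $\varphi_i(\overline K) = U_i(\overline K)$, so $\dim \varphi_i(\overline K) = \dim U_i$, and the desired identity $\ed(I_{\varphi_i}) = \efd_{\Ext_K}(\varphi_i) + \dim \varphi_i(\overline K)$ follows.

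The bulk of the work is done by Proposition \ref{prop:unionFlatIncompressible}, whose construction of the flat pieces with incompressible generic fibre is the genuinely geometric heart of the argument; the only mildly delicate point here is the irreducibility of $V_i$ (which frees us from considering non-equidimensional flat morphisms), after which everything reduces to bookkeeping with Lemma \ref{lem:edEfdFormulaToSentence}.
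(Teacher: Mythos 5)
Your approach is the same as the paper's: pass from $\varphi$ to a morphism via Lemma \ref{lem:formulasEquivVarieties}, decompose using Proposition \ref{prop:unionFlatIncompressible}, translate each piece back to a formula $\varphi_i$, and compute $\ed$ and $\efd$ of each piece via Lemma \ref{lem:edEfdFormulaToSentence}. The bookkeeping ($\efd_{\Ext_K}(\varphi_i) = \dim (V_i)_{\eta_i}$, $\ed(I_{\varphi_i}) = \dim U_i + \dim (V_i)_{\eta_i}$, mutual exclusivity from disjointness of the $U_i$) matches, and your explicit check that $\varphi_i(\overline K) = U_i(\overline K)$, which the paper leaves implicit, is a welcome addition.

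There is, however, one genuine misstep: you assert that $(V_i)_{\eta_i}$ ``is incompressible and in particular irreducible,'' and then deduce that $V_i$ is irreducible. But Proposition \ref{prop:unionFlatIncompressible} only guarantees $\cdim((V_i)_{\eta_i}) = \dim((V_i)_{\eta_i})$; it does \emph{not} assert that the generic fibre is irreducible, and the paper's notion of incompressibility is only defined for irreducible varieties precisely to avoid such ambiguity. A reducible variety can perfectly well satisfy $\cdim = \dim$ (take the disjoint union of two incompressible conics of the same dimension), so the inference to irreducibility of $(V_i)_{\eta_i}$, and hence of $V_i$, does not go through. Fortunately, you only need the \emph{upper} bound $\dim(g_i^{-1}(x)) \leq \dim((V_i)_{\eta_i})$ for all $x \in U_i$, and this follows directly from lower semi-continuity of fibre dimension for a flat morphism of finite presentation (which is what the paper cites), without any irreducibility or equidimensionality of fibres. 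Dropping the irreducibility claim and invoking lower semi-continuity instead repairs the argument and aligns it with the paper's.
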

\begin{proof}
  Assume first that the functor $I_\varphi$ is equal to the functor $I_g$ corresponding to a morphism $g \colon V \to U \hookrightarrow \mathbb{A}_K^n$ with $U$ irreducible and locally closed in $\mathbb{A}_K^n$, and $V \to U$ flat with generic fibre $V_\eta$ satisfying $\cdim(V_\eta) = \dim(V_\eta)$.
  By lower semicontinuity of the dimension of the fibres of $V \to U$ as a function on $U$ \cite[Tag 0D4H]{StacksProject}, all fibres have dimension at most $\dim V_\eta$,
  so in particular $\dim V_\eta \geq \dim V - \dim U$ \cite[Proposition 14.107]{GoertzWedhorn}.
  On the other hand \[\dim V_\eta = \cdim V_\eta \leq \ed(I_g) - \dim U \leq \dim V - \dim U,\]
  where the first inequality follows from Lemma \ref{lem:edEfdFormulaToSentence} and the second is Lemma \ref{lem:edDim}.
  Therefore $\dim V_\eta = \dim V - \dim U$, and Lemma \ref{lem:edEfdFormulaToSentence} implies $\efd_\subTKleq(\varphi) = \dim V - \dim U$ and $\ed(I_\varphi) = \dim V$, so the result holds in this case.
  
  In general, we pass from $\varphi$ to a morphism $f \colon V \to \mathbb{A}_K^n$ of $K$-varieties with $I_f = I_\varphi$ using Lemma \ref{lem:formulasEquivVarieties}, and then apply Proposition \ref{prop:unionFlatIncompressible} to $f$.
  We obtain a morphism $g \colon \coprod_i V_i \to \mathbb{A}_K^n$ with $I_g = I_f = I_\varphi$, and the restriction $g_i$ of $g$ to $V_i$ satisfies the conditions of the first case.
  Let $\varphi_i$ be a formula with $I_{\varphi_i} = I_{g_i}$, afforded by Lemma \ref{lem:formulasEquivVarieties}.
  Then $\varphi$ is equivalent modulo $\Ext_K$ to the disjunction of the $\varphi_i$ (Remark \ref{rem:same_functor_equivalent_formulas}), the $\varphi_i$ are mutually exclusive since the $g_i$ have disjoint images in $\mathbb{A}_K^n$, and each $\varphi_i$ is as desired by the first case.
\end{proof}

\begin{vb}
  In Corollary \ref{cor:finDisjEdEfdDim} it is necessary to allow finite disjunctions.
  Consider for instance an existential $\Lar(K)$-sentence $\varphi$ with $\efd_{\Ext_K}(\varphi) > 0$.
  Then the one-variable formula $\psi(X) = (\varphi \vee X \dotneq 0)$ has $\ed(I_\psi) = \efd_{\Ext_K}(\psi) = \efd_{\Ext_K}(\varphi)$ and $\dim(\psi(\overline K)) = 1$.
\end{vb}

\begin{gev}\label{cor:edCharacterisation}
  An existential $\Lar(K)$-formula $\varphi$ has $\ed(I_\varphi) \leq d$ if and only if there exists a $K$-variety $V$ of dimension at most $d$ with a $K$-morphism $f \colon V \to \mathbb{A}_K^n$ such that $I_\varphi = I_f$.
\end{gev}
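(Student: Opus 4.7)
The plan is to prove the two directions separately, with the easy direction being immediate. For the ``if'' direction, if $f\colon V \to \mathbb{A}_K^n$ is a $K$-morphism with $I_\varphi = I_f$ and $\dim V \leq d$, then Lemma \ref{lem:edDim} gives $\ed(I_\varphi) = \ed(I_f) \leq \dim V \leq d$, so there is nothing more to do.

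For the ``only if'' direction, suppose $\ed(I_\varphi) \leq d$. The main tool is Corollary \ref{cor:finDisjEdEfdDim}, which lets me write $\varphi$ modulo $\Ext_K$ as a finite disjunction of mutually exclusive existential formulas $\varphi_1, \ldots, \varphi_k$. A glance inside the proof of that corollary (which uses Proposition \ref{prop:unionFlatIncompressible} and Lemma \ref{lem:formulasEquivVarieties}) shows that each $\varphi_i$ is realised as $I_{\varphi_i} = I_{g_i}$ for a $K$-morphism $g_i \colon V_i \to \mathbb{A}_K^n$ where $V_i \to U_i$ is flat onto an irreducible locally closed $U_i \subseteq \mathbb{A}_K^n$ with incompressible generic fibre, and moreover $\dim V_i = \ed(I_{\varphi_i})$ (this is an identity obtained explicitly in the course of that proof).

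The central step is then to transfer the bound $\ed(I_\varphi) \leq d$ to each piece $\varphi_i$. I would argue directly: given $\underline x \in I_{\varphi_i}(L)$ for some $L/K$, one has $\underline x \in I_\varphi(L)$, so by hypothesis there is a subextension $L_0/K$ of $L/K$ with $\trdeg(L_0/K) \leq d$ and $\underline x \in I_\varphi(L_0)$. Mutual exclusivity of the $\varphi_j$ forces $\underline x$ to satisfy precisely the same disjunct $\varphi_i$ over $L_0$ that it does over $L$, since the embedding $L_0 \hookrightarrow L$ is compatible with the functors $I_{\varphi_j}$. Hence $\underline x \in I_{\varphi_i}(L_0)$, which shows $\ed(I_{\varphi_i}) \leq d$ and therefore $\dim V_i \leq d$ for every $i$.

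Taking $V = \coprod_i V_i$ with the $K$-morphism $f \colon V \to \mathbb{A}_K^n$ induced by the $g_i$ yields $I_f = \bigcup_i I_{g_i} = I_\varphi$ and $\dim V = \max_i \dim V_i \leq d$, completing the proof. The only mild obstacle is that I need the identity $\dim V_i = \ed(I_{\varphi_i})$ rather than just $\dim V_i \geq \ed(I_{\varphi_i})$; this is not part of the stated conclusion of Corollary \ref{cor:finDisjEdEfdDim}, but it is established internally in its proof (via Lemmas \ref{lem:edDim} and \ref{lem:edEfdFormulaToSentence} combined with the flatness and incompressibility furnished by Proposition \ref{prop:unionFlatIncompressible}), so I would simply cite that part of the argument.
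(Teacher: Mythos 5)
Your proof is correct, and the overall structure -- decompose $\varphi$ via Corollary \ref{cor:finDisjEdEfdDim}, bound each piece, glue the varieties -- matches the paper's. The two places where you deviate are both sound but worth noting. First, to get $\dim V_i = \ed(I_{\varphi_i})$ you reach inside the proof of Corollary \ref{cor:finDisjEdEfdDim} and use the variety $V_i$ constructed there; the paper instead takes only the \emph{stated} conclusion $\ed(I_{\varphi_i}) = \efd_{\subTKleq}(\varphi_i) + \dim(\varphi_i(\overline K))$ and then freshly invokes Corollary \ref{cor:formulaToVarietyEfd} to produce a variety with fibre dimension $\leq \efd_{\subTKleq}(\varphi_i)$ over an image of dimension $\dim(\varphi_i(\overline K))$, giving $\dim V_i \leq \ed(I_{\varphi_i})$. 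The paper's route is slightly more modular (it depends only on the statement, not the internals, of the earlier corollary), but yours is equally valid given the internal claim really is established there. Second, to transfer the bound to each $\varphi_i$ you give a direct argument from the mutual exclusivity of the disjuncts that $\ed(I_{\varphi_i}) \leq \ed(I_\varphi)$; the paper cites \cite[Lemma 1.10]{BerhuyFavi_EssDim} for the stronger identity $\ed(I_\varphi) = \max_i \ed(I_{\varphi_i})$. Your self-contained argument (that a witness over $L_0$ must land in the same disjunct by exclusivity plus functoriality) is exactly the ``$\geq$'' half of that lemma, and it is all you need.
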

\begin{proof}
  For sufficiency, it suffices to note that by Lemma \ref{lem:edDim} the functor $I_f$ associated to a morphism from a variety of dimension at most $d$ has essential dimension at most $d$.

  For necessity, assume first that $\varphi$ satisfies $\ed(I_\varphi) = \efd_{\TKleq}(\varphi) + \dim(\varphi(\overline{K}))$ as in Corollary~\ref{cor:finDisjEdEfdDim}.
  By Corollary \ref{cor:formulaToVarietyEfd}, $I_\varphi = I_f$ for some morphism $f \colon V \to \mathbb{A}_K^n$ with fibres of dimension at most $\efd_\subTKleq(\varphi)$.
  Since the image of $f$ has dimension $\dim(\varphi(\overline{K}))$, it follows that $V$ has dimension at most $\efd_\subTKleq(\varphi) + \dim(\varphi(\overline{K})) = \ed(I_\varphi)$, as desired.

  In the general case, we write $\varphi$ as a mutually exclusive disjunction $\varphi_1 \vee \dotsb \vee \varphi_m$  according to Corollary \ref{cor:finDisjEdEfdDim}
 such that each $\varphi_i$ satisfies the previous condition, 
 obtain for each $i$ a morphism $f_i \colon V_i \to \mathbb{A}_K^n$  with $I_{\varphi_i} = I_{f_i}$ and $\dim(V_i) \leq \ed(I_{\varphi_i})$, and construct $V$ as the disjoint union of the $V_i$, with $f \colon V \to \mathbb{A}_K^n$ given by patching together the $f_i$.
  Since the $\varphi_i$ are mutually exclusive, the functor $I_\varphi$ is (pointwise) the disjoint union of the $I_{\varphi_i}$, which implies that
  \[ \ed(I_\varphi) = \max_i \ed(I_{\varphi_i}) \geq \max_i \dim(V_i) = \dim(V) \]
  using \cite[Lemma 1.10]{BerhuyFavi_EssDim}.
\end{proof}

\section{Lifting lower bounds to complete theories}
\label{sect:lowerBoundsCompleteTheories}

\noindent
In the last section, we saw in Examples \ref{vb:efdQuadric} and \ref{vb:edCyclicDivisionAlgebra} how knowledge of the canonical dimension of certain varieties over a field $K$ allows us to compute essential fibre dimension and existential rank of corresponding sentences with respect to the theory $\TKleq$.
However, this alone does not suffice to determine interesting examples of existential ranks with respect to any \emph{complete} theory of fields.
We rectify this in the present section by means of a general construction (Proposition \ref{prop:lowerBoundsLimit}), leading us to concretely determine existential ranks in a fixed field $L$ in several instances.

Let $K$ always be a field.

\begin{lem}\label{lem:free_amalgamation}
Let $(K_i)_{i\in I}$ be a family of regular extensions of $K$.
There exists an extension $L/K$ and $K$-embeddings $\iota_i\colon K_i\rightarrow L$ such that the family $(\iota_iK_i)_{i\in I}$ is linearly disjoint over $K$ with compositum $L$.
\end{lem}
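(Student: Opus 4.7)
The plan is to construct $L$ as the fraction field of the tensor product $R = \bigotimes_{i \in I} K_i$, with all tensor products here taken over $K$. The key input is that regularity of the extensions $K_i/K$ is precisely what is needed to ensure $R$ is an integral domain.

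First I would handle finite subsets $F \subseteq I$ by induction on $|F|$, proving the stronger statement that $R_F = \bigotimes_{i \in F} K_i$ is an integral domain whose fraction field $F_F$ is a regular extension of $K$. The cases $|F| \leq 1$ are trivial. For the inductive step, given that $F_{I'} = \Frac(R_{I'})$ is regular over $K$ and $j \notin I'$, the classical result \cite[Ch.~V §17 Proposition 8]{Bourbaki_AlgebreII}, already invoked in the proof of Proposition \ref{prop:efdPerfPAC}, ensures that $F_{I'} \otimes_K K_j$ is an integral domain and that its fraction field is regular over $K_j$. Since regularity is transitive and $F_{I'}/K$ is regular, the fraction field of $F_{I'} \otimes_K K_j$ is regular over $K$, and as $R_{I' \cup \{j\}} = R_{I'} \otimes_K K_j$ embeds into $F_{I'} \otimes_K K_j$, the inductive step goes through.

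For an arbitrary index set $I$, I would express $R$ as the direct limit of the $R_F$ for finite $F \subseteq I$. For $F \subseteq F'$ both finite, writing $R_{F'} \cong R_F \otimes_K R_{F' \setminus F}$ and using that $R_{F' \setminus F}$ is a free (and in particular flat, nonzero) $K$-module, the transition map $R_F \to R_{F'}$ is injective. A direct limit of integral domains along injective transition maps is again an integral domain, so $R$ is a domain; I would set $L = \Frac(R)$ with $\iota_i \colon K_i \to R \hookrightarrow L$ the natural homomorphisms, each of which is a $K$-embedding.

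To conclude, linear disjointness of $(\iota_i K_i)_{i \in I}$ over $K$ is, by definition, injectivity of the multiplication map $\bigotimes_{i \in F} \iota_i K_i \to L$ for every finite $F \subseteq I$, which is immediate from the construction since this map factors through $R_F \hookrightarrow R \hookrightarrow L$. The compositum of the $\iota_i K_i$ inside $L$ contains the image of $R$, hence contains $\Frac(R) = L$, giving equality. The main obstacle is the inductive claim that tensor products of regular extensions remain integral domains with regular fraction field; this is classical commutative algebra and can be extracted directly from \cite[Ch.~V §17]{Bourbaki_AlgebreII}.
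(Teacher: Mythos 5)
Your proof is correct and takes essentially the same route as the paper: both build $L$ from the direct limit of the finite tensor products $K_{i_1}\otimes_K\cdots\otimes_K K_{i_n}$ and rely on the same Bourbaki proposition \cite[Ch.~V~\S17 Proposition 8]{Bourbaki_AlgebreII} to see that these are integral domains. The one place you supply genuine extra content is the explicit induction showing that $\Frac(R_F)$ remains regular over $K$, which is what makes the cited Bourbaki result (stated only for a tensor product of two fields) propagate to $n$-fold tensor products; the paper's one-line citation glosses over this step, and your write-up makes it precise.
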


\begin{proof}
Take $L=\varinjlim {\rm Frac}(K_{i_1}\otimes_K\dots\otimes_K K_{i_n})$,
where the direct limit runs over finite subsets $\{i_1,\dots,i_n\}$ of $I$.
Indeed, as each $K_{i_j}$ is regular over $K$,
$K_{i_1}\otimes_K\dots\otimes_K K_{i_n}$
is an integral domain by \cite[Ch.~V §17 Proposition 8]{Bourbaki_AlgebreII}.
The embedding $\iota_i$ is obtained from the natural embedding of $K_i$ into $K_{i_1}\otimes_K\dots\otimes_K K_{i_n}$ for $i\in\{i_1,\dots,i_n\}$.
\end{proof}

The proof of the following lemma follows \cite[Corollary 3.1.4]{ErshovMVF} %\cite[Cor. 3 to Prop. 1]{ErshovRRCF}, 
where this is stated in the case that $L/K$ is algebraic.

\begin{lem}\label{lem:ec_base_change}
Let $F$ and $L$ be linearly disjoint extensions of $K$ with $K\prec_\exists F$. Then $L\prec_\exists FL$.
\end{lem}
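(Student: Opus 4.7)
The plan is to translate a given existential statement over $L$ that holds in $FL$ into an existential statement over $K$ that holds in $F$, then use the hypothesis $K \prec_\exists F$ to find a solution in $K$ which can be assembled back into a witness in $L$.

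Let $\varphi(\underline X) = \exists \underline Y\,\psi(\underline X,\underline Y)$ be an existential $\Lar(L)$-formula with $\psi$ quantifier-free, and fix $\underline a\in L^n$ with $FL\models\varphi(\underline a)$. First I would reduce to witnesses in the subring $F\otimes_K L\subseteq FL$ rather than in $FL$: any witness $\underline b\in(FL)^m$ is of the form $b_j=p_j/q$ with $p_j,q\in F\otimes_K L$, $q\neq 0$ (since $FL = \operatorname{Frac}(F\otimes_K L)$ by linear disjointness), so after clearing denominators and adjoining an auxiliary variable $Z$, the original formula is equivalent to an existential formula $\exists\underline{Y'},Z\,\bigl(\psi'(\underline X,\underline{Y'},Z)\wedge Z\dotneq 0\bigr)$ having witnesses $\underline{y'},z\in F\otimes_K L$.

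Now I would fix a $K$-basis $(e_i)_{i\in I}$ of $L$. By linear disjointness $F\otimes_K L$ is a free $F$-module with basis $(e_i)_{i\in I}$ (inside $FL$). Writing
\[ y'_j=\sum_i \lambda_{ij} e_i,\qquad z=\sum_i \zeta_i e_i,\qquad \lambda_{ij},\zeta_i\in F \text{ finitely nonzero}, \]
and expanding the coefficients of $\psi'$ and the entries of $\underline a$ (all lying in $L$) in the same basis with $K$-coordinates, I would then multiply out $\psi'(\underline a,\underline{y'},z)$ inside $F\otimes_K L$. Using that the structure constants of multiplication of basis elements $e_ie_{i'}=\sum_{i''}c_{ii'}^{i''}e_{i''}$ lie in $K$, the resulting element takes the form $\sum_i P^{(i)}(\underline\Lambda,\underline\zeta)\,e_i$, where $P^{(i)}\in K[\underline\Lambda,\underline\zeta]$ and only finitely many $P^{(i)}$ are nonzero as polynomials; analogous expansions apply to the inequation(s) in $\psi'$ and to $z\dotneq 0$. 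By $F$-linear independence of the $e_i$ in $F\otimes_K L$, the conditions in $\psi'$ evaluated at our witness translate into a finite conjunction of polynomial equations $P^{(i)}=0$ together with finite disjunctions of inequations, all with coefficients in $K$.

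This yields an existential $\Lar$-sentence $\Phi$ over $K$ in the finitely many variables $(\Lambda_{ij},\zeta_i)$ that is satisfied in $F$ by $(\lambda_{ij},\zeta_i)$. The hypothesis $K\prec_\exists F$ then provides a solution $(\lambda'_{ij},\zeta'_i)\in K$. Setting $y''_j=\sum_i\lambda'_{ij}e_i\in L$ and $z''=\sum_i\zeta'_ie_i\in L$ (finite sums) and reading the computation backwards, the $K$-linear independence of the $e_i$ in $L$ guarantees $L\models\psi'(\underline a,\underline{y''},z'')\wedge z''\dotneq 0$, whence $L\models\varphi(\underline a)$, as desired.

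The main technical obstacle is the bookkeeping in the basis expansion step: one must verify that the translation really produces a \emph{finite} first-order existential sentence over $K$ (so that $K\prec_\exists F$ applies), and that the $K$-coefficients of the translated polynomials are the same whether we work in $F\otimes_K L$ or in $L$ — both facts follow from the basis coefficients of the fixed $L$-parameters and the structure constants of $L/K$ lying in $K$.
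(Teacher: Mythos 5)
Your proof is correct, and it takes a genuinely different route from the paper's. The paper's argument is model-theoretic and very short: it uses $K\prec_\exists F$ to embed $F$ over $K$ into an ultrapower $K^*=K^I/\mathcal{F}$, takes the corresponding ultrapower $L^*=L^I/\mathcal{F}$ (which contains both $L$ and $K^*$, hence the compositum $\iota(F)L$), observes that $L\prec L^*$ gives $L\prec_\exists\iota(F)L$, and then shows that $K^*$ and $L$ are linearly disjoint over $K$ inside $L^*$ (a $K^*$-linear relation gives a $K$-linear relation in some coordinate), so that $\iota(F)L\cong_L FL$. Your proof is instead a direct syntactic translation: fix a $K$-basis $(e_i)$ of $L$, write $FL=\operatorname{Frac}(F\otimes_K L)$ using linear disjointness, clear denominators to get witnesses in $F\otimes_K L$, and then expand the given quantifier-free condition coefficient-by-coefficient in the basis $(e_i)$ to produce an existential $\Lar(K)$-sentence that holds in $F$; apply $K\prec_\exists F$ to get $K$-coefficients, and reassemble a witness in $L$ via $K$-linear independence of the $e_i$. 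The bookkeeping (finitely many nonzero $P^{(i)}$, structure constants and basis coordinates in $K$, the same polynomials serving for both the $F$-linear independence argument in $F\otimes_K L$ and the $K$-linear independence argument in $L$) is exactly the right list of points to check, and it all goes through. What each approach buys: the paper's proof is much shorter but leans on standard facts about ultrapowers and their compatibility with linear disjointness; yours is self-contained and elementary but requires careful (if routine) bookkeeping with bases and structure constants, and would be longer to write out in full detail.
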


\begin{proof}
As $K\prec_\exists F$, there is a $K$-embedding 
$\iota\colon F\rightarrow K^*$ of $F$ into the ultrapower
$K^*=K^I/\mathcal{F}$ for some $I$ and some ultrafilter $\mathcal{F}$ on $I$ \cite[Exercise 9.5.12]{Hodges_Longer}. 
The corresponding ultrapower $L^*=L^I/\mathcal{F}$ contains both $L$ and $K^*$, hence also $\iota(F)L$, so $L\prec L^*$ implies that $L\prec_\exists\iota(F)L$.
As subfields of $L^*$, the fields $K^*$ and $L$ are linearly disjoint over $K$,
since a $K^*$-linear dependence of a subset of $L$ 
gives, in at least one of the factors of the ultrapower, a $K$-linear dependence of the same set.
In particular, $\iota(F)$ and $L$ are linearly disjoint over $K$,
so since also $F$ and $L$ are linearly disjoint over $K$,
$\iota(F)L\cong_L FL$, proving the claim.
\end{proof}

\begin{lem}\label{lem:ecc}
Let $L$ be the compositum of a linearly disjoint family $(K_i)_{i\in I}$ of extensions of $K$ with $K\prec_\exists K_i$ for each $i$.
Then $K_i\prec_\exists L$ for each $i$.
\end{lem}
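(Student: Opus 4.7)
The plan is to prove the statement first for finite index sets $I$, by induction on $|I|$, and then to deduce the general case from the finite one via a compactness-style argument on existential formulas.

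For the finite case, I would argue by induction on $n = |I|$, with the base $n = 1$ being trivial since $L = K_i$. For the inductive step, fix $i \in I$ and let $L'$ denote the compositum of the subfamily $(K_j)_{j \in I \setminus \{i\}}$, which is itself linearly disjoint over $K$. Linear disjointness of the full family entails that $K_i$ and $L'$ are linearly disjoint over $K$. Applying the inductive hypothesis to the subfamily yields $K_j \prec_\exists L'$ for each $j \in I \setminus \{i\}$, and chaining this with $K \prec_\exists K_j$ (which holds by hypothesis) gives $K \prec_\exists L'$. Now Lemma~\ref{lem:ec_base_change}, applied with $F = L'$ and with its $L$-role played by $K_i$, produces $K_i \prec_\exists L' \cdot K_i = L$, closing the induction.

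For the general case, fix $i \in I$ and write $L$ as the directed union of the subcompositums $L_J$ for $J \subseteq I$ finite with $i \in J$; by the finite case, $K_i \prec_\exists L_J$ for each such $J$. If $\underline a$ is a tuple in $K_i$ and $\varphi(\underline X) = \exists \underline Y\, \psi(\underline X, \underline Y)$ is an existential $\Lar(K_i)$-formula with $\psi$ quantifier-free such that $L \models \varphi(\underline a)$, then a witness $\underline b \in L$ for $\psi(\underline a, \underline Y)$, together with the finitely many parameters from $K_i$ appearing in $\psi$, lies in some $L_J$. Hence $L_J \models \varphi(\underline a)$, and the finite case yields $K_i \models \varphi(\underline a)$, as desired.

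The main technical point lies in the inductive step of the finite case: one must simultaneously package the inductive hypothesis on the smaller family as an existential closedness statement $K \prec_\exists L'$ and verify that $K_i$ and $L'$ are linearly disjoint over $K$, so that Lemma~\ref{lem:ec_base_change} is applicable in precisely the direction that yields $K_i \prec_\exists L$. Neither ingredient is difficult individually — the former by transitivity of existential closedness, the latter a standard property of linearly disjoint families — but their correct combination is essentially the entire content of the argument.
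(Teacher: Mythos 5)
Your proof is correct, and it follows the same basic strategy as the paper's: reduce to finite $I$, use transitivity of $\prec_\exists$, and invoke Lemma~\ref{lem:ec_base_change} as the only nontrivial ingredient. The difference is in the decomposition. The paper (after relabelling so that the distinguished index is $1$) builds the tower $K_1 \subseteq K_1K_2 \subseteq \cdots \subseteq K_1\cdots K_n$ and applies Lemma~\ref{lem:ec_base_change} at each step with $F = K_{i+1}$ and $L = K_1\cdots K_i$, obtaining $K_1\cdots K_i \prec_\exists K_1\cdots K_{i+1}$; $K_1 \prec_\exists L$ then follows by chaining. You instead peel off $K_i$, apply the inductive hypothesis to the remaining family to get $K \prec_\exists L'$, and apply Lemma~\ref{lem:ec_base_change} once with $F = L'$ and $L = K_i$. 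Both decompositions use the lemma the same number of times in total; the paper's avoids the auxiliary step $K \prec_\exists L'$, while yours makes the reduction to the finite case fully explicit (the paper only asserts it as a ``without loss of generality''), which is a small gain in rigour. The hypothesis you need at the crucial step — that $K_i$ and $L'$ are linearly disjoint over $K$ — is indeed exactly what linear disjointness of the family gives for finite $I$, so the argument goes through.
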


\begin{proof}
Without loss of generality $I=\{1,\dots,n\}$ is finite
and we want to show that $K_1\prec_\exists K_1\cdots K_n$.
By transitivity of $\prec_\exists$
it suffices to show that $K':=K_1\cdots K_i\prec_\exists K'K_{i+1}$.
As $K\prec_\exists K_{i+1}$ and $K'$, $K_{i+1}$ are linearly disjoint over $K$, this follows from Lemma \ref{lem:ec_base_change}.
\end{proof}

Recall from Definition \ref{def:theories} that $\TKec$ is the $\Lar(K)$-theory of extension fields of $K$ in which $K$ is existentially closed.
A universal-existential $\Lar(K)$-sentence (see Remark \ref{rem:effective}) will also be called an $\forall\exists$-$\Lar(K)$-sentence.
%\begin{prop}\label{prop:lowerBoundsCompositum}
%  For every field $K$ there exists a field $K\prec_\exists L$ such that for any two $\exists$-$\Lar(K)$-formulas $\varphi(\underline{X})$, $\psi(\underline{X})$ with $\TKec \not\models \forall \underline{X}(\varphi \rightarrow \psi)$ one has $\varphi(L) \nsubseteq \psi(L)$.
%  In particular, 
%  with $\Sigma={{\rm Th}_{\Lar(K)}(L)}$,
%  $\qr_\Sigma(\varphi) = \qr_{\subTKec}(\varphi)$ for every $\exists$-$\Lar(K)$-formula $\varphi$.
%\end{prop}
%
%\begin{proof}
%  % For any $\exists$-$\Lar(K)$-formula $\varphi$, let $n_\varphi$ be the number of free variables of $\varphi$.
%
%This proves the first claim.
%In particular, any two $\exists$-$\Lar(K)$-formulas $\varphi$, $\psi$ which are equivalent modulo $\Sigma$ are already equivalent modulo $\TKec$, implying that $\qr_\Sigma(\varphi) \geq \qr_\subTKec(\varphi)$ for any $\varphi$,
%and the other inequality is given by Lemma \ref{lem:language} as $\TKec\subseteq\Sigma$.
%\end{proof}

\begin{prop}\label{prop:lowerBoundsLimit}
For every field $K$ there exists a field $K \prec_\exists L$ such that 
$\Tprec{L}$ and $\Tec{L}$ imply the same 
$\forall\exists$-$\Lar(L)$-sentences.
%for every $\forall\exists$-$\Lar(L)$-sentence $\varphi$
%one has $\Tec{L} \models \varphi$ if and only if $\Tprec{L} \models \varphi$.
  In particular, 
  $\qr_L(\varphi) = \qr_{\subTec{L}}(\varphi)$ for every $\exists$-$\Lar(L)$-formula $\varphi$.
\end{prop}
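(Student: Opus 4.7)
The plan is to build $L$ as the union of a transfinite chain $K = L_0 \subseteq L_1 \subseteq \cdots \subseteq L_\alpha \subseteq \cdots$ of fields indexed by ordinals $\alpha < \kappa$, for a suitably large cardinal $\kappa > |K| + \aleph_0$, in which each $L_{\alpha+1}$ is an existential extension of $L_\alpha$, arranged so that at each successor stage we destroy a designated $\forall\exists$-sentence over $L_\alpha$ whenever it admits a falsifying existential extension. Concretely, keeping all $|L_\alpha| \leq \kappa$ via L\"owenheim--Skolem and using a standard bookkeeping bijection $\kappa \cong \kappa \times \kappa$ with weak-diagonal property, fix an enumeration $(\sigma_\alpha)_{\alpha<\kappa}$ such that every $\forall\exists$-$\Lar(L_\beta)$-sentence appears as some $\sigma_\alpha$ with $\alpha \geq \beta$. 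At stage $\alpha+1$, set $L_{\alpha+1} := L_\alpha^*$ if there exists an existential extension $L_\alpha \prec_\exists L_\alpha^*$ with $L_\alpha^* \not\models \sigma_\alpha$, and $L_{\alpha+1} := L_\alpha$ otherwise; take unions at limit ordinals and put $L := \bigcup_{\alpha<\kappa} L_\alpha$. Transitivity of $\prec_\exists$ along the chain, combined with the fact that any existential witness over $L_\alpha$ in $L$ already lies in some $L_\gamma$, yields $K \prec_\exists L_\alpha \prec_\exists L$ for every $\alpha$.

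Since $\Tec{L} \subseteq \Tprec{L}$ automatically (every elementary extension is an existentially closed extension), to prove the main claim it suffices to show that if $\sigma$ is a $\forall\exists$-$\Lar(L)$-sentence with $L \models \sigma$, then $M \models \sigma$ for every $M$ with $L \prec_\exists M$. Suppose not, and pick $\beta$ with $\sigma_\beta = \sigma$, so $\sigma \in \Lar(L_\beta)$. The standard downward preservation of $\forall\exists$-sentences along existential extensions -- write $\sigma = \forall \underline X \exists \underline Y \psi$, fix $\underline a \in L_\beta$, and use $L_\beta \prec_\exists L$ to pull $\exists \underline Y \psi(\underline a, \underline Y)$ back from $L$ to $L_\beta$ -- gives $L_\beta \models \sigma$. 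Transitivity gives $L_\beta \prec_\exists M$ with $M \not\models \sigma$, so at stage $\beta$ some existential extension of $L_\beta$ falsifies $\sigma_\beta$, and by construction $L_{\beta+1} \not\models \sigma$. But the same preservation fact applied to $L_{\beta+1} \prec_\exists L$ and $L \models \sigma$ forces $L_{\beta+1} \models \sigma$, a contradiction.

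The in particular clause is then immediate: $\erk_L(\varphi) \leq m$ is equivalent to the existence of some $\exists_m$-$\Lar(L)$-formula $\psi$ making the biconditional $\forall \underline X(\varphi \leftrightarrow \psi)$ true in $L$, and this biconditional is logically equivalent to a conjunction of two $\forall\exists$-$\Lar(L)$-sentences (push the existential quantifiers in each implication outside past the universal). By the main claim, $\Tprec{L}$ implies this conjunction iff $\Tec{L}$ does, so the two existential ranks agree. The main technical obstacle is the transfinite bookkeeping itself: arranging the enumeration so that every $\forall\exists$-sentence over every intermediate $L_\beta$ is genuinely considered at some later stage while keeping all cardinalities bounded by $\kappa$. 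The rest of the proof is routine model theory.
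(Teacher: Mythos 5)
Your proof is correct, but it takes a genuinely different route from the paper. The paper builds $L$ as the direct limit of a \emph{countable} chain $K = K_0 \prec_\exists K_1 \prec_\exists \cdots$, and the key step is an intermediate claim: for any field $K$, there is a \emph{single} extension $K \prec_\exists K'$ that simultaneously falsifies \emph{every} $\forall\exists$-$\Lar(K)$-sentence $\varphi$ with $\TKec \not\models \varphi$. This simultaneous falsification is where the algebra enters: one chooses, for each bad $\varphi$, a witness $F_\varphi \models \TKec \cup \{\neg\varphi\}$, notes that $K \prec_\exists F_\varphi$ forces $F_\varphi/K$ to be regular, and then uses the free (linearly disjoint) amalgamation of Lemma \ref{lem:free_amalgamation} together with Lemma \ref{lem:ecc} to produce a compositum $K'$ of all the $F_\varphi$ in which each factor remains existentially closed; downward preservation of $\forall\exists$ along $\prec_\exists$ then kills every bad $\varphi$ in $K'$ at once. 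You instead kill one $\forall\exists$-sentence per step and pay for it with a transfinite chain of uncountable length $\kappa$ plus the $\kappa \cong \kappa \times \kappa$ bookkeeping and Löwenheim--Skolem cardinality maintenance. Both arguments work. Your approach is purely model-theoretic and would apply verbatim to any first-order theory, while the paper's exploits field-specific facts (regularity of existentially closed subfields, linear disjointness of tensor products) to avoid any transfinite machinery and get a countable chain without bookkeeping. One point worth spelling out more carefully in your writeup is the bookkeeping itself: the enumeration $(\sigma_\alpha)_{\alpha<\kappa}$ cannot be fixed in advance, since $L_\beta$ is only available once the recursion has reached $\beta$; the pairing $\pi \colon \kappa \to \kappa \times \kappa$ with $\pi(\alpha)=(\beta,\gamma) \Rightarrow \beta \le \alpha$ must be used to \emph{define} $\sigma_\alpha$ from the already-constructed enumeration of $\forall\exists$-$\Lar(L_\beta)$-sentences during the recursion, which is standard but not quite as trivial as "fix an enumeration".
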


\begin{proof}
We first show that
for every field $K$ there exists a field $K\prec_\exists K'$ such that for any $\forall\exists$-$\Lar(K)$-sentence $\varphi$ with $\TKec \not\models \varphi$ one has $K' \not\models \varphi$.
 Write $P$ for the set of $\forall\exists$-$\Lar(K)$-sentences $\varphi$ such that $\TKec \not\models \varphi$. 
  For every $\varphi \in P$, choose $F_{\varphi}\models\TKec$ such that $F_{\varphi} \not\models \varphi$.
  As $K$ is existentially closed in all $F_{\varphi}$, in particular all $F_{\varphi}$ are regular extensions of $K$  
  \cite[Corollary 3.1.3]{ErshovMVF}.
By Lemma \ref{lem:free_amalgamation} there exists 
an extension $K'/K$ with $K$-embeddings $\iota_{\varphi} \colon F_{\varphi}\rightarrow K'$ for all $\varphi \in P$ such that the family $(\iota_{\varphi}(F_{\varphi}))_{\varphi \in P}$ is linearly disjoint over $K$ with compositum $K'$.
By Lemma \ref{lem:ecc} we have $\iota_{\varphi}(F_{\varphi}) \prec_\exists K'$ for all $\varphi \in P$; in particular $K \prec_\exists K'$.
If we now had $K' \models \varphi$ for some $\varphi \in P$, then $F_{\varphi} \cong \iota_{\varphi}(F_{\varphi}) \prec_\exists K'$ would imply that $F_{\varphi} \models \varphi$ in contradiction to the choice of $F_{\varphi}$.
 
  We now iterate this construction to obtain a chain of fields $K=K_0\prec_\exists K_1 \prec_\exists \ldots$ 
  such that for every $i$ 
  and every  $\forall\exists$-$\Lar(K_{i-1})$-sentence $\varphi$ for which   $\Tec{K_{i-1}} \not\models \varphi$
  one has $K_i \not\models \varphi$.
%  Such a field is afforded by Proposition \ref{prop:lowerBoundsCompositum}.
  Let $L$ be the direct limit of $(K_i)_{i \in \Natwithoutzero}$. Clearly $K \prec_\exists L$; we will show that this $L$ is as desired.
  To this end, let $\varphi$ be an $\forall\exists$-$\Lar(L)$-sentence.
  Since $\Tec{L}\subseteq\Tprec{L}$ (Remark \ref{rem:theories}), 
  it is trivial that $\Tec{L}\models\varphi$ implies $\Tprec{L}\models\varphi$, so assume that $\Tec{L} \not\models \varphi$.
%  The inequality $\erk_L(\varphi) \leq \erk_{\subTec{L}}(\varphi)$ is immediate from Lemma \ref{lem:language}.
%  Suppose for the sake of a contradiction that $\psi$ is an $\exists_m$-$\Lar(L)$-formula for some $m < \erk_{\subTec{L}}(\varphi)$ such that $\psi(L) = \varphi(L)$.
  Since $L$ is the direct limit of $(K_i)_{i \in \Natwithoutzero}$, there exists some $i \in \Natwithoutzero$ such that 
    $\varphi$ is an $\Lar(K_i)$-formula, and since $K_i \prec_\exists L$, also $\Tec{K_i} \not\models \varphi$. %, and we have $m < \erk_{\subTec{L}}(\varphi) \leq \erk_{\subTec{K_i}}(\varphi)$ by Lemma \ref{lem:language}
  But then by construction $K_{i+1} \not\models \varphi$, and since $K_{i+1} \prec_\exists L$, also $L \not\models \varphi$,
  i.e.~$\Tprec{L}\not\models\varphi$.
  This proves the first claim,
  and the `in particular' part follows by translating equivalence of existential formulas into a universal-existential sentence as in Remark \ref{rem:effective}.
 %  In particular, any two $\exists$-$\Lar(L)$-formulas $\varphi, \psi$ for which $\varphi(L) = \psi(L)$ are already equivalent modulo $\Tec{L}$, implying that $\erk_L(\varphi) \geq \erk_{\subTec{L}}(\varphi)$, and the other inequality is given by Lemma \ref{lem:language}.
\end{proof}

\begin{stel}\label{stel:erk_quad_cyclic_norm}
Let $K$ be an infinite field and $K\prec_\exists L$ as in Proposition \ref{prop:lowerBoundsLimit}. 
Then for every $m\in\Natwithoutzero$ the following hold:
\begin{enumerate}
\item $\erk_L(q(L^m)) = m$ for every anisotropic quadratic form $q\in L[Y_1,\dots,Y_m]$.
\item\label{it:cyclic} If $m=p^r > 1$ is a prime power, then $\erk_L(\operatorname{N}_{M/L}(M^\times)) = m$ for every cyclic Galois extension $M/L$ of degree $m$. %such that $\operatorname{N}_{M/L}(M^\times) \nsubseteq L^\times,$
\end{enumerate}
\end{stel}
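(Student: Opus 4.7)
The upper bounds are immediate. For (1), the $\exists^+_m$-formula $\exists Y_1, \ldots, Y_m (X \doteq q(Y_1, \ldots, Y_m))$ defines $q(L^m)$, yielding $\erk_L \leq m$. For (2), fix a primitive element $\alpha$ for $M/L$ and let $N(Y_1, \ldots, Y_m) \in L[\underline Y]$ represent $\operatorname{N}_{M/L}(Y_1 + Y_2 \alpha + \cdots + Y_m \alpha^{m-1})$; then the formula $X \dotneq 0 \wedge \exists Y_1, \ldots, Y_m (X \doteq N(\underline Y))$ defines $\operatorname{N}_{M/L}(M^\times)$, again with $\erk_L \leq m$.

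For the lower bound I invoke Proposition \ref{prop:lowerBoundsLimit} to replace $\erk_L(\varphi)$ with $\erk_{\subTec{L}}(\varphi)$ for the defining existential formula $\varphi$. Since every model of $\Tec{L}$ extends $L$ and is hence infinite, case \ref{item3} of Proposition \ref{prop:qrvsqrp} cannot occur, giving $\erk_{\subTec{L}}(\varphi) = \perk_{\subTec{L}}(\varphi)$ as soon as the latter is at least $2$; Proposition \ref{edleqqr} further provides $\perk_{\subTec{L}}(\varphi) \geq \efd_{\subTec{L}}(\varphi) + 1$ as soon as $\efd_{\subTec{L}}(\varphi) \geq 1$. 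For $m \geq 2$ it thus suffices to establish $\efd_{\subTec{L}}(\varphi) \geq m - 1$. The case $m = 1$ of (1) reduces to verifying that $aL^2 \cup \{0\}$ is not quantifier-freely definable in $L$, which follows from $L$ being infinite with infinite set of non-squares (a property readable off the explicit construction of $L$ in Proposition \ref{prop:lowerBoundsLimit}, where the amalgamated $F_\varphi$ contribute transcendental non-squares).

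The key geometric input is incompressibility of a generic fibre. For (1), fix $c$ transcendental over $L$ and let $V_c \subseteq \mathbb{P}^m_{L(c)}$ be the projective quadric $q(Y_1, \ldots, Y_m) = c Y_0^2$. The form $q \perp \langle -c \rangle$ is anisotropic over $L(c)$ via the $c$-adic valuation, and its first Witt index over $L(c)$ equals $1$: the generic isotropic vector $(s_1, \ldots, s_m, 1)$ in $L(c)(V_c) = L(s_1, \ldots, s_m)$ with $c = q(s)$ exhausts the isotropy, as a valuation-theoretic argument over the function field verifies. Hence by the Karpenko--Merkurjev theorem (Totaro in characteristic $2$), cited in Example \ref{vb:efdQuadric}, $V_c$ is incompressible with $\cdim V_c = m - 1$. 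For (2), let $V_c$ be the Severi--Brauer variety of the cyclic algebra $(M \otimes_L L(c)/L(c), \sigma, c)$; transcendence of $c$ over $L$ prevents $c$ from being a norm in any proper subextension, so this algebra is a division algebra of degree $m = p^r$, and Karpenko's theorem (Example \ref{vb:edCyclicDivisionAlgebra}) yields $\cdim V_c = m - 1$.

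To pass from incompressibility to $\efd_{\subTec{L}}(\varphi) \geq m - 1$, I realize $c$ in a model $K \models \Tec{L}$ containing $L(c)(V_c)$ (for instance an $L$-existential closure of $L(c)(V_c)$), so that $c \in \varphi(K)$ via the generic $V_c$-point. Suppose for contradiction that $\efd_{\subTec{L}}(\varphi) \leq m - 2$; this yields a subfield $K'$ of $K$ containing $L(c)$ with $\trdeg(K'/L(c)) \leq m - 2$ such that every $K'' \models \Tec{L}$ with an $L$-embedding $K' \to K''$ satisfies $K'' \models \varphi(c)$. Combining Lemma \ref{lem:incompressibleFunctionField} (which forbids $V_c$-points in small subextensions of $L(c)(V_c)$) with the free amalgamation tools of Lemmas \ref{lem:free_amalgamation}, \ref{lem:ec_base_change}, and \ref{lem:ecc}, I construct an ex-closed extension $K''$ of $K'$ over $L$ with $V_c(K'') = \emptyset$, yielding the desired contradiction. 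The main obstacle is precisely this final construction: incompressibility directly controls $V_c$-points only over subfields of $L(c)(V_c)$, whereas $K'$ lives in an arbitrary ex-closed extension of $L$, and transferring incompressibility to this setting demands a delicate amalgamation argument in the spirit of Proposition \ref{prop:lowerBoundsLimit}.
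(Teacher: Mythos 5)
Your approach parallels the paper's quite closely: pass from $\erk_L$ to $\erk_{\subTec{L}}$ via Proposition \ref{prop:lowerBoundsLimit}, from $\erk$ to $\perk$ to $\efd$ via Propositions \ref{prop:qrvsqrp} and \ref{edleqqr}, and get the lower bound on $\efd_{\subTec{L}}$ from incompressibility of the projective quadric (resp.\ the Severi--Brauer variety) via Examples \ref{vb:efdQuadric}, \ref{vb:edCyclicDivisionAlgebra} and Lemma \ref{lem:incompressibleFunctionField}. The anisotropy and first Witt index computation for $q \perp \langle -c\rangle$, the choice of cyclic algebra, and the separate treatment of $m=1$ all match the paper's proof in substance.

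However, the gap you identify at the end --- passing from incompressibility, which controls subfields of $L(c)(V_c)$, to a bound on $\efd_{\subTec{L}}$, which gives you a subfield of an arbitrary model --- is self-imposed and closes immediately once you make a better choice of model. You realise $c$ inside an existentially closed extension $K$ of $L(c)(V_c)$, and then the subfield $K'$ furnished by the hypothesis $\efd_{\subTec{L}}(\varphi) \leq m-2$ need not sit inside $L(c)(V_c)$. But the definition of $\efd$ requires the existence of a good $K'$ for \emph{every} choice of $K \models \Tec{L}$ and point, so to refute $\efd_{\subTec{L}}(\varphi) \leq m-2$ you may pick the most convenient $K$: take $K = L(c)(V_c)$ itself. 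You have already computed $L(c)(V_c) = L(s_1,\dots,s_m)$ with $c = q(\underline s)$, which is purely transcendental over $L$; since $L$ is infinite, $L \prec_\exists L(s_1,\dots,s_m)$, so $K \models \Tec{L}$. (The paper writes this as $F = L(t_1,\dots,t_m)$ and identifies $F$ as the function field over $L(t)$, $t = q(\underline t)$, of the projective hypersurface $q(\underline Y) - tZ^2 = 0$.) Now any subfield $K'$ with $L(c) \subseteq K' \subseteq K$ and $\trdeg(K'/L(c)) \leq m-2 < \dim V_c$ has $V_c(K') = \emptyset$ by Lemma \ref{lem:incompressibleFunctionField}, i.e.\ $K' \not\models \varphi(c)$. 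On the other hand, $K'$ is an intermediate field of $K/L$ and $L \prec_\exists K$, so $L \prec_\exists K'$ as well, i.e.\ $K' \models \Tec{L}$; applying the defining property of $K'$ with $K'' = K'$ then forces $K' \models \varphi(c)$, a contradiction. No amalgamation is needed: Lemma \ref{lem:incompressibleFunctionField} combined with the observation that subfields of $K$ containing $L$ again model $\Tec{L}$ does all the work. The same fix works verbatim for the cyclic algebra case.
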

\begin{proof}
%Let $L$ be as constructed in Proposition \ref{prop:lowerBoundsLimit}, consider $m \in \Natwithoutzero$, and set 
Since $L$ is infinite we have that $L \prec_\exists F:= L(t_1, \ldots, t_m)$, cf.~\cite[Example 3.1.2]{ErshovMVF}.
For $f\in L[Y_1, \ldots, Y_m]$ of total degree at least $2$,
let $t = f(t_1, \ldots, t_m)\in F$ and
$$
 \sigma_f(X) = \exists Y_1, \ldots, Y_m (X \stackrel{.}{=} f(Y_1, \ldots, Y_m)).
$$ 
Then $F \models \sigma_f(t)$, and
if we can show that $L' \not\models \sigma_f(t)$ for every subextension $L'$ of $F/L(t)$ of transcendence degree $m-2$, 
then $\efd_{\subTec{L}}(\sigma_f) \geq m-1$. 
Propositions \ref{edleqqr} and \ref{prop:qrvsqrp} then yield that $m \leq \erk_{\subTec{L}}(\sigma_f) \leq \erk_L(\sigma_f) = \erk_L(f(L^m))\leq m$, unless possibly $m = 1$ and $\qr_{\subTec{L}}(\sigma_q) = 0$, but this latter case is impossible, since for $m=1$ the formula $\sigma_f$ defines an infinite co-infinite set in $L(t) \models \Tec{L}$ and is therefore not equivalent to a quantifier-free formula.
We will show that if $f$ is either a quadratic form in $m$ variables or the norm form of a cyclic Galois extension of prime power degree $m$, then this condition is satisfied, concluding the proof.

In the case where $f=q$ is an anisotropic quadratic form, consider the quadratic form
$$
 q'(Y_1, \ldots, Y_m, Z) = q(Y_1, \ldots, Y_m) - tZ^2 \in L(t)[Y_1,\dots,Y_m,Z],
$$ 
and note that $F$ is isomorphic to the function field of 
the integral projective hypersurface described by $q'$ over $L(t)$; in particular $q'$ is isotropic over $F$.
We claim that the first Witt index $i_1(q') = 1$.\footnote{If $\car(K) \neq 2$, this is \cite[Lemma 11.9]{Merkurjev_EDSurvey}.} 
  Suppose instead that $i_1(q') > 1$. 
  Then there exists a $2$-dimensional $F$-subspace of $F^{m+1}$ consisting of zeros of $q'$. 
  This subspace must non-trivially intersect $F^m \times \lbrace 0 \rbrace$, yielding a non-trivial zero of $q$ over $F$, 
  thereby contradicting the fact that $q$ is anisotropic over $L$ and $L \prec_\exists F$. 
  Hence $i_1(q') = 1$. 
  By Example \ref{vb:efdQuadric} and Lemma \ref{lem:incompressibleFunctionField}
  $F/L(t)$ has no subextension of transcendence degree less than $m-1$ over which $q'$ is anisotropic.
  Since $q'$ is isotropic over an intermediate field $L'$ of $F/L(t)$ if and only if $t\in q((L')^m)$, 
  we have shown the desired property.

Now consider a cyclic extension $M/L$ of prime power degree $m$
and let $f\in L[Y_1, \ldots, Y_m]$ be the norm form of $M/L$ (with respect to an arbitrary choice of $L$-basis). % for which $\operatorname{N}_{M/L}$ is not surjective.
Let $\tau$ be a generator of $\Gal(M/L) \cong \Gal(M(t)/L(t))$ and let $A$ be the cyclic $L(t)$-algebra $(M(t), \tau, t)$,
defined as a degree $m$ central division algebra over $L(t)$, generated over $L(t)$ by $M(t)$ and an element $\alpha$ subject to the relations $\alpha^m = t$ and $y\alpha = \alpha \tau(y)$ for all $y \in M$. 
See for example \cite[Sections 5.8-5.10]{Albert} for background on cyclic algebras.
For an intermediate extension $L'$ of $F/L(t)$, we have that $A_{L'}$ is split if and only if $t$ is a norm of $ML'/L'$ \cite[Theorem 5.14]{Albert}, i.e.~if and only if $L' \models \sigma_f(t)$. On the other hand, if $L'$ has transcendence degree $m-2$ over $L(t)$, then by Example \ref{vb:edCyclicDivisionAlgebra} and Lemma \ref{lem:incompressibleFunctionField} $A_{L'}$ is not split.
This shows that the desired property holds.
\end{proof}

%\begin{stel}\label{stel:sums_of_squares}
%  Let $K$ be an infinite field.
%  There exists a field $K \prec_\exists L$
%  such that for every anisotropic quadratic form $q$ in $m \in \Natwithoutzero$ variables defined over $L$,
%  $\qr_L(\sigma_q) = m$.
%\end{stel}
%
%\begin{proof}
%  We construct a chain of fields $K_1 \prec_\exists K_2 \prec_\exists \ldots$ in the following way.
%  Set $K_1 = K$.
%  Given $K_{i-1}$, let $K_{i-1}\prec_\exists K_{i}$ be such that $\qr_{\Th_{\Lar(K_{i-1})}(K_i)}(\varphi) = \qr_{\subTec{K_{i-1}}}(\varphi)$ for every $\exists$-$\Lar(K_{i-1})$-formula $\varphi$. Such a field is afforded by Proposition \ref{prop:lowerBoundsCompositum}.
%  Let $L$ be the direct limit of $(K_i)_{i \in \Natwithoutzero}$.
%  
%  Let $q$ be an anisotropic quadratic form in $m \in \Natwithoutzero$ variables defined over $L$.
%  Let $\psi$ be an $\exists_{m-1}$-$\Lar(L)$-formula; we shall show that $\psi$ is not equivalent in $L$ to $\sigma_q$.
%  Both $q$ and $\psi$ are actually defined over $K_i$ for some $i \in \Natwithoutzero$. 
%  By Lemma \ref{lem:weakLowerBoundSumOfSquares} and by construction of $K_{i+1}$, $\qr_{\Th_{\Lar(K_{i})}(K_{i+1})}(\sigma_q) = m$.
%  In particular, $\psi$ is not equivalent to $\sigma_q$ in $K_{i+1}$.
%  Any witness for the non-equivalence of these formulas in $K_{i+1}$ also witnesses that they are not equivalent in $L$ since $K_{i+1} \prec_\exists L$.
%\end{proof}
\begin{opm}
In Theorem \ref{stel:erk_quad_cyclic_norm}\eqref{it:cyclic}, we cannot allow arbitrary cyclic extensions.
For example, if $M/L$ is a cyclic extension of degree $6$, then 
${\rm N}_{M/L}(M^\times)={\rm N}_{M_2/L}(M_2^\times)\cap {\rm N}_{M_3/L}(M_3^\times)$,
where $M_i$ ($i=2,3$) is the unique subextension of $M/L$ of degree $i$,
for if $x={\rm N}_{M_i/L}(x_i)$ with $x_i\in M_i^\times$ for $i=2,3$, then $x={\rm N}_{M/L}(\frac{x_2}{x_3})$.
%an element $x \in L$ lies in the norm group of $M$ if and only if it lies in the norm group of both its degree $2$ subextension and its degree $3$ subextension; this property can be expressed with only $4$ quantifiers.
Thus, since trivially $\erk_L({\rm N}_{F/L}(F^\times))\leq[F:L]$ for every finite extension $F/L$,
Remark \ref{qrtrivial} shows that $\erk_L({\rm N}_{M/L}(M^\times))\leq5$.
\end{opm}

%Applying this result to $K = \qq$, observing that for all $m \in \Natwithoutzero$ the form $q(Y_1, \ldots, Y_m) = \sum_{i=1}^m Y_i^2$ is anisotropic over $\qq$ and therefore over every $\qq \prec_\exists L$, we obtain the following.
\begin{gev}\label{gev:sums_of_squares}
There exists a field $L$ of characteristic $0$ such that $\erk_L(\sum_{i=1}^m L\pow{2}) = m$ for all $m \in \Natwithoutzero$.
In particular, $\erk_L((L\pow{2}+L\pow{2})^n)=n+1$ for every $n\in\Natwithoutzero$.
\end{gev}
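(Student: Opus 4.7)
My plan is to establish both statements via a single choice of $L$. First, I will apply Proposition \ref{prop:lowerBoundsLimit} to the infinite field $K = \mathbb{Q}$ to obtain a field $\mathbb{Q} \prec_\exists L$ satisfying the conclusion of that proposition. Since $\mathbb{Q} \subseteq L$, the field $L$ has characteristic zero. For each $m \in \Natwithoutzero$ the quadratic form $q_m = Y_1^2 + \cdots + Y_m^2$ is anisotropic over $\mathbb{Q}$, and since $\mathbb{Q} \prec_\exists L$ this anisotropy is preserved in $L$. Theorem \ref{stel:erk_quad_cyclic_norm}(1) then yields $\erk_L(q_m(L^m)) = m$, i.e.~$\erk_L(\sum_{i=1}^m L\pow 2) = m$, proving the first assertion.

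For the upper bound $\erk_L((L\pow 2 + L\pow 2)^n) \leq n+1$, I will write this set as the intersection $\bigcap_{i=1}^n D_i$, where $D_i \subseteq L^n$ is defined by the positive existential formula $\exists Y, Z(X_i \doteq Y^2 + Z^2)$, so that $\perk_L(D_i) \leq 2$; moreover $\perk_L(D_i) \geq 1$ since $D_i$ is not definable by a positive quantifier-free formula in $L$. Since $L$ has characteristic zero, the theory $\Tprec{L}$ satisfies the hypothesis of Theorem \ref{ed=qr} (Example \ref{ex:perfect}), so $n-1$ iterations of Corollary \ref{edqrgev} yield $\perk_L(\bigcap_{i=1}^n D_i) \leq 2n - (n-1) = n+1$, and hence $\erk_L((L\pow 2 + L\pow 2)^n) \leq n+1$.

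The main obstacle is the matching lower bound, which I plan to establish by reducing to the first assertion applied with $m = 2n$. Suppose for contradiction that there exists an existential $\Lar(L)$-formula $\varphi(X_1, \ldots, X_n)$ with at most $n$ quantifiers defining $(L\pow 2 + L\pow 2)^n$ in $L$. I will consider the $\Lar(L)$-formula
\[
  \psi(Y) \;:=\; \exists X_1, \ldots, X_{n-1}\; \varphi(X_1, \ldots, X_{n-1},\, Y - X_1 - \cdots - X_{n-1}),
\]
which has at most $(n-1) + n = 2n-1$ existential quantifiers. A direct inspection shows that $\psi(L)$ is the set of $y \in L$ expressible as $x_1 + \cdots + x_n$ with each $x_i \in L\pow 2 + L\pow 2$, and this set equals $\sum_{i=1}^{2n} L\pow 2$, since any sum of $2n$ squares can be grouped into $n$ sums of two squares and vice versa. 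Hence $\erk_L(\sum_{i=1}^{2n} L\pow 2) \leq 2n - 1$, contradicting the first assertion with $m = 2n$. Therefore $\erk_L((L\pow 2 + L\pow 2)^n) \geq n+1$, completing the proof.
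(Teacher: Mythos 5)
Your proof is correct and follows essentially the same route as the paper: the first assertion via Theorem \ref{stel:erk_quad_cyclic_norm}(1) applied to $K=\mathbb{Q}$ with the anisotropic form $\sum Y_i^2$, the upper bound $n+1$ via iterated application of the intersection corollary starting from $\perk_L(L\pow 2 + L\pow 2)\leq 2$, and the lower bound via the same reduction expressing $\sum_{i=1}^{2n}L\pow 2$ from $(L\pow 2+L\pow 2)^n$ at a cost of $n-1$ extra quantifiers. The only cosmetic difference is that the paper states the lower bound directly as the inequality $2n \leq (n-1) + \erk_L((L\pow 2+L\pow 2)^n)$ rather than arguing by contradiction.
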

\begin{proof}
Applying Theorem \ref{stel:erk_quad_cyclic_norm} to $K = \qq$, observing that for all $m \in \Natwithoutzero$ the quadratic form $q(Y_1, \ldots, Y_m) = \sum_{i=1}^m Y_i^2$ is anisotropic over $\qq$ and therefore over every $\qq \prec_\exists L$, the first part is immediate.

For the second part, the upper bound $\erk_L((L\pow{2}+L\pow{2})^n) \leq n+1$ follows from the observation that $\erk_L(L\pow{2}+L\pow{2}) \leq 2$ and by applying Corollary \ref{cor:equivalences}(6) inductively.
On the other hand, for each $x \in L$ we have that $x \in \sum_{i=1}^{2n} L\pow{2}$ if and only if there exist $y_1, \ldots, y_{n-1} \in L$ such that $(y_1, \ldots, y_{n-1}, x - \sum_{i=1}^{n-1}y_i) \in (L\pow{2}+L\pow{2})^n$.
We infer that $2n = \erk_L(\sum_{i=1}^{2n} L\pow{2}) \leq n - 1 + \erk_L((L\pow{2}+L\pow{2})^n)$, from which the other inequality follows.
\end{proof}
\begin{vb}
For $K = \qq$ we have $\sum_{i=1}^m \qq\pow{2} = \qq_{\geq 0}$ for $m\geq 4$ by a theorem due to Euler (sometimes incorrectly attributed to Lagrange).
On the other hand, $\sum_{i=1}^3 \qq\pow{2}$ is a proper subset of $\qq_{\geq 0}$: one easily verifies that it does contain not any integers of the form $8m-1$ for $m \in \zz$.

Nevertheless we have $\erk_{\qq}(\qq_{\geq 0}) \leq 3$.
Indeed, as a consequence of Legendre's Three-Square Theorem, we observe that $\qq_{\geq 0} = (\sum_{i=1}^3 \qq\pow{2}) \cup 2(\sum_{i=1}^3 \qq\pow{2})$, from which the $\exists_3$-$\Lar$-definability of $\qq_{\geq 0}$ follows.
This illustrates the subtleties which arise when computing the existential rank of subsets of fields.
\end{vb}

\section{Global fields}\label{sect:global}

\noindent
In this final section we make a few observations regarding existential rank
in Hilbertian fields
and then focus on $\mathbb{Q}$ and $\mathbb{F}_p(t)$.

Let $K$ be a field  of characteristic exponent $p$.

\begin{defi}
A dominant morphism $f\colon W\rightarrow V$ of integral $K$-varieties is
of {\em finite degree}, {\em separable} or {\em purely inseparable}
if the corresponding extension $K(W)/K(V)$ of function fields has this property.
If $V$ is an integral $K$-variety,
a subset $D\subseteq V(K)$ is {\em [separably] thin} (in $V$)
if $D$ is contained in a finite union of sets of the form
$W(K)$ for $W\subseteq V$ a proper closed subvariety, or
$f(W(K))$ for 
$W$ a geometrically integral $K$-variety and
$f\colon W\rightarrow V$ a [separable] dominant morphism of finite degree ${\rm deg}(f)>1$.
A [separably] thin subset of $K^n$ is a [separably] thin subset of $\mathbb{A}_K^n(K)$.
The field $K$ is {\em Hilbertian} if $\mathbb{A}_K^n(K)$ is not separably thin in $\mathbb{A}^n_K$ for any $n$.
\end{defi}

\begin{opm}
The equivalence of our definition of {\em Hilbertian} with the definition in \cite{FJ}
(which differs from the one in the first edition of the same book)
is explained in \cite[Chapter 13.5]{FJ}.
In the literature, the term `thin' is used mostly in characteristic zero, 
where our notions of {\em thin} and {\em separably thin} coincide,
while in positive characteristic the precise meaning varies:
While \cite{FJ} call `thin' what we call {\em separably thin},
the definition of `thin'  in \cite{CZ} coincides with what we call {\em thin}.
\end{opm}

\begin{lem}\label{lem:thin}
If $K$ is Hilbertian and imperfect, then $\mathbb{A}_K^n(K)$ is not thin in $\mathbb{A}^n_K$ for any $n$.
\end{lem}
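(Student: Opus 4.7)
The plan is to argue by contradiction: assume $\mathbb{A}^n_K(K) = K^n$ is thin, and derive that it is in fact separably thin, contradicting Hilbertianness. Suppose $K^n = T_0 \cup \bigcup_{i=1}^k f_i(W_i(K))$ with $T_0$ a finite union of $K$-points of proper closed subvarieties and each $f_i \colon W_i \to \mathbb{A}^n_K$ a dominant morphism of finite degree $>1$ with $W_i$ geometrically integral. For each $i$, factor $f_i = \beta_i \circ \alpha_i$ with $\alpha_i \colon W_i \to W_i'$ purely inseparable and $\beta_i \colon W_i' \to \mathbb{A}^n_K$ separable, using the separable closure of $K(\mathbb{A}^n_K)$ inside $K(W_i)$. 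If $\deg \beta_i > 1$, then $f_i(W_i(K)) \subseteq \beta_i(W_i'(K))$ is part of a separably thin set. Otherwise $\beta_i$ is birational, so up to absorbing its exceptional locus (a proper closed subvariety) into the separably thin part, $f_i(W_i(K))$ is identified with $\alpha_i(W_i(K))$ viewed as a subset of $K^n$, for a purely inseparable $\alpha_i$ of degree $>1$.

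Iterating along the purely inseparable tower one layer at a time reduces to covers of degree $p$. Locally such a cover corresponds to $K(W_i) = K(\underline T)[y]/(y^p - u_i)$ for some $u_i \in K(\underline T) \setminus K(\underline T)\pow p$, and gives
\[
 f_i(W_i(K)) \subseteq \bigl\{ v \in K^n : u_i(v) \in K\pow p \bigr\}
\]
up to absorbing the pole locus of $u_i$ into the separably thin part. Hence, to reach the desired contradiction, it suffices to show that for every $u \in K(\underline T) \setminus K(\underline T)\pow p$, the set $\{ v \in K^n : u(v) \in K\pow p \}$ is contained in a separably thin subset of $\mathbb{A}^n_K(K)$.

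This last step is the main obstacle, and where imperfectness is used essentially. Pick $c \in K \setminus K\pow p$. The natural candidate is the polynomial $y^p - c y - u(\underline T) \in K(\underline T)[y]$, whose derivative in $y$ is $-c \ne 0$; it is separable of degree $p$, and the associated dominant separable cover of $\mathbb{A}^n_K$ has $K$-image $\{ v : u(v) \in \wp_c(K) \}$, where $\wp_c \colon K \to K$, $z \mapsto z^p - cz$. To absorb $\{ v : u(v) \in K\pow p \}$ into this, or into a finite union of similar images, one would want $K\pow p \subseteq \wp_c(K)$. Since $\wp_c$ is $\mathbb{F}_p$-linear while ``being a $p$-th power'' is a multiplicative condition, a single twist is generally insufficient, and several separable covers---obtained by rescaling $u$ by elements of $K^\times$ and by using various non-$p$-th-power elements $c$---should be combined so that every $v$ with $u(v) \in K\pow p$ appears in the $K$-image of at least one.

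Once this absorption is carried out, combining it with the separably thin parts from the first reduction yields that $K^n$ is separably thin, contradicting the fact that $K$ is Hilbertian. The principal difficulty is the construction of these absorbing separable covers: the naive single-$c$ Artin--Schreier construction only yields the image $\wp_c(K)$ on the target, and establishing $K\pow p \subseteq \wp_c(K)$ (or its covering by finitely many such separable images) is where a genuinely careful use of imperfectness is required.
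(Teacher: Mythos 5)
The paper proves this lemma by a one-line citation of Uchida's theorem \cite[Proposition 12.4.3]{FJ}: in an imperfect Hilbertian field every Hilbert set -- separable or not -- is nonempty. Your proposal is in effect an attempt to reprove that result from scratch. The preliminary reductions are sound (factoring each dominant cover into a purely inseparable part followed by a separable one, absorbing separable covers of degree $>1$ and proper closed loci into the separably thin part, and passing to degree-$p$ purely inseparable covers), and they reduce the claim to showing that $D_u = \{v \in K^n : u(v) \in K\pow{p}\}$ is separably thin for each $u \in K(\underline T) \setminus K(\underline T)\pow{p}$.

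The final step, however, is missing -- as you yourself acknowledge -- and the repair you sketch does not go through. The set $\wp_c(K) = \{z^p - cz : z \in K\}$ is an additive subgroup of $K$, while $K\pow{p}$ is a subfield, and there is no containment between them: already for $K = \mathbb{F}_2(s)$ and $c = s$, an elementary degree computation shows that $z^2 + sz = s^2$ has no solution in $\mathbb{F}_2(s)$, so $s^2 \in K\pow{2} \setminus \wp_s(K)$, and one cannot expect a finite union of such twists to absorb $K\pow{p}$: the multiplicative condition ``being a $p$-th power'' is genuinely misaligned with the additive conditions cut out by Artin--Schreier covers. It is also not clear that the target $D_u$ is separably thin at all; the lemma only needs the weaker statement that finitely many sets $D_{u_i}$ together with one separably thin set cannot cover $K^n$. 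The content of Uchida's theorem, which the paper simply invokes, is precisely this delicate point, and its proof proceeds along quite different lines than the absorption-by-Artin--Schreier idea; I would recommend consulting Section 12.4 of \cite{FJ} for the genuine argument.
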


\begin{proof}
This
follows from Uchida's result \cite[Proposition 12.4.3]{FJ} that in a Hilbertian field which is imperfect,
every (not necessarily separable) Hilbert set is nonempty.
\end{proof}

 We start with a refinement of Lemma \ref{lem:formulasEquivVarieties} 
 in the case of essential fibre dimension zero. 
 \begin{prop}\label{lem:E1}\label{prop:cofinite_or_thin}
 Let $D\subseteq K^n$ be a diophantine set with $\efd_K(D)=0$.
 Then 
 \begin{equation}\label{eq:efd0thin}
  D = D_0 \cup \bigcup_{i=1}^rf_i(W_i(K))
 \end{equation} 
 with $D_0\subseteq K^n$ separably thin,
 $r\in\Natwithzero$,
 and, for each $i$,
 $W_i$ a geometrically integral $K$-variety and $f_i\colon W_i\rightarrow\mathbb{A}^n_K$ a purely inseparable dominant morphism.
\end{prop}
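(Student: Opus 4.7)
The plan is to begin with the geometric characterisation from Corollary \ref{cor:efdCharacterisationDiag}, which gives $D=f(V(K))$ for some $K$-variety $V$ and morphism $f\colon V\to\mathbb{A}_K^n$ whose fibres all have dimension at most zero. After replacing $V$ by its reduction and decomposing into irreducible components $V=V_1\cup\dots\cup V_k$, it suffices to analyse each integral $V_i$ separately. Because $f|_{V_i}$ has $0$-dimensional fibres, $\dim V_i=\dim\overline{f(V_i)}$, so if $\dim V_i<n$ then $\overline{f(V_i)}$ is a proper closed subvariety of $\mathbb{A}_K^n$ and $f(V_i(K))$ is absorbed into the thin set $D_0$. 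The remaining case is $\dim V_i=n$, where $f|_{V_i}$ is dominant of finite degree, and the function field extension $F_i/E$ with $E=K(X_1,\dots,X_n)$ and $F_i=K(V_i)$ is a finite extension.

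Next I would split along the separable part of $F_i/E$, writing $E\subseteq E^s\subseteq F_i$ with $E^s$ the separable closure of $E$ in $F_i$ (so $F_i/E^s$ is purely inseparable). The good case is $V_i$ geometrically integral with $E^s=E$: then $f|_{V_i}$ is itself a purely inseparable dominant morphism from a geometrically integral $K$-variety, and contributes directly as one of the $f_j(W_j(K))$. If $V_i$ is geometrically integral but $[E^s:E]>1$, I would let $V_i^s$ be the normalisation of $\mathbb{A}_K^n$ in $E^s$, an integral finite cover of $\mathbb{A}_K^n$ with function field $E^s$; its geometric integrality follows from that of $V_i$, since $K$ being algebraically closed in $F_i$ forces $K$ algebraically closed in the subfield $E^s$. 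The inclusion $E^s\hookrightarrow F_i$ yields a rational map $V_i\dashrightarrow V_i^s$ over $\mathbb{A}_K^n$, defined on some dense open $V_i^\circ\subseteq V_i$. Then $f(V_i^\circ(K))\subseteq f^s(V_i^s(K))$ is separably thin directly from the definition (since $f^s\colon V_i^s\to\mathbb{A}_K^n$ is separable dominant of degree $[E^s:E]>1$ with $V_i^s$ geometrically integral), while $f((V_i\setminus V_i^\circ)(K))$ lies in a proper closed subvariety of $\mathbb{A}_K^n$, as $\dim(V_i\setminus V_i^\circ)<n$.

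The final case is $V_i$ integral of dimension $n$ but not geometrically integral, which I would dispatch by a residue-field argument. Let $K''$ be the algebraic closure of $K$ in $F_i$; since $F_i/K$ is finitely generated, $K''/K$ is a finite extension, and the characterisation of geometric integrality as $K''=K$ gives $K''\supsetneq K$. If generators $\alpha_1,\dots,\alpha_m$ of $K''/K$ were simultaneously regular at some $v\in V_i(K)$, then $K''\subseteq\mathcal{O}_{V_i,v}$, and reduction modulo the maximal ideal would yield a $K$-algebra homomorphism $K''\to K$, forcing $K''=K$ (the image is a $K$-subfield of $K$ containing $K$, hence equal to $K$); contradiction. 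Hence $V_i(K)$ lies inside the proper closed pole locus of the $\alpha_j$, and its image under $f$ lies in a proper closed subvariety of $\mathbb{A}_K^n$, contributing to $D_0$. I expect the main delicate point to be the function field bookkeeping for $V_i^s$ in the mixed case, together with checking that the rational map $V_i\dashrightarrow V_i^s$ has a dense open domain of definition, which ultimately reduces to the standard fact that rational maps from integral schemes to separated schemes always do.
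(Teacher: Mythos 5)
Your proposal takes essentially the same route as the paper's proof (begin with the geometric characterisation of $\efd$ from Corollary \ref{cor:efdCharacterisationDiag}, reduce to integral components, and split the dominant $n$-dimensional pieces according to the structure of the function field extension), and the cases you treat are mostly handled correctly, but the final case has a genuine gap. You claim that geometric integrality of an integral $K$-variety $V_i$ is characterised by $K''=K$, where $K''$ is the algebraic closure of $K$ in $F_i=K(V_i)$. Over a perfect base this is correct, but in general geometric integrality is equivalent to $F_i/K$ being a \emph{regular} extension, i.e.\ $K''=K$ together with $F_i/K$ separable, and the second condition can fail while the first holds. Concretely, with $K=\ff_p(s,t)$ the curve $V_i=\Spec\bigl(K[x,y]/(y^p-sx^p-t)\bigr)$, mapping to $\mathbb A^1_K$ via $x$, is integral with $K$ algebraically closed in $F_i$ (so $K''=K$), yet it is not geometrically reduced since the defining equation becomes $(y-s^{1/p}x-t^{1/p})^p$ over $\overline K$. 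In that situation your residue-field argument has nothing to work with: there is no $\alpha\in K''\setminus K$ whose pole locus could confine $V_i(K)$.

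The conclusion you need is still correct, and closing the gap requires precisely the fact the paper imports via \cite[Remark 2.3.27]{Poonen_RationalPoints}: when $F_i/K$ is inseparable the smooth locus of $V_i$ is empty, and since a $K$-rational point that is regular is automatically smooth, $V_i(K)$ must lie in the non-regular locus of the integral variety $V_i$, a proper closed subset whose image under $f$ lies in a proper closed subvariety of $\mathbb A^n_K$ and is therefore separably thin. With this supplementing (or replacing) your residue-field argument, the proof goes through. A smaller point along the same lines: in your treatment of the separable cover $V_i^s$ you justify its geometric integrality only by $K$ being algebraically closed in $E^s$; you should also note that $E^s/K$ is separable (it is separable algebraic over the purely transcendental $E$), which together with $K$ being algebraically closed in $E^s$ gives regularity and hence geometric integrality.
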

 
\begin{proof}
By Corollary \ref{cor:efdCharacterisationDiag}, 
$D=f(W(K))$ for a $K$-variety $W$ and a $K$-morphism $f\colon W\rightarrow\mathbb{A}_K^n$ that has all fibres of dimension at most $\efd_K(D)=0$.
Since a union of finitely many separably thin sets is separably thin, we can assume that $W$ is integral.
Further we can assume that $W$ is geometrically integral, since otherwise the Zariski closure of $W(K)$ has smaller dimension than $W$ (see \cite[Remark 2.3.27]{Poonen_RationalPoints}), and therefore $f(W(K))$ is separably thin.
Similarly, we can assume that $f$ is dominant, since otherwise $f(W(K))$ is contained in a subvariety of smaller dimension, hence separably thin.
Finally, if $f$ is not purely inseparable, then up to replacing $W$ by an open subvariety,
$f$ factors through a separable morphism $f'\colon W'\rightarrow\mathbb{A}^n_K$
of degree ${\rm deg}(f')>1$, hence $D\subseteq f'(W'(K))$ is separably thin.
\end{proof} 

\begin{gev}\label{cor:thinorcofinite}
Let $D\subseteq K^n$ be a diophantine set with $\efd_K(D)=0$.
Then $D$ is either thin in $K^n$ or contains $U(K)$ for a nonempty Zariski-open set $U\subseteq\mathbb{A}^n_K$.
In particular, if $n=1$, then $D$ is either thin in $K$ or cofinite in $K$.
\end{gev}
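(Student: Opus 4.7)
The plan is to apply Proposition \ref{prop:cofinite_or_thin} to obtain a decomposition $D = D_0 \cup \bigcup_{i=1}^r f_i(W_i(K))$ with $D_0$ separably thin and each $f_i \colon W_i \to \mathbb{A}^n_K$ a purely inseparable dominant morphism of finite degree from a geometrically integral $K$-variety $W_i$. Since separably thin sets are in particular thin, $D_0$ is thin, and the work reduces to analyzing the contribution of each $f_i(W_i(K))$.

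For each $i$ I would split according to the degree $\deg(f_i) = [K(W_i) : K(\mathbb{A}^n_K)]$. When $\deg(f_i) > 1$, the image $f_i(W_i(K))$ is thin directly from the definition, the key point being that in the definition of \emph{thin} (unlike \emph{separably thin}) the dominant morphism is not required to be separable, which is exactly what permits us to cover the purely inseparable case. When instead $\deg(f_i) = 1$, the morphism $f_i$ is birational (note that $W_i$ and $\mathbb{A}^n_K$ have the same dimension because $f_i$ is dominant of finite degree), so there exists a nonempty Zariski-open subvariety $U_i \subseteq \mathbb{A}^n_K$ on which $f_i$ restricts to an isomorphism; in particular $U_i(K) \subseteq f_i(W_i(K)) \subseteq D$.

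The dichotomy then follows immediately: either every $f_i$ has degree $> 1$, in which case $D$ is a finite union of thin sets and hence thin, or at least one $f_i$ is birational and $D$ contains $U(K)$ for the corresponding nonempty open $U = U_i \subseteq \mathbb{A}^n_K$. For the special case $n = 1$, any nonempty Zariski-open $U \subseteq \mathbb{A}^1_K$ is the complement of a finite set of closed points, so $U(K)$ differs from $K = \mathbb{A}^1_K(K)$ in at most finitely many elements and is therefore cofinite in $K$.

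The main obstacle is really conceptual rather than technical: one must recognize that precisely the flexibility of the thin (as opposed to separably thin) definition is needed to absorb the purely inseparable morphisms of degree $>1$ produced by Proposition \ref{prop:cofinite_or_thin}, while the remaining birational pieces contribute the Zariski-open part of $D$.
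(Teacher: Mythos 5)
Your proof is correct and takes essentially the same route as the paper: apply Proposition \ref{prop:cofinite_or_thin}, split by $\deg(f_i)$, observe that degree-$>1$ purely inseparable morphisms fall under the \emph{thin} (but not separably thin) definition, and that any degree-$1$ (birational) morphism contributes a nonempty Zariski-open subset of the image. The paper's proof is identical in substance, merely more terse.
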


\begin{proof}
Write $D$ as in (\ref{eq:efd0thin}).
If ${\rm deg}(f_i)=1$ for some $i$, then $f_i$ is a birational morphism, hence $D$ contains $U(K)$ for a nonempty Zariski-open $U\subseteq\mathbb{A}_K^n$.
Otherwise, $D$ is thin.
\end{proof}

\begin{gev}\label{cor:thinperfect}
Let $K$ be perfect and let $D\subseteq K^n$ be a diophantine set with $\efd_K(D)=0$.
  Then $D$ is either separably thin in $K^n$
  or contains $U(K)$ for a nonempty Zariski-open set $U\subseteq\mathbb{A}^n_K$.
  In particular, if $n=1$, then $D$ is either separably thin in $K$ or cofinite in $K$.
\end{gev}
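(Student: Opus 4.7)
The plan is to apply Proposition \ref{lem:E1} to decompose $D$ and then exploit perfection of $K$ to promote each purely inseparable piece to one containing an open set of $K$-rational points.

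First I would invoke Proposition \ref{lem:E1} to write
$$
 D \;=\; D_0 \cup \bigcup_{i=1}^r f_i(W_i(K))
$$
with $D_0$ separably thin in $K^n$, each $W_i$ a geometrically integral $K$-variety, and each $f_i \colon W_i \to \mathbb{A}_K^n$ a purely inseparable dominant morphism. If $r = 0$, then $D = D_0$ is separably thin and we are done, so we may assume $r \geq 1$.

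The key step is then to show that for perfect $K$ and such an $f_i$, the image $f_i(W_i(K))$ contains $U_i(K)$ for some nonempty Zariski-open $U_i \subseteq \mathbb{A}_K^n$. Since $f_i$ is dominant between integral $K$-varieties of the same dimension, Chevalley's theorem gives that its set-theoretic image is constructible and dense, hence contains a dense open $U_i \subseteq \mathbb{A}_K^n$. For $x \in U_i(K)$, the scheme-theoretic fibre $f_i^{-1}(x)$ is nonempty; since the function field extension $K(W_i)/K(\mathbb{A}_K^n)$ is purely inseparable, $f_i$ is universally injective (radicial), so the reduction of $f_i^{-1}(x)$ consists of a single point whose residue field is purely inseparable over $K(x) = K$. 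Because $K$ is perfect, that residue field equals $K$, yielding a $K$-rational point of $W_i$ mapping to $x$. Hence $U_i(K) \subseteq f_i(W_i(K)) \subseteq D$, proving the first assertion. For the ``in particular'' statement with $n=1$, any nonempty Zariski-open $U \subseteq \mathbb{A}_K^1$ has finite complement, so $U(K)$ is cofinite in $K$.

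The main obstacle will be cleanly justifying the fibre-theoretic claim, namely that a purely inseparable dominant morphism between integral $K$-varieties (defined via function fields) is universally injective with purely inseparable residue field extension on each reduced fibre in a dense open of the base. This can be done either by invoking the equivalence between ``purely inseparable dominant morphism of integral schemes'' and ``radicial dominant morphism'' after passing to a suitable open, or by spreading out the generic-fibre information using standard constructibility arguments, so that the fact that $K$ is perfect then suffices to produce the desired $K$-rational lift.
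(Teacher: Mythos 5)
Your decomposition step matches the paper exactly (both invoke Proposition~\ref{lem:E1}), but the way you handle the purely inseparable pieces is genuinely different from the paper's proof, and both routes work. The paper exploits perfection of $K$ at the level of function fields: since $K(\mathbb{A}^n_K)^{p^{-e}} \cong_K K(\mathbb{A}^n_K)$, the inclusion $K(\mathbb{A}^n_K) \subseteq K(W_1) \subseteq K(\mathbb{A}^n_K)^{p^{-e}}$ produces an explicit dominant rational map $f_1' \colon \mathbb{A}^n_K \dashrightarrow W_1$ with $f_1 \circ f_1' = \phi^e$ a Frobenius power; on the regular locus $U$ of $f_1'$ one reads off directly that $f_1(W_1(K))$ contains $\phi^e(U)(K)$, a nonempty open set of $K$-points. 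You instead argue fiberwise via radiciality and lift $K$-points of a dense open in the base one at a time. Both use perfection in the same essential way (purely inseparable extensions of $K$ are trivial), but yours requires a bit more machinery about universally injective morphisms, whereas the paper's construction is more explicit and produces the open set $U^{p^e}$ in one stroke.

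One point deserves a concrete warning. Your intermediate assertion that ``since the function field extension $K(W_i)/K(\mathbb{A}_K^n)$ is purely inseparable, $f_i$ is universally injective (radicial)'' is false as stated; a dominant morphism of integral $K$-varieties with purely inseparable generic function field extension need not be radicial everywhere. For instance, over a field $K$ of characteristic $p$, the inclusion $K[u,v] \hookrightarrow K[u,v,y]/(uy^p-v)$ is dominant with purely inseparable fraction field extension $K(u,y)/K(u,y^p)$, yet the fibre over $(u,v)=(0,0)$ is a whole affine line, so the morphism is not universally injective. You do flag that one must ``pass to a suitable open,'' and this is indeed the fix: after shrinking the base, one can write the coordinate ring of $W_i$ as a quotient of $\mathcal{O}[T_1,\dots,T_m]/(T_j^{p^{e_j}} - a_j)$ with $a_j$ regular functions, making the restricted morphism visibly radicial. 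With that spreading-out spelled out, and with $U_i$ taken inside both the radicial locus and the constructible image, your fibre-lifting argument goes through and gives the statement. So the proposal is correct in outline but would need this clean-up; the paper's Frobenius argument sidesteps the issue entirely by building the section upstairs rather than analyzing fibres.
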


\begin{proof}
Write $D$ as in (\ref{eq:efd0thin}).
If $r\geq1$, then since $K$ is perfect, 
the purely inseparable extension $K(W_1)$ of $K(\mathbb{A}_K^n)$ is contained in $K(\mathbb{A}_K^n)^{p^{-e}}\cong_K K(\mathbb{A}_K^n)$ for some $e$,
hence there exists a dominant rational map $f_1'\colon \mathbb{A}^n_K\dashrightarrow W_1$ such that the rational map
$f_1\circ f_1'$ is a power $\phi^e$ of the Frobenius morphism $\phi\colon \mathbb{A}^n_K\rightarrow\mathbb{A}^n_K$,
 i.e.~the $p^e$-th power map.
So if $U\subseteq\mathbb{A}^n_K$ is a nonempty open subvariety on which $f_1'$ is regular,
we get that $f_1(W_1(K))$ contains $\phi^e(U(K))=U^{p^e}(K)$,
where $U^{p^e}\subseteq\mathbb{A}^n_K$ is again nonempty and Zariski-open.
\end{proof}

\begin{gev}\label{cor:thinOrCofinite}
  Let $K_0$ be a perfect field and $K/K_0$ a finitely generated extension of transcendence degree one.
  Let $D\subseteq K$ be a diophantine set with $\efd_K(D)=0$.
  Then $D=D_0\cup\bigcup_{i=1}^rD_i$
  where $D_0$ is separably thin
  and $D_i$ is cofinite in $f_i(K\pow{p^{e_i}})$ for some 
  $e_i\geq0$ and a Möbius transformation
  $f_i \colon \mathbb{P}_K^1 \to \mathbb{P}_K^1$.
\end{gev}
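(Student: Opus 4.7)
The plan is to refine the decomposition given by Proposition \ref{prop:cofinite_or_thin}. First I would apply that proposition with $n=1$ to write
\[ D \;=\; D_0' \cup \bigcup_{i=1}^r f_i(W_i(K)), \]
with $D_0'$ separably thin, each $W_i$ a geometrically integral $K$-curve, and each $f_i \colon W_i \to \mathbb{A}^1_K$ a purely inseparable dominant morphism. Since the conclusion allows finitely many points to be moved into a larger separably thin set $D_0 \supseteq D_0'$, I would pass from each $W_i$ to the regular projective model $\tilde W_i$ of $L_i = K(W_i)$ and extend $f_i$ to $\tilde f_i \colon \tilde W_i \to \mathbb{P}^1_K$ by properness; the sets $f_i(W_i(K))$ and $\tilde f_i(\tilde W_i(K)) \cap K$ then differ only in finitely many points.

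The next step would be to show that each $\tilde W_i$ either has essentially no $K$-points or is already isomorphic to $\mathbb{P}^1_K$. The function field of $\tilde W_i \times_K \bar K$ is a finite purely inseparable extension of $\bar K(T)$; because $\bar K$ is perfect it sits inside $\bar K(T^{1/p^e})$ for some $e$, which is rational by L\"uroth. Hence the geometric genus of $\tilde W_i$ vanishes. If the arithmetic genus is strictly positive then $\tilde W_i$ is non-conservative, and Proposition \ref{thm:genusChangingFinite}---which uses exactly the hypothesis that $K/K_0$ is finitely generated of transcendence degree one over a perfect field---gives $\tilde W_i(K)$ finite, so this piece is absorbed into $D_0$. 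Otherwise $\tilde W_i$ is smooth projective of genus zero, i.e.\ a Brauer--Severi variety of dimension one; if $\tilde W_i(K) = \emptyset$ the piece is again absorbed into $D_0$, and otherwise any $K$-rational point trivialises the Brauer class, so $\tilde W_i \cong \mathbb{P}^1_K$.

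Finally, in the case $\tilde W_i \cong \mathbb{P}^1_K$, the map $\tilde f_i$ is given in affine coordinates by $T \mapsto \phi(S)$ for some $\phi \in K(S)$, with $K(S)/K(\phi)$ purely inseparable of some degree $p^{e_i}$. The crucial algebraic identity, using only that $K \subseteq K(\phi)$, is
\[ K(\phi) \;\supseteq\; K \cdot K(S)^{p^{e_i}} \;=\; K \cdot K^{p^{e_i}}(S^{p^{e_i}}) \;=\; K(S^{p^{e_i}}), \]
and since $[K(S):K(S^{p^{e_i}})] = p^{e_i} = [K(S):K(\phi)]$, equality must hold: $K(\phi) = K(S^{p^{e_i}})$. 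By L\"uroth this means $\phi(S) = f_i(S^{p^{e_i}})$ for a M\"obius transformation $f_i \colon \mathbb{P}^1_K \to \mathbb{P}^1_K$, from which $\tilde f_i(\tilde W_i(K)) = f_i(K^{p^{e_i}} \cup \{\infty\})$; intersecting with $K$ and discarding at most two exceptional points yields the desired $D_i$ cofinite in $f_i(K^{p^{e_i}})$. I expect the main obstacle to be disposing of the non-smooth case for $\tilde W_i$, which is precisely where the transcendence-degree-one hypothesis over the perfect field $K_0$ is crucially invoked, through the arithmetic-geometric finiteness result of Proposition \ref{thm:genusChangingFinite}.
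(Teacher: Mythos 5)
Your proposal is correct and follows the same structure as the paper's proof: both reduce via Proposition \ref{prop:cofinite_or_thin} to analysing a geometrically integral curve $C$ with a purely inseparable map to $\mathbb{A}^1_K$, pass to a regular projective model, observe that the geometric genus vanishes because the function field over $\overline K$ embeds in $\overline K(T^{1/p^e})$, and then split into the three cases (arithmetic genus positive and hence non-conservative, handled by Proposition \ref{thm:genusChangingFinite}; genus zero without a rational point, hence $C(K)=\emptyset$; and $C\cong\mathbb{P}^1_K$). The one place you diverge from the paper is the final step: the paper simply cites \cite[Proposition 7.4.21]{Liu} to factor the purely inseparable self-map of $\mathbb{P}^1_K$ as a Frobenius power followed by a M\"obius transformation, whereas you give a short self-contained field-theoretic argument ($K(S)^{p^{e_i}}\subseteq K(\phi)$ and a degree count forcing $K(\phi)=K(S^{p^{e_i}})$), which is a nice replacement for the citation.
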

\begin{proof} 
By Proposition \ref{prop:cofinite_or_thin} it suffices to show that 
if $C$ is a geometrically integral $K$-curve and $f\colon C\rightarrow\mathbb{A}^1_K$ a purely inseparable morphism, then
   $f(C(K))$ is either finite or 
   cofinite in the image of $K\pow{p^{e}}$ for some $e$ under a Möbius transformation.
  We may remove the finitely many non-regular points of $C$ without affecting the claim.
   Since the function field of the base change $C_{\overline K}$ is a purely inseparable extension of $\overline{K}(\mathbb{A}_K^1)$ and therefore isomorphic to $\overline{K}(\mathbb{A}_K^1)$, $C_{\overline K}$ has genus zero.
 We distinguish three cases, in which we prove the claim:

  If $C$ has non-zero genus, 
  then $C(K)$ is finite by Proposition \ref{thm:genusChangingFinite}.

  If $C$ has genus zero but is not rational, then $C(K)$ is empty (see for instance \cite[Proposition 7.4.1]{Liu}).

  If $C$ is rational, then $C$ is isomorphic to an open subscheme of $\mathbb{P}_K^1$, and the morphism $f \colon C \to \mathbb{A}_K^1$ extends to a purely inseparable morphism $\mathbb{P}_K^1 \to \mathbb{P}_K^1$.
  By \cite[Proposition 7.4.21]{Liu}, this must be a power of the Frobenius morphism, 
  composed with an automorphism of $\mathbb{P}_K^1$, i.e.~a Möbius transformation.
  Since $\mathbb{P}_K^1 \setminus C$ is finite, this proves that $f(C(K))$ contains a cofinite subset of the image of $K\pow{p^e}$ under a Möbius transformation.
\end{proof}

\begin{opm}
  Let $K$ be a number field.
  Then Corollary \ref{cor:thinorcofinite}, together with \cite[Theorem~13.3.5(c)]{FJ}, shows that the ring of integers $\mathcal{O}_K$ is not  diophantine
  with essential fibre dimension zero, i.e.~by Corollary \ref{cor:equivalences} not definable by an $\exists_1$-formula.
  More generally, any $\exists_1$-definable $D\subseteq K$ which is not cofinite does not contain a translate of an ideal of $\mathcal{O}_K$.
\end{opm}

The following lemma generalizes \cite[Exercise 2 on p.~20]{Serre}, which treats the separable case:

\begin{lem}\label{cor:efd_or_cofinite}
Let $f\colon V'\rightarrow V$ be a dominant morphism of geometrically integral $K$-varieties with geometrically integral generic fibre,
and let $D'\subseteq V'(K)$.
If $f(D')$ is [separably] thin in $V$, then $D'$ is [separably] thin in $V'$.
\end{lem}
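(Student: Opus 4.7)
The plan is to establish the equivalent statement that for any [separably] thin $T \subseteq V(K)$, the preimage $f^{-1}(T) \subseteq V'(K)$ is [separably] thin, from which the lemma follows via $D' \subseteq f^{-1}(f(D'))$. Since [separably] thin sets are closed under finite unions, I would reduce to two generating cases. If $T = Z(K)$ for a proper closed subvariety $Z \subsetneq V$, then by dominance of $f$ the preimage $f^{-1}(Z)$ is a proper closed subvariety of $V'$, and its irreducible components supply a [separably] thin cover.

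The main case is $T = g(W(K))$ for $g \colon W \to V$ a [separable] dominant morphism of finite degree $d > 1$ from a geometrically integral $K$-variety $W$. The plan is to form the fibre product $W' = V' \times_V W$ with projection $\pi \colon W' \to V'$ and to isolate inside $W'_{\mathrm{red}}$ the unique irreducible component $\widetilde W$ that dominates $V'$, equipped with the reduced induced structure. Any $x \in V'(K)$ with $f(x) \in g(W(K))$ lifts to a $K$-point of $W'$, so
\[
  f^{-1}(T) \;\subseteq\; \pi(\widetilde W(K)) \cup \bigcup_i \pi(Y_i(K)),
\]
where the $Y_i$ are the remaining components of $W'_{\mathrm{red}}$. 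Since each $Y_i$ fails to dominate $V'$, its image lies in a proper closed subvariety of $V'$, contributing only a [separably] thin piece. The real work is to show that $\pi(\widetilde W(K))$ is [separably] thin, which by definition reduces to verifying that $\widetilde W$ is geometrically integral over $K$ and that $\pi|_{\widetilde W}$ is a dominant morphism of degree $d > 1$ (separable if $g$ is).

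Translating the hypotheses into algebra: $V, V', W$ being geometrically integral over $K$ amounts to $K(V)/K$, $K(V')/K$, $K(W)/K$ being regular extensions, while the generic fibre of $f$ being geometrically integral amounts to $K(V')/K(V)$ being regular. The generic fibre of $\pi$ is $\operatorname{Spec}(K(W) \otimes_{K(V)} K(V'))$; by regularity of $K(V')/K(V)$ and finiteness of $K(W)/K(V)$, this tensor product is a field of degree $d$ over $K(V')$, and it is the function field $K(\widetilde W)$. This already yields that $\widetilde W$ is integral, uniquely characterised as the dominating component, that $\pi|_{\widetilde W}$ has degree $d > 1$, and, in the separable case, that $\pi|_{\widetilde W}$ is separable, since separability of finite field extensions is preserved under base change.

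The crux will be verifying that $K(\widetilde W) \otimes_K \overline K$ is a domain. The plan is to compute
\[
  K(\widetilde W) \otimes_K \overline K \;=\; K(W) \otimes_{K(V)} \bigl(K(V') \otimes_K \overline K\bigr) \;\hookrightarrow\; K(W) \otimes_{K(V)} K(V'_{\overline K}),
\]
where $K(V'_{\overline K}) := \operatorname{Frac}(K(V') \otimes_K \overline K)$ and the injection uses flatness of $K(W)$ as a $K(V)$-module. The right-hand side will be shown to be a domain in two base-change steps: first, $K(W) \otimes_{K(V)} K(V_{\overline K})$ (with $K(V_{\overline K}) := \operatorname{Frac}(K(V) \otimes_K \overline K)$) is a field, as a finite-dimensional $K(V_{\overline K})$-algebra that is also a localisation of $K(W) \otimes_K \overline K$, which is a domain by geometric integrality of $W$; second, $K(V'_{\overline K})/K(V_{\overline K})$ is regular as the base change of the regular extension $K(V')/K(V)$, so tensoring the preceding field with $K(V'_{\overline K})$ over $K(V_{\overline K})$ yields a domain. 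Thus $K(\widetilde W) \otimes_K \overline K$ embeds into a domain and is itself a domain, completing the argument.
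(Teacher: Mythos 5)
Your proposal is correct and shares the paper's core strategy: form the fibre product over $V$ and exploit the hypothesis that the generic fibre of $f$ is geometrically integral (so $K(V')/K(V)$ is regular) to obtain linear disjointness of the relevant function fields. The difference lies in how geometric integrality of the covering piece is secured. The paper first applies generic flatness to shrink each $W_i$ so that $f_i$ is flat (moving the removed locus into the proper-closed part of the cover) and then cites \cite[Proposition 4.3.8]{Liu} to conclude that the full fibre product $W_i'=V'\times_V W_i$ is already geometrically integral; there is no need to pass to the reduction or to separate out components. You instead work with the reduction of the fibre product, isolate the unique component $\widetilde W$ dominating $V'$, dispose of the non-dominating components as proper-closed pieces, and prove geometric integrality of $\widetilde W$ by computing $K(\widetilde W)=K(W)\otimes_{K(V)}K(V')$ and showing $K(\widetilde W)\otimes_K\overline K$ is a domain via two base-change steps. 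Your version trades the paper's geometric integrality criterion (via flatness) for hands-on commutative algebra, at the cost of handling the extra components separately; the paper's route is a touch more concise, but both rest on the same linear-disjointness computation.
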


\begin{proof}
Suppose that $f(D')\subseteq W(K)\cup\bigcup_{i=1}^r{f_i(W_i(K))}$ with $W\subseteq V$ a proper closed subvariety,
$W_i$ a geometrically integral $K$-variety and $f_i\colon W_i\rightarrow V$ a dominant [separable] morphism of finite degree ${\rm deg}(f_i)>1$.
By generic flatness, assume without loss of generality that each $f_i$ is flat (possibly enlarging $W$).
Then $W_i':=V'\times_{V}W_i$ 
 is geometrically integral, and $f_i'\colon W_i'\rightarrow V'$ 
is [separable] dominant of finite degree ${\rm deg}(f_i')={\rm deg}(f_i)$.
Indeed, $(W_i')_{\overline{K}}\rightarrow V'_{\overline{K}}$ is flat and its generic fibre is integral,
because $\overline{K}(V')$ and $\overline{K}(W_i)$ are linearly disjoint over $\overline{K}(V)$ by the assumption on the generic fibre of $f$,
hence $(W_i')_{\overline{K}}$ is integral, see \cite[Proposition 4.3.8]{Liu}.
Moreover, $W':=f^{-1}(W)$ is a proper closed subvariety of $V'$.
Thus $D'\subseteq W'(K)\cup\bigcup_{i=1}^r{f_i'(W_i'(K))}$ is [separably] thin.
\end{proof}

\begin{stel}\label{thm:norms}
  Let $K$ be Hilbertian.
  If there exists a finite separable extension $L/K$ with $\operatorname{N}_{L/K}(L^\times) \subsetneq K^\times$, 
  then $\erk_K(\operatorname{N}_{L/K}(L^\times))\geq 2$,
  and in particular, $\erk(K) \geq 2$.
\end{stel}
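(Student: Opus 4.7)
The plan is to set $D = \operatorname{N}_{L/K}(L^\times)\subseteq K$, which is diophantine via the existential $\Lar(K)$-formula $\exists Y_1\dots Y_n (\operatorname{N}(\underline Y)\doteq X \wedge X\dotneq 0)$, where $n=[L:K]$ and $\operatorname{N}$ is the norm polynomial associated to a choice of $K$-basis of $L$. Since $(K^\times)^n\subseteq D \subsetneq K^\times$, $D$ is an infinite proper subgroup of $K^\times$; in particular $D$ is not cofinite in $K$, as a proper cofinite subgroup of an infinite group is impossible (the finite complement would be a union of cosets each of cardinality $\lvert D\rvert$). The second claim follows at once from the first via $\erk(K)\geq \erk_K(D)$, so I focus on proving $\erk_K(D)\geq 2$. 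Suppose for contradiction that $\erk_K(D)\leq 1$.

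If $\erk_K(D)=0$, then $D$ is quantifier-freely definable in the infinite field $K$, hence finite or cofinite, both of which are excluded above. So $\erk_K(D)=1$, and since $K$ is Hilbertian (in particular infinite), case (\ref{item3}) of Proposition \ref{prop:qrvsqrp} does not occur, yielding $\perk_K(D)=1$ and then $\efd_K(D)=0$ by Proposition \ref{edleqqr}.

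The geometric core of the argument uses $V=\mathrm{Res}_{L/K}(\mathbb{G}_m)$, realised as the open subvariety $\{\underline y\in\mathbb{A}^n_K : \operatorname{N}(\underline y)\neq 0\}$ of $\mathbb{A}^n_K$, together with the norm morphism $f\colon V\to\mathbb{A}^1_K$, so that $f(V(K))=D$. Separability of $L/K$ ensures that $V_{\overline K}\cong \mathbb{G}_m^n$ is geometrically integral, and that $f_{\overline K}$ is the product map $\mathbb{G}_m^n\to\mathbb{G}_m$, which is smooth with geometrically integral generic fibre (the norm-one torus, $\cong\mathbb{G}_m^{n-1}$ over $\overline K$). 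I branch on whether $K$ is perfect: if $K$ is perfect, Corollary \ref{cor:thinperfect} applied to $D\subseteq K$ yields that $D$ is separably thin in $K$ (the alternative that $D\supseteq U(K)$ for a nonempty Zariski-open $U\subseteq\mathbb{A}^1_K$ would force $D$ cofinite); if $K$ is imperfect, Corollary \ref{cor:thinorcofinite} yields that $D$ is thin in $K$. In either case, Lemma \ref{cor:efd_or_cofinite} applied to $f$ (a dominant morphism, separable in the perfect case, with geometrically integral generic fibre) yields that $V(K)$ is [separably] thin in $V$.

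Finally, via the open immersion $V\hookrightarrow \mathbb{A}^n_K$, [separable] thinness of $V(K)$ in $V$ promotes to [separable] thinness of $K^n=\mathbb{A}^n_K(K)$ in $\mathbb{A}^n_K$: closures in $\mathbb{A}^n_K$ of proper closed subvarieties of $V$ remain proper closed (they do not contain the generic point of $V$, which is also the generic point of $\mathbb{A}^n_K$); compositions $W_i\to V\hookrightarrow\mathbb{A}^n_K$ of the dominant morphisms $W_i\to V$ in the thin decomposition remain dominant of the same [separable] degree; and $\mathbb{A}^n_K(K)\setminus V(K)$ lies in the proper closed subvariety $\mathbb{A}^n_K\setminus V$. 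In the perfect case this contradicts Hilbertianity of $K$; in the imperfect case this contradicts Lemma \ref{lem:thin}. The main technical obstacle lies in the careful bookkeeping between thin and separably thin sets across the perfect and imperfect cases, together with verifying the geometric integrality of the Weil restriction and its generic norm fibre; neither point is deep, but both require that the split into cases be done cleanly.
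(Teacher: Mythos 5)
Your proof is correct and takes essentially the same route as the paper: both pass through Corollaries~\ref{cor:thinorcofinite}/\ref{cor:thinperfect} to reduce the question to thinness, and both apply Lemma~\ref{cor:efd_or_cofinite} to the norm morphism with geometrically integral generic fibre, contradicting Hilbertianity (together with Lemma~\ref{lem:thin} in the imperfect case) via Propositions~\ref{edleqqr} and~\ref{prop:qrvsqrp}. The only material difference is cosmetic: where you take the source to be the Weil restriction $V=\{\operatorname{N}\neq0\}\subseteq\mathbb{A}^n_K$ and then need an extra promotion step to transfer [separable] thinness from $V$ to $\mathbb{A}^n_K$, the paper simply works with $f\colon\mathbb{A}^n_K\to\mathbb{A}^1_K$ from the start, noting that $f-T$ is absolutely irreducible (citing \cite[Section 11 Exercise 6]{FJ}, where you instead compute the generic fibre over $\overline K$ directly), and applies the lemma in the contrapositive direction rather than arguing by contradiction. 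Both get the job done; the direct route avoids the detour through the open subvariety and the associated bookkeeping you flag at the end. One very minor imprecision: the generic fibre of the norm map over $\overline K$ is a torsor under the norm-one torus, not the norm-one torus itself (it is the fibre over the generic point, not over $1$), but it is still $\cong\mathbb{G}_m^{n-1}$ and in particular geometrically integral, so the conclusion you need is unaffected.
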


\begin{proof}
  Let $n = [L \colon K]$, and let $f \in K[Y_1, \dotsc, Y_n]$ be the norm form of $L/K$ with respect to some basis.
  This is a homogeneous polynomial of degree $n$,
  and $f-T$ is absolutely irreducible, cf.~\cite[Section 11 Exercise 6]{FJ},
  in other words, the morphism $f\colon \mathbb{A}^n_K\rightarrow\mathbb{A}^1_K$ has geometrically integral generic fibre.
  Since $K$ is Hilbertian, $D':=K^n\setminus\{0\}$ is not separably thin in $\mathbb{A}^n_K$, and not thin if $K$ is imperfect (Lemma \ref{lem:thin}),
  so Lemma \ref{cor:efd_or_cofinite} implies that $D := f(D')$ is not separably thin in $K$, and not thin if $K$ is imperfect.
  Since $D = \operatorname{N}_{L/K}(L^\times)$ is a proper subgroup of $K^\times$, it is also not cofinite in $K$.
  Therefore Corollaries \ref{cor:thinorcofinite} and \ref{cor:thinperfect} imply
  $\efd_K(D)\geq1$,
  hence
  $\erk_K(D)\geq2$ by Propositions \ref{edleqqr} and \ref{prop:qrvsqrp}.
\end{proof}

 \begin{gev}\label{cor:real}
 If $K$ is Hilbertian and real, then 
 $\erk_K(K\pow{2} + K\pow{2})=2$.
 \end{gev}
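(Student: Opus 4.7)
The plan is to sandwich $\erk_K(K\pow{2} + K\pow{2})$ between a trivial upper bound of $2$ and a lower bound of $2$ obtained by applying Theorem \ref{thm:norms} to the quadratic extension $K(\sqrt{-1})/K$.

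For the upper bound, the formula $\exists Y_1\, Y_2\, (X \doteq Y_1^2 + Y_2^2)$ is an $\exists_2$-$\Lar(K)$-formula defining $K\pow{2} + K\pow{2}$, so $\erk_K(K\pow{2} + K\pow{2}) \leq 2$ directly from the definition.

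For the lower bound, I would first note that since $K$ is formally real, $\car(K) = 0$ and $-1$ is neither a square nor a sum of two squares in $K$. In particular, $L = K(\sqrt{-1})$ is a separable quadratic extension of $K$, and the elementary identity $(a + b\sqrt{-1})(a - b\sqrt{-1}) = a^2 + b^2$ shows that
\[
 \operatorname{N}_{L/K}(L^\times) \;=\; (K\pow{2} + K\pow{2}) \setminus \{0\},
\]
which is a proper subgroup of $K^\times$ since $-1 \notin K\pow{2} + K\pow{2}$. Since $K$ is Hilbertian, Theorem \ref{thm:norms} applies and yields $\erk_K(\operatorname{N}_{L/K}(L^\times)) \geq 2$.

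Finally, the set $\operatorname{N}_{L/K}(L^\times)$ is the intersection of $K\pow{2} + K\pow{2}$ with the quantifier-freely defined set $\{x \in K : x \dotneq 0\}$, whose existential rank is $0$. By the conjunction bound in Remark \ref{qrtrivial},
\[
 2 \;\leq\; \erk_K(\operatorname{N}_{L/K}(L^\times)) \;\leq\; \erk_K(K\pow{2} + K\pow{2}) + 0,
\]
so combining both inequalities gives $\erk_K(K\pow{2} + K\pow{2}) = 2$. There is no substantial obstacle here; the argument is a direct synthesis of Theorem \ref{thm:norms} with the observation that the norm group and the set of sums of two squares differ only by the single element $0$, which is definable without quantifiers.
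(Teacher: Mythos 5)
Your proof is correct and follows the same route as the paper: trivial upper bound of $2$, and lower bound from Theorem \ref{thm:norms} applied to $L=K(\sqrt{-1})$. You are slightly more careful than the paper in one spot: the paper's one-line proof passes directly from $\erk_K(\operatorname{N}_{L/K}(L^\times))\geq 2$ (which is what Theorem \ref{thm:norms} literally gives) to $\erk_K(K\pow{2}+K\pow{2})\geq 2$, whereas you explicitly note that these two sets differ only by $\{0\}$ (using that $K$ is real) and transfer the bound via the conjunction inequality in Remark \ref{qrtrivial}; that extra step is correct and worth spelling out.
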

 
\begin{proof}
It is clear that $2$ is an upper bound, and the other inequality follows from Theorem~\ref{thm:norms} with $L=K(\sqrt{-1})$.
\end{proof}

\begin{gev}\label{cor:global}
If $K$ is a global field, then $\Erk(K)\geq 2$.
\end{gev}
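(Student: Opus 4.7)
The plan is to deduce this from Theorem \ref{thm:norms}, which for a Hilbertian field $K$ with a finite separable extension $L/K$ satisfying $\operatorname{N}_{L/K}(L^\times)\subsetneq K^\times$ yields $\erk(K)\geq 2$. Two facts therefore need to be checked: $K$ is Hilbertian, and $K$ admits such an extension $L$.

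The first is classical. For number fields it is the original Hilbert irreducibility theorem, and for global function fields, which are finitely generated of transcendence degree one over a prime field, the Hilbertian property is equally well-known and can be cited from \cite[Chapter 13]{FJ}.

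For the second fact I would pass to local data. Fix any non-archimedean place $v$ of $K$; the completion $K_v$ is a non-trivially valued local field with finite residue field, so admits an unramified quadratic extension $F/K_v$. For such an $F$ a uniformiser of $K_v$ has odd valuation and hence is not a norm from $F$ (norms from $F$ have even valuation in $K_v$), so $\operatorname{N}_{F/K_v}(F^\times)$ is a proper subgroup of $K_v^\times$. Next I would produce a global separable quadratic extension $L/K$ in which $v$ has a unique prolongation $w$ with $L_w\cong F$; this is standard via Krasner's lemma combined with weak approximation, or by applying the primitive element theorem to a sufficiently good approximation of a generator of $F/K_v$. With such an $L$, the canonical isomorphism $L\otimes_K K_v\cong L_w$ yields $\operatorname{N}_{L/K}(x)=\operatorname{N}_{L_w/K_v}(x)$ in $K_v^\times$ for every $x\in L^\times$, so $\operatorname{N}_{L/K}(L^\times)$ lies in the proper local subgroup $\operatorname{N}_{F/K_v}(F^\times)\subsetneq K_v^\times$ and is in particular a proper subgroup of $K^\times$.

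The main delicate point of this plan is the construction of a global $L$ with prescribed completion at $v$; once this is accomplished, Theorem \ref{thm:norms} finishes the argument and everything else is either a direct calculation or a standard citation.
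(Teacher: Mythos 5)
Your proposal is correct and rests on the same core argument as the paper: invoke Theorem \ref{thm:norms} after producing a separable quadratic extension $L/K$ whose norm group is proper, with properness detected by the evenness of $v(\operatorname{N}_{L/K}(x))$ at an unramified, inert place. The only difference is order of quantifiers: you fix the place $v$ first and then construct a global $L$ with prescribed completion at $v$ via Krasner's lemma and weak approximation, whereas the paper takes any separable quadratic extension $L/K$ together with an inert prime $\mathfrak p$ (such a pair exists by standard splitting arguments) and computes $v_{\mathfrak p}(\operatorname{N}_{L/K}(x)) = 2v_{\mathfrak P}(x)$ directly using the conjugate $\overline{x}$ — avoiding any appeal to completions or Krasner, so your route is valid but somewhat heavier than necessary.
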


\begin{proof}
  Take a separable quadratic extension $L/K$ and a discrete valuation $v_{\mathfrak p}$ on $K$ (i.e.\ the valuation corresponding to a non-archimedean place) which is inert in $L$, i.e.~there is a unique valuation $v_{\mathfrak P}$ on $L$ above $v_{\mathfrak p}$ and $v_{\mathfrak P}/v_{\mathfrak p}$ is unramified.
  Then for every $x \in L^\times$ with conjugate $\overline x$ over $K$, the valuation
  \[ v_{\mathfrak p}(\operatorname{N}_{L/K}(x)) = v_{\mathfrak p}(x \overline x) = v_{\mathfrak P}(x) + v_{\mathfrak P}(\overline x) = 2 v_{\mathfrak P}(x) \]
  is even, thus $\operatorname{N}_{L/K}(L^\times)$ is a proper subgroup of $K^\times$.
  Since global fields are Hilbertian \cite[Theorem 13.4.2]{FJ},
Theorem \ref{thm:norms} applies.
\end{proof}

\begin{opm}
  There exist perfect Hilbertian fields which are PAC \cite[Chapter 27]{FJ}, so Corollary \ref{cor:erkPAC} shows that some condition beyond Hilbertian is necessary in Theorem \ref{thm:norms} and Corollary \ref{cor:real}.
\end{opm}

\begin{opm}\label{opm:dittmann-leijnse}
  In connection with Corollary~\ref{cor:global},
  we point out another sense in which sets of existential rank at most $1$ in a global field $K$ are rather special
  (beyond Corollary~\ref{cor:thinorcofinite}):
  by \cite[Corollary~3.2]{DittmannLeijnse} (after \cite{Dittmann}),
  for every $D \subseteq K^n$ with $\erk_K(D) \leq 1$ the complement $K^n \setminus D$ is also diophantine.
  In \cite{DittmannLeijnse}, this property is studied as a condition on a general field $K$.
\end{opm}

We now turn more specifically to the global fields $\mathbb{Q}$ and $\mathbb{F}_p(t)$.

 \begin{opm}\label{rem:Cohen}
 In $\mathbb{Q}$, thin sets 
 in particular have density zero when counted by height.
 Moreover, by a result of Cohen, if $S\subseteq\mathbb{Q}^n$ is thin, then the number of $(x_1,\dots,x_n)\in S\cap\mathbb{Z}^n$ with
 $|x_i|<N$ for all $i$ is $O(N^{n-1/2}\log N)$, see \cite[Theorem 3.4.4]{Serre}.
 For the analogous result for $\mathbb{F}_p(t)$ see \cite{BarysorokerEntin}.
 \end{opm}

\begin{vb}
Let $S\subseteq\mathbb{N}$ be the set of prime numbers that are $1$ modulo $4$.
Comparing the result of Cohen
with Dirichlet's density theorem and the prime number theorem,
we get from Corollary \ref{cor:thinorcofinite} that
every definable set $S\subseteq D\subseteq\mathbb{Q}$ 
that
is not cofinite in $\mathbb{Q}$
has $\erk_{\mathbb{Q}}(D)\geq 2$ (possibly $\infty$).
This includes for example the set $D = \mathbb{Q} \setminus \mathbb{Q}\pow{2}$ of non-squares, which is known to be existentially definable \cite{PoonenNonsquares}.
\end{vb}

\begin{ques}
What is $\Erk(\mathbb{Q})$? What is $\Erk(\mathbb{F}_p(t))$?
\end{ques}

\begin{opm}
It is known that the $\exists_1$-theory of $\mathbb{Q}$ is decidable
(this follows from the existence of a splitting algorithm, cf.~\cite[Definition 19.1.2]{FJ}),
but from Corollary \ref{cor:global} we see that $\Erk(\mathbb{Q})\geq2$.
It can be expected that also the $\exists_2$-theory of $\mathbb{Q}$ is decidable (see for example \cite{Poonen_Brauer-Manin,Alpoege}).
However, even if $\Erk(\mathbb{Q})=2$ this would not immediately imply that ${\rm Th}_\exists(\mathbb{Q})$ is decidable,
as the universal-existential fragment ${\rm Th}_{\forall\exists}(\mathbb{Q})$ is not recursively enumerable,
as follows from Koenigsmann's universal definition \cite{Koenigsmann} of $\mathbb{Z}$ in $\mathbb{Q}$,
cf.~Remark \ref{rem:effective}.
\end{opm}

In the following we view a ring $R$ (always assumed commutative and with $1$) as an $\mathcal{L}_{\rm ring}$-structure
and apply the definitions from Section \ref{sect:erk}.

\begin{lem}\label{lem:pol_injection}
Let $R$ be a ring.
If there exists $f\in R[X,Y]$ such that $f\colon R\times R\rightarrow R$ is injective,
then $\Erk(R)=\Erkone(R)$. 
\end{lem}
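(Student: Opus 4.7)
The plan is to prove that $\Erk^{\exists,n}(R) \leq \Erk^{\exists,1}(R)$ for every $n \in \Natwithzero$; the reverse inequality holds trivially by definition of $\Erk(R)$. The cases $n \leq 1$ are immediate (a formula in at most $0$ variables is a fortiori a formula in at most $1$ variable), so I focus on $n \geq 2$.

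First I would iterate $f$ to encode $n$-tuples into single ring elements. Define polynomials $f_n \in R[X_1,\dots,X_n]$ recursively by $f_2 = f$ and
\[
 f_{n+1}(X_1,\dots,X_{n+1}) = f(f_n(X_1,\dots,X_n),X_{n+1}).
\]
A straightforward induction on $n$ using the injectivity of $f$ shows that $f_n \colon R^n \to R$ is injective: if $f(f_n(\underline x),x_{n+1}) = f(f_n(\underline x'),x_{n+1}')$, then by injectivity of $f$ we get $f_n(\underline x)=f_n(\underline x')$ and $x_{n+1}=x_{n+1}'$, and then apply the induction hypothesis.

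Next, given an existential $\Lar(R)$-formula $\varphi(X_1,\dots,X_n)$ with $n\geq 2$, I would consider the diophantine subset
\[
 D_\varphi = f_n(\varphi(R)) = \{f_n(\underline x) : \underline x \in \varphi(R)\} \subseteq R,
\]
which is defined by the existential $\Lar(R)$-formula $\exists X_1,\dots,X_n\,(Z \doteq f_n(X_1,\dots,X_n) \wedge \varphi(X_1,\dots,X_n))$. By the definition of $\Erk^{\exists,1}(R)$, there exists an existential $\Lar(R)$-formula $\psi(Z)$ with at most $\Erk^{\exists,1}(R)$ existential quantifiers that defines $D_\varphi$ in $R$.

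The key point, which uses the injectivity of $f_n$ crucially, is the equivalence
\[
 R \models \varphi(x_1,\dots,x_n) \quad \Longleftrightarrow \quad R \models \psi(f_n(x_1,\dots,x_n))
\]
for every $(x_1,\dots,x_n) \in R^n$: the direction ``$\Rightarrow$'' is by definition of $D_\varphi$; for ``$\Leftarrow$'', if $R \models \psi(f_n(\underline x))$ then $f_n(\underline x) \in D_\varphi$, so $f_n(\underline x) = f_n(\underline x')$ for some $\underline x'$ with $R \models \varphi(\underline x')$, and injectivity of $f_n$ forces $\underline x = \underline x'$. Since substituting the term $f_n(X_1,\dots,X_n)$ for the free variable $Z$ in $\psi$ produces an existential $\Lar(R)$-formula with the same number of existential quantifiers as $\psi$, we conclude $\Erk_R(\varphi) \leq \Erk_R(\psi) \leq \Erk^{\exists,1}(R)$. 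This completes the argument.

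I do not foresee a major obstacle: the proof is a clean encoding, and the only real subtlety is tracking the role of injectivity in establishing the biconditional above, without which the reduction to a one-variable diophantine set would fail.
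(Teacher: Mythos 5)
Your proposal is correct and follows essentially the same route as the paper's proof: iterate $f$ to a polynomial injection $f_n\colon R^n\to R$, push the $n$-variable diophantine set forward to a one-variable diophantine set $D_\varphi=f_n(\varphi(R))$, describe $D_\varphi$ by a $\psi$ with at most $\Erkone(R)$ quantifiers, and substitute the term $f_n(\underline X)$ back for the free variable of $\psi$. The only difference is that you spell out the injectivity-of-$f_n$ induction and the biconditional that the paper leaves implicit.
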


\begin{proof}
Write $f_n$ for the injection $R^n\rightarrow R$ obtained by iteratively defining $f_n(a_1,\dots,a_n)=f(f_{n-1}(a_1,\dots,a_{n-1}),a_n)$.
Let $\varphi(X_1,\dots,X_n)$ be an existential formula.
The diophantine set
$\{f_n(a_1,\dots,a_n):(a_1,\dots,a_n)\in\varphi(R)\}$ is defined by an 
existential formula $\varphi'$ with at most $d=\Erkone(R)$ quantifiers,
hence $\varphi$ is equivalent in $R$ to the $\exists_d$-formula
$\varphi'(f_n(X_1,\dots,X_n))$.
Thus, $\erk(R)\leq\Erkone(R)$,
and $\Erkone(R)\leq\erk(R)$ by definition.
\end{proof}

\begin{lem}\label{lem:polynomial_injection_on_R}
Let $R=\mathbb{Z}$ or let $R$ be an integral domain such that $\Frac(R)$ is imperfect.
There exists $f\in R[X,Y]$ such that $f\colon R\times R\rightarrow R$ is injective.
Furthermore, if $R = \zz$, we may assume that $f(R\times R) \subseteq \nat$.
\end{lem}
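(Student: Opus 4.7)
The plan is to split into the two asserted cases and construct an explicit injective polynomial in each.

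\emph{Case 1: $R$ is an integral domain whose fraction field $K = \Frac(R)$ is imperfect.} Then $p := \car(K) > 0$ and $K \neq K\pow{p}$. I first observe that $R$ must contain an element $a$ not lying in $K\pow{p}$: otherwise $R \subseteq K\pow{p}$, and since $K\pow{p}$ is a subfield, we would get $K = \Frac(R) \subseteq K\pow{p}$, a contradiction. Fix such an $a \in R$ and set
\[
 f(X, Y) = X^p + a Y^p \in R[X,Y].
\]
To check injectivity, suppose $f(x_1, y_1) = f(x_2, y_2)$ for $x_i, y_i \in R$. Using that Frobenius is additive in characteristic $p$, this rearranges to $(x_1 - x_2)^p = a(y_2 - y_1)^p$. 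If $y_1 = y_2$, then $(x_1-x_2)^p = 0$ and $x_1 = x_2$ since $R$ is a domain. If $y_1 \neq y_2$, then inside $K$ we obtain $a = ((x_1 - x_2)/(y_2 - y_1))^p \in K\pow{p}$, contradicting the choice of $a$.

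\emph{Case 2: $R = \mathbb{Z}$.} Here the idea is to compose an injection $\mathbb{Z} \to \mathbb{Z}_{\geq 0}$ coordinatewise with a variant of Cantor's pairing polynomial. Concretely, the polynomial
\[
 g(T) = 2T^2 - T \in \zz[T]
\]
takes $\mathbb{Z}$ injectively into $\mathbb{Z}_{\geq 0}$: the equation $2n^2 - n = 2m^2 - m$ factors as $(n-m)(2(n+m) - 1) = 0$, and the second factor is odd and nonzero, so $n = m$. Twice the standard Cantor pairing yields
\[
 h(U, V) = (U+V)(U+V+1) + 2V \in \zz[U,V],
\]
which restricts to an injection $\mathbb{Z}_{\geq 0}^2 \to \mathbb{Z}_{\geq 0}$. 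Composing gives
\[
 f(X, Y) = h(g(X), g(Y)) \in \zz[X,Y],
\]
which is an injective map $\mathbb{Z} \times \mathbb{Z} \to \mathbb{Z}$.

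Neither case presents real difficulty once the right formula is written down; the only mild subtlety is ensuring in Case 1 that the witness $a$ can be chosen inside $R$ rather than merely in $K$, for which the fraction-field argument above suffices. In Case 2 the only thing to verify by hand is the injectivity of $g$, since the injectivity of Cantor's polynomial on $\mathbb{Z}_{\geq 0}^2$ (and hence of $h$) is classical.
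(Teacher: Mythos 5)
Your proof is correct in both cases. In the positive-characteristic case you use the same polynomial $f = X^p + tY^p$ with $t \in R \setminus \Frac(R)^{(p)}$ that the paper takes from Poonen's remark; the only thing you add is the (worthwhile) observation that such a $t$ can indeed be found inside $R$ rather than merely in $\Frac(R)$, which the paper leaves implicit. For $R = \mathbb{Z}$ your proof is actually \emph{more} careful than the paper's, and in fact repairs a flaw: the paper simply takes $f=(X+Y)(X+Y+1)+Y$ and remarks that $\tfrac{f}{2}$ is Cantor's pairing function, but this polynomial is not injective on $\mathbb{Z}\times\mathbb{Z}$ — for instance $f(0,1)=(1)(2)+1=3$ and $f(-3,3)=(0)(1)+3=3$, and more generally $f(x,-x)=-x$ gives collisions with points off the antidiagonal. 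The Cantor polynomial is only injective on $\mathbb{Z}_{\geq 0}^2$, which is exactly why your two-step construction — first push $\mathbb{Z}$ injectively into $\mathbb{Z}_{\geq 0}$ via $g(T)=2T^2-T$, then apply (twice) the Cantor pairing $h$ to $(g(X),g(Y))$ — is the right fix. Your verification that $g$ is injective (factoring $2n^2-n-2m^2+m=(n-m)(2(n+m)-1)$ with the second factor odd) and maps into $\mathbb{Z}_{\geq 0}$ is sound, as is the observation that $(U+V)(U+V+1)$ is even so $h=2\pi$ with $\pi$ the Cantor bijection $\mathbb{Z}_{\geq 0}^2\to\mathbb{Z}_{\geq 0}$.
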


\begin{proof}
Consider first the case $R = \zz$.
The map $\zz \to \nat : X \mapsto 2X^2 - X$ is easily seen to be injective.
Furthermore, also the map $\nat \times \nat \to \nat : (X, Y) \mapsto (X+Y)(X+Y+1) + 2Y$ is known to be injective ($\frac{f}{2}$ is the Cantor pairing function). 
We may thus take the polynomial
$$ f = (2X^2 - X + 2Y^2 - Y)(2X^2 - X + 2Y^2 - Y + 1) + 4Y^2 - 2Y.$$
If $R$ is an integral domain of characteristic $p > 0$ and $t \in R \setminus \Frac(R)\pow{p}$, one may take
$f=X^p+tY^p$, cf.~\cite[Remark 1.6]{Poonen}.
\end{proof}

%\begin{opm}\label{rem:pairing}
It is strongly expected that also for $R=\mathbb{Q}$ 
a polynomial $f$ as in Lemma \ref{lem:pol_injection} exists,
see \cite{Cornelissen,Poonen,Pasten2}.
%\end{opm}

\begin{lem}\label{lem:universal}
Let $R=\mathbb{Z}$ or $R=\mathbb{F}_p[t]$.
There exists a {\em universal diophantine set}, i.e.~an existential $\mathcal{L}_{\rm ring}(R)$-formula in 2 variables
$$
 \theta(X, Z) = \exists Y_1,\dots,Y_N\vartheta(X,\underline{Y},Z)
$$ 
for some $N \in \Natwithoutzero$ and $\vartheta$ quantifier-free
such that for every existential $\mathcal{L}_{\rm ring}(R)$-formula $\varphi(X)$ in one variable
there exists $c\in R$ with $\varphi(R)=\theta(R,c)$.
\end{lem}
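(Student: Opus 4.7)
The plan is to leverage the Davis--Putnam--Robinson--Matiyasevich (DPRM) theorem for $R = \mathbb{Z}$, and its analog for $R = \mathbb{F}_p[t]$ due to Demeyer, both of which assert that every recursively enumerable subset of $R^n$ (with respect to a fixed recursive presentation of $R$) is already diophantine in $R$.

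I would begin by fixing a recursive bijection $\iota \colon \mathbb{N} \to R$: the natural enumeration for $R = \mathbb{Z}$, or the base-$p$ encoding $\sum_i a_i p^i \mapsto \sum_i a_i t^i$ (with $0 \leq a_i < p$) for $R = \mathbb{F}_p[t]$. Under this presentation, $R$ is a recursive ring, and one obtains a G\"odel numbering $(\varphi_n(X))_{n \in \mathbb{N}}$ of all existential $\mathcal{L}_{\rm ring}(R)$-formulas in the single free variable $X$, where constants from $R$ appearing in such formulas are themselves recursively encoded via $\iota$.

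Now consider the relation
\[ S \;=\; \{ (a, \iota(n)) \in R \times R : R \models \varphi_n(a) \}. \]
This set is recursively enumerable: given a pair $(a,b) \in R \times R$, one computes $n = \iota^{-1}(b)$, decodes the formula $\varphi_n(X) = \exists Y_1, \ldots, Y_m\, \psi_n(X, \underline Y)$ with $\psi_n$ quantifier-free, and searches exhaustively through tuples $\underline y$ in $R$ for a witness to $\psi_n(a, \underline y)$. By DPRM (respectively, its analog in characteristic $p$), the r.e.~set $S$ is diophantine in $R$, hence defined by some existential $\mathcal{L}_{\rm ring}(R)$-formula which, after rewriting, takes the form $\theta(X, Z) = \exists Y_1, \ldots, Y_N\, \vartheta(X, \underline Y, Z)$ with $\vartheta$ quantifier-free. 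Then, for an arbitrary existential $\mathcal{L}_{\rm ring}(R)$-formula $\varphi(X)$, picking $n$ such that $\varphi_n = \varphi$ (e.g., the G\"odel number of $\varphi$) and setting $c = \iota(n)$ gives $\varphi(R) = \varphi_n(R) = \theta(R, c)$, as required.

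The main obstacle is unquestionably the appeal to DPRM, and especially to its analog for $\mathbb{F}_p[t]$: both are deep theorems. Once they are granted, the remainder is a routine combination of recursion-theoretic encoding with a systematic search for existential witnesses, the only subtlety being to ensure that the encoding of parameters from $R$ inside formulas is compatible with the recursive structure of $R$ itself.
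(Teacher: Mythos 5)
Your proposal is correct and follows essentially the same approach as the paper: both construct a universal recursively enumerable relation on $R \times R$ (you do so explicitly via a G\"odel numbering of existential formulas and the witness-search argument, whereas the paper cites the standard enumeration of r.e.\ sets) and then apply the DPRM theorem, resp.\ Demeyer's analogue for $\mathbb{F}_p[t]$, to conclude that this relation is diophantine.
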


\begin{proof}
We fix any recursive presentation of $R$,
i.e.~a bijection $\rho\colon R \rightarrow \Natwithzero$  such that the images of the graphs of addition and multiplication are recursive.
%so that we may speak of recursive and recursively enumerable subsets of $R$.
It is proven in \cite{Matiyasevich} respectively \cite{Demeyer}
that every recursively enumerable subset of $R$ is diophantine in $R$.
Since the recursively enumerable subsets of $R$ can be enumerated as $M_0,M_1,\dots$ such that the relation 
$x\in M_{\rho(y)}$ is again recursively enumerable, cf.~the introduction of \cite{Jones} in the case $R=\mathbb{Z}$, this gives the existence of a universal diophantine set in $R$.
In the case $R=\mathbb{Z}$,
an explicit universal diophantine set is constructed in \cite{Jones}.
\end{proof}

\begin{prop}\label{prop:ZinQ1}
Let $R=\mathbb{Z}$ or $R=\mathbb{F}_p[t]$, and $K={\rm Frac}(R)$.
If $R$ is diophantine in $K$, then $\Erkone(K)<\infty$.
\end{prop}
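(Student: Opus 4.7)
The plan is to show that every diophantine subset of $K$ in one variable admits an existential $\Lar(K)$-definition whose number of quantifiers is bounded by a constant depending only on $R$. The strategy rests on three ingredients, each provided by a lemma earlier in the section: a polynomial injection $f \in R[X,Y]$ from $R^2$ into $R$, furnished by Lemma \ref{lem:polynomial_injection_on_R}; a universal diophantine formula $\theta(X, Z) = \exists Y_1, \ldots, Y_N \vartheta(X, \underline Y, Z)$ over $R$ with a fixed number $N$ of quantifiers, furnished by Lemma \ref{lem:universal}; and, since $R$ is diophantine in $K$ by assumption, an existential $\Lar(R)$-formula $\rho(X)$ with some fixed number $m$ of quantifiers defining $R$ inside $K$.

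Given any diophantine $D \subseteq K$, I would first pull it back along the surjection $R \times (R \setminus \{0\}) \to K$, $(a, b) \mapsto a/b$, by setting $E = \{(a, b) \in R^2 : b \neq 0 \text{ and } a/b \in D\}$. Clearing denominators in a polynomial system defining $D$---both in the argument $x = a/b$ and in any witness tuple $\underline y \in K^k$ by writing each $y_j = c_j/d_j$ with $c_j, d_j \in R$---shows that $E$ is existentially definable over $R$, hence diophantine in $R^2$. Since $f$ is a polynomial injection, the image $\tilde D = f(E) \subseteq R$ is again diophantine in $R$, and so Lemma \ref{lem:universal} produces a parameter $c \in R$ with $\tilde D = \theta(R, c)$.

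Using $K = \Frac(R)$ together with the injectivity of $f$, one then has
\[
 x \in D \quad \Longleftrightarrow \quad \exists a, b, y_1, \ldots, y_N \in R \colon (b \dotneq 0) \wedge (xb \doteq a) \wedge \vartheta(f(a,b), y_1, \ldots, y_N, c).
\]
Translating each of the $N + 2$ quantifiers $\exists \cdot \in R$ into an existential quantifier over $K$ conjoined with $\rho$ applied to the bound variable yields an existential $\Lar(K)$-formula defining $D$. After prenexation it has $(N + 2) + (N + 2) m = (N + 2)(m + 1)$ existential quantifiers. Since this bound depends only on $R$ and on the chosen $\rho$, and not on $D$, one concludes $\Erkone(K) \leq (N + 2)(m + 1) < \infty$.

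The argument is quite direct once the three ingredients are in hand. The main point requiring a small amount of care is the denominator-clearing step producing $E$: the auxiliary inequalities $d_j \neq 0$ on the introduced denominator variables must be accommodated within the diophantine framework over $R$. This is routine because such inequalities are already quantifier-free in $\Lar$ and are absorbed into the fixed bound before Lemma \ref{lem:universal} is invoked; consequently the precise value of the constant plays no role, and any fixed dependence on $R$ alone suffices for the conclusion.
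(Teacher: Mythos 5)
Your proof is correct and follows essentially the same route as the paper: encode pairs $(a,b)$ with $b\neq 0$, $a/b\in D$ into $R$ via the polynomial injection $f$, apply Lemma~\ref{lem:universal} to get a fixed bound $N$ on the quantifiers needed over $R$, then translate back to $K$ by relativising each $R$-quantifier with $\rho$. The only difference is cosmetic: you keep $a$ as a separate bound variable with the constraint $xb \doteq a$, whereas the paper substitutes $a = X Y_{N+1}$ directly into $\vartheta$ (adding only the condition $\rho(XY_{N+1})$), saving one quantifier and giving the slightly tighter bound $N+1+(N+2)M$ instead of your $(N+2)(M+1)$; both are finite, which is all the statement requires.
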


\begin{proof}
Let $\theta(X,Z) = \exists Y_1,\dots,Y_N\vartheta(X,\underline{Y},Z)$ be the universal diophantine set from Lemma \ref{lem:universal},
and suppose that $\rho(X)$ is an $\exists_M$-formula defining $R$ in $K$.
Let $\varphi(X)$ be an existential $\Lar(K)$-formula.
We claim that $\qr_K(\varphi)\leq N+1+(N+2)M$.
The set
$$
 D=\left\{f(a,b):a,b\in R,b\neq0,\frac{a}{b}\in\varphi(K)\right\},
$$
where $f\in R[X,Y]$ 
is a polynomial as in Lemma \ref{lem:polynomial_injection_on_R},
is existentially definable in $R$
(where we use the usual quantifier-free interpretation of $K=\Frac(R)$ in $R$ to express $\frac{a}{b} \in \varphi(K)$).
%is again diophantine in $K$.
%By the usual quantifier-free interpretation of $K=\Frac(R)$ in $R$,
%we obtain that $D$ is existentially definable in $R$,
Thus there exists $c\in R$ with $\theta(R,c)=D$. Let $\varphi'(X)$ be the formula
$$
 \exists Y_1,\dots,Y_{N+1}\left(\bigwedge_{i=1}^{N+1}\rho(Y_i)\wedge \rho(XY_{N+1}) \wedge Y_{N+1}\neq0\wedge\vartheta(f(XY_{N+1},Y_{N+1}),Y_1,\dots,Y_N,c)\right).
$$
Then $\varphi'$ is an existential formula with
$N+1+(N+2)M$ quantifiers,
and $\varphi'(K)=\{\frac{a}{b}:f(a,b)\in D\}=\varphi(K)$.
\end{proof}

\begin{opm}\label{rem:ErkZ}
Let $R = \zz$ or $R = \ff_p[t]$. In Lemma \ref{lem:universal}, the quantity $N$ can be chosen equal to $\Erk(R)$, which is the same as $\Erkone(R)$ by virtue of Lemmas \ref{lem:pol_injection} and \ref{lem:polynomial_injection_on_R}.
The proof of Proposition \ref{prop:ZinQ1} can then be refined by applying Corollary \ref{cor:finGenOverPerf} $N+1$ times to show that for $K = \Frac(R)$ one has $\Erkone(K) \leq (\Erkone(R)+2)\cdot\erk_K(R)$. 

It follows from \cite[Theorem 1.1(ii)]{Sun21} that $\Erkone(\zz) \leq 10$
(the quoted result talks about subsets of $\Natwithzero$, 
but applying an injective polynomial function $f:\mathbb{Z}\rightarrow\Natwithzero$,
like $f=2X^2-X$, one obtains the same result for subsets of $\mathbb{Z}$), yielding $\Erkone(\qq) \leq 12\cdot\erk_\qq(\zz)$.

In a similar vein, Matijasevič's 9 Unknowns Theorem as proven in \cite{Jones} implies that $\Erkone(\Natwithzero) \leq 9$ (using the general Definition \ref{def:erk_K} with the language $\lbrace +, \cdot, 0, 1 \rbrace$), and one can use the same arguments as above to show that $\Erkone(\qq) \leq 11\cdot \erk_\qq(\Natwithzero)$.
\end{opm}

\begin{gev}\label{cor:erk_Z_in_Q}
If $\Erk(\mathbb{Q})=\infty$
and there exists $f\in\mathbb{Z}[X,Y]$ such that $f\colon \mathbb{Q}\times\mathbb{Q}\rightarrow\mathbb{Q}$ is injective, 
then $\mathbb{Z}$ is not diophantine in $\mathbb{Q}$.
If $\Erk(\mathbb{F}_p(t))=\infty$, then $\mathbb{F}_p[t]$ is not diophantine in $\mathbb{F}_p(t)$.
\end{gev}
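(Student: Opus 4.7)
The plan is to argue by contrapositive in both cases, chaining together Proposition \ref{prop:ZinQ1} and Lemma \ref{lem:pol_injection}. The heart of the matter is that these two results combine into a single implication: if $R\in\{\mathbb{Z},\mathbb{F}_p[t]\}$ is diophantine in $K=\Frac(R)$ and there is a polynomial injection $K\times K\to K$, then $\Erk(K)=\Erkone(K)<\infty$.

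For the first statement, I would suppose for contradiction that $\mathbb{Z}$ is diophantine in $\mathbb{Q}$. Proposition \ref{prop:ZinQ1} (applied with $R=\mathbb{Z}$) then yields $\Erkone(\mathbb{Q})<\infty$. By the hypothesis on the existence of a polynomial $f\in\mathbb{Z}[X,Y]\subseteq\mathbb{Q}[X,Y]$ inducing an injection $\mathbb{Q}\times\mathbb{Q}\to\mathbb{Q}$, Lemma \ref{lem:pol_injection} (with $R=\mathbb{Q}$) gives $\Erk(\mathbb{Q})=\Erkone(\mathbb{Q})<\infty$, contradicting the assumption $\Erk(\mathbb{Q})=\infty$.

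For the second statement, suppose for contradiction that $\mathbb{F}_p[t]$ is diophantine in $\mathbb{F}_p(t)$. Proposition \ref{prop:ZinQ1} (with $R=\mathbb{F}_p[t]$) gives $\Erkone(\mathbb{F}_p(t))<\infty$. Here no extra hypothesis on the existence of a polynomial injection is needed: since $\mathbb{F}_p(t)$ is imperfect, Lemma \ref{lem:polynomial_injection_on_R} (applied to the domain $R=\mathbb{F}_p(t)$, whose fraction field is itself and thus imperfect) supplies a polynomial $f\in\mathbb{F}_p(t)[X,Y]$ inducing an injection $\mathbb{F}_p(t)\times\mathbb{F}_p(t)\to\mathbb{F}_p(t)$. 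Lemma \ref{lem:pol_injection} then yields $\Erk(\mathbb{F}_p(t))=\Erkone(\mathbb{F}_p(t))<\infty$, contradicting $\Erk(\mathbb{F}_p(t))=\infty$.

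There is no real obstacle here; the corollary is simply a packaging of the two preceding results. The only small point to keep straight is the distinction between the two cases: over $\mathbb{F}_p(t)$ the imperfectness of the field gives the polynomial injection for free via Lemma \ref{lem:polynomial_injection_on_R}, whereas over $\mathbb{Q}$ no such unconditional construction is known, so the existence of an injective $f\colon\mathbb{Q}\times\mathbb{Q}\to\mathbb{Q}$ must appear as an explicit hypothesis.
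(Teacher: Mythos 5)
Your proof is correct and follows exactly the paper's argument, which likewise combines Proposition \ref{prop:ZinQ1} with Lemma \ref{lem:pol_injection} and invokes Lemma \ref{lem:polynomial_injection_on_R} in the $\mathbb{F}_p(t)$ case to supply the polynomial injection. You have merely spelled out the contrapositive structure the paper leaves implicit.
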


\begin{proof}
This follows by combining Proposition \ref{prop:ZinQ1} with Lemma \ref{lem:pol_injection},
where in the case of $\mathbb{F}_p(t)$ we also use Lemma \ref{lem:polynomial_injection_on_R}.
\end{proof}

This leads to the question whether one can prove  
that these existential ranks are in fact infinite.
One hope to prove that the existential rank of $\mathbb{F}_p(t)$ is infinite
and thus to resolve Question \ref{q:FqT} via Corollary \ref{cor:erk_Z_in_Q}
would be to use the same construction that works for its completion $\mathbb{F}_p((t))$:
The point there was that the set of $n$-tuples of $p$-th powers has 
$\Erk_{\mathbb{F}_p((t))}((\mathbb{F}_p((t))\pow p)^n)=n$.
However, Theorem \ref{thm:finGenOverPerf} implies that
$\Erk_{\mathbb{F}_p(t)}((\mathbb{F}_p(t)\pow p)^n)=1$.

As for the existential rank of $\mathbb{Q}$, Theorem \ref{stel:erk_quad_cyclic_norm} and Corollary \ref{gev:sums_of_squares} suggest diophantine sets which are candidates to have high existential rank:
%for example, $\Erk(\mathbb{Q}\pow 2+\mathbb{Q}\pow 2)=2$,
%and Corollary \ref{cor:finGenOverPerf} gives only the upper bound
%$\Erk_\mathbb{Q}((\mathbb{Q}\pow 2+\mathbb{Q}\pow 2)^n)\leq n+1$,
%so one might hope that this existential rank tends to infinity with $n$,
%as it does in other fields of characteristic zero (Corollary \ref{gev:sums_of_squares}):

\begin{ques}
Is $\Erk_\mathbb{Q}((\mathbb{Q}\pow 2+\mathbb{Q}\pow 2)^n)=n+1$ for every $n\in\Natwithoutzero$?
\end{ques}

%In the spirit of Theorem \ref{thm:norms} one might also ask:

\begin{ques}
Is $\Erk_\mathbb{Q}(\operatorname{N}_{L/\mathbb{Q}}(L^\times))=m$
for every cyclic Galois extension $L/\mathbb{Q}$ of prime power degree $m$?
\end{ques}

Theorem \ref{thm:norms} gives 
the lower bounds
$\Erk_\mathbb{Q}((\mathbb{Q}\pow 2+\mathbb{Q}\pow 2)^n)\geq 2$
and $\Erk_\mathbb{Q}(\operatorname{N}_{L/\mathbb{Q}}(L^\times))\geq 2$
and hence answers these questions positively for $n=1$ respectively $m=2$. 

Another point to notice is that 
we know that $\Erk(\mathbb{Q})\geq2$, but
for all fields $K$ whose existential rank we were able to determine, we found $\Erk(K)\in\{0,1,\infty\}$.

\begin{ques}
Is there any field $K$ with $1<\Erk(K)<\infty$?
\end{ques}
\begin{opm}
Recall from Remark \ref{rem:ErkZ} that $\Erk(\zz) = \Erkone(\zz) \leq 10$.
On the other hand, $\Erk(\zz) > 1$: any $\exists_1$-$\Lar$-definable subset of $\zz$ is recursive, but there exist recursively enumerable subsets of $\zz$ (hence $\exists$-$\Lar$-definable by Matijasevič' theorem) which are not recursive.

It was shown recently that for any number field $K$, $\zz$ is $\exists$-$\Lar$-definable in the ring of integers $\mathcal{O}_K$ \cite{ABHS25,KP25}.
By a bi-interpretability argument similar to that given in the proof of Proposition \ref{prop:ZinQ1}, one can show that $1 < \Erk(\mathcal{O}_K) < \infty$ for any number field $K$.
\end{opm}

\end{document}